%
%
\documentclass[sn-mathphys-num]{sn-jnl}
%
%
%
%
%
%
\usepackage{anyfontsize}
\usepackage{amscd}
\usepackage{natbib}
\usepackage{tikz}
\usetikzlibrary{patterns, positioning, arrows}
\usetikzlibrary{arrows,decorations.pathmorphing,positioning}
\usepackage{tikz-cd}
\usepackage{import}
\usepackage[all,cmtip]{xy}
\usepackage{graphicx}%
\usepackage{multirow}%
\usepackage{amsmath,amssymb,amsfonts}%
\usepackage{amsthm}%
\usepackage{mathrsfs}%
\usepackage[title]{appendix}%
\usepackage{xcolor}%
\usepackage{textcomp}%
\usepackage{manyfoot}%
\usepackage{booktabs}%
\usepackage{algorithm}%
\usepackage{algorithmicx}%
\usepackage{algpseudocode}%
\usepackage{tabularx}
\usepackage{listings}%

\usepackage[capitalize]{cleveref}

\newtheorem{thm}{Theorem}[section]    
\newtheorem{lem}[thm]{Lemma}          
\newtheorem{com}[thm]{Comments}  
\theoremstyle{definition}
\newtheorem{defn}[thm]{Definition}    
%
%
\newtheorem{corollary}[thm]{Corollary}
\newtheorem{remark}[thm]{Remark}
\newtheorem{proposition}[thm]{Proposition}

\title[Article Title]{Any K\"ahler metric is a Fisher information metric}


\author*{\fnm{Emmanuel} \sur{Gnandi}}\email{kpanteemmanuel@gmail.com, gnandi@insa-toulouse.fr}

\affil{\orgdiv{INSA de Toulouse}, \orgname{Département de Génie Mathématique}, \orgaddress{\street{135 Avenue de Rangueil}, \city{Toulouse}, \postcode{31077}, \country{France}}}

\abstract{The Fisher information metric or the Fisher-Rao metric corresponds to a natural Riemannian metric defined on a parameterized family of probability density functions. As in the case of Riemannian geometry, we can define a distance in terms of the Fisher information metric, called the Fisher-Rao distance. The Fisher information metric has a wide range of applications in estimation and information theories. Indeed, it provides the most informative Cramer-Rao bound for an unbiased estimator. The Goldberg conjecture is a well-known unsolved problem which states that any compact Einstein almost K\"ahler manifold is necessarily a K\"ahler-Einstein. Note that, there is also a known odd-dimensional analog of the Goldberg conjecture in the literature. The main objective of this paper is to  establish a new characterization of coK\"ahler manifolds and K\"ahler manifolds; our characterization is statistical in nature. Finally, we corroborate that every, K\"ahler and co-K\"ahler manifolds, can be viewed as being a parametric family of probability density functions, whereas K\"ahler and coK\"ahler metrics can be regarded as Fisher information metrics. In particular, we prove that, when the K\"ahler metric is real analytic, it is
always locally the Fisher information of an exponential family. We also tackle
the link between K\"ahler potential and Kullback-Leibler divergence.}

\begin{document}
\maketitle
\section{Introduction}
In this section, certain relevant notations and useful definitions related to information geometry are recalled. Information geometry is an emerging field which provides a certain correspondence between differential geometry and statistics through the Fisher information matrix (cf.\cite{amari1987differential}, \cite{amari2007methods}, \cite{rao1992information}, \cite{lauritzen1987statistical}). Several authors have examined the process of constructing a proper distance between probability distributions referring to its significance in both practical, theoretical statistics, information theory, signal processing, quantitative finance, and machine Learning. To solve this problem, Rao \cite{rao1992information} proposed the geodesic distance induced by the Fisher information metric. This geodesic distance is basically known in literature as the Fisher-Rao distance or Rao’s distance. Let us consider a statistical model $\mathcal{P}_{d}(\Xi)$, which is a set of parametric densities defined as follows:
\begin{equation}
    \mathcal{P}_{d}(\Xi)=\{p_{\theta}:\,\theta\in \Theta\},
\end{equation}
where $\Theta$(the parametric space) is an open set of $\mathbb{R}^{d}$ and $\Xi$ is a sample space, with parameter $\theta=(\theta^{1},....,\theta^{d})$.
Assuming that $p$ never vanishes, the likelihood function is expressed as follows:
$$l_{\theta}(x)=\log p_{\theta}(x).$$
The Fisher matrix tensor is indicated in terms of:
\begin{equation}
    g^{F}(\theta)_{ij}=E_{\theta}(\frac{\partial l_{\theta}(x)}{\partial\theta_{i}}\frac{\partial l_{\theta}(x)}{\partial\theta_{j}})=-E_{\theta}(\frac{\partial^{2}l_{\theta}(x)}{\partial\theta_{i}\partial\theta_{j}})\quad\text{for all}\quad\frac{\partial}{\partial\theta^{i}},\frac{\partial}{\partial\theta^{j}}\in T_{\theta}\Theta=\mathbb{R}^{d}.
\end{equation}
A simple computation reveals that the Fisher-Information matrix can be stated as follows:
\begin{equation}
    g^{F}(\theta)_{ij}=4\int_{\Omega}\frac{\partial\sqrt{p_{\theta}(x)}}{\partial\theta_{i}}\frac{\partial\sqrt{p_{\theta}(x)}}{\partial\theta_{j}}dx
    \label{eq:Fisher}
\end{equation}
The obtained geometry is called the informative geometry of the parametric 
family $\mathcal{P}_{d}(\Xi)$. The Fisher information matrix can be regarded as a measure of the amount of information present in the data about a parameter $\theta$. \citet{rao1992information} demonstrated that, under elementary regularity hypotheses, $ \mathcal{P}_{d}(\Xi)$ or $\Theta$ 
is a Riemannian manifold with a Riemannian metric, defined by the Fisher-information matrix, called the Fisher-information metric or the Fisher-Rao metric. The Fisher–information metric is invariant under the action of the diffeomorphism group. [\citet{cencov2000statistical},p. 156] proved the  uniqueness
result for Fisher’s information metric on finite sample
spaces. Subsequently, in \citet{ay2015information}, the authors extended Cencov's result to infinite sample spaces. The Fisher information metric is invariant under parametrization on a sample space $\Xi$ and covariant under parametrization on a parametric space $\Theta$, see \cite{calin2014geometric}. The Fisher-Rao distance is inherently designed to be invariant under diffeomorphisms of both the sample space $\Xi$ 
and the parameter space $\Theta$.

\vspace{1ex}
Quantum information geometry has been equally developed, which is a quite natural outcome, mainly as the standard quantum mechanics is a probabilistic-statistical theory. It has been corroborated using the geometrical formulation of quantum mechanics that, the Fisher quantum information metric (SLD metric) on the space of pure  quantum states coincides with the Fubini-Study metric of the complex projective space (up to constant)\cite{facchi2010classical}. The above remark indicates a close relationship between Information geometry and K\"ahler geometry. As far as the current research work is concerned, our central focus is upon the statistical characterization of K\"ahler and coK\"ahler metrics. This characterization generalizes the well known characterization of K\"ahler metrics in Hermitian manifolds.
\vspace{1ex}

This paper is organized as follows.  In Section \ref{se:section1}, we recall the relevant background in quantum analogue of information geometry. In Section \ref{se:Ham},
we recall the notion of Hamiltonian structure on odd dimensional manifold and establish the main result of this section:
\vspace{1ex}

\begin{thm}\label{mainTheorem} 
An odd dimensional manifold $M^{2d+1}$ admits a Hamiltonian structure, if and only if, its canonical bundle admits a maximal parallel section.
\end{thm}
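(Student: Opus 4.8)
The plan is to reduce the stated equivalence to a single tensorial identity relating the third-order condition $d\omega=0$ to the first-order condition $\nabla\Omega=0$, and then to verify the two implications by running that identity in the two directions. First I would fix the ambient data: a Hamiltonian structure is a closed $2$-form $\omega$ on $M^{2d+1}$ of maximal rank $2d$, whose one-dimensional kernel $\ker\omega=\mathbb{R}\xi$ is the characteristic line field. On the complementary rank-$2d$ distribution $D$ one has a compatible almost complex structure $J$ (together with a compatible metric, either given with the almost contact structure or built from $\omega$ and $J$), so that the canonical bundle is $K_M=\Lambda^{d}(D^{1,0})^{*}$ and its sections are complex $d$-forms of type $(d,0)$. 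Here \emph{maximal} is read as nowhere vanishing, i.e.\ a global trivialization of $K_M$; this is precisely the condition matching the maximal rank $2d$ of $\omega$. The theorem then becomes the assertion that $d\omega=0$ holds if and only if the canonical connection $\nabla$ induced on $K_M$ admits a nowhere-vanishing parallel section $\Omega$.

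For the forward implication I would start from $d\omega=0$ and construct $\Omega$ locally as the unit $(d,0)$-form normalized by $\omega^{d}=c_{d}\,\Omega\wedge\overline{\Omega}$ for the appropriate nonzero constant $c_{d}$. The decisive computation is that of $\nabla\Omega$: expanding the covariant derivative of a unit $(d,0)$-form shows that the only obstruction to $\nabla\Omega=0$ is carried by the intrinsic torsion of the structure, concentrated in a Lee-type $1$-form $\theta$. One then checks that $\theta$ is determined by the non-closed part of $\omega$, so that the hypothesis $d\omega=0$ forces $\theta=0$ and hence $\nabla\Omega=0$ locally; the local parallel forms differ only by parallel phases, so they patch to a global nowhere-vanishing parallel section.

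For the converse I would assume a nowhere-vanishing parallel section $\Omega$ of $K_M$ and recover the Hamiltonian structure. Since $\Omega$ trivializes $K_M$ it pins down a reduction of the structure group and thereby a $2$-form $\omega$ of maximal rank $2d$ everywhere; running the torsion identity of the previous paragraph in reverse, the vanishing $\nabla\Omega=0$ kills the Lee-type form $\theta$, and the vanishing of $\theta$ is exactly what forces the non-closed part of $\omega$ to vanish, giving $d\omega=0$. Thus both directions hinge on the same bridge $\nabla\Omega\;\longleftrightarrow\;\theta\;\longleftrightarrow\;d\omega$.

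The hard part will be establishing this bridge precisely: identifying the correct canonical connection on $K_M$ and proving the exact identity that expresses the first-order quantity $\nabla\Omega$ through the Lee form $\theta$ and then $\theta$ through the third-order quantity $d\omega$. Closely tied to this is the odd-dimensional bookkeeping along the characteristic direction $\xi$: because $\omega$ necessarily degenerates on $\ker\omega$, one must verify that the construction of $\Omega$ on $D$ and the computation of $\nabla\Omega$ are compatible with, and insensitive to, this one-dimensional kernel, and that no spurious contribution enters along $\xi$. Finally I would confirm that the \emph{maximality} of the section (nowhere vanishing) corresponds uniformly across $M$ to the maximal rank $2d$ of $\omega$, which is what guarantees the equivalence is global rather than merely pointwise.
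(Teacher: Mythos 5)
Your proposal does not engage with the object the theorem is actually about. In this paper the ``canonical bundle'' $K_{M}$ is \emph{not} the complex-geometric line bundle $\Lambda^{d}(D^{1,0})^{*}$: it is defined in Section~\ref{se:Ham} as the bundle of skew-symmetric endomorphisms of $TM^{2d+1}$, equipped with the connection $\widetilde{\nabla}$ induced by a SMAT pair $(\nabla,\nabla^{*})$, so that a parallel section is an endomorphism $\Theta$ satisfying $\nabla^{*}_{X}\Theta Y=\Theta\nabla_{X}Y$, and ``maximal'' means $\operatorname{rank}\Theta=2d$ (not ``nowhere vanishing'' as a section of a line bundle). The paper's proof is correspondingly different from yours: it extends $\Omega$ to an almost symplectic form on the open manifold $M^{2d+1}\times\mathbb{R}$, invokes Gromov's $h$-principle to replace it by a closed one, chooses a symplectic connection there with a normalization in Darboux coordinates, restricts to obtain a torsion-free $\nabla$ on $M^{2d+1}$ with $\nabla\Omega=0$, and then uses the pointwise identity $(\nabla_{Z}\Omega)(X,Y)=g(\nabla^{*}_{Z}\Theta X-\Theta\nabla_{Z}X,Y)$ to translate $\nabla\Omega=0$ into parallelism of the rank-$2d$ skew-symmetric endomorphism $\Theta=g^{-1}\Omega$; the converse runs the same identity backwards and uses torsion-freeness of $\nabla$ to get $d\Omega_{\Theta}=0$. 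There is no Lee form, no $(d,0)$-form, and no normalization $\omega^{d}=c_{d}\,\Omega\wedge\overline{\Omega}$ anywhere in the argument.

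Beyond the misidentification, the bridge you propose would not close even for the statement you formulate. The condition $d\omega=0$ controls only the Lee-form component of the intrinsic torsion of the underlying almost Hermitian (or almost contact) structure; it does not force the full covariant constancy of a unit $(d,0)$-form, which additionally involves the remaining torsion components (in particular the Nijenhuis tensor) and, globally, the triviality of the canonical line bundle. Your patching step --- ``the local parallel forms differ only by parallel phases, so they patch'' --- is precisely where a nontrivial first Chern class obstructs the construction, so the forward implication as you outline it fails already in the even-dimensional almost Kähler model. To repair the argument you would need to restate it for the bundle of skew-symmetric endomorphisms with the SMAT connection, at which point the content reduces to producing a torsion-free connection with $\nabla\Omega=0$ (the paper's $h$-principle/symplectic-connection step) together with the identity $(\nabla_{Z}\Omega)(X,Y)=g(\nabla^{*}_{Z}\Theta X-\Theta\nabla_{Z}X,Y)$.
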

\vspace{1ex}
The proof is based on the symplecization technique and the Gromov h-principle.
This theorem reveals that any Hamiltonian structure on an odd-dimension manifold $M^{2d+1}$ arises from SMAT structure on $M^{2d+1}$. Hamiltonian manifold refers to the odd dimensional analogue
of a symplectic manifold. Thus, a link between symplectic geometry and
information geometry is etabhished.

Section \ref{se:stable H and smat} provides an overview on certain basic concepts of a stable Hamiltonian structure that are highly
needed throughout this paper. In this section, our initial target is to prove that on every shs manifold, there are always canonical connections called shs connections. These connections will be useful in the sequel of the paper.
\vspace{1ex}
\begin{thm}
     On every shs manifold $(M^{2d+1},\Omega,\alpha)$, there are always connections such that:
\begin{equation}
    \nabla E=0\,,\nabla\Omega=0.
\end{equation}
\end{thm}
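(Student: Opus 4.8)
The plan is to exploit the canonical splitting of $TM$ furnished by the stable Hamiltonian structure and to build $\nabla$ as a direct sum of two connections, one on the Reeb line and one on the horizontal symplectic bundle. First I would recall that $(\Omega,\alpha)$ determines a global characteristic (Reeb) field $E$ by the conditions $\iota_E\Omega=0$ and $\alpha(E)=1$. The volume axiom $\alpha\wedge\Omega^d\neq 0$ forces $\Omega$ to have constant rank $2d$, so $\ker\Omega$ is a line bundle; normalising by $\alpha(E)=1$ singles out $E$ as a well-defined, nowhere-vanishing smooth section. This yields the splitting $TM=L\oplus\xi$ with $L=\langle E\rangle$ and $\xi=\ker\alpha$, and the same volume axiom shows that $\omega:=\Omega|_\xi$ is nondegenerate, i.e. $(\xi,\omega)$ is a rank-$2d$ symplectic vector bundle over $M$.

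Next I would construct $\nabla$ blockwise with respect to this splitting. On $L$ I take the flat connection $\nabla^L_Z(fE)=(Zf)E$ in which the frame $E$ is parallel; this is exactly what makes $\nabla E=0$. On $\xi$ I take any symplectic connection $\nabla^\xi$, that is a linear connection with $\nabla^\xi\omega=0$. Such a connection always exists on a symplectic vector bundle: starting from an arbitrary connection $D$ on $\xi$ one corrects it by an $\mathrm{End}(\xi)$-valued one-form so as to annihilate $D\omega$, the correction being solvable because $\mathfrak{gl}(\xi)=\mathfrak{sp}(\xi,\omega)\oplus\{\text{$\omega$-symmetric endomorphisms}\}$. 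Writing $X=\alpha(X)E+X^\xi$, I set $\nabla_Z X:=\nabla^L_Z(\alpha(X)E)+\nabla^\xi_Z(X^\xi)$ and check the Leibniz rule and $C^\infty$-linearity in $Z$, so that $\nabla$ is a genuine linear connection on $TM$ preserving the splitting $L\oplus\xi$.

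Finally I would verify $\nabla\Omega=0$. Because $\nabla$ preserves $L\oplus\xi$, because $E$ is $\nabla$-parallel, and because $\iota_E\Omega=0$, every block of $(\nabla_Z\Omega)(X,Y)$ that involves $E$ vanishes, while on the $\xi$-$\xi$ block one has $(\nabla_Z\Omega)(X^\xi,Y^\xi)=(\nabla^\xi_Z\omega)(X^\xi,Y^\xi)=0$. Together with $\nabla E=0$ this establishes both identities. The only nontrivial input is the classical existence of a symplectic connection on $(\xi,\omega)$; the point that most deserves care is that the volume and stability axioms genuinely produce a smooth, globally defined characteristic field $E$ and a fibrewise nondegenerate $\omega$, so that the blockwise construction is valid on all of $M$ rather than merely locally, and no further global obstruction enters.
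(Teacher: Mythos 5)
Your construction is correct for the statement as literally written, but it takes a genuinely different route from the paper. You split $TM^{2d+1}=\langle E\rangle\oplus\ker\alpha$ and define $\nabla$ as the direct sum of the trivial connection on the Reeb line (making $E$ parallel by fiat) and a fibrewise symplectic connection on the symplectic vector bundle $(\ker\alpha,\Omega|_{\ker\alpha})$, whose existence follows from the standard correction argument you sketch; the verification $(\nabla_Z\Omega)(X,Y)=(\nabla^{\xi}_Z\omega)(X^{\xi},Y^{\xi})=0$ goes through because $i_E\Omega=0$ kills every block involving $E$. The paper instead symplectizes: it realizes $M^{2d+1}$ as the leaf $t=0$ in $(M^{2d+1}\times]-\epsilon,\epsilon[,\ \Omega-d(t\alpha))$, chooses a symplectic connection there, normalizes it in Darboux coordinates so that the relevant Christoffel symbols vanish, and restricts to the leaf (a partition-of-unity gluing of flat Darboux-chart connections is given as an alternative in the remark). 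Your argument is more elementary and more general --- it is coordinate-free, needs no Darboux normal form, and in fact uses neither $d\Omega=0$ nor the stability condition $\ker\Omega\subset\ker d\alpha$, only that $E$ and the splitting are globally defined.

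One caveat you should be aware of: your block-diagonal connection is in general \emph{not} torsion-free. For $X,Y\in\Gamma(\ker\alpha)$ one has $\alpha(T^{\nabla}(X,Y))=-\alpha([X,Y])=d\alpha(X,Y)$, which is nonzero precisely on the contact-type part of the structure. The connections produced by the paper's proof are symmetric (they are restrictions of symplectic connections, which are torsion-free by definition), and the rest of the paper relies on this symmetry --- for instance in the description of the space of shs connections as an affine space over $\mathcal{S}^{3}_{E}(M^{2d+1})$, and in the proof of the co-K\"ahler characterization, where the symmetry of the shs connection is invoked explicitly. So while your proof establishes the two displayed identities, to recover what the paper actually uses you would need to symmetrize your connection (or correct its torsion) while preserving $\nabla E=0$ and $\nabla\Omega=0$, which is exactly the extra work the Darboux-chart or symplectization arguments are doing for you.
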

The proof of this theorem relies on the symplectization technique and we use the symplectic connections on the product $M^{2d+1}\times\mathbb{R}$ to build up the desired connections on the shs manifold. Next, we set forward a description of the set of shs connections in \cref{eq:112} and we confirm that there are shs connections invariant by the Reeb field $E$ in \cref{prop:invar}. Our second target is to prove that on every shs manifold, both orthogonal distributions $\ell_{\Omega}$ and $\ell_{\alpha}$ are always adapted to SMAT connections
in \cref{eq:190}. We obtain the local components of the torsion of the dual of an shs connection in local Darboux charts in \cref{pr:torsiondual}. Our third target lies therefore in proving that there are shs manifolds which do not contain statistical SMAT shs connections. These results are obtained with regard to \cref{prop:contact}, \cref{eq:128}, \cref{prop:fundamenta} and 
   \cref{prop:fundabis}. In \cref{sec:cokahlerstructure}, our first goal is to introduce a new characterization of a co-K\"ahler manifold:
\vspace{1ex}
\begin{thm}
    An shs manifold $(M^{2d+1},\Omega,\alpha)$ is a co-Kähler manifold, if and only if, it is a statistical shs manifold.
\end{thm}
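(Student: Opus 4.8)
The plan is to prove the two implications separately, after reducing the statement to a normality condition. Recall that the shs data $(\Omega,\alpha)$ together with its compatible metric $g$ and fundamental endomorphism $\phi$ (so that $\Omega(X,Y)=g(X,\phi Y)$, $\phi^{2}=-\mathrm{Id}+\alpha\otimes E$, and $g(\phi X,\phi Y)=g(X,Y)-\alpha(X)\alpha(Y)$) constitute an almost co-K\"ahler structure, since $d\Omega=0$ and $d\alpha=0$ hold on any shs manifold. Such a structure is co-K\"ahler precisely when it is normal, that is, when the Nijenhuis torsion $N_\phi$ vanishes. Thus it suffices to show that a torsion-free statistical shs connection exists if and only if $N_\phi=0$.

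For the easy implication, assume $(M^{2d+1},\Omega,\alpha)$ is co-K\"ahler. Then the Levi-Civita connection $\nabla^{g}$ parallelises the whole structure, $\nabla^{g}\phi=0$, whence $\nabla^{g}\Omega=0$ and $\nabla^{g}E=0$; in particular $\nabla^{g}$ is an shs connection. Being metric and torsion-free, its cubic form $C:=\nabla^{g}g$ vanishes identically and is therefore trivially totally symmetric, so $(\Omega,\alpha,g,\nabla^{g})$ is a statistical shs manifold.

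For the substantive implication, let $\nabla$ be a torsion-free statistical shs connection, and write $C(Z,X,Y)=(\nabla_Z g)(X,Y)$ for its totally symmetric cubic form. Since $\nabla$ is torsion-free, the Nijenhuis torsion takes the connection form
\[
N_\phi(X,Y)=(\nabla_{\phi X}\phi)Y-(\nabla_{\phi Y}\phi)X-\phi\big((\nabla_X\phi)Y\big)+\phi\big((\nabla_Y\phi)X\big).
\]
From $\nabla\Omega=0$ and $\Omega=g(\cdot,\phi\cdot)$ one obtains $g\big((\nabla_Z\phi)Y,W\big)=-C(Z,W,\phi Y)$, which expresses $\nabla\phi$ through $C$. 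Differentiating the metric compatibility $g(\phi X,\phi Y)=g(X,Y)-\alpha(X)\alpha(Y)$ along $\nabla$ and using $\nabla E=0$ then yields the anti-invariance relation $C(Z,\phi Y,\phi W)=-C(Z,Y,W)$ on $\ker\alpha$. Substituting the expression for $\nabla\phi$ into $g(N_\phi(X,Y),W)$ produces two pairs of cubic-form terms: the total symmetry of $C$ cancels the pair carrying two $\phi$-slots directly, while the anti-invariance rewrites the remaining pair as symmetric terms that cancel in turn, giving $N_\phi=0$ on $\ker\alpha$. The Reeb directions and the vanishing of the auxiliary almost-contact normality tensors then follow from $d\alpha=0$ and $d\Omega=0$, so the structure is normal and $(M^{2d+1},\Omega,\alpha)$ is co-K\"ahler.

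The main obstacle is exactly this final cancellation. The Codazzi symmetry of $C$ by itself annihilates only the $\phi$-invariant part of $N_\phi$; removing the $\phi$-anti-invariant part requires the compatibility of the cubic form with $\phi$, and the delicate point is to \emph{derive} the anti-invariance $C(Z,\phi Y,\phi W)=-C(Z,Y,W)$ from $\nabla\Omega=0$ and the almost-contact-metric relations, rather than to assume it. Secondary technical checks are the treatment of the degenerate Reeb direction, where $\phi^{2}=-\mathrm{Id}$ fails, and the confirmation that the three remaining normality tensors vanish under $d\alpha=0$ and $d\Omega=0$; these become routine once the anti-invariance lemma is established.
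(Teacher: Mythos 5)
Your transverse computation is sound and follows a genuinely different route from the paper: you aim at the normality tensor $N_\phi$ directly, deriving the anti-invariance $C(Z,\phi X,\phi Y)=-C(Z,X,Y)$ on $\ker\alpha$ from $\nabla\Omega=0$, $\nabla E=0$ and the compatibility of $g$ with $\phi$, and then cancelling the four cubic-form terms in $g(N_\phi(X,Y),W)$ by total symmetry; I checked this and it works for $X,Y,W\in\ker\alpha$. The paper instead proves $\nabla^{*}\Omega=0$ by a case-by-case check in a $\Theta$-adapted frame, averages via $2\nabla^{lc}=\nabla+\nabla^{*}$ to get $\nabla^{lc}\Omega=0$, and invokes Blair's characterization $\nabla^{lc}\Theta=0\Leftrightarrow$ co-K\"ahler. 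However, your argument has two genuine gaps. First, the opening claim that ``$d\Omega=0$ and $d\alpha=0$ hold on any shs manifold'' is false: the shs condition only requires $i_Ed\alpha=0$, and contact manifolds ($\Omega=d\alpha\neq 0$) are shs. So the reduction ``statistical shs $\Leftrightarrow$ normal'' is not available upfront. The closedness of $\alpha$ is itself part of what must be proved, and it is exactly here that the statistical hypothesis first bites: since $\nabla^{*}\alpha=0$, one has $d\alpha(X,Y)=\alpha(T^{\nabla^{*}}(X,Y))$, so the symmetry of $\nabla^{*}$ forces $d\alpha=0$ (this is the paper's first step, and also the content of its Proposition~\ref{prop:contact} showing contact manifolds admit no statistical shs connection). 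Without this step your appeal to ``co-K\"ahler $=$ normal almost cosymplectic'' is unjustified.

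Second, the Reeb-direction check is not routine and does not follow from $d\alpha=0$ and $d\Omega=0$ alone. For a torsion-free $\nabla$ with $\nabla E=0$ one computes $N_\phi(X,E)=\phi\bigl((\nabla_E\phi)X\bigr)$, and $g\bigl(\phi(\nabla_E\phi)X,W\bigr)=C(E,\phi W,\phi X)=-C(E,W,X)$ on $\ker\alpha$; by total symmetry $C(E,W,X)=(\nabla_W\alpha)(X)$, which is precisely the tensor $\nabla\alpha$ (equivalently $L_Eg$, since $\nabla E=0$). This is extra data not controlled by your anti-invariance lemma: there exist almost cosymplectic structures whose leafwise structure is integrable but for which $L_E\phi\neq 0$, and these are not normal. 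Killing this term requires using the symmetry of $\nabla^{*}$ in the Reeb slot as well — the paper's Proposition~\ref{pr:torsiondual} gives $T^{\nabla^{*}}(\partial_0,\partial_j)=g^{\bar ip}\partial_0g_{\bar ij}\partial_p$, so torsion-freeness forces $\partial_0g_{\bar ij}=0$, i.e.\ $E$ is Killing and $C(E,\cdot,\cdot)=0$. Both gaps are fillable from your hypotheses, but as written the proof both rests on a false general statement about shs manifolds and dismisses as routine the one remaining place where the statistical hypothesis is essential.
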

\vspace{1ex}
According to the above Theorem, we deduce that shs manifolds that contain statistical shs connections are co-Kähler manifolds. In \cref{pro:geode} and \cref{cor:stat}, we examine some statistical properties of shs manifolds, from which we derive a description of the space of statistical shs connections on a statistical shs manifold (co-K\"ahler manifold). In \cref{se:Kal and cok}, we
extend the arguments of \cref{thm:principal} to provide a statistical characterization of K\"ahler manifold. This result is obtained from \cref{th:AlmsKha}. Statistical characterization of K\"ahler metrics in terms of parallel transport is displayed in \cref{subsect:stacha}. In \cref{subsection:Kobay}, we report a generalization of the theorem of Kobayashi. In \cref{subsec:Gol}, we face our statistical characterization of K\"ahler manifold with the integrability condition of S.Golberg. In \cref{subsec:Posit}, we invest two approaches to prove that any K\"ahler and co-K\"ahler metric is a Fisher-Information metric:
\vspace{1ex}

\begin{thm}
    Any K\"ahler and co-K\"ahler metric is a Fisher information metric.
\end{thm}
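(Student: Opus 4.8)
The plan is to reduce the statement to an isometric realization problem and then to settle it by two complementary routes, matching the two approaches announced in the abstract. The starting observation is that the Fisher matrix \eqref{eq:Fisher} is exactly the pullback of the round ($L^{2}$) metric of the unit sphere of $L^{2}(\Xi)$ under the square-root map $\theta\mapsto\phi_{\theta}:=\sqrt{p_{\theta}}$: the normalization $\int_{\Xi}p_{\theta}\,dx=1$ gives $\|\phi_{\theta}\|_{L^{2}}=1$, while \eqref{eq:Fisher} becomes $g^{F}(\theta)_{ij}=4\,\langle\partial_{i}\phi_{\theta},\partial_{j}\phi_{\theta}\rangle_{L^{2}}$. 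Consequently a metric $g$ is a Fisher information metric precisely when $(M,g)$ admits an isometric immersion (up to the factor $4$) into the positive part of the unit sphere $S^{\infty}\subset L^{2}(\Xi)$, the positivity being what guarantees that $p_{\theta}=\phi_{\theta}^{2}$ is a genuine density.

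First I would run the general route. A K\"ahler metric, and likewise a coK\"ahler metric in the odd-dimensional case, is in particular a smooth Riemannian metric; moreover the characterization theorems proved above equip it canonically with a statistical (dualistic) structure. I would then invoke the infinite-dimensional isometric embedding principle for densities: every smooth Riemannian manifold embeds isometrically into the space of square roots of probability densities, and after composing with a strictly positive cut-off one arranges $\phi_{\theta}>0$. Pulling the sample variable back through $p_{\theta}=\phi_{\theta}^{2}$ yields a parametric family whose Fisher information metric is, by the first paragraph, precisely $g$. This already establishes the theorem for both the K\"ahler and the coK\"ahler case.

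The second route sharpens the conclusion to an \emph{exponential family} in the real-analytic case, and here the K\"ahler potential intervenes directly, in the spirit of the coincidence between the quantum (SLD) Fisher metric and the Fubini--Study metric recalled in the introduction. In local holomorphic coordinates the metric reads $g_{i\bar{j}}=\partial_{i}\partial_{\bar{j}}\Phi$, so the aim is to produce an exponential family whose log-partition function is the potential $\Phi$ and whose associated (Hermitian) Fisher information is therefore $\partial_{i}\partial_{\bar{j}}\Phi=g_{i\bar{j}}$. Concretely this amounts to realizing $e^{\Phi}$ as a Laplace transform, $e^{\Phi(\theta)}=\int_{\Xi}e^{\langle\theta,T(x)\rangle}h(x)\,dx$, that is, to recovering an admissible sufficient statistic $T$ and a positive base density $h$ from the potential. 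Real analyticity of $\Phi$ is exactly what makes this inverse-Laplace (moment) problem solvable on a small neighborhood, whereupon $g$ is identified, by construction, with the Fisher information of the constructed family.

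The hard part will be this second route. Matching the complex Hessian $\partial_{i}\partial_{\bar{j}}\Phi$ of the K\"ahler potential with the Hessian of a genuine log-partition function, and then inverting $e^{\Phi}$ to a positive measure, is delicate: a convex real-analytic function need not be globally a log-partition function, so one must work locally and, above all, control the positivity and integrability of the reconstructed base density $h$. This is precisely where real analyticity becomes indispensable, allowing $h$ to be recovered from $e^{\Phi}$ by a Paley--Wiener/Bernstein-type argument. By contrast, the first route is comparatively soft: its only genuine subtlety is to arrange that a Nash-type isometric embedding lands in the strictly positive cone of $S^{\infty}$ while remaining compatible with the dualistic structure furnished by the characterization theorems, after which $g=g^{F}$ follows at once from \eqref{eq:Fisher}.
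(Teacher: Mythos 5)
Your first route is the same strategy the paper follows, but the step you describe as ``comparatively soft'' is in fact the entire mathematical content, and as written it has a gap. You reduce the theorem to the claim that every smooth Riemannian manifold admits an isometric immersion into the strictly positive part of the unit sphere of $L^{2}(\Xi)$, and then invoke this as a known ``infinite-dimensional isometric embedding principle,'' suggesting positivity can be arranged ``after composing with a strictly positive cut-off.'' Neither half survives scrutiny: the existence of such an embedding is a deep theorem (it is precisely the isostatistical embedding theorem of L\^e \cite{le2006statistical}, built on Nash and Gromov, which the paper cites as the engine of its proof), and post-composing an embedding with a cut-off does not preserve the induced metric, so positivity cannot be retrofitted this way. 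The paper also does something you omit: before invoking L\^e's theorem it must first exhibit a \emph{statistical structure} on the K\"ahler (resp.\ co-K\"ahler) manifold, namely the totally symmetric Amari--Chentsov tensor $T^{Ka}(X,Y,Z)=g(U(X,Y),Z)$ with $U=\nabla^{*}-\nabla$ built from the statistical symplectic (resp.\ statistical shs) connections supplied by the characterization theorems. This is where those earlier results actually enter the proof, and it is what makes the embedding theorem applicable with both $g=i^{*}g^{F}$ and $T=i^{*}T^{A\text{-}C}$. Your proposal gestures at ``compatibility with the dualistic structure'' but never constructs the symmetric $3$-tensor, so the hypothesis of the embedding theorem is not verified.

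Your second route is aimed at a different statement. The Laplace-transform/exponential-family argument via the K\"ahler potential is, in the paper, the proof of the separate theorem that a \emph{real analytic} K\"ahler metric is \emph{locally} the Fisher information of an exponential family; it requires analyticity (Calabi's diastasis) and yields only a local conclusion, so it cannot substitute for the global statement about arbitrary smooth K\"ahler and co-K\"ahler metrics. Treating it as the ``hard part'' of this theorem, and the embedding as the easy part, inverts the actual logical structure of the paper's argument.
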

\vspace{1ex}
The above result reveals the powerful link between K\"ahler geometry and Information geometry. In \cref{subsec:kahsub}, we explore the nature of K\"ahler and coK\"ahler manifolds as submanifolds of statistical models. Departing from results drawn in \cref{prop:kaandco}, we obtain a lower bound on both geodesic distance of K\"ahler metrics and the geodesic distance of co-K\"ahler metrics in \cref{cor:Hell}. In \cref{subsec:Stasymp}, we address relations between K\"ahler potential and statistical symplectic connections. We tackle the link between K\"ahler potential and Kullback-Leibler divergence in \cref{subs:KahlandKL}. Finally in \cref{subs:analy}, we demonstrate that:
\vspace{1ex}

\begin{thm}
    Any real analytic K\"ahler metric is locally the Fisher information of an exponential family.
\end{thm}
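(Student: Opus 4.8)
The plan is to realize the given K\"ahler metric as the Hermitian Fisher information of a family built from holomorphic ``coherent states,'' exploiting the identification of the Fubini--Study metric with the quantum Fisher (SLD) metric recalled in the introduction, together with Calabi's theory of the diastasis for real-analytic potentials. Recall first the statistical input: for an exponential family $p_\theta(x)=\exp(\langle\theta,T(x)\rangle-\psi(\theta))$ the Fisher information \eqref{eq:Fisher} equals the Hessian of the log-partition function $\psi$, and, as developed in the discussion linking the K\"ahler potential to the Kullback--Leibler divergence, the divergence $\mathrm{KL}(p_\theta\|p_{\theta'})$ is the Bregman divergence of $\psi$. Thus the task reduces to producing a cumulant generating function that plays the role of a K\"ahler potential and is a genuine log-Laplace transform of a positive measure.

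First I would fix a local real-analytic K\"ahler potential $\Phi(z,\bar z)$, so that $g_{i\bar j}=\partial_{z^i}\partial_{\bar z^j}\Phi$ on a coordinate ball. Real-analyticity lets me polarize $\Phi$ into a function $\Phi(z,\bar u)$ separately holomorphic in $z$ and $\bar u$ on a neighbourhood of the diagonal, which is precisely the datum underlying Calabi's diastasis $D(z,u)=\Phi(z,\bar z)+\Phi(u,\bar u)-\Phi(z,\bar u)-\Phi(u,\bar z)$. Expanding the exponential of the polarized potential, $e^{\Phi(z,\bar u)}=\sum_{\alpha,\beta}a_{\alpha\beta}\,z^\alpha\bar u^\beta$, yields a Hermitian coefficient matrix $(a_{\alpha\beta})$; the diastasis will be the object that I match with the Kullback--Leibler divergence at the end.

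Next, on a sufficiently small ball the matrix $(a_{\alpha\beta})$ is positive (this is Calabi's local resolvability criterion, which holds for real-analytic metrics), so $e^{\Phi(z,\bar u)}$ is a positive-definite reproducing kernel and factors as an inner product $\langle f_z,f_u\rangle_{L^2(\Xi,\mu)}$ with $f_z=\sum_\alpha z^\alpha v_\alpha$ holomorphic and Hilbert-space valued; in particular $\|f_z\|^2=e^{\Phi(z,\bar z)}$. Setting $p_z(x)=|f_z(x)|^2\,e^{-\Phi(z,\bar z)}$ defines a probability density on $\Xi$, and the square-root map $\sqrt{p_z}=|f_z|\,e^{-\Phi/2}$ of \eqref{eq:Fisher} realizes $z\mapsto[f_z]\in\mathbb{P}(L^2)$ as a holomorphic immersion whose Fubini--Study pullback is $i\partial\bar\partial\log\|f_z\|^2=i\partial\bar\partial\Phi$, the K\"ahler form. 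Using that the Fubini--Study metric is (up to the universal constant) the SLD quantum Fisher metric, the Fisher information of this family is exactly $g_{i\bar j}=\partial_{z^i}\partial_{\bar z^j}\Phi$. Reading the holomorphic dependence of $f_z$ as the natural parameters and $\Phi$ as the log-partition function exhibits the family as an exponential one, and matching $D$ with the Bregman divergence of $\Phi$ closes the loop with the Kullback--Leibler picture.

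The main obstacle is the positivity and realizability step: one must show that the kernel $e^{\Phi(z,\bar u)}$ is positive definite (equivalently $(a_{\alpha\beta})\succeq0$) so that the base measure $\mu$ and the sufficient statistics actually exist, that the factorization $f_z$ converges on a common neighbourhood, and that the resulting holomorphic family can be normalized to a bona fide exponential family with $\Phi$ as its cumulant generating function. This is exactly where real-analyticity is indispensable---it is what makes the local Calabi immersion available---and it is also where the statement is genuinely local, since the positivity of $(a_{\alpha\beta})$ and the convergence of $\sum_\alpha z^\alpha v_\alpha$ can fail once one leaves a small coordinate ball.
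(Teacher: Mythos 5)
Your route is genuinely different from the paper's (which runs through the diastasis as a contrast function, the Eguchi construction in Bochner coordinates, and the Bregman-divergence/exponential-family correspondence), but it breaks at exactly the step you flag as the main obstacle, and shrinking the coordinate ball does not repair it. Positive semidefiniteness of the coefficient matrix $(a_{\alpha\beta})$ of $e^{\Phi(z,\bar u)}$ is \emph{not} a consequence of real analyticity: it is precisely Calabi's criterion for the metric to be locally resolvable in $\mathbb{C}P^{\infty}$ (projectively induced), and that is a strict additional hypothesis. Concretely, take $d=1$ and $\Phi(z,\bar z)=|z|^{2}-c|z|^{4}$ with $c>\tfrac12$; on the ball $|z|^{2}<1/(4c)$ this is a real-analytic K\"ahler potential, since $\partial_{z}\partial_{\bar z}\Phi=1-4c|z|^{2}>0$ there, yet the coefficient of $z^{2}\bar u^{2}$ in $e^{z\bar u-cz^{2}\bar u^{2}}$ equals $\tfrac12-c<0$, a negative diagonal entry of the would-be Gram matrix. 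Hence the kernel $e^{\Phi(z,\bar u)}$ is not positive definite on any neighbourhood of the origin, no factorization $e^{\Phi(z,\bar u)}=\langle f_{z},f_{u}\rangle_{L^{2}(\Xi,\mu)}$ exists, and the family $p_{z}$ on which your whole argument rests cannot be constructed for such metrics.

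Even where the factorization does exist, two further steps do not go through as stated. First, $p_{z}(x)=|f_{z}(x)|^{2}e^{-\Phi(z,\bar z)}$ is not an exponential family: $\log p_{z}=\log|f_{z}|^{2}-\Phi$ is not affine in any choice of natural parameters, because $\bigl|\sum_{\alpha}z^{\alpha}v_{\alpha}(x)\bigr|^{2}$ is a squared modulus rather than the exponential of a linear form in $z$; so even a successful construction would prove a weaker statement than the one claimed. Second, the identification of the classical Fisher metric of $p_{z}$ with the pulled-back Fubini--Study metric is not automatic: the SLD metric coincides with Fubini--Study on pure quantum states, but the classical Fisher information of the induced classical model $|f_{z}(x)|^{2}d\mu(x)$ (a fixed measurement of those states) is in general strictly smaller than the quantum SLD information, with equality only for special measurements; applying \cref{eq:Fisher} to $\sqrt{p_{z}}=|f_{z}|e^{-\Phi/2}$ yields $\partial_{i}\partial_{\bar j}\Phi$ only when the phases of $f_{z}(x)$ can be ignored, which holomorphy in $z$ forbids. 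The paper sidesteps both issues by staying on the classical side throughout: it shows the diastasis reduces to a Bregman (canonical) divergence in Bochner coordinates and then invokes the Bregman/exponential-family correspondence to produce the base measure $\mu$ with $\Phi(z)=\log\int e^{\langle z,x\rangle}d\mu(x)$ directly.
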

\vspace{1ex}

The above result establishes a strong link between real analytic K\"ahler manifold  and exponential families distributions.

\section{Background materials and motivation of the paper}
\label{se:section1}
Let $\mathcal{H}$ ($\text{dim}_{\mathbb{C}}\mathcal{H}=d$) be a finite dimensional Hilbert space. Denote by $\mathcal{L}\mathcal{H}$ and $\mathcal{L}_{h}\mathcal{H}$ the set of linear operators and the set of Hermitian operators of $\mathcal{H}$, respectively. 
The family of density operators (quantum states) is defined as follows:
$$ \mathcal{M}=\{ \rho\in\mathcal{L}_{h}\mathcal{H},\,  \rho\geq 0,\, \text{Tr}(\rho)=1.\}
$$
The quantum states $\mathcal{M}$ are partitioned into $\mathcal{M}=\cup_{k=0}^{d}\mathcal{M}_{k}$, where $\mathcal{M}_{k}=\{ \rho\in\mathcal{M},\text{rank}(\rho)=k\}$ is a $(2dk-k^{2}-1)$-dimensional real manifold. We notably treat $\mathcal{M}_{d}$(the space of faithful  quantum  states) and $\mathcal{M}_{1}$(the space of pure quantum states) in the sequel. We first consider the space of faithful  quantum  states. Clearly, $\mathcal{M}_{d}=\{ \rho\in \mathcal{M},\,  \rho>0\}$ is an open subset of the affine space $\{\rho\in\mathcal{L}_{h}\mathcal{H}, \text{Tr}(\rho)=1\}$. From this perspective, we can naturally induce that a flat connection on $\mathcal{M}_{d}$, called a mixture connection, is defined by: 
\begin{equation}
    (\nabla^{m}_{X}Y)\rho=X(Y\rho)\quad \forall\, X,Y\in\mathcal{X}(\mathcal{M}_{d}).
\end{equation}
It is well known departing from the work of \citet{petz1996monotone} that there exists an infinite number of metrics on $\mathcal{M}_{d}$ called quantum monotone metric tensors. These metrics were thoroughly classified by \citet{petz1996monotone}. Based on \citet{lesniewski1999monotone}, every quantum monotone metric tensor can be obtained by appropriately expanding a quantum relative entropy. Now, selecting a quantum monotone metric $g$, one can define a dualistic structure $(g,\nabla^{m},\nabla^{e})$ on $\mathcal{M}_{d}$ by:
   \begin{equation}
       g(\nabla^e_{X}Y,Z)=X.g(Y,Z)-g(Y,\nabla^{m}_{X}Z) \quad \forall\,X,Y,Z\in\mathcal{X}(\mathcal{M}_{d}).
       \label{eq:smatqua}
   \end{equation} 
Let us consider the quantum version of the classical Fisher metric, called SLD (Symmetric Logarithmic Derivative) metric, which is defined in \cite{petz2007quantum} as follows:

$$ g^{SLD}(X,Y)=\frac{1}{2}\text{Tr}\rho(L_{X}L_{Y}+L_{Y}L_{X}),\quad \forall\,X,Y\in\mathcal{X}(\mathcal{M}_{d}).$$

It is not difficult to notice that the dual connection of $\nabla^{m}$ with respect to the SLD metric defined in the above is expressed as:
$$
    (\nabla^e_{X}Y)\rho=\frac{1}{2}\{\rho(XL_{Y}-\text{Tr}\rho(XL_{Y})+(XL_{Y}-Tr\rho(XL_{Y}))\rho         \}, \quad \forall\,X,Y\in\mathcal{X}(\mathcal{M}_{d}).
$$
The connection $\nabla^e$ is called the exponential connection and referring to \cite{amari2000methods}, a direct computation reveals that the torsion of $\nabla^e$ is determined by:

   $$T^{\nabla^e}(X,Y)\rho=\frac{1}{4}[[L_{X},L_{Y}],\rho].$$
Finally, $\mathcal{M}_{d}$ is endowed with a dualistic structure $(g^{SLD},\nabla^{m},\nabla^{e})$ such that $\nabla^{m}$ is a flat connection and $\nabla^{e}$ is not symmetric but its curvature vanishes. It is significant to note that according to \cite{amari2000methods}, there exists a unique quantum monotone metric $g^{BKM}$ called Bogoliubov-Kubo-Mori metric (BKM metric) such that $(\mathcal{M}_{d},g^{BKM},\nabla^{m},\nabla^{e})$ is a dually flat manifold. In this particular case, the
associated quantum relative entropy corresponds to the von Neumann-Umegaki relative entropy defined by:
$$\text{D}(\rho,\sigma)=\text{Tr}[\rho(log \rho-log \sigma)]\quad     \forall \rho,\sigma\in \mathcal{M}_{d}.$$ 
In other words, the KMB Fisher metric is
derived from $K$ as follows:
$$
g^{BKM}_{ij}(\theta)=\frac{\partial^{2}\text{D}(\rho_{\theta},\rho_{\hat{\theta}})}{\partial\theta^{j}\partial\hat{\theta}^{i}}|_{\hat{\theta}=\theta}.
$$ 
 Let us consider the space of pure states $\mathcal{M}_{1}=\{ \rho\in \mathcal{M},\text{rank}(\rho)=1\}$. With respect to \cite{facchi2010classical} and \cite{cirelli1984hamiltonian}, it is well known that the space of pure states $\mathcal{M}_{1}$ is diffeomorphic with the complex projective space $CP(\mathcal{H})$ and the SLD metric on $\mathcal{M}_{1}$ is
identical up to a constant factor to the Fubini-Study 
metric on $CP(\mathcal{H})$:

\begin{equation}
g^{SLD}_{j\Bar{k}}=c\frac{(1+z^{l}\Bar{z}^{l})\delta_{jk}-z^{k}\Bar{z}^{j}}{(1+z^{l}\Bar{z}^{l})^{2}},
\label{eq:FS}
\end{equation} with $c\in\mathbb{R}$.

The above \cref{eq:FS} shows that the Fubini-Study metric can be precisely interpreted as a quantum information metric on pure quantum
states.
From the above remark, a natural link is established between statistics and K\"ahler structure of the projective space. The idea that the K\"ahler structure of projective space  may originate from the intrinsic geometry of a statistical manifold is an appealing one. Nevertheless, in the literature, a few links have been established between statistics and K\"ahler structure, except for the natural connection between the Hessian and Kähler geometries. As reported by
Dombrowski and Shima, the tangent bundle over a Hessian manifold admits a Kähler metric induced by the Hessian metric \cite{dombrowski1962geometry}, \cite{shima2007geometry}. Departing from the above arguments, it is legitimate to ask this natural question.

\begin{equation}
    \text{Question 1: Is a K\"ahler metric always a Fisher information metric ?}
    \label{eq:Ques1}
\end{equation}
\vspace{0,5ex}
As the author knows that such a question has not been previously addressed, the main target of this paper resides in answering this question. We are now prepared to introduce one of the central concepts of the paper.\\
Motivated and inspired by the richness of the  construction of dualistical structure in the quantum states, \citet{kurose2007statistical} introduced the notion of statistical manifold admitting torsion (SMAT) as a generalisation of statistical manifold with the basic goal of developing a quantum analogue of information geometry. \textit{A statistical manifold admitting a torsion} (SMAT) is a manifold endowed with a dualistic structure, where one of the dual must be torsion-free but the other is not necessarily (cf. \cite{henmi2019statistical,matsuzoe2012quasi}, \cite{zhang2019hessian}, \cite{zhang2019new}). In [\cite{amari2000methods}, p.19], Amari and Nagaoka asserted "The incorporation of torsion into the framework of information geometry, which would relate it to such fields as quantum mechanics (non commutative probability theory) and systems theory, is an interesting topic for the future.” As far as the current research paper is concerned, we tackle the impact of the torsion of dual connections on the symplectic, contact and Kahler geometry.
\vspace{1ex}
   \begin{defn}\cite{kurose2007statistical}
\label{th:smat}   
       Let $M$ be a manifold, and  let $(g,\nabla,\nabla^{*})$ be a dualistic structure on $M$. The quadruplet $(M,g, \nabla,\nabla^{*})$ is a SMAT if the following equivalent conditions are satisfied: 
\begin{equation}
    \nabla \text{ is torsion-free},
\end{equation}
\begin{equation}
    (\nabla_{X} g)(Y,Z)-(\nabla_{Y} g)(X,Z)=g(T^{\nabla^*}(X,Y),Z),
\end{equation}
\begin{equation}
     (\nabla^{*}_{X} g) (Y,Z)-(\nabla^{*}_{Y} g)(X,Z)=-g(T^{\nabla^*}(X,Y),Z).
\end{equation}
  \end{defn} 
  
\vspace{1ex}
Throughout this paper, the dual connections $(\nabla,\nabla^{*})$ of a SMAT will be called the SMAT connections. If the curvature of one (or equivalently both) of them vanishes, the SMAT $(M,g,\nabla,\nabla^{*})$ will be called partially flat( see \cite{henmi2019statistical}). As a statistical structure may be induced by a contrast function (divergence function), a SMAT structure can be equally induced by a pre-contrast function $\text{D}$ that is defined in \cite{henmi2011geometry}. For a given pre-contrast function $\text{D}$, we can define a dualistic structure $(g,\nabla,\nabla^*)$ by:
\begin{equation*}
\begin{cases}
    & g_{ij}(\theta)=-\frac{\partial \text{D}}{\partial \theta_{2}^{j}}|_{\theta=\theta^{2}=\theta^{1}},\\
    & \Gamma_{kj,i}^{*}(\theta)=g(\nabla_{\partial_{k}}^{*}\partial_{j},\partial_{i})=-\frac{\partial^{2} \text{D}}{\partial \theta^{2 k}\partial \theta^{2 j}}|_{\theta=\theta^1=\theta^2},\\
    &\Gamma_{ki,j}(\theta)=g(\nabla_{\partial_{k}}\partial_{i},\partial_{j})=-\frac{\partial^{2} \text{D}}{\partial \theta^{1 k}\partial \theta^{2 j}}|_{\theta=\theta^1=\theta^2} .
\end{cases}
\end{equation*}
It is easy to infer that the connection $\nabla^{*}$ is symmetric, and the connection $\nabla$ is not necessarily symmetric. Hence, it follows from the definition \ref{th:smat} that $(g,\nabla,\nabla^{*})$ is a SMAT structure on $M$. In \cite{henmi2011geometry}, Henmi, M., Matsuzoe, H proved that the SMAT arrives in “classical”
statistics through an estimating function. In particular, both authors demonstrated that on a parametric statistical model, an estimating function defines a pre-contrast function and obviously a SMAT.
\vspace{0.2cm}

Figure \ref{fig:connections} summarises all the different connections used in Information Geometry which will be useful in this paper.

\begin{figure}[H]
    \centering
    \begin{tikzpicture}[yscale = 0.7]
    \draw (0,0) circle(5);
    \draw (0,0.25) circle (4 and 3);
    \fill[gray!20, opacity = 0.5] (0,0.25) circle (4 and 3);
    \draw (0,-2) circle (4 and 2.3);
    \fill[gray!20, opacity = 0.5] (0,-2) circle (4 and 2.3);
    \draw (0,-1) circle(1.5 and 1);
        \draw (0,-0.5) circle(2);
    \fill[gray!20, opacity = 0.5] (0,-0.5) circle(2);
    \fill[gray!20, opacity = 0.5] (0,-1) circle(1.5 and 1);
    \node at (0,4){\begin{tabular}{c}Dual connections 
    \end{tabular}};
    \node at (0,2.5){Smat connections };
   \node at (0,-3.3){\begin{tabular}{c}Auto-dual connections 
    \end{tabular}};
    \node at (0,0.8){Statistical connections };
    \node at (0,-1){Levi-Civita };
\end{tikzpicture}
    \caption{The connections in information Geometry.}
    \label{fig:connections}
\end{figure}
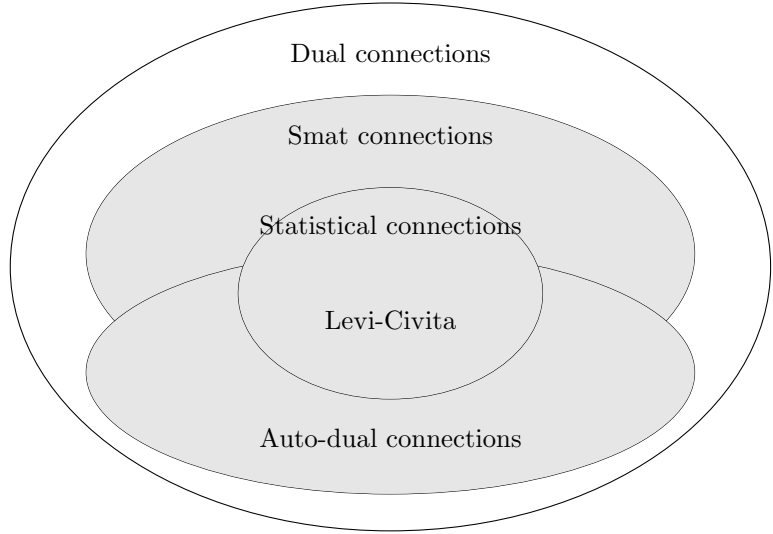

\section{ Hamiltoniann structure and SMAT connections}
\label{se:Ham}
In this section, we shall clarify that smat structures are found in multiple geometric structures, and a thorough investigation of them yields Kahler geometry results. We shall provide a new characterization of Hamiltonian structure in terms of SMAT connections. 
\subsection{Parallel section on the canonical bundle of SMAT }
 Let's select a symmetric connection $\nabla$ on the tangent bundle $TM$ and denote by $^*\nabla$ the dual connection of $TM^{*}$ induced by $\nabla$. It is defined by:
\begin{equation}
< {}^*\nabla_{Z}\beta,Y>+\beta(\nabla_{Z}Y)=Z(\beta(Y))\quad Y,Z\in \Gamma(TM), \beta\in \Gamma(TM^{*}),
\end{equation}
where $<,>$ denotes the pairing of vectors and covectors. Relying on \cite{abe1985general}, it is well known that there exists a unique connection on $TM\otimes TM^{*}$ defined by: 

\begin{equation}
    \widehat{\nabla}(X\otimes \beta)=\nabla X\otimes \beta+X\otimes\nabla^{*}(\beta),\,\, \forall X\in \Gamma(TM), \beta\in \Gamma(TM^{*}).
\end{equation}

\vspace{2ex}
Identify $TM\otimes TM^{*}$ with $\text{End}(TM)$(the bundle of endomorphism of $TM$). One can therefore define a connection $\widetilde{\nabla}$ on $\text{End}(TM)$ by:

\begin{equation}
\nabla^{*}_{X}\Theta Y=(\widetilde{\nabla}_{X}\Theta)Y+\Theta\nabla_{X}Y,\quad \forall X,Y\in \Gamma(TM)
\label{eq:parrar}
\end{equation}
where $\Theta\in\Gamma(\text{End}(TM))$. Now let's opt for some Riemannian metrics $g$ on $M$ and denote the musical isomorphism from $TM$ to $TM^{*}$ by $\flat$, which is indicated as follows: 

\begin{equation}
\flat:TM\rightarrow TM^{*}<\flat(X),Y>=g(X,Y),\quad X,Y\in \Gamma(TM)
\label{eq:flat}
\end{equation}

We can define a connection on $TM$ by:

 $$\nabla^{*}=\flat^{*}(^*\nabla)=\flat^{-1}\circ\nabla\circ\flat.$$
With regard to the above construction, we deduce that $(M,g,\nabla,\nabla^{*})$ is a SMAT.
\vspace{2ex}

\begin{defn}
 A given section $\Theta\in \Gamma(\text{End}(TM))$ is said to be parallel if $\widetilde{\nabla}\Theta=0$, i.e., if
\begin{equation}
\widetilde{\nabla}_{X}\Theta Y=\Theta\widetilde{\nabla}_{X}Y\,, \, \forall X,Y\in \Gamma(TM).
\end{equation}
\label{def:para}
\end{defn}
Grounded on \cref{eq:parrar}, the previous equation is equivalent to:
\begin{equation}
\nabla^{*}_{X}\Theta Y =\Theta\nabla_{X}Y,\,\, \forall X,Y\in\Gamma(TM).
\label{eq:31}
\end{equation}
Using the difference tensor $U\in TM^{*}\otimes TM^{*}\otimes TM$, defined by: 
$$ U=\nabla^{*}-\nabla;$$
 \cref{eq:31} is equivalent to either of the two equations:
        \begin{align}\label{eqparallel}
        & \nabla \Theta = -U(.,\Theta),\\
        & \nabla^*\Theta = \Theta U(.,.).
 \end{align}
 \label{eq:17}
  \begin{proposition}
Let's consider a SMAT $(M,g,\nabla,\nabla^{*})$, and let $\Theta$ be a parallel section on $\text{End}(TM)$. The following assertions are therefore equivalent:
\begin{enumerate}
    \item $\nabla \Theta=0$,
    \item $\nabla$ is the Levi-Civita connection of the metric $g$. 
\end{enumerate}     
 \end{proposition}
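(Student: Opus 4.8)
The plan is to recast condition (2) as the vanishing of the difference tensor $U=\nabla^{*}-\nabla$ and then read off the equivalence directly from the parallelism relations \eqref{eqparallel}. The preliminary step is to record that, for the dualistic structure $(g,\nabla,\nabla^{*})$, one has $(\nabla_{X}g)(Y,Z)=g(U(X,Z),Y)$; this follows at once from the defining duality relation $Xg(Y,Z)=g(\nabla_{X}Y,Z)+g(Y,\nabla^{*}_{X}Z)$ by substituting $\nabla^{*}=\nabla+U$. Hence $\nabla$ is metric if and only if $U\equiv 0$. Since the SMAT hypothesis guarantees that $\nabla$ is torsion-free, a metric $\nabla$ is forced to be the Levi-Civita connection, so condition (2) is equivalent to $U=0$. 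This reduces the whole statement to comparing $\nabla\Theta=0$ with $U=0$.

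For the implication $(2)\Rightarrow(1)$, I would argue directly: if $\nabla$ is the Levi-Civita connection then $U=0$ by the preliminary step, and the first relation in \eqref{eqparallel}, namely $\nabla\Theta=-U(\,\cdot\,,\Theta\,\cdot\,)$, collapses to $\nabla\Theta=0$. This direction is essentially formal.

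For the converse $(1)\Rightarrow(2)$, I would again invoke the first relation of \eqref{eqparallel}: the hypothesis $\nabla\Theta=0$ gives $U(X,\Theta Y)=0$ for all $X,Y\in\Gamma(TM)$. Because the parallel section under consideration is non-degenerate (it is the almost complex structure attached to the canonical bundle, so $\Theta$ is invertible fibrewise), the vectors $\Theta Y$ exhaust $T_{p}M$ at each point as $Y$ varies; therefore $U(X,Z)=0$ for all $X,Z$, that is $U\equiv 0$. By the preliminary step this yields that $\nabla$ is the Levi-Civita connection, completing the equivalence.

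The step I expect to be the genuine content, rather than bookkeeping, is the reduction $(2)\Leftrightarrow U=0$ together with the use of invertibility of $\Theta$ in the converse: invertibility is exactly what upgrades $U(\,\cdot\,,\Theta\,\cdot\,)=0$ to $U\equiv 0$. Its necessity is visible from the degenerate case $\Theta=0$, where $\nabla\Theta=0$ holds for every $\nabla$ while $(2)$ manifestly can fail; thus any correct argument must use that $\Theta$ is non-degenerate, and I would make this hypothesis explicit at the outset.
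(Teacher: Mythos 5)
Your proof is correct and takes essentially the same route as the paper's: both directions come down to the relation $\nabla\Theta=-U(\cdot,\Theta\,\cdot)$ from \cref{eqparallel} combined with the fact that $\nabla g$ is measured by the difference tensor $U$ --- the paper packages this as $-(\nabla_{Z}g)(\Theta X,Y)=(\nabla^{*}_{Z}g)(\Theta X,Y)=g((\nabla_{Z}\Theta)X,Y)$, whereas you use the equivalent identity $(\nabla_{X}g)(Y,Z)=g(U(X,Z),Y)$ and work with $U=0$ directly. The one place you go beyond the paper is your explicit insistence on the fibrewise surjectivity of $\Theta$ in the direction $(1)\Rightarrow(2)$, and this point is well taken: the paper's last step tacitly needs $\mathrm{Im}\,\Theta=TM$ in order to pass from $(\nabla_{Z}g)(\Theta X,Y)=0$ to $\nabla g=0$, and your example $\Theta=0$ shows the statement is false without some such non-degeneracy assumption. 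Be aware, though, that your stated justification for invertibility does not match the paper's intended setting: the maximal parallel sections of the canonical bundle constructed later have rank $2d$ on a $(2d+1)$-dimensional manifold and are therefore never invertible, so non-degeneracy genuinely must be added as a hypothesis to the proposition rather than inferred from the canonical bundle.
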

\begin{proof}
 By a direct computation, we get:
    \begin{equation}
        (\nabla^{*}_{Z}g)(\Theta X,Y)=Z(g(\Theta X,Y))-g(\nabla^{*}_{Z}\Theta X,Y)-g(\Theta X,\nabla^{*}_{Z}Y)
    \end{equation}
    Now, using the duality condition, we obtain:
\begin{eqnarray}
    (\nabla^{*}_{Z}g)(\Theta X,Y)&=&-g(\nabla^{*}_{Z}\Theta X-\nabla_{Z}\Theta X,Y)\\ &
    =&-g(U(Z,\Theta X),Y)\\ &
    =&g((\nabla_{Z}\Theta)X,Y)
\end{eqnarray}
Hence, by a simple calculation:
\begin{equation}
    (-\nabla_{Z} g)(\Theta X,Y)=(\nabla^{*}_{Z} g)(\Theta X,Y)=g((\nabla_{Z}\Theta)X,Y).
    \label{eq:28}
\end{equation}
The proposition follows from \cref{eq:28}.
\end{proof}

Let's consider $K_{M}$ the skew-symmetric endomorphisms of $TM$. We  can naturally endow $K_{M}$ with the connection $\widetilde{\nabla}$ defined in definition \ref{def:para}. In this paper, the vector $K_{M}$ will be called a canonical bundle of $M$.
\vspace{2ex}

\begin{defn} 
Consider a $2d+1$-dimensional manifold $M^{2d+1}$. A parallel section $\Theta$ on the canonical bundle $K_{M^{2d+1}}$ is called maximal if its rank is $2d$.
\end{defn}

\subsection{SMAT connections and existence of Hamiltoniann structure}
In this subsection, our central focus is to identify a Hamiltonian structure which is a basic notion in this paper. Subsequently, we prove the main theorem of this section.
\vspace{2ex}

\begin{defn}
    A Hamiltonian structure on $M^{2d+1}$ is a closed 2-form $\Omega$  of maximal rank, \textit{i.e.} such that $\Omega^{d}$ is nowhere-vanishing.   
We refer to the pair $(M^{2d+1}, \Omega)$ as a Hamiltonian manifold.
\end{defn}
\vspace{2ex}

 Let's associate with the Hamiltonian structure $\Omega$ the 1-dimensional foliation  $\ell_{\Omega}\subset TM^{2d+1}$ expressed by: 
 $$\ell_{\Omega}=\cup_{p}\{p,\textnormal{ker}(\Omega_{p})\},$$
 where $\text{ker}(\Omega_{p})$ is the kernel of the linear map $\Omega_{p}: T_{p}M^{2d+1}\rightarrow T^{*}_{p}M^{2d+1}$ defined by $w\rightarrow\Omega_{p}(w,.).$ Note that for each $p\in M^{2d+1}$, the 2-form $\Omega_{p}$ is nondegenerate on $\text{T}_{p}M^{2d+1}/\text{ker}(\Omega_{p})$. Hence, it is a symplectic form on this vector space and therefore defines an orientation on it. If $M^{2d+1}$ is oriented, there exists a global 1-form $\alpha$ on $M^{2d+1}$ such that $\text{T}_{p}M^{2d+1}/\text{ker}(\Omega_{p})=\text{ker}(\alpha_{p})$. In this case, the Hamiltonian structure can be defined as an almost cosymplectic structure( almost contact) $(\Omega,\alpha)$ such that $\Omega$ is closed (cf. \cite{libermann1959automorphismes}). 
A Hamiltonian manifold is the odd dimensional analogue of a symplectic manifold. To explore this relation, let's consider $(M^{2d+2}, \Omega)$ a symplectic manifold and $M^{2d+1}$ a hypersurface on $M^{2d+2}$. The restriction $\Omega_{M^{2d+1}}$ of $\Omega$ is a Hamiltonian structure on $M^{2d+1}$. If we assume that $\ell_{\Omega}$ is a regular foliation or in other words if $M^{2d+1}/\ell_{\Omega_{M^{2d+1}}}$ is manifold; the manifold $M^{2d+1}/\ell_{\Omega_{M^{2d+1}}}$ naturally inherits a symplectic structure. In the literature, Hamiltonian manifolds often bear other names, quasi-contact manifold \cite{casals2015almost}, or 2-calibrated manifold \cite{ibort2004lefschetz}. For further information about Hamiltonian manifolds refer back to\,\cite{mcduff2017introduction}, \,\cite{cieliebak2010stable}, \,\cite{eliashberg2006geometry},\,\cite{wendl2010open},\, \cite{acakpo2022stable}.\\
Let us now describe the main results of this section.
\vspace{1ex}

\begin{thm}\label{th:mainTheorem} 
An odd dimensional manifold $M^{2d+1}$ admits a Hamiltonian structure if and only if its canonical bundle admits a maximal parallel section.
\end{thm}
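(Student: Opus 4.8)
The plan is to prove the two implications by systematically transferring between $g$-skew-symmetric endomorphisms and $2$-forms through the musical isomorphism $\flat$ of \cref{eq:flat}. To a section $\Theta\in\Gamma(K_{M^{2d+1}})$ I attach the $2$-form $\Omega_\Theta(X,Y):=g(\Theta X,Y)$, which is genuinely skew because $\Theta$ is $g$-skew-symmetric; conversely, to a $2$-form $\Omega$ I attach $\Theta:=\flat^{-1}\circ\Omega^{\flat}$. Under this dictionary $\ker\Theta=\ker\Omega_\Theta$ and $\operatorname{rank}\Theta=\operatorname{rank}\Omega_\Theta$, so $\Theta$ is maximal (rank $2d$) exactly when $\Omega_\Theta^{\,d}$ is nowhere vanishing. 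The theorem then amounts to identifying the $\widetilde\nabla$-parallelism of $\Theta$ with the closedness of $\Omega_\Theta$, together with an existence statement for a suitable connection.

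For the easy implication -- a maximal parallel section forces a Hamiltonian structure -- I would argue directly from the Proposition that precedes the theorem. Starting with a SMAT $(M^{2d+1},g,\nabla,\nabla^*)$ and a maximal parallel $\Theta$, put $\Omega:=\Omega_\Theta$ and differentiate with the torsion-free connection $\nabla$. Expanding $(\nabla_Z\Omega)(X,Y)$ and substituting the identity $(\nabla_Zg)(\Theta X,Y)=-g((\nabla_Z\Theta)X,Y)$ recorded in \cref{eq:28}, all terms cancel, giving $\nabla\Omega=0$. Because $\nabla$ is torsion-free one has $d\Omega=\operatorname{Alt}(\nabla\Omega)=0$, so $\Omega$ is closed; and $\operatorname{rank}\Omega=\operatorname{rank}\Theta=2d$, whence $\Omega^{d}\neq0$. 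Thus $\Omega$ is a Hamiltonian structure, and this direction needs nothing beyond the parallelism relations \cref{eqparallel}.

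The content of the theorem lies in the converse. Given a Hamiltonian structure $\Omega$, I first reduce the problem to finding one torsion-free connection $\nabla$ with $\nabla\Omega=0$: once such $\nabla$ is available I choose any metric $g$, let $\nabla^*=\flat^{-1}\circ{}^*\nabla\circ\flat$ be its $g$-dual so that $(g,\nabla,\nabla^*)$ is a SMAT in the sense of \cref{th:smat}, and set $\Theta:=\flat^{-1}\circ\Omega^{\flat}$. A short computation shows $\widetilde\nabla\Theta=0$: from $\nabla\Omega=0$ one gets $g((\nabla_Z\Theta)X,Y)=-g(\Theta X,U(Z,Y))$ with $U=\nabla^*-\nabla$, and since $\nabla_Zg$ is symmetric the tensor $U(Z,\cdot)$ is $g$-self-adjoint, which makes the extra term $U(Z,\Theta X)$ occurring in $\widetilde\nabla_Z\Theta$ cancel the first. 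As $\operatorname{rank}\Theta=2d$, the section is maximal and parallel, as required.

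The genuine obstacle is therefore the existence of a torsion-free $\nabla$ preserving the degenerate closed form $\Omega$; the classical argument producing symplectic (torsion-free, form-preserving) connections assumes non-degeneracy, which fails here because $\dim M^{2d+1}$ is odd and $\ker\Omega$ is a line field. I would resolve this in either of two ways. Directly: since $\Omega$ is closed of constant maximal rank, a presymplectic Darboux theorem gives local coordinates in which $\Omega=\sum_{i}dp_i\wedge dq_i$ is constant, so the flat coordinate connection preserves it, and the torsion-free $\Omega$-preserving connections form a convex affine subspace (a convex combination of torsion-free connections is torsion-free, and stays $\Omega$-preserving), so a partition of unity glues the local models into a global $\nabla$ with $\nabla\Omega=0$. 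Along the paper's stated route one instead symplectizes: on $M^{2d+1}\times\mathbb{R}$ the form $\omega=\Omega+d(t\alpha)$, with $\alpha$ the defining $1$-form of the characteristic foliation $\ell_\Omega$, is genuinely symplectic near $M^{2d+1}\times\{0\}$, one takes a symplectic connection there and restricts it along the hypersurface, the Gromov h-principle supplying the global realization and patching of the formal symplectic data. Either way, organizing this construction and, in the symplectization, controlling the interaction between the symplectic connection and the Reeb/kernel direction of $\Omega$ is the step I expect to demand the most care.
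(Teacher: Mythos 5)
Your proposal is correct. The direction from a maximal parallel section to a Hamiltonian structure coincides with the paper's: one forms $\Omega_\Theta=g(\Theta\cdot,\cdot)$, uses the identity $(\nabla_Z\Omega_\Theta)(X,Y)=g(\nabla^{*}_{Z}\Theta X-\Theta\nabla_{Z}X,Y)$ to get $\nabla\Omega_\Theta=0$ from parallelism, and then torsion-freeness of $\nabla$ to conclude $d\Omega_\Theta=0$. For the converse your route is genuinely different from the paper's main argument, though it essentially matches the alternative the paper itself records in the Remark immediately following the theorem. You first reduce everything to producing one torsion-free connection with $\nabla\Omega=0$, observing that any auxiliary metric then yields a SMAT in which $\Theta=\flat^{-1}\circ\Omega^{\flat}$ is automatically parallel; your self-adjointness claim for $U(Z,\cdot)$ is sound, since $g(U(Z,A),B)=(\nabla_{Z}g)(A,B)$ is symmetric in $A,B$. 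The paper instead restricts an almost Hermitian pair from the symplectization, which obscures the fact that the metric is arbitrary. You then obtain the connection from the presymplectic Darboux normal form, flat local connections, and a partition-of-unity gluing legitimized by the convexity of the set of torsion-free $\Omega$-preserving connections, thereby bypassing Gromov's $h$-principle entirely; the paper's main proof extends $\Omega$ to an almost symplectic form on $M^{2d+1}\times\mathbb{R}$, deforms it to a genuine symplectic form via the $h$-principle, normalizes a symplectic connection in Darboux charts, and restricts to the zero section. Your construction has the mild advantage of keeping the given $\Omega$ throughout, whereas the $h$-principle step replaces it by a possibly different closed form whose restriction is only some Hamiltonian structure; both versions suffice for the stated existence equivalence.
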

\begin{proof}
 Let
$\pi: W^{2d+2}=M^{2d+1}\times\mathbb{R} \rightarrow M^{2d+1}$ be the
canonical projection and denote by $i(x)=(x,0):M^{2d+1}\rightarrow W^{2d+2}$ the zero section. Consider a Hamiltonian structure $\Omega$ on $M^{2d+1}$. Now we can construct an almost symplectic form $\widetilde{\Omega}$ on $W^{2d+2}$ such that $i^{*}\Tilde{\Omega}=\Omega$. Applying Gromov's $h$-Principle \cite{gromov1969stable} on the open manifold $W^{2d+2}$, there exists a family of almost symplectic $\widetilde{\omega}_{t\in[0,1]}$ on $W^{2d+2}$ such that $\widetilde{\omega}_{0}=\widetilde{\Omega}$ and $\widetilde{\omega}_{1}=\omega$ is closed. According to \cite{lichnerowicz1982deformations,bieliavsky2006symplectic}, the symplectic manifold $(W^{2d+1},\omega)$
admits infinitely mumerous symplectic connections. A construction is provided in \cite{bieliavsky2006symplectic} by: 
\begin{equation}
\widetilde{\nabla}_{X}Y= \nabla^0_{X}Y+\frac{1}{3}B(X,Y)+ \frac{1}{3}B(Y,X)\quad,
\end{equation}
where $\nabla^0$ is a symmetric connection on $W^{2d+2}$ and $B\in T^{2}_{1}(W^{2d+2})$ is defined by:
\begin{equation}
    \nabla^0_{X}\omega (Y,Z)=\omega(B(X,Y),Z).
\end{equation}
Consider  Darboux charts $(\widetilde{U}^{i}=I\times U^{i},x^{\Bar{0}},x^{i})$ of $(W^{2d+2},\omega)$, such that $(U^{i},\, x^{i})\, i:0,1,..,2d$; where $x^{\Bar{0}}$ is the parameter along $\mathbb{R}$. With these Darboux charts $(\widetilde{U}^{i}=I\times U^{i},x^{\Bar{0}},x^{i})$, we have the following equation:
\begin{equation}
\widetilde{\Gamma}_{ijk}=\widetilde{\Gamma}^{l}_{ij}\omega_{lk}\in S^{3}T^{*}W^{2d+2},
\label{eq:1000}
\end{equation}
where $\widetilde{\Gamma}^{l}_{ij}$ are the Christoffel symbols of the symplectic connection $\widetilde{\nabla}.$ It is well known that the difference  $U\in T^{2}_{1}(W^{2d+2})$ between two symplectic connections $\widetilde{\nabla},\widetilde{\widetilde{\nabla} }$ on $(W^{2d+2},\omega)$
satisfies $\omega(U(.,.),.)\in S^{3}T^{*}W^{2d+2}$. Therefore, with Darboux coordinates, we get:
\begin{equation}
\widetilde{U}_{ijk}=U^{l}_{ij}\omega_{lk}\in S^{3}T^{*}W^{2d+2}.
\label{eq:2000}
\end{equation}

In the Darboux charts $(\widetilde{U}^{i},x^{\Bar{0}},x^{i})$, the symplections connections $\widetilde{\nabla},\widetilde{\widetilde{\nabla} }$ are provided by:
\begin{equation}
\widetilde{\widetilde{\Gamma}}^{l}_{\Bar{0}\Bar{0}}=\widetilde{\Gamma}^{l}_{\Bar{0}\Bar{0}}+U^{l}_{\Bar{0}\Bar{0}}.
\end{equation}

Now, referring to \cref{eq:1000,eq:2000}, we can select $\widetilde{U}$ as follows:

\begin{equation}
\widetilde{U}_{ijk}=0\,,\widetilde{U}_{i0j}=0\, ,\widetilde{U}_{00i}=-\widetilde{\Gamma}^{l}_{00}\omega_{li}.
\end{equation}

Hence, we obtain:

\begin{equation}
    \widetilde{\widetilde{\Gamma}}^{l}_{\Bar{0}\Bar{0}}=0.
\end{equation}

From the above analysis, we deduce that on the symplectic manifold $(W^{2d+2},\omega)$, we can find a symplectic connection $\widetilde{\widetilde{\nabla}}$ satisfying in a local Darboux chart the following condition: 
 \begin{equation}
\widetilde{\widetilde{\nabla}}_{\partial_{\Bar{0}}}\partial_{\Bar{0}}=0.
 \end{equation}
 
Now, identify $M^{2d+1}$ with the leaf $x^{\Bar{0}}=0$ of $W^{2d+2}$. We can define by restriction on this leaf a connection by:

$$\nabla_{X}Y=\widetilde{\widetilde{\nabla}}_{X}Y,$$ 
for any vectors fields $X,Y$ on $M^{2d+1}$. The restriction of the symplectic form $\omega$ on $M^{2d+1}$ is a Hamiltonian form which satisfies:
\begin{equation} 
\nabla\omega=\widetilde{\widetilde{\nabla}}\omega=0.
\label{eq:37}
\end{equation}
Select an almost Hermitian $(\Tilde{g},\Tilde{\Theta})$ on $(W^{2d+2},\omega)$ and denote by $(g,\Theta)$  the restriction of $(\widetilde{g},\widetilde{\Theta})$ on $M^{2d+1}$. It is straightforward to check that:

 \begin{equation}
(\nabla_{Z}\omega)(X,Y)= g(\nabla^{*}_{Z}\Theta X-\Theta \nabla_{Z}X,Y),
\label{eq:38}
 \end{equation}
 
for any vector field $X,Y,Z$ on $M^{2d+1}$. Relying upon \cref{eq:37}, we obtain 

\begin{equation*}
    \nabla^{*}_{X}\Theta Y =\Theta\nabla_{X}Y
\end{equation*}
for any vector field $X,Y$ on $M^{2d+1}$. Thus, we conclude that the canonical bundle  of $M^{2d+1}$ admits a maximal parallel section.\vspace{0.2cm}

The converse direction is an immediate consequence of construction of parallel section on $K_{M^{2d+1}}$. Let us consider $\Theta$ a maximal parallel section on the canonical bundle of $M^{2d+1}$. We can therefore define a 2-form  $\Omega_{\Theta}$ by:
$$ \Omega_{\Theta}(X,Y)=g(\Theta X,Y),$$
for all vector fields $X, Y$ on $M^{2d+1}$. It follows from a direct computation that
   \begin{align}
       (\nabla_Z \Omega_{\Theta})(X,Y) & = g(\nabla^{*}_{Z}\Theta X-\Theta \nabla_{Z}X,Y),
       \label{eq:para}
   \end{align}
for all $X,Y,Z$ on $M^{2d+1}$. With regard to the above \cref{eq:para}, we obtain: 

   \[d\Omega_{\Theta}=0.\]

Therefore we deduce that $(M^{2d+1},\Omega_{\Theta})$ is a Hamiltonian manifold.
\end{proof}

\begin{remark}
There exists an alternative technique to prove the necessary parts of the previous theorem in the case where $M^{2d+1}$ is oriented. In that case, the Hamiltonian manifold $(M^{2d+1},\Omega)$ can be viewed as almost cosymplectic $(M^{2d+1},\Omega,\alpha)$ such that $d\Omega=0$ and  $\Omega^{d}\wedge\alpha$ is a volume form on $M^{2d+1}.$ 
On the co-dimension 1 distribution $\ell_{\alpha}=\text{ker}(\alpha)$, there always exists a compatible almost complex structure $\Theta:\ell_{\alpha}\rightarrow \ell_{\alpha}$. Given an almost complex structure $\Theta$ on $\ell_{\alpha}$, we can extend it on $TM^{2d+1}$ by requiring $\Theta|_{\ell_{\Omega}}=0$ and $\ell_{\alpha}=\text{Im}(\Theta)\subset TM^{2d+1}$. Now choose a Riemaniann metric $g$ on $M^{2d+1}$ such that $g|_{{\ell_{\alpha}}}=\Omega(.,\Theta)$. Taking into account the above construction of the metric $g$, it follows that the Hamiltonian 2-form $\Omega$ is finally defined by $\Omega=g(\Theta,.)$. Noting that for any point $p\in M^{2d+1}$, there exists neighborhood $U^{i}$ of $p$ and local Darboux coordinates $ (z,x^{i},y^{i})$ such that $\Omega=dx^{1}\wedge dy^{1}+...+dx^{d}\wedge dy^{d}$ (see \citet{libermann2012symplectic}, \citet{godbillon1969geometrie}, \citet{torres2009geometry}). Now, let's
define on $U^{i}$ a connection $\nabla^{i}$ such that $\Gamma^{l}_{ij}=0.$ Based on this choice, we obviously get :
\begin{equation}
\partial_{k}\Omega_{ij}=\Omega_{il}\Gamma^{l}_{kj}+\Omega_{lj}\Gamma^{l}_{ki}.
\end{equation}

Hence, $\nabla^{i}$ is a local symplectic connection(ie $\nabla^{i}\Omega=0$). Now, using a covering of $M^{2d+1}$ by local Darboux coordinates neighborhoods, moving to a locally finite refinement  $(U_{i})_{i\in \mathcal{A}}$ and opting for a partition of unity $(\rho_{i})_{i\in \mathcal{A}}$ subordinate to the open covering $(U_{i})_{i\in \mathcal{A}}$, we can glue up the local connection symplectic $\nabla^{i}$ on $U^{i}$ to a global one $\nabla$ by
\begin{equation}
    \nabla=\sum_{i\in \mathcal{A}}\rho_{i}\nabla^{i}.
\end{equation}
Obviously, $\nabla\Omega=0.$ Now, proceedings with the same computation as in the above theorem, we get:
\begin{equation}
(\nabla_{Z}\Omega)(X,Y)= g(\nabla^{*}_{Z}\Theta X-\Theta \nabla_{Z}X,Y)\,\,\,  \forall X,Y\in \Gamma(TM^{2d+1}).
 \end{equation}

Thus,
$$\nabla^{*}_{Y}\Theta X=\Theta \nabla_{Y}X,\,\,\,  \forall X,Y\in \Gamma(TM^{2d+1}).$$

\end{remark}

To summarise this section, Theorem \ref{th:mainTheorem} indicates that any Hamiltonian structure on an odd-dimensional always arises from a maximal parallel section of its canonical bundle. Theorem \ref{th:mainTheorem} manifests a link between the symplectic geometry and quantum information geometry. 

\section{Stable Hamiltonian structures and SMAT adapted connections }
\label{se:stable H and smat}
In this section, we review certain basic concepts of a stable Hamiltonian structure needed throughout this paper. A stable Hamiltonian structure (SHS) stands for a generalization of contact, cosymplectic and coKhaler structures. It originates from the conjecture of  \citet{WEINSTEIN1979353} which states that a simply connected hypersurface $W^{2d+1}$ of a symplectic manifold $(M^{2d+2},\Omega)$ which carries a contact form $\alpha$ such that $d\alpha=j^{*}\Omega$ (such  submanifolds  are called "hypersurface of contact type") admits at least a closed orbit. Here, $j:W^{2d+1}\rightarrow M^{2d+2}$ is the inclusion map. \citet{hofer1994symplectic} introduced stable Hamiltonian structures as a hypersurface for which the Weinstein conjecture can be proved. Further interest in stable
Hamiltonian structures stems from the extension of the proof of the Weinstein conjecture in dimension three in
\cite{hutchings2009weinstein,rechtman2010existence}. Their theorems state that for a closed oriented connected 3-dimensional stable Hamiltonian manifold which is not a $\mathbb{T}^{2}$-bundle over $\mathbb{S}^{1}$, the associated Reeb vector field
has a closed orbit. Topological perspectives of stable Hamiltonian manifold were extensively investigated in \cite{cieliebak2015first}, \cite{eliashberg2010introduction}, \cite{bourgeois2003compactness}, \cite{eliashberg2006geometry}, \cite{latschev2011algebraic}, \cite{niederkruger2011weak}, \cite{siefring2011intersection}.\vspace{0.2cm}

Let $(M^{2d+1},\Omega)$ be a Hamiltonian manifold of dimension $2d + 1$. A 1-form $\alpha$ is called a stabilizing 1-form, if it satisfies the following conditions:
    \begin{align}
  &  \alpha\wedge\Omega^{d}\ne 0,\\
  &  \text{ker}(\Omega)\subset\text{ker}(d\alpha).
 \end{align}
 
\begin{defn}
    A Hamiltonian manifold $(M^{2d+1},\Omega)$ is called stable, if there exists a stabilizing 1-form. The pair $(\Omega,\alpha)$ is called a stable Hamiltonian structure.
\end{defn}

A stable Hamiltonian structure $(\Omega,\alpha)$ induces a canonical Reeb vector field $E$ such that:
\begin{equation}
    i_{E}\Omega=0,\quad i_{E}\alpha=1.
\end{equation}
The stabilizability condition can be reformulated as: 
\begin{equation}
    i_{E}\alpha=1\quad\text{and}\quad i_{E}d\alpha=0.
    \label{eq:sta}
\end{equation}

Based on a straightforward computation, the previous condition implies that:
\begin{equation}
  i_{E}(\alpha\wedge\Omega^{d})=\Omega^{d}\quad\text{and}\quad L_{E}(\alpha\wedge\Omega^{d})=0
\end{equation}

Note that the condition $\alpha\wedge\Omega^{d}\ne 0$ implies that $\alpha$ is nowhere vanishing. Then, it induces the hyperplane distribution defined by:

$$\ell_{\alpha}=\text{ker}(\alpha)$$

The stabilzation condition implies that the pair $(\Omega,\alpha)$ determines a splitting of the tangent bundle of $M^{2d+1}$ as follows:

\begin{equation}
    TM^{2d+1}=(\ell_{\Omega},E)\oplus (\ell_{\alpha},\Omega),
    \label{eq:decomp}
\end{equation} where $(\ell_{\Omega},E)$ is a line bundle and $(\ell_{\alpha},\Omega)$ is a symplectic vector bundle. There exists a natural isomorphism defined by:
$$\flat_{\Omega,\alpha}: TM^{2d+1}\rightarrow TM^{*2d+1}:X\rightarrow i_{X}\Omega+\alpha(X)\alpha$$ 
Therefore, we have:
$$\flat_{\Omega,\alpha}(E)=\alpha.$$

Note that there is an analogue of Darboux theorem for an shs manifold. The Darboux theorem states that, locally at $p\in M^{2d+1}$, there exist local charts $(U^{i},x^{0},x^{i},x^{i+d})$, called Darboux charts such that:

\begin{equation}
    E=\frac{\partial}{\partial{x^{0}}},\,\alpha_{U_i}=dx^{0}+\sum_{i}\alpha^{i}dx^{i}+\alpha^{i+d}dx^{i+d}\quad\text{and}\quad \Omega_{U_i}=\sum_{i}dx^{i}\wedge dx^{i+d}
    \label{eq:locass}
\end{equation}
(see \citet{libermann2012symplectic}, \citet{godbillon1969geometrie}, \citet{torres2009geometry}, \citet{siefring2011intersection}). The stabilization condition \cref{eq:sta} translates to:

\begin{equation}
    \frac{\partial\alpha^{i}}{\partial x^{0}}=\frac{\partial\alpha^{i+d}}{\partial x^{0}}=0.
    \label{eq:51}
\end{equation}

Denote $\partial_{i}=\frac{\partial}{\partial x_{i}}$. In Darboux charts, $(U^{i},(\partial_{i})_{i=0}^{i=2d})$, $\partial_{0}=E$ and $(\partial_{i})_{i=1}^{i=2d}$ is a symplectic basis of $\ell_{\alpha}$ adapted to the symplectic form $\Omega$. Note that locally, we have:

\[
\ell_{\Omega}=<\frac{\partial}{\partial x^{0}}>,\,\ell_{\alpha}=<\frac{\partial}{\partial x^{i}}-\alpha^{i}\frac{\partial}{\partial x^{0}}, \frac{\partial}{\partial x^{i+d}}-\alpha^{i+d}\frac{\partial}{\partial x^{0}}>,
\]
(see \citet{de2023review} ). According to \citet{libermann1962quelques},\cite{libermann1959automorphismes}, an shs structure $(\Omega,\alpha)$ on $M^{2d+1}$, can be viewed as an almsot-cosymplectic structure $(\Omega,\alpha)$ satisfying:

$$d\Omega=0\,\,\text{and}\,\,\ell_{\Omega}\subset 
 \ell_{d\alpha}.$$
 
 Hence, in the language of $G$-structures, an shs structure is a ${1}\times Sp(d,\mathbb{R})$-structure. Now, in the intersection of two Darboux charts, namely $(V^{j},y^{0},y^{i},y^{i+d})$ and $(U^{i},x^{0},x^{j},x^{j+d})$
such that $U^{i}\cap V^{j}\ne\emptyset$, we have:
$$
\frac{\partial}{\partial{y}^{0}}=\frac{\partial}{\partial{x^{0}}},\,\, \frac{\partial}{\partial y^{i}}=B^{i}_{j}\frac{\partial}{\partial x^{j}}+C^{i}_{j}\frac{\partial}{\partial x^{j+d}},\,\, \frac{\partial}{\partial y^{i+d}}=-C^{i}_{j}\frac{\partial}{\partial x^{j}}+B^{i}_{j}\frac{\partial}{\partial x^{j+d}}$$
and 
$$
dy^{0}=dx^{0}\,\,, dy^{i}=B^{i}_{j}dx_{j}-C^{i}_{j}dx_{j+d}\,\,,dy^{i+d}=C^{i}_{j}dx_{j}+B^{i}_{j}dx_{j+d},$$
where $B,C\in GL(d,;\mathbb{R}).$ Hence, we can cover $M^{2d+1}$ by the atlas $\mathcal{A}=\{U^{i}, x^{i}\}_{i\in A}$ such that the shs structure $(\Omega,\alpha)$ on each  $U^{i}$ satisfies the condition of \cref{eq:51}.\vspace{0.2cm}

 We note that in the state of art works, stable Hamiltonian manifolds are often referred to as orientable Hamiltonian manifolds such that the foliation $\ell_{\Omega}$ is geodesible. The stability of the Hamiltonian structure $\Omega$  uniquely depends on the geodesibilty of the foliation $\ell_{\Omega}$ (see \cite{gluck2006dynamical}, \cite{rechtman2010existence} \cite{wadsley1975geodesic}, \cite{sullivan1978foliation}).

\subsection{Basic examples}
We will provide and illustrate fundamental examples
of stable Hamiltonian structures.\vspace{0.2cm}

\textbf{Contact manifolds:}  $(M^{2d+1},\Omega,\alpha)$  such that $\Omega=d\alpha$.\vspace{0.2cm}

\textbf{Contact type hypersurfaces:}
Any hypersurface $M^{2d+1}$ of contact type in a symplectic manifold $(W^{2d+2},\Omega)$ is endowed with a
stable Hamiltonian structure with $\Omega=d\alpha.$\vspace{0.2cm}

\textbf{Cosymplectic manifolds }\cite{libermann1959automorphismes}: $(M^{2d+1},\Omega,\alpha)$ such that $d\alpha=0$. With reference to \citet{li2008topology}, when $M^{2d+1}$ is closed, 
$M^{2d+1}$ is a symplectic mapping torus \textit{i.e.} ($M^{2d+1}=\frac{Z^{2d}\times[0,1]}{(p,0)\sim (\varphi(p),1)})$, where $Z^{2d}$ is a leaf of the codimension one foliation $\ell_\alpha)$, and $\varphi$ is a symplectomorphism of $(Z^{2d},\widetilde{\Omega})$. By the projection $\pi_{1}:Z^{2d}\times [0,1]\rightarrow Z^{2d}$, $\Omega$ is obtained by glueing up $\widehat{\Omega}=\pi^{*}\widetilde{\Omega}$ by $\varphi$ and resting on \citet{tischler1970fibering}, $\alpha=p^{*}(d\theta)$, with  $\theta$ being the angle coordinate on $\mathbb{S}^{1}$ and $p:M^{2d+1}=Z^{2d}_{\varphi}\rightarrow\mathbb{S}^{1}$. Note that the leaves of $\ell_\alpha$, are all symplectomorphic to each other under the flow of $E$, and the universal cover $\widetilde{M^{2d+1}}$ of $M^{2d+1}$ is diffeomorphic to $\mathbb{R}\times\widetilde{Z^{2d}}$,
where $\widetilde{Z^{2d}}$ is the universal cover of a leaf of the foliation $\ell_\alpha$. For additional details on cosymplectic structure, consult (\cite{reeb1951solutions}, \cite{takizawa1963contact}, \cite{de1996universal}, \cite{albert1989theoreme}, \cite{libermann1962quelques}).\vspace{0.2cm}

\textbf{Circle bundles:} Let $(B^{2d},\hat{\Omega})$ be a symplectic manifold satisfying the prequantization condition$(Per(\hat{\Omega})=c\mathbb{Z}$, with $c\in \mathbb{R}$). Departing from \citet{kostant1973quantization}, there exists a corresponding prequantization space, that is a principal $\mathbb{S}^{1}$-bundle $p: M^{2d+1}\rightarrow B^{2d}$, with a principal connection $\alpha\in\Omega^{1}(M^{2d+1},\mathbb{R})$ satisfying $\Omega=p^{*}\hat{\Omega}=d\alpha$. Hence, $(M^{2d+1},\Omega,\alpha)$ is a SHS manifold. Note that from \citet{gromov1971topological}, there exists a symplectic immersion $j$ from  $(B^{2d},\hat{\Omega})$ into the K\"ahler manifold $(\mathbb{C}P^{N},\Omega^{0})$ for $N$ sufficiently large, such that  $\hat{\Omega}=j^{*}\Omega^{0}.$ The 2-form $\Omega^{0}$ is the Fubini-Study K\"ahler form  defined by $\Omega^{0}=\frac{i}{2\pi}\partial\Bar{\partial}log(1+|z_{i}|^{2})$. 

\subsection{ Canonical connections on shs manifold}

 In \cite{lichnerowiczgeometrie}, the authors proved that on  contact manifold $(M^{2d+1},\alpha)$, there exists a symmetric affine connection such that 
 \begin{equation}
     \nabla E=0,\,\nabla d\alpha=0.
 \end{equation}
 A.Lichnerowicz called these connections contact connections. In this section, we introduce a generalisation of the theorem of \citet{lichnerowiczgeometrie}. Our theorem states that every shs manifold admits infinite connections called  shs connections, which preserves the shs structure $(\Omega,\alpha)$. The shs connections coincide with the contact connections in the contact manifold case. The idea of the proof relies on symplectization processes.
 \vspace{1ex}
 
\begin{thm}
     On every shs manifold $(M^{2d+1},\Omega,\alpha)$, there are always connections such that:
\begin{equation}
    \nabla E=0\,,\nabla\Omega=0.
\label{eq:50}
\end{equation}
\label{th:shs}
\end{thm}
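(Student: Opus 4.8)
The plan is to prove existence by symplectization, in the exact spirit of \cref{th:mainTheorem}: lift the structure to the even-dimensional product, produce a symplectic connection there, normalize it, and push the result down to the slice $M^{2d+1}\times\{0\}$. Let $\pi:W^{2d+2}=M^{2d+1}\times\mathbb{R}\to M^{2d+1}$ be the projection, let $s$ denote the coordinate along $\mathbb{R}$, and set
\[
\omega=\pi^{*}\Omega+d\!\left(s\,\pi^{*}\alpha\right)=\pi^{*}\Omega+ds\wedge\pi^{*}\alpha+s\,\pi^{*}d\alpha .
\]
Writing $\Omega,\alpha$ for their pullbacks, $\omega$ is closed because $d\Omega=0$, and I would first check that it is nondegenerate along $s=0$: if $X+c\,\partial_{s}\in\ker\omega$, then pairing with $\partial_{s}$ forces $\alpha(X)=0$ and pairing with $E$ forces $c=0$, after which nondegeneracy of $\Omega$ on $\ell_{\alpha}$ gives $X=0$. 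This is precisely the step that consumes the stabilizing hypothesis $\ker(\Omega)\subset\ker(d\alpha)$, so $\omega$ is symplectic on a neighbourhood $N$ of $M^{2d+1}\times\{0\}$. The structural fact driving the whole argument is the identity $i_{E}\omega=-ds$, so that $E$ is, up to sign, the Hamiltonian vector field of the coordinate $s$; dually $i_{\partial_{s}}\omega=\alpha$.

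By Lichnerowicz and Bieliavsky there is a torsion-free symplectic connection $\widetilde{\nabla}$ on $(N,\omega)$. Next I would normalize it so that $E$ becomes parallel. Since $\widetilde{\nabla}\omega=0$ and $i_{E}\omega=-ds$, lowering indices yields $\omega(\widetilde{\nabla}_{X}E,\cdot)=-\big(\widetilde{\nabla}_{X}ds\big)=-\operatorname{Hess}^{\widetilde{\nabla}}(s)(X,\cdot)$, so $\widetilde{\nabla}E=0$ is equivalent to the vanishing of the covariant Hessian of $s$. Replacing $\widetilde{\nabla}$ by $\widetilde{\nabla}'=\widetilde{\nabla}+U$ with $\widetilde{U}:=\omega(U(\cdot,\cdot),\cdot)\in S^{3}T^{*}N$ (the only condition for $\widetilde{\nabla}'$ to remain symplectic and torsion-free, as recalled in the proof of \cref{th:mainTheorem}) changes the Hessian by $\operatorname{Hess}'(s)=\operatorname{Hess}(s)-\widetilde{U}(\cdot,\cdot,E)$. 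Hence it suffices to solve $\widetilde{U}(\cdot,\cdot,E)=\operatorname{Hess}^{\widetilde{\nabla}}(s)$ for a fully symmetric $\widetilde{U}$; both sides are symmetric $2$-tensors, and the bundle map $S^{3}T^{*}N\to S^{2}T^{*}N$, $\widetilde{U}\mapsto\widetilde{U}(\cdot,\cdot,E)$, is fibrewise surjective (complete $E$ to a frame and prescribe the components carrying the index of $E$), so a smooth solution exists. With this choice $\widetilde{\nabla}'E=0$.

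Finally I would descend to the slice. For $X,Y\in\Gamma(TM^{2d+1})$ let $\bar X,\bar Y$ be their $s$-independent extensions and set $\nabla_{X}Y:=\mathrm{pr}_{TM}\big(\widetilde{\nabla}'_{\bar X}\bar Y\big)\big|_{s=0}$, the projection along $\partial_{s}$. One checks routinely that this is a genuine connection on $M^{2d+1}$. Parallelism of $E$ is immediate: $\nabla_{X}E=\mathrm{pr}_{TM}(\widetilde{\nabla}'_{X}E)=0$. For $\nabla\Omega$, writing $\widetilde{\nabla}'_{\bar X}\bar Y=\nabla_{X}Y+\lambda\,\partial_{s}$ and using $\omega(\partial_{s},\cdot)=\alpha$, a direct computation at $s=0$ gives $(\nabla_{X}\Omega)(Y,Z)=\lambda\,\alpha(Z)+\mu\,\alpha(Y)$, where $\lambda=\omega(\widetilde{\nabla}'_{\bar X}\bar Y,E)$ and $\mu=\omega(\widetilde{\nabla}'_{\bar X}\bar Z,E)$. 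But $\omega(\bar Y,E)=ds(\bar Y)\equiv 0$ for an $s$-independent field tangent to $M^{2d+1}$, so $\widetilde{\nabla}'\omega=0$ together with $\widetilde{\nabla}'E=0$ forces $\lambda=\bar X\,\omega(\bar Y,E)-\omega(\bar Y,\widetilde{\nabla}'_{\bar X}E)=0$, and likewise $\mu=0$. Therefore $\nabla\Omega=0$ and $\nabla E=0$; since both the symplectic connection and the normalizing tensor were drawn from infinite-dimensional families, there are in fact infinitely many such $\nabla$.

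The delicate points, on which I would concentrate the care, are twofold. First, the nondegeneracy of $\omega$ near the slice, which is exactly where stability is used and without which the symplectization collapses. Second, the interlocking role of $\widetilde{\nabla}'E=0$: it is needed not only to obtain $\nabla E=0$ directly, but also, through the vanishing of $\lambda$ and $\mu$, to kill the anomalous $\alpha$-terms that would otherwise obstruct $\nabla\Omega=0$. The identity $i_{E}\omega=-ds$ is what ties these two uses together, and I expect this to be the conceptual crux of the argument.
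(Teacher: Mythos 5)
Your proof is correct, and it follows the paper's overall strategy --- symplectize, choose a symplectic connection upstairs, normalize it, restrict to the zero slice --- but the normalization and the descent are carried out by a genuinely different mechanism. The paper works throughout in Darboux charts: it imports the symplectic connection of \cref{th:mainTheorem}, reads off the nonzero components of $\widetilde{\Omega}$ in coordinates, and extracts $\widetilde{\Gamma}^{0}_{i0}=0$ and then $\Gamma^{l}_{i0}=0$ from the total symmetry of the lowered Christoffel symbols $\widetilde{\Gamma}^{l}_{ij}\widetilde{\Omega}_{lk}$ together with $\Omega_{l0}=0$. You instead isolate the invariant identity $i_{E}\omega=-ds$ (which is exactly where the stabilizing hypothesis $i_{E}d\alpha=0$ enters away from the slice $s=0$), translate $\widetilde{\nabla}E=0$ into the vanishing of $\operatorname{Hess}^{\widetilde{\nabla}}(s)$, and achieve it by a global correction inside the affine space of symplectic connections via the fibrewise surjectivity of $S^{3}T^{*}N\to S^{2}T^{*}N$, $\widetilde{U}\mapsto\widetilde{U}(\cdot,\cdot,E)$. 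Your descent step is also more complete than the paper's: by showing $ds\bigl(\widetilde{\nabla}'_{\bar X}\bar Y\bigr)=0$ you prove that the slice is totally geodesic, so the projection along $\partial_{s}$ is the identity and the terms $\lambda\,\alpha(Z)+\mu\,\alpha(Y)$ that would obstruct $\nabla\Omega=0$ vanish --- a point the paper passes over silently. What the coordinate approach buys is direct compatibility with the rest of Section 4, where everything is phrased in Darboux charts; what yours buys is a normalization that is manifestly global and chart-free, together with an explicit accounting of where stability is consumed. One small remark: fibrewise surjectivity of a constant-rank bundle morphism yields a smooth right inverse, so your local prescription (complete $E$ to a frame) does globalize, but this deserves a sentence.
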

\vspace{-6pt}
\begin{proof}
Assume that $M^{2d+1}$ admits an shs structure $(\Omega,\alpha)$. By symplectization, we can always realize $M^{2d+1}$ as hypersurface in the symplectic manifold $(W^{2d+2}=M^{2d+1}\times ]-\epsilon,\epsilon[,   \widetilde{\Omega}=\Omega-d(t\alpha))$ for sufficiently small $\epsilon> 0$. Here, $t$ denotes the coordinate on $]-\epsilon,\epsilon[$.
By a direct computation, we get 

$$\widetilde{\Omega}=\Omega-dt\wedge\alpha-td\alpha.$$

We denote $t=x^{\hat{0}}$. Using \cref{eq:locass}, locally in Darboux charts  $(\widetilde{U}^{i}=U^{i}\times I ,x^{\hat{0}},x^{i})$ $i=0,1,..,2d$ , we obtain:

$$\widetilde{\Omega}=dx^{i}\wedge dx^{i+d}-dt\wedge(dx^{0}+\alpha^{i}dx^{i}+\alpha^{i+d}dx^{i+d})-td(\alpha^{i}dx^{i}+\alpha^{i+d}dx^{i+d}).$$

It follows from the stabilization condition \cref{eq:51} that

\begin{equation}
    \widetilde{\Omega}=(1+t(\frac{\partial\alpha^{i}}{\partial x^{i+d}}-\frac{\partial\alpha^{i+d}}{\partial x^{i}}))dx^{i}\wedge dx^{i+d}-dt\wedge dx^{0}-\alpha^{i}dt\wedge dx^{i}-\alpha^{i+d}dt\wedge dx^{i+d}.
    \label{eq:locap}
\end{equation}

 With reference to \cref{eq:locap}, the only non-zero components of $\widetilde{\Omega}$ are:
 
\begin{equation}
\widetilde{\Omega}_{i,i+d}=1+x^{\hat{0}}(\frac{\partial\alpha^{i}}{\partial x^{i+d}}-\frac{\partial\alpha^{i+d}}{\partial x^{i}}),\, \widetilde{\Omega}_{\hat{0},0}=-1, \,\widetilde{\Omega}_{\hat{0},i}=-\alpha^{i}, \,\widetilde{\Omega}_{\hat{0},i+d}=-\alpha^{i+d}.
\label{eq:localcomp}
\end{equation}

Now, we shall use the same strategy as in Theorem \ref{mainTheorem} through replacing the closed 2-form $\omega$ constructed by the use of Gromov's $h$-principle with $\widetilde{\Omega}$. Let us select a symplectic connection $\widetilde{\nabla}$ on  $(W^{2d+2},\widetilde{\Omega})$ constructed in theorem \ref{thm:principal}.
With Darboux coordinates on $(\widetilde{U}^{i}= U^{i}\times I,x^{\hat{0}},x^{i})$ $i=0,..,2d$ of $(W^{2d+2},\widetilde{\Omega})$, we get:

$$\widetilde{\Gamma}^{l}_{ij}\widetilde{\Omega}_{lk}=\widetilde{\Gamma}^{l}_{ik}\widetilde{\Omega}_{lj},\, \text{for}\,  i,j,k,l=\hat{0},0,1,.,2d.$$
Relying upon \cref{eq:localcomp}, it follows:

\begin{equation}
\widetilde{\Gamma}^{0}_{i0}=0.
\label{eq:coponul}
\end{equation}

Now, identify $M^{2d+1}$ with the leaf $t=0$ of $W^{2d+2}$. We can define restriction connections $\nabla$ on this leaf by $\Gamma^{l}_{ij}=\widetilde{\Gamma}^{l}_{ij}$. Hence, there are connections $\nabla$ on $M^{2d+1}$ satisfying 
 \begin{equation}
     \nabla\Omega=0.
     \label{eq:54}
 \end{equation}
 
From  the above analysis, in Darboux charts $(U^{i},x^{0},x^{1},...,x^{2d})$ on $(M^{2d+1},\Omega,\alpha)$, we have:

$$\Gamma^{l}_{ij}\Omega_{lk}=\Gamma^{l}_{ik}\Omega_{lj},\, \text{for}\,  i,j,k,l=0,1,.,2d.$$
\vspace{1ex}
Bear in mind that $i_{E}\Omega=0$. Therefore in Darboux charts, we have $\Omega_{l0}=0$. Consequently, we deduce that
$$\Gamma^{l}_{i0}\Omega_{lk}=0.$$ 
In Darboux charts, we have $\Omega_{i,i+d}=1$. Then,

\begin{equation}
\Gamma^{l}_{i0}=0\,, \text{for}\,
l=1,..,2d.
\end{equation}
Now, resting on \cref{eq:coponul}, we get:

$$\Gamma^{0}_{i0}=\widetilde{\Gamma}^{0}_{i0}=0.$$

Consequently, we have:

$$\Gamma_{i,0}^{l}=0\,, \text{for}\,\,
l=0,1,..,2d.$$
or equivalently 
\begin{equation}
    \nabla E=0.
\label{eq:Reebpa}
\end{equation}

Finally, with respect to \cref{eq:Reebpa,eq:54}, we obtain

$$ \nabla E=0\,,\nabla\Omega=0.$$

This therefore proves our assertion.
\end{proof}

\begin{remark}
As presented in section \ref{se:Ham}, locally we can also define a symmetric
connection $\nabla^{i}$  to have zero Christoffel symbols in Darboux charts $(U^{i},x^{i})\,\,\forall i:0,1,...,2d$. We hence get
$$
\nabla^{i}E_{U_i}=0,\quad \nabla^{i}\Omega_{U_i}
=0
$$
Globally, we can glue $\nabla^{i}$ using a partition of unity $(\rho_{i})_{i\in\mathcal{A}}$ subordinate to the open covering $(U_{i})_{i\in \mathcal{A}}$, as $\nabla=\sum_{i}\rho_{i}\nabla^{i}$. We thus deduce that 
$$\nabla E=0\quad\text{and}\quad\nabla\Omega=0.$$
The desired connections are hence obtained.
\end{remark}

The space of  connections satisfying \cref{eq:50} will be denoted by $\mathcal{C}M^{2d+1}_{(\Omega,\alpha)}$. These connections will be called \textbf{ shs-connections}. 
 In what follows, we attempt to prove that there are $E$-invariant shs connections on an shs manifold.
\vspace{2ex}

\begin{proposition}
    Let $(M^{2d+1},\Omega,\alpha)$ be an shs manifold such that the leaves of $\ell_{\Omega}$ are closed. The shs manifold $(M^{2d+1},\Omega,\alpha)$   admits $E$-invariant shs connections.
    \label{prop:invar}
\end{proposition}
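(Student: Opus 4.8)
The plan is to produce the required connection by averaging an arbitrary shs connection over the flow of the Reeb field $E$. By \cref{th:shs} the set $\mathcal{C}M^{2d+1}_{(\Omega,\alpha)}$ of shs connections is non-empty, so I would fix some $\nabla^{0}$ in it. Since $\nabla\Omega=0$ and $\nabla E=0$ are inhomogeneous linear (affine) conditions on $\nabla$, this set is an affine subspace of the space of all connections, modelled on the vector space of $(1,2)$-tensors $S$ satisfying $\Omega(S(X,Y),Z)+\Omega(Y,S(X,Z))=0$ and $S(X,E)=0$. The heart of the argument is to show that the flow of $E$ is periodic with a single global period, so that averaging $\nabla^{0}$ over one period lands back in this affine subspace and is automatically $E$-invariant.

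First I would check that the flow $\phi_{t}$ of $E$ preserves the entire shs structure. Cartan's formula gives $L_{E}\Omega=i_{E}d\Omega+d\,i_{E}\Omega=0$ (since $d\Omega=0$ and $i_{E}\Omega=0$) and $L_{E}\alpha=i_{E}d\alpha+d\,i_{E}\alpha=0$ (since $i_{E}d\alpha=0$ by \cref{eq:sta} and $i_{E}\alpha=1$). Hence $\phi_{t}^{*}\Omega=\Omega$, $\phi_{t}^{*}\alpha=\alpha$, and $\phi_{t}$ preserves $E$. By naturality of the covariant-constancy conditions under diffeomorphisms, it follows that $\phi_{t}^{*}\nabla^{0}$ again satisfies $(\phi_{t}^{*}\nabla^{0})\,\Omega=0$ and $(\phi_{t}^{*}\nabla^{0})\,E=0$; thus the affine subspace $\mathcal{C}M^{2d+1}_{(\Omega,\alpha)}$ is invariant under the flow.

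The main obstacle, and the only place the hypothesis is used, is to prove that all orbits of $E$ share one period. Because $\ell_{\Omega}=\ker\Omega=\langle E\rangle$, the closed leaves are exactly the (compact, hence circular) orbits of $E$; let $\gamma_{p}$ be the orbit through $p$ and $T(p)$ its minimal period, which is smooth as a first-return time. Since $\alpha(E)=1$, integrating $\alpha$ along $\gamma_{p}$ gives $\oint_{\gamma_{p}}\alpha=T(p)$. To see this is locally constant I would take a small transversal $c:[0,1]\to M$ and form the flow band $G(s,t)=\phi_{t}(c(s))$ over the region $D$ under the graph of $T$; since $\partial_{t}G=E$, one has $G^{*}d\alpha(\partial_{s},\partial_{t})=d\alpha(\partial_{s}G,E)=-(i_{E}d\alpha)(\partial_{s}G)=0$, so $G^{*}d\alpha\equiv 0$. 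Stokes' theorem then collapses $\oint_{\partial D}G^{*}\alpha$ to $T(c(1))-T(c(0))$ (the two transversal edges cancel because $G(s,T(s))=c(s)$), giving $T(c(1))=T(c(0))$. Hence $T$ is locally constant, thus constant $=T$ on each connected component, and the flow of $E$ descends to a free $S^{1}$-action with $S^{1}=\mathbb{R}/T\mathbb{Z}$ (consistent with the geodesibility of $\ell_{\Omega}$ and \cite{wadsley1975geodesic}).

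With the $S^{1}$-action in hand I would set
\[
\nabla=\frac{1}{T}\int_{0}^{T}\phi_{t}^{*}\nabla^{0}\,dt .
\]
Writing $\phi_{t}^{*}\nabla^{0}=\nabla^{0}+S_{t}$ with each $S_{t}$ in the model vector space above, the average equals $\nabla^{0}+\tfrac{1}{T}\int_{0}^{T}S_{t}\,dt$, whose correction tensor still satisfies the same linear conditions; hence $\nabla$ is again an shs connection, and it is a genuine connection because the weights integrate to $1$. Finally, the substitution $t\mapsto t+s$ together with $T$-periodicity yields $\phi_{s}^{*}\nabla=\nabla$ for all $s$, that is $L_{E}\nabla=0$, so $\nabla$ is the required $E$-invariant shs connection. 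I expect the period-constancy step to be the only delicate point; the averaging is then routine, and the affine structure of $\mathcal{C}M^{2d+1}_{(\Omega,\alpha)}$ shows that such connections in fact form an entire family.
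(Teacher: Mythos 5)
Your route is genuinely different from the paper's. The paper does not average: it uses the hypothesis to form the leaf space $B^{2d}=M^{2d+1}/\ell_{\Omega}$, notes that $\Omega$ and $\alpha$ descend to a symplectic form $\widetilde{\Omega}$ and a $1$-form $\widetilde{\alpha}$ on $B^{2d}$, lifts a symplectic connection $\widetilde{\nabla}$ of $(B^{2d},\widetilde{\Omega})$ through adapted Darboux charts by setting $\Gamma^{s}_{ij}=\widetilde{\Gamma}^{s}_{ij}$, $\Gamma^{s}_{j0}=0$, $\Gamma^{0}_{ij}=0$, and observes that the resulting Christoffel symbols are independent of $x^{0}$, which gives $L_{E}\nabla=0$ directly. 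Your averaging idea is attractive because it avoids the leaf space altogether and produces the invariant connection as a deformation of an arbitrary shs connection inside the affine space of \cref{eq:112}; the flow-invariance of that affine space and the final averaging computation are correct as far as they go.

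The gap is in the step ``the closed leaves are exactly the (compact, hence circular) orbits of $E$.'' A closed leaf of the one-dimensional foliation $\ell_{\Omega}$ is a closed subset of $M^{2d+1}$, not necessarily a compact one: on the cosymplectic manifold $M^{2d+1}=Z^{2d}\times\mathbb{R}$ with $\alpha=dt$, $E=\partial_{t}$ and $\Omega$ pulled back from a symplectic form on $Z^{2d}$, every leaf is a properly embedded copy of $\mathbb{R}$, hence closed, yet no orbit of $E$ is periodic. In that situation there is no period $T$ to average over and your construction does not start, so the proof as written establishes the proposition only when every leaf is a circle. Even in the all-circles case there is a known subtlety you should make explicit: by Sullivan's counterexample to the periodic orbit conjecture, ``all orbits periodic'' does not by itself yield a continuous minimal-period function, and the smoothness of the first-return time you invoke is not immediate; your Stokes computation with the band $G(s,t)$ is essentially the Wadsley geodesibility argument applied to the stabilizing form $\alpha$, and it is that argument (not general principles) which rescues the step, so it deserves to be stated as such. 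To close the gap you would either have to strengthen the hypothesis to compact leaves, or supply a separate construction for line leaves --- at which point you are essentially forced back to the paper's quotient construction, which handles both cases uniformly.
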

    
\begin{proof}
First of all, the leaves of the foliation $\ell_{\Omega}$ are closed. Hence, the space of leaves $M^{2d+1}/\ell_{\Omega}$ is a smooth manifold. Denote the manifold $M^{2d+1}/\ell_{\Omega}$ by $B^{2d}$ and the canonical projection by $\pi:X^{2d+1}\rightarrow B^{2d}.$ Since $L_{E}\Omega=0$, thus there exists on $B^{2d}$ a closed 2-form $\widetilde{\Omega}$ such that $\Omega=\pi^{*}\widetilde{\Omega}$. As  $\Omega^{d}=\pi^{*}\widetilde{\Omega^{d}}$ is non-vanishing, then $(B^{2d},\widetilde{\Omega})$ is a symplectic manifold. The stabilization condition $L_{E}\alpha=0$ implies that there is on $B^{2d}$ a 1-form $\widetilde{\alpha}$ such that $\alpha=\pi^{*}\widetilde{\alpha}$. Now, consider  Darboux charts $(\widetilde{U}^{i},x^{i})$ $i=1,..,2d$ on the symplectic manifold $(B^{2d},\widetilde{\Omega})$. We can construct a Darboux chart $U^{i}=\pi^{-1}(\widetilde{U}^{i})$ with coordinates $(x^{0},x^{i})$.
On $U^{i}$, we have:
\begin{equation} 
E=\frac{\partial}{\partial x_{0}},\alpha_{U^{i}}=dx^{0}+\sum_{i}\alpha^{i}dx^{i}+\alpha^{i+d}dx^{i+d},\quad\Omega_{U^{i}}=\sum_{i}dx^{i}\wedge dx^{i+d},
\end{equation}
where $\partial_{0}\alpha^{i}=\partial_{0}\alpha^{i+d}=0$. Let us choose $\widetilde{\nabla}$, a symplectic connection on $(B^{2d},\widetilde{\Omega})$. Naturally, on $(\widetilde{U}^{i},x^{i})$, we have

\begin{equation}
\tilde{\Gamma}^{s}_{kj}\widetilde{\Omega}_{is}=\tilde{\Gamma}^{s}_{ki}\widetilde{\Omega}_{js},
\label{eq:regul}
\end{equation}
where $\widetilde{\Gamma}$ denote the Christoffel symbols of $\widetilde{\nabla}.$
We can define a symmetric connection $\nabla$ on $U^{i}=\pi^{-1}(\widetilde{U}^{i})$ by:

\begin{equation}
\Gamma^{s}_{ij}=\widetilde{\Gamma}^{s}_{ij},\ \Gamma^{s}_{j0}=0,\ \Gamma^{0}_{ij}=0.
\label{eq:regularde}
\end{equation}

Departing from \cref{eq:regul,eq:regularde}, it follows that on $U^{i}$: 
\begin{equation}
\Gamma^{s}_{kj}\Omega_{is}=\Gamma^{s}_{ki}\Omega_{js}.
\end{equation}

Thus, we deduce: 
\begin{equation}
    (\nabla_{\partial_k}\Omega)(\partial_{i},\partial_{j})=0\quad \forall i,j,k=1,..,2d.
\end{equation}

Through a straightforward calculation, we get:
\begin{equation}
    (\nabla_{\partial_j}\Omega)(E,\partial_i)=\partial_{j}.\Omega(E,\partial_i)-\Omega(\nabla_{\partial_j}E,E)-\Omega(E,\nabla_{\partial_j}\partial_i).
\end{equation}

Since $i_{E}\Omega=0$, we obtain: 
\begin{equation}
    (\nabla_{\partial_j}\Omega)(E,\partial_i)=0.
\end{equation}

Using a direct computation, we have:
\begin{equation}
    (\nabla_{E}\Omega)(\partial_i,\partial_j)=\partial_{0}\Omega(\partial_i,\partial_j)-\Gamma^{s}_{0,i}\Omega_{sj}-\Gamma^{s}_{0,j}\Omega_{is}.
\end{equation}

 We obtain from \cref{eq:regularde} that:
 \begin{equation}
     (\nabla_{\partial_0}\Omega)(\partial_i,\partial_j)=0\,\, \forall i,j=1,..,2d.
 \end{equation}
 
Finally, We deduce from the previous identities that: 
\begin{equation}
    \nabla\Omega=0.
\end{equation}

Now, based on \cref{eq:regularde}, we have $\Gamma^{s}_{j0}=0$. Thus,

\begin{equation}
    \nabla E=0.
\end{equation}

As a matter of fact, connection $\nabla$ is an shs connection. Resting on a direct calculation, we have:

 $$(L_{E}\nabla)(Y,Z)=\nabla_{E}\nabla_{Y}Z-\nabla_{\nabla_{Y}Z}E-\nabla_{[E,Y]}Z-\nabla_{Y}\nabla_{E}Z+\nabla_{Y}\nabla_{Z}E,$$
 for any vector fields $Y,Z$ on $M^{2d+1}.$
With the local basis, we have:

\begin{equation}
    (L_{E}\nabla)(\partial_{i},\partial_{j})=R^{\nabla}(E,\partial_{i})\partial_{j}+(\nabla^{2}E)(\partial_{i},\partial_{j}),\,\, \forall i,j=0,1,..,2d.
    \label{eq:68}
\end{equation}
Now, by a direct computation, we get:

    $$(L_{E}\nabla)(\partial_{i},\partial_{j})=\partial_{0}(\Gamma^{s}_{ij})\partial_s.$$
Using \cref{eq:regularde}, we obtain:

$$\partial_0\Gamma^{s}_{ij}=\partial_0\widetilde{\Gamma}^{s}_{ij}=0.$$
It follows from the above that:

   $$ L_{E}\nabla=0.$$
The result follows at once.
\end{proof}

\subsubsection{Description of the space of shs connections}
 A covariant tensor $S$ of rank $q$ is $E$-transverse, if:
 $$i_{E}S=0.$$
Denote by $\mathcal{T}_{E}(M^{2d+1})=\oplus_{p}\mathcal{T}^{q}_{E}(M^{2d+1})$ the tensor algebra of covariant $E$-transverse tensor on $M^{2d+1}$. For an shs structure, $(\Omega,\alpha)$, $(\Omega,d\alpha)\in \mathcal{T}^{2}_{E}(M^{2d+1})\times\mathcal{T}^{2}_{E}(M^{2d+1})$. Each space  $\mathcal{T}^{q}_{E}(M^{2d+1})$ contains the 
subspace $\mathcal{S}_{E}^{q}(M^{2d+1})$ of totally symmetric  covariant tensor fields of rank $q$.
\begin{proposition}\label{eq:112}
The set of shs connections $\mathcal{C}M_{(\Omega,\alpha)}^{2d+1}$ on the shs manifold $(M^{2d+1},\Omega,\alpha)$ is an affine space modelled on the vector space $\mathcal{S}_{E}^{3}(M^{2d+1})$.
\end{proposition}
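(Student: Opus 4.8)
The plan is to combine a nonemptiness statement with the standard ``difference tensor'' identification. Nonemptiness of $\mathcal{C}M^{2d+1}_{(\Omega,\alpha)}$ is exactly \cref{th:shs}, so the real content is to fix a basepoint $\nabla_{0}\in\mathcal{C}M^{2d+1}_{(\Omega,\alpha)}$ and show that the assignment $\nabla\mapsto U:=\nabla-\nabla_{0}$, followed by lowering an index with $\Omega$, is a bijection from $\mathcal{C}M^{2d+1}_{(\Omega,\alpha)}$ onto $\mathcal{S}_{E}^{3}(M^{2d+1})$. Since the difference of two connections is a $(1,2)$-tensor and both connections are symmetric, $U$ is a \emph{symmetric} $(1,2)$-tensor, $U(X,Y)=U(Y,X)$, and I would phrase the whole argument at the level of such tensors.

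First I would translate the two defining conditions into conditions on $U$. From $\nabla_{X}E=\nabla_{0,X}E+U(X,E)$ together with $\nabla E=\nabla_{0}E=0$ one gets $U(X,E)=0$ for every $X$; with the symmetry of $U$ this is precisely $i_{E}U=0$, so $U$ is $E$-transverse. Next, subtracting the identities $\nabla\Omega=0$ and $\nabla_{0}\Omega=0$ yields $\Omega(U(X,Y),Z)+\Omega(Y,U(X,Z))=0$, that is $\Omega(U(X,Y),Z)=\Omega(U(X,Z),Y)$. Setting $\widetilde{U}(X,Y,Z):=\Omega(U(X,Y),Z)$, this says $\widetilde{U}$ is symmetric in its last two slots, while symmetry of $U$ makes it symmetric in the first two; hence $\widetilde{U}$ is totally symmetric, and $E$-transversality of $U$ together with $i_{E}\Omega=0$ forces $i_{E}\widetilde{U}=0$. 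Therefore $\widetilde{U}\in\mathcal{S}_{E}^{3}(M^{2d+1})$, which is the forward map.

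For the inverse I would use the splitting \cref{eq:decomp}, which exhibits $(\ell_{\alpha},\Omega)$ as a symplectic sub-bundle on which $\Omega$ is nondegenerate. Given $S\in\mathcal{S}_{E}^{3}(M^{2d+1})$, I define $U$ to be $E$-transverse with $U(X,Y)\in\ell_{\alpha}$ and $\Omega(U(X,Y),\cdot)=S(X,Y,\cdot)$; nondegeneracy of $\Omega|_{\ell_{\alpha}}$ recovers $U(X,Y)\in\ell_{\alpha}$ uniquely from the covector $S(X,Y,\cdot)$, and total symmetry of $S$ makes $U$ symmetric. It then remains to check that $\nabla_{0}+U$ is symmetric, annihilates $E$, and preserves $\Omega$, so that it lies in $\mathcal{C}M^{2d+1}_{(\Omega,\alpha)}$; each verification simply reverses the computation of the previous paragraph. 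Running both directions in the Darboux charts of \cref{eq:locass}, where $\partial_{0}=E$ and $(\partial_{i})_{i=1}^{2d}$ is an $\Omega$-symplectic frame of $\ell_{\alpha}$, turns every identity into a finite index computation: $U^{l}_{i0}=0$ encodes $E$-transversality and $\widetilde{U}_{ijk}=U^{l}_{ij}\Omega_{lk}\in\mathcal{S}_{E}^{3}$ encodes total symmetry, in exact parallel with \cref{eq:2000} in the symplectic setting.

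The step I expect to be the main obstacle is injectivity of the forward map, because $\Omega$ is degenerate on all of $TM^{2d+1}$ — its kernel is $\ell_{\Omega}=\langle E\rangle$ — so lowering an index with $\Omega$ is not invertible on the full tangent bundle and a priori forgets the $\ell_{\Omega}$-component of $U$. One checks that this component affects neither $\nabla E$ nor $\nabla\Omega$, so the correspondence becomes bijective once $U$ is taken in its natural $\ell_{\alpha}$-valued representative; restricting the whole construction to the symplectic sub-bundle $(\ell_{\alpha},\Omega)$ of \cref{eq:decomp}, where $\Omega$ is nondegenerate, then makes $\widetilde{U}\mapsto U$ a well-defined, chart-independent two-sided inverse of $U\mapsto\widetilde{U}$, and the affine structure of $\mathcal{C}M^{2d+1}_{(\Omega,\alpha)}$ over $\mathcal{S}_{E}^{3}(M^{2d+1})$ follows.
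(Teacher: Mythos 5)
Your overall strategy coincides with the paper's: fix a basepoint using \cref{th:shs}, pass to the difference tensor $U=\nabla-\nabla_{0}$, and lower an index with $\Omega$. Your translation of the two conditions in \cref{eq:50} into $i_{E}U=0$ and total symmetry of $\widetilde{U}=\Omega(U(\cdot,\cdot),\cdot)$ is correct, and your reconstruction of an $\ell_{\alpha}$-valued $U$ from a given $S\in\mathcal{S}_{E}^{3}(M^{2d+1})$ via nondegeneracy of $\Omega|_{\ell_{\alpha}}$ does give surjectivity of $\nabla\mapsto\widetilde{U}$.

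The gap is in your final paragraph, and you have in fact put your finger on the one genuine difficulty before arguing it away. As you observe, the $\ell_{\Omega}$-component of $U$ affects neither $\nabla E$ nor $\nabla\Omega$: if $U(X,Y)=f(X,Y)E$ with $f$ a nonzero symmetric $E$-transverse $2$-tensor, then $\nabla_{0}+U$ is torsion-free, satisfies \cref{eq:50}, and yet $\widetilde{U}=0$ because $i_{E}\Omega=0$. This does \emph{not} mean the correspondence ``becomes bijective once $U$ is taken in its $\ell_{\alpha}$-valued representative'': replacing $U$ by its $\ell_{\alpha}$-part changes the connection, so you are only parametrizing the sub-affine-space of connections differing from $\nabla_{0}$ by an $\ell_{\alpha}$-valued tensor, while on all of $\mathcal{C}M^{2d+1}_{(\Omega,\alpha)}$ the map $\nabla\mapsto\widetilde{U}$ is surjective but has fibers affine over $\{f\otimes E:\, f\in\mathcal{S}_{E}^{2}(M^{2d+1})\}$. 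What your computation actually establishes is that $\mathcal{C}M^{2d+1}_{(\Omega,\alpha)}$ is modelled on $\mathcal{S}_{E}^{2}(M^{2d+1})\oplus\mathcal{S}_{E}^{3}(M^{2d+1})$, the summands recording $\alpha(U(\cdot,\cdot))$ and $\Omega(U(\cdot,\cdot),\cdot)$ with respect to the splitting \cref{eq:decomp} --- exactly the decomposition the paper itself uses later for statistical shs connections. You should be aware that the paper's own proof is silent on this same point: it proves the same ``if and only if'' on the difference tensor $B$ and then identifies the admissible $B$'s with $\mathcal{S}_{E}^{3}(M^{2d+1})$ without addressing the kernel of $B\mapsto\Omega(B(\cdot,\cdot),\cdot)$, which is nontrivial precisely because $\Omega$ is degenerate along $\ell_{\Omega}$ (in contrast to the genuinely symplectic situation of \cref{eq:2000}). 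So the missing step --- injectivity of the $\Omega$-lowering on admissible difference tensors --- cannot be supplied, and the resolution you sketch does not close it.
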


\begin{proof}
Let us consider $\nabla$ a connection satisfying \cref{eq:50}. It is well known that any connection $\widetilde{\nabla}$ can be expressed as:

$$\widetilde{\nabla}=\nabla+B\,\, \text{where},\,\, B\in T^{2}_{1}M^{2d+1}.$$

 It follows from a straightforward computation that $\widetilde{\nabla}$ satisfies the condition \cref{eq:50} if and only if
  $$ A(X,Y,Z)=\Omega(B(X,Y),Z)\in S^{3}TM^{2d+1}\quad\text{and}\quad i_{E}B=0.$$

  Since $i_{E}\Omega=0$, then we deduce that $ A(X,Y,Z)=\Omega(B(X,Y),Z)\in \mathcal{S}_{E}^{3}(M^{2d+1}).$
  This proves our assertion.
\end{proof}

Let $\nabla$ be an shs connection and let $R^{\nabla}$ be its curvature tensor. Referring to \cite{vaisman1985symplectic}, we can define the analogues of the symplectic curvature for shs connections. The symplectic curvature of an shs connection will be indicated by:

$$S^{\nabla}(X,Y,Z,W)=\Omega(X,R^{\nabla}(Z,W)Y),$$
for any vector fields $X,Y,Z,W$ on $M^{2d+1}$. The components of the curvature on the local Darboux charts are of the form:

\begin{equation}
S^{\nabla}_{ijkl}=\Omega(\partial_i,R^{\nabla}(\partial_k,\partial_l)\partial_j)=\Omega_{im}R^{m}_{jkl},
\end{equation}
\vspace{1ex}
where $R^{\nabla}(\partial_{j},\partial_{k})\partial_{i}=R^{m}_{ijk}\partial_{m}.$ From a direct computation, we get:

\vspace{1ex}

\[ R^{\nabla}(\partial_i,\partial_j)E=(\nabla^{2}E)(\partial_i,\partial_j)-(\nabla^{2}E)(\partial_j,\partial_i), \quad i,j=0,1,..,2d.\]
\vspace{1ex}

Now using \cref{eq:50}, we obtain from a straightforward calculation
that
\[
    (\nabla^2E)(\partial_i,\partial_j) = \nabla_{\partial_i}(\nabla_{\partial_{j}} E) - \nabla_{\nabla_{\partial_{i}} \partial_{j}} E=0,\,\, i,j=0,1,..,2d.
    \]
    
  Thus, we get:
   \begin{equation}
    S^{\nabla}_{i0kl}=0,\,\, \text{for},i,k,l=0,1,..,2d.
    \label{eq:symplcutnu}
    \end{equation}
   
   Resting upon \cref{eq:50}, we have:
   
   \begin{equation}
\partial_{k}\Omega_{ij}=\Omega_{il}\Gamma^{l}_{kj}+\Omega_{lj}\Gamma^{l}_{ki}\,\,\text{for}\, i,j,k=1,..,2d.
\end{equation}
Hence, grounded on \citet{lemlein1957spaces}, \citet{vaisman1985symplectic}, the symplectic curvature $S^{\nabla}$ satisfies symmetry properties as:
\begin{equation}
    S^{\nabla }_{ijkl}=S^{\nabla }_{jikl},\,\,S^{\nabla }_{ijkl}=-S^{\nabla }_{ijkl} \quad \text{for}\,
   i,j,k,l=1,....,2d.
   \label{eq:symplcur}
\end{equation}

\vspace{1ex}
  Based on \cref{eq:symplcur,eq:symplcutnu}, the components of the symplectic curvature of an shs connection in the local Darboux charts satisfy the following identities:

   \begin{equation}
        S^{\nabla}_{i0kl}=0\,\,,  S^{\nabla }_{ijkl}=S^{\nabla }_{jikl},\,\,S^{\nabla }_{ijkl}=-S^{\nabla }_{ijkl} \quad\,\text{for}\, 
   i,j,k,l=0,1,....,2d.
   \end{equation}
   \vspace{1ex}
Resting on \cite{vaisman1985symplectic}, we deduce that the Ricci tensor $Ric^{\nabla}$ of an shs connection is symmetric
\begin{equation} Ric^{\nabla}_{ij}=Ric^{\nabla}_{ji} ,\quad \text{for}\, 
   i,j,k,l=0,1,....,2d.
\end{equation}
\subsection{Compatible SMAT connections on shs manifolds}
The central objective of this subsection is to corroborate that on any shs manifold $(M^{2d+1},\Omega,\alpha)$, there always are smat connections adapted to the distributions $\ell_\Omega$ and  $\ell_{\alpha}$. Remember that a linear connection $\nabla$ on a manifold $M$ is said to be adapted to a distribution $H$ or parallel with respect to $\nabla$, or $\nabla$-geodesic if
$$\nabla_{Y}S\in\Gamma(H),\quad \forall Y\in\Gamma(TM),\forall S\in\Gamma(H),$$
(see \cite{bejancu2006foliations,bejancu2012geometry, walker1958connexions}). 
With reference to \cite{de2011methods}, \cite{sasaki1985shigeo},  \cite{blair2010riemannian}, note that for an shs pair $(\Omega,\alpha)$,  there exist a tensor field $\Theta\in\Gamma(TM^{2d+1})$ and a
compatible Riemannian metric $g$ satisfying:
\begin{equation}
    \Theta^{2}=-I+\alpha\otimes E
    \label{eq:72}
\end{equation}
\begin{equation}
    \Theta E=0,\,\alpha\circ \Theta=0
    \label{eq:73}
\end{equation}
\begin{equation}
    \alpha=g(.,E)
    \label{eq:74}
\end{equation}
\begin{equation}
\Theta^{2}|_{\ell_{\alpha}}=-1
\label{eq:75}
\end{equation}
\begin{equation}
    g=\Omega(,\Theta )+\alpha\otimes\alpha.
    \label{eq:76}
\end{equation}
Therefore, the Hamiltonian form is indicated by:

$$\Omega=g(\Theta .,.)$$

We hence state that $(\Theta, E, \alpha, g)$ is the almost contact Riemannian structure associated with the shs structure $(\Omega,\alpha)$. Note that it is, generally, not unique. The triple $(\Theta, E, \alpha)$, is called an almost contact
structure on $M^{2d+1}.$ In \cite{ogiue1967cocomplex}, the triple $(\Theta,E,\alpha)$ is also called an almost co-complex structure and the compatible metric $g$ is defined in \cref{eq:76} as a co-Hermitian metric. Let's denote by $\mathcal{H}(\ell_{\alpha})$ the space of $\Omega$-compatible almost complex structure of the symplectic bundle $\ell_{\alpha}$ and let's consider a Hermitian metric $g_{|\ell_{\alpha}}=\Omega(.,\Theta)$ on the symplectic bundle $\ell_{\alpha}$. Extend $\Theta$ on $TM^{2d+1}$ as $\Theta|_{\ell_{\Omega}}=0$ and $\ell_{\alpha}=Im(\Theta)\subset TM^{2d+1}$. According to the decomposition in \cref{eq:decomp}, any vector fields $X,Y$ in $TM^{2d+1}$ can be stated as follows:

\begin{equation}
    X=X_{\ell_{\alpha}}+(i_{X}\alpha) E,\, Y=Y_{\ell_{\alpha}}+(i_{Y}\alpha) E,
\end{equation}
where $X_{\ell_{\alpha}}$, $Y_{\ell_{\alpha}}$ 
are the $\ell_{\alpha}$–components of the vector fields $X$ and $Y$. The Hermitian metric $g_{\ell_{\alpha}}$ can be extended as follows:

\begin{equation}
    g(X,Y)=g_{|\ell_{\alpha}}(X_{\ell_{\alpha}},Y_{\ell_{\alpha}})+ (i_{X}\alpha)(i_{Y}\alpha).
\end{equation}

We can easily prove that $\Omega=g(.,\Theta)$ and $\alpha=g(.,E).$ We deduce that $(\Theta, E, \alpha, g)$ constructed above is the almost contact Riemannian structure associated with
the shs structure $(\Omega,\alpha)$. Finally from the above construction, there exists a bijective correspondence between the set of almost contact Riemannian
structures associated with the shs structure $(\Omega,\alpha)$ and the set of $\Omega$-compatible almost complex structure of the symplectic bundle $\ell_{\alpha}.$\vspace{0.2cm}

As a matter of fact, the shs structure $(\Omega,\alpha)$
can also be written as $(\Omega,\alpha,g)$ or $(\Omega,\alpha,\Theta)$ in the sequel.
\vspace{1ex}
\begin{proposition}
  On an shs manifold $(M^{2d+1},\Omega,\alpha)$, there are always SMAT connections suitably adapted to the distributions $\ell_{\Omega}$ and $\ell_{\alpha}.$
   \label{eq:190}
\end{proposition}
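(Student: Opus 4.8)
The plan is to build the required SMAT out of a symmetric shs connection together with its metric dual, and then to read off the two adaptation properties directly from the two identities $\nabla E=0$ and $\nabla\Omega=0$. First I would use \cref{th:shs} to select a symmetric shs connection $\nabla$ on $(M^{2d+1},\Omega,\alpha)$, so that $\nabla E=0$ and $\nabla\Omega=0$. Fixing a compatible almost contact Riemannian structure $(\Theta,E,\alpha,g)$ as in \cref{eq:72,eq:73,eq:74,eq:75,eq:76}, I would then form the associated dual connection $\nabla^{*}=\flat^{*}({}^{*}\nabla)$ through the musical isomorphism $\flat$ of \cref{eq:flat}. Since $\nabla$ is torsion-free, the construction recalled just after \cref{eq:flat} guarantees that $(M^{2d+1},g,\nabla,\nabla^{*})$ is a SMAT; this is the pair of connections I will show to be suitably adapted, with $\nabla$ adapted to $\ell_{\Omega}$ and $\nabla^{*}$ adapted to $\ell_{\alpha}$.

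The adaptation of $\nabla$ to $\ell_{\Omega}$ is immediate. Since $\Omega$ has maximal rank $2d$ and $i_{E}\Omega=0$, we have $\ell_{\Omega}=\langle E\rangle$; then $\nabla E=0$ gives, for every $Y\in\Gamma(TM^{2d+1})$ and every section $fE$ of $\ell_{\Omega}$, the identity $\nabla_{Y}(fE)=(Yf)E\in\Gamma(\ell_{\Omega})$, so $\nabla$ is adapted to $\ell_{\Omega}$.

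For the adaptation of $\nabla^{*}$ to $\ell_{\alpha}$ I would first show that $\Theta$ is a parallel section of the canonical bundle in the sense of \cref{def:para}. Starting from $\Omega=g(\Theta\,\cdot,\cdot)$ and dualising the correct slot, exactly as in the computation leading to \cref{eq:38,eq:para}, one obtains $(\nabla_{Z}\Omega)(X,Y)=g(\nabla^{*}_{Z}\Theta X-\Theta\nabla_{Z}X,Y)$; since $\nabla\Omega=0$ and $g$ is nondegenerate, this forces $\nabla^{*}_{Z}\Theta X=\Theta\nabla_{Z}X$, i.e. $\widetilde{\nabla}\Theta=0$ by \cref{eq:31}. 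Now recall that $\ell_{\alpha}=\text{Im}(\Theta)$ and that $\Theta$ is invertible along $\ell_{\alpha}$ by \cref{eq:75}. Given $S\in\Gamma(\ell_{\alpha})$, I write $S=\Theta W$ with $W=-\Theta S$ (using $\Theta^{2}S=-S$, which follows from \cref{eq:72} since $\alpha(S)=0$); then $\nabla^{*}_{Y}S=\nabla^{*}_{Y}(\Theta W)=\Theta(\nabla_{Y}W)\in\Gamma(\text{Im}\,\Theta)=\Gamma(\ell_{\alpha})$. Hence $\nabla^{*}$ is adapted to $\ell_{\alpha}$, and the SMAT $(\nabla,\nabla^{*})$ is suitably adapted to both distributions.

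The main obstacle is the middle step, namely that $\nabla\Omega=0$ is equivalent to the parallelism $\widetilde{\nabla}\Theta=0$; this hinges on the identity $(\nabla_{Z}\Omega)(X,Y)=g(\nabla^{*}_{Z}\Theta X-\Theta\nabla_{Z}X,Y)$, which must be derived carefully by dualising the slot of $g(\Theta X,Y)$ carrying $Y$ (so that $\nabla^{*}$, rather than $\nabla$, appears in front of $\Theta X$). Everything else is formal: the adaptation to $\ell_{\Omega}$ is a one-line consequence of $\nabla E=0$, and the adaptation to $\ell_{\alpha}$ follows mechanically once $\Theta$ is parallel and $\text{Im}(\Theta)=\ell_{\alpha}$ with $\Theta$ invertible along $\ell_{\alpha}$. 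I do not expect a single connection to adapt to both distributions simultaneously: the duality between $\nabla$ and $\nabla^{*}$ is precisely what splits the two adaptation conditions between the two SMAT connections.
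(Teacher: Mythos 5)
Your proof is correct, and it reaches the same conclusion with the same pairing ($\nabla$ adapted to $\ell_{\Omega}$, $\nabla^{*}$ adapted to $\ell_{\alpha}$), but the mechanism you use for the $\ell_{\alpha}$-half is genuinely different from the paper's. The paper never invokes the tensor $\Theta$ in this proof: it observes that $\alpha=g(\cdot,E)$ (\cref{eq:74}) together with $\nabla E=0$ and the duality condition gives $(\nabla^{*}_{X}\alpha)Y=g(Y,\nabla_{X}E)=0$, i.e.\ $\nabla^{*}\alpha=0$, so that $\nabla^{*}$ preserves $\ell_{\alpha}=\ker\alpha$ because $i_{\nabla^{*}X}\alpha=\nabla^{*}(i_{X}\alpha)=0$ for $X\in\ker\alpha$; symmetrically, $\nabla\Omega=0$ makes $\nabla$ preserve $\ell_{\Omega}=\ker\Omega$. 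You instead convert $\nabla\Omega=0$ into the parallelism $\nabla^{*}_{Z}\Theta X=\Theta\nabla_{Z}X$ via the identity of \cref{eq:38}, and then exploit $\ell_{\alpha}=\mathrm{Im}\,\Theta$ with $\Theta$ invertible along $\ell_{\alpha}$ (your verification that $S=\Theta(-\Theta S)$ for $S\in\ker\alpha$ is correct, since $\Theta^{2}=-I+\alpha\otimes E$); for $\ell_{\Omega}$ you use $\nabla E=0$ directly on the line bundle $\langle E\rangle$ rather than the kernel characterization. Both mechanisms are sound. The paper's route is shorter and needs the compatible structure only through $\alpha=g(\cdot,E)$, whereas yours is slightly longer but makes explicit the link with the parallel sections of the canonical bundle from \cref{def:para} and \cref{th:mainTheorem}, which is a worthwhile conceptual connection the paper only draws elsewhere.
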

   \begin{proof}
   Select an SHS connection $\nabla$ on $(M^{2d+1},\Omega,\alpha)$ and a co-Hermitian metric $g$ on $(M^{2d+1},\Omega,\alpha)$. Using a direct computation, the dual connection $\nabla^{*}$ of $\nabla$ with respect to $g$ satisfies:
   
\begin{equation}
    (\nabla_{X}^{*}\alpha)Y=Xg(Y,E)-g(\nabla^{*}_{X}Y,E),
\end{equation}

for any vector field $X,Y$ in $M^{2d+1}$.
According to the duality condition, we obtain
\begin{equation}
(\nabla_{X}^{*}\alpha)Y=g(Y,\nabla_{X}E),
\end{equation}
for any vector fields $X,Y$ on $M^{2d+1}$. We infer departing from the condition of \cref{eq:50} that:
\begin{equation}
\nabla\Omega=0,\, \nabla^{*}\alpha=0.
\label{eq:77}
\end{equation} 

  With reference to \cref{eq:77}, we obtain by a straightforward computation
  \begin{equation}
   \nabla i_{X}\Omega=i_{\nabla X}\Omega,\, \forall X\in \ell_{\Omega},
   \label{eq:firs}
   \end{equation}
   \begin{equation}
      \nabla^{*} i_{X}\alpha=i_{\nabla^{*} X}\alpha,  \forall X\in \ell_{\alpha}.
      \label{eq:second}
   \end{equation}
Therefore, from \cref{eq:firs,eq:second}, we get:
$$
\nabla \ell_\Omega\subset\ell_\Omega\,, \nabla^{*}\ell_{\alpha}\subset \ell_{\alpha}.$$
The desired adapted smat shs connections are therefore obtained.
\end{proof}

\begin{figure}[H]
    \centering
\begin{center}
    \begin{tikzpicture}[xscale = 2]
    \fill[gray!20] (0,0) -- (2,0) -- (3,2) -- (1,2) -- cycle;
    \draw (0,0) -- (2,0) -- (3,2) node[pos = 0.4, below right]{$\ell_\alpha = \text{ker }\alpha$} node[pos = 0.7, below right]{$\nabla^*$-totally geodesic} -- (1,2) -- cycle;
    \draw[thick] (1.5,-1) -- ++(0,1);
    \draw[dashed, thick] (1.5,0) -- ++(0,1);
    \draw[->, thick] (1.5,1) -- ++(0,2) node[pos = 0.9, left=5pt]{$\ell_\Omega = <E>$}node[pos = 0.9, right=5pt]{$\nabla$-totally geodesic};
\end{tikzpicture}
 \end{center}
\caption{Smat connections on a shs manifold.}
    \label{fig:adaptedconnections}
\end{figure}
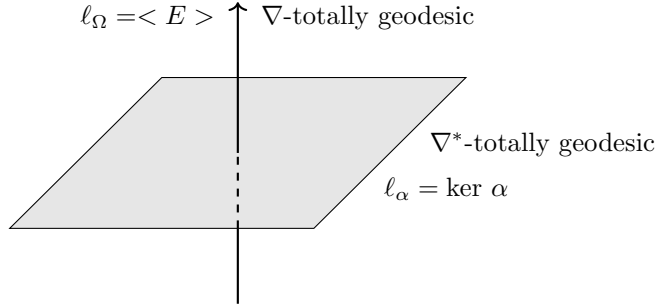
\subsection{The torsion of the dual connection of an SHS connection}
We may denote the Darboux  coordinates by $(x^{0}, z^{i})\in \mathbb{R}\times \mathbb{C}^{d},$ 
where $z^{i}=x_{i}+i x_{i+d}$ and $\Bar{z}^{i}=x_{i}-i x_{i+d}.$
According to Darboux theorem, it is to be noted that in a local Darboux charts $(U^{i}, x^{0}, z^{i})$, we obtain the following local representation of $E,\,\Omega,\, \alpha,\,\Theta:$ 
\begin{align}
    E_{U^{i}}=\frac{\partial}{\partial x^{0}},\, \Omega_{U^{i}}=\frac{i}{2}dz^{i}\wedge d\Bar{z}^{i},\, \alpha_{U^{i}}=dx^{0}+(\frac{\alpha_{i}}{2}+\frac{\alpha_{i+d}}{2i}) dz^{i}+(\frac{\alpha_{i}}{2}-\frac{\alpha_{i+d}}{2i}) d\Bar{z}^{i},\\
    \Theta_{U^{i}}=\frac{1}{2}(\Theta^{j}_{i} dz^{i}\otimes \frac{\partial}{\partial z^{j}}+\Theta^{\Bar{j}}_{i} dz^{i}\otimes\frac{\partial}{\partial\Bar{z}_{j}}+\Theta^{j}_{\Bar{i}} d\Bar{z}^{i}\otimes\frac{\partial}{\partial z_{j}}+ \Theta^{\Bar{j}}_{\Bar{i}}d\Bar{z}^{i}\otimes\frac{\partial}{\partial \Bar{z}_{j}}),  
\end{align}

where $\Theta^{j}_{\Bar{i}}$, $\Theta^{\Bar{j}}_{\Bar{i}}$, $\Theta^{\Bar{j}}_{i}$, $\Theta^{j}_{i}$ are certain functions on $U^{i}$.
\vspace{1ex}
The co-Hermitian metric $g$ satisfies the following identities:
\begin{equation}
   g_{00}=1,\, g_{0i}=0,\,g_{0\Bar{i}}=0,\, g_{ij}=g_{\Bar{i}\Bar{j}}=0\,, g_{i\Bar{j}}=g_{\Bar{j}i}\,,  g_{j\Bar{i}}=\Bar{g}_{i\Bar{j}},
    \label{eq:coherloca}
\end{equation}
Hence locally, we get
\begin{equation} 
g_{U^{i}}=\alpha_{U^{i}}\otimes\alpha_{U^{i}}+2g_{i\Bar{j}}dz^{i}\otimes d\Bar{z}^{j},
\end{equation}
where $g_{i\Bar{j}}$ are certain functions on $U^{i}$.

\vspace{2ex}

\begin{proposition}
    The local components of the torsion tensor $T^{{\nabla^*}}$ of the dual of an shs connection with respect to Darboux charts are provided by:

    \begin{enumerate}
        \item 
        $T^{\nabla^*}(\partial_{0},\partial_{0})=0,$\,\, $T^{\nabla^*}(\partial_{0},\partial_{j})=g^{\Bar{i}p}\partial_{0}g_{\Bar{i}j}\partial_{p}$,\, j=1,..,2d,

        \item $T^{\nabla^{*}}(\partial_i,\partial_k)=\partial_{k}\Theta_{i}^{p}\Theta_{p}^{s}\partial_{s}+\Theta_{i}^{p}\Gamma_{kp}^{m}\Theta_{m}^{l}\partial_l-\partial_i\Theta_ {k}^{p}\Theta^{r}_{p}\partial_r-\Theta_ {k}^{p}\Gamma_{ip}^{n}\Theta^{a}_n\partial_a$,\\\,\, i,k=1,...,2d.
        
    \end{enumerate}
    \label{pr:torsiondual}
\end{proposition}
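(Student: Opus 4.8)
The plan is to extract both families of components from the two structural identities satisfied by an shs connection $\nabla$ and its $g$-dual $\nabla^{*}$: the parallel-section identity $\nabla^{*}_{X}\Theta Y=\Theta\nabla_{X}Y$ (which follows from $\nabla\Omega=0$ and $\Omega=g(\Theta\,\cdot,\cdot)$, \cref{eq:31}), together with $\nabla E=0$ and $\nabla^{*}\alpha=0$ from \cref{eq:77}. Since $\nabla$ is torsion-free, setting $U=\nabla^{*}-\nabla$ gives $T^{\nabla^{*}}(X,Y)=U(X,Y)-U(Y,X)$, and the SMAT relation of \cref{th:smat} ties $U$ to $\nabla g$ through $g(U(X,Z),Y)=(\nabla_{X}g)(Y,Z)$; this is the bridge between the torsion and the explicit metric and structure coefficients in a Darboux chart.

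I would first treat the Reeb components. The identity $T^{\nabla^{*}}(\partial_{0},\partial_{0})=0$ is immediate from antisymmetry. For $T^{\nabla^{*}}(\partial_{0},\partial_{j})$ I would use $\partial_{0}=E$ together with $\nabla E=0$ and torsion-freeness to obtain $\nabla_{E}\partial_{j}=\nabla_{\partial_{j}}E+[E,\partial_{j}]=0$; feeding this into $g(T^{\nabla^{*}}(E,\partial_{j}),Z)=(\nabla_{E}g)(\partial_{j},Z)-(\nabla_{\partial_{j}}g)(E,Z)$ collapses the first term to the bare Reeb derivative $\partial_{0}\,g(\partial_{j},Z)$, while the vanishing components $g_{00}=1$, $g_{0i}=g_{0\bar{i}}=0$, $g_{ij}=0$ of \cref{eq:coherloca} kill everything except $\partial_{0}g_{\bar{i}j}$. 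Raising the surviving index with $g^{\bar{i}p}$ then gives $T^{\nabla^{*}}(\partial_{0},\partial_{j})=g^{\bar{i}p}\partial_{0}g_{\bar{i}j}\partial_{p}$.

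For the transverse components I would use that on $\ell_{\alpha}$ the endomorphism $\Theta$ is invertible with $\Theta^{2}=-\mathrm{Id}$ (the restriction of \cref{eq:72}), so each $\ell_{\alpha}$-basis vector satisfies $\partial_{k}=\Theta(-\Theta\partial_{k})$. Inserting this into the parallel-section identity yields $\nabla^{*}_{\partial_{i}}\partial_{k}=-\Theta\nabla_{\partial_{i}}(\Theta\partial_{k})$, and expanding $\Theta\nabla_{\partial_{i}}(\Theta_{k}^{p}\partial_{p})$ in the chart produces precisely $(\partial_{i}\Theta_{k}^{p})\Theta_{p}^{r}\partial_{r}+\Theta_{k}^{p}\Gamma_{ip}^{n}\Theta_{n}^{a}\partial_{a}$. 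Antisymmetrizing in $i,k$ then assembles the four-term expression of statement (2).

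The step I expect to be the main obstacle is controlling the Reeb direction throughout part (2). Writing $\partial_{k}=\Theta(-\Theta\partial_{k})$ is clean only modulo the $\alpha\otimes E$ correction in \cref{eq:72}, and $\nabla^{*}E$ is not in general zero for an shs connection; so I must verify that all contributions pointing along $E$—those coming from the $\alpha\otimes E$ term and from the bracket $[\partial_{i},\partial_{k}]=-d\alpha(\partial_{i},\partial_{k})E$ of the $\ell_{\alpha}$-frame—cancel, or are absent because the chosen $\partial_{i}$ genuinely lie in $\ell_{\alpha}$. Here the stabilizing condition $i_{E}d\alpha=0$ and $\nabla E=0$ are exactly what is needed to annihilate these terms; securing that cancellation, rather than the subsequent index expansion (which is routine), is the delicate part of the argument.
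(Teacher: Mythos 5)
Your overall strategy is the same as the paper's: the Reeb components come from the duality condition (your SMAT identity $g(T^{\nabla^*}(X,Y),Z)=(\nabla_Xg)(Y,Z)-(\nabla_Yg)(X,Z)$ is just the paper's Christoffel manipulation of $\partial_0 g_{ij}=\Gamma^m_{0i}g_{mj}+\Gamma^{*m}_{0j}g_{im}$ repackaged), and the transverse components come from the parallel-section/gauge identity, which is exactly the paper's route to \cref{eq:dualexp} and then to $T^{\nabla^*}(\partial_i,\partial_k)=\Theta((\nabla_{\partial_k}\Theta)\partial_i-(\nabla_{\partial_i}\Theta)\partial_k)$. Two points need tightening. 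First, in part (1) the term $(\nabla_{\partial_j}g)(E,\partial_m)$ is \emph{not} killed by the vanishing components of \cref{eq:coherloca}: it contains $-g(E,\nabla_{\partial_j}\partial_m)=-\alpha(\nabla_{\partial_j}\partial_m)=-\Gamma^0_{jm}$, and $g_{00}=1$, so this survives; the paper's own proof silently drops the corresponding $g_{0m}\Gamma^m_{ji}$ term, so you have reproduced its argument including this elision, but you cannot claim the metric components alone dispose of it. Second, you correctly identify the $\alpha\otimes E$ correction in $\Theta^2=-I+\alpha\otimes E$ as the delicate point of part (2), but you invoke the wrong mechanism: expanding $-\Theta\nabla_{\partial_i}(\Theta\partial_k)$ produces the extra term $-\alpha(\nabla_{\partial_i}\partial_k)E$, and this disappears from $T^{\nabla^*}(\partial_i,\partial_k)$ not because of $i_Ed\alpha=0$ but because it is symmetric in $(i,k)$ (as $\nabla$ is torsion-free and the $\partial_i$ are commuting coordinate fields) and so cancels under antisymmetrization; for the same reason the bracket $[\partial_i,\partial_k]$ you worry about vanishes identically, since the proposition is stated for the coordinate frame, not for the adapted frame $\partial_i-\alpha^i\partial_0$ spanning $\ell_\alpha$.
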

\begin{proof}

\begin{equation}
    T^{\nabla^{*}}(\partial_{0},\partial_{0})=0,\,T^{\nabla^{*}}(\partial_{0},\partial_{j})=T^{*0}_{0j}\partial_{0}+T^{*l}_{0j}\partial_{l},\, T^{\nabla^{*}}(\partial_{i},\partial_{{j}})=T^{*0}_{ij}\partial_{0}+T^{*l}_{i{j}}\partial_{l}.
\end{equation}

Bearing in mind that $\nabla E=0$ is equivalent to $\nabla^{*}\alpha=0$, then
for any vector field $X, Y$, we get

$$0=(\nabla_{X}^{*}\alpha)Y=X.\alpha(Y)-\alpha(\nabla^{*}_{X}Y),$$
and 
$$0=(\nabla_{Y}^{*}\alpha)X=Y.\alpha(X)-\alpha(\nabla^{*}_{Y}X).$$
From the above analysis, we obtain:

$$X.\alpha(Y)-Y.\alpha(X)-\alpha([X,Y])=\alpha(\nabla^{*}_{X}Y)-\alpha(\nabla^{*}_{Y}X)-\alpha([X,Y]),$$
that is equivalent to:
\begin{equation}
    \label{eq:deriv}
    (d\alpha)(X,Y)=\alpha(T^{\nabla^*}(X,Y)).
\end{equation}

The stabilization condition yields: 

\begin{equation}
    \alpha(T^{*}(E,Y))=0.
    \label{eq:derivO}
\end{equation}

It follows from the above \cref{eq:deriv,eq:derivO} that:

\begin{equation}
   T^{*0}_{0j}=0, \, T^{*0}_{ij}=0,\, \forall i,j.
    \label{eq:torsionpre}
\end{equation}

 Hence, departing from \cref{eq:torsionpre}, we deduce that:

\begin{equation}
  T^{\nabla^{*}}(\partial_{0},\partial_{0})=0,\,\,T^{\nabla^{*}}(\partial_{0},\partial_{j})=T^{*l}_{0,j}\partial_{l}\,\,, T^{\nabla^{*}}(\partial_{i},\partial_{j})=T^{*l}_{ij}\partial_{l}.
\end{equation}

\begin{enumerate}
    \item Now from duality condition, we get:

\begin{equation}
    \partial_{0}g_{ij}=g_{jm}\Gamma^{m}_{0i}+g_{im}\Gamma^{*m}_{0j}
    \label{eq:usedua}
\end{equation}
\vspace{1ex}
 Replacing $j$ by $0$ and $0$ by $j$ in \cref{eq:usedua}, we obtain:
 
\begin{equation}
\partial_{j}g_{i0}=g_{0m}\Gamma^{m}_{ji}+g_{im}\Gamma^{*m}_{j0}.
\label{eq:usedualpermu}
\end{equation}
\vspace{1ex}
Resting upon \cref{eq:usedua,eq:usedualpermu}, we get:
\begin{equation}
  \partial_{0}g_{ij}= g_{im}(\Gamma^{*m}_{0j}-\Gamma^{*m}_{j0})+g_{jm}\Gamma^{m}_{0i}.
\end{equation}
\vspace{1ex}
Bearing in mind that $\Gamma^{m}_{0i}=\Gamma^{m}_{i0}=0$, then we have:

\begin{equation}
     g_{im}(\Gamma^{*m}_{0j}-\Gamma^{*m}_{j0})=\partial_{0}g_{ij}.
     \label{eq:principdual}
\end{equation}

With regard to \cref{eq:principdual}, we get:

\begin{equation}
     g_{i\Bar{p}}(\Gamma^{*\Bar{p}}_{0j}-\Gamma^{*\Bar{p}}_{j0})=\partial_{0}g_{ij}.
\end{equation}     
and therefore,
\begin{equation}
     T^{*\Bar{p}}_{0j}=g^{i\Bar{p}}\partial_{0}g_{ij}.
\end{equation}  

Accordingly, referring to \cref{eq:coherloca}, we obtain:

\begin{equation}
    T^{*\Bar{p}}_{0j}=0.
\end{equation}

Using again formula \cref{eq:principdual} and replacing $i$ by $\Bar{i}$, we get:

\begin{equation}
     g_{\Bar{i}m}(\Gamma^{*m}_{0j}-\Gamma^{*m}_{j0})=\partial_{0}g_{\Bar{i}j};
\end{equation}

and hence,

\begin{equation}
     T^{*p}_{0j}=g^{\Bar{i}p}\partial_{0}g_{\Bar{i}j};
\end{equation}

Consequently, we infer that

\begin{equation}
    T^{\nabla^{*}}(\partial_{0},\partial_{j})=g^{\Bar{i}p}\partial_{0}g_{\Bar{i}j}\partial_{p}.
\end{equation}
\item 
  Departing from a straightforward computation, we have 
  
    \begin{equation}
    \partial_{i}.\Omega(\partial_j,\Theta \partial_k)=\Omega(\nabla_{\partial _i}\partial_j,\Theta \partial_k)+\Omega(\partial_j,\nabla_{\partial_i}\Theta \partial_k), \quad\forall i,k=1,..,2d.
    \label{eq:parall}
    \end{equation}
    
By a simple observation, we get:

\begin{equation}
    \nabla_{\partial_i}\Theta \partial_k=\Theta(\nabla_{\partial_i}\partial_k-\Theta(\nabla_{\partial_i}\Theta)\partial_{k}), \quad\forall i,k=1,..,2d.
\end{equation}

Combining the above equality with \cref{eq:parall}, we get:

$$ \partial_i.\Omega(\partial_j,\Theta \partial_k)=\Omega(\nabla_{\partial_i}\partial_j,\Theta \partial_k)+\Omega(\partial_j,\Theta(\nabla_{\partial_{i}}\partial_{k}-\Theta(\nabla_{\partial_i}\Theta)\partial_k).\,$$

\vspace{1ex}
Eventually, substituting $\Omega(\partial_j,\Theta \partial_k)$ for $g(\partial_j,\partial_k)$, we get:
\begin{equation}
\partial_i.g(\partial_j,\partial_k)=g(\nabla_{\partial_i}\partial_j, \partial_k)+g(\partial_j,\nabla_{\partial_i}\partial_k-\Theta(\nabla_{\partial_i}\Theta)\partial_{k})\,.
\end{equation}
Thus,
\begin{equation}
\nabla^{*}_{\partial_i}\partial_k=\nabla_{\partial_i}\partial_k-\Theta(\nabla_{\partial_i}\Theta)\partial_{k}.
\label{eq:dualexp}
\end{equation}

It follows from the above that:
\begin{equation}
    T^{\nabla^{*}}(\partial_i,\partial_k)=\Theta((\nabla_{\partial_k}\Theta)\partial_{i}-(\nabla_{\partial_i}\Theta)\partial_{k}).
\end{equation}

Consequently, we obtain:

\begin{equation}
    T^{\nabla^{*}}(\partial_i,\partial_k)=\Theta\nabla_{\partial_k}\Theta\partial_{i}-\Theta\nabla_{\partial_i}\Theta\partial_{k}.
\end{equation}

By a direct computation, we get:

\begin{equation}
    T^{\nabla^{*}}(\partial_i,\partial_k)=\Theta\nabla_{\partial_k}\Theta_{i}^{p}\partial_{p}-\Theta\nabla_{\partial_i}\Theta_ {k}^{p}\partial_{p};
\end{equation}

\begin{equation}
=\Theta(\partial_{k}\Theta_{i}^{p}\partial_{p}+\Theta_{i}^{p}\nabla_{\partial_k}\partial_p)-\Theta(\partial_i\Theta_ {k}^{p}\partial_{p}+\Theta_ {k}^{p}\nabla_{\partial_i}\partial_{p});
\end{equation}

Finally, we obtain:
\begin{equation}
  T^{\nabla^{*}}(\partial_i,\partial_k)=\partial_{k}\Theta_{i}^{p}\Theta_{p}^{s}\partial_{s}+\Theta_{i}^{p}\Gamma_{kp}^{m}\Theta_{m}^{l}\partial_l-\partial_i\Theta_ {k}^{p}\Theta^{r}_{p}\partial_r-\Theta_ {k}^{p}\Gamma_{ip}^{n}\Theta^{a}_n\partial_a.
\end{equation}

\end{enumerate}
\end{proof}

\subsection{ Obstructions to the existence of statistical SMAT shs connections on shs manifolds}

We shall now shed light on the work of Lauritzen in the information geometry area.

\subsubsection{Statistical connections}


\citet{amari1987differential,amari2012differential} and \cite{chentsov1982statistical} introduced a family of $\alpha$-connections $\nabla^{\alpha}$, where $\alpha\in\mathbb{R}$ is symmetric but non-metric on the statistical model $ \mathcal{P}_{d}(\Xi)$, such that:

$$\nabla^{\alpha}g^{F}=\alpha T^{A-C},$$

with $T^{A-C}$ being often called the Amari-Chentsov tensor, in honour of both researchers who have worked intensively on this structure. It is a totally symmetric 3-tensor defined by the following formula:

 \begin{equation}
     g_{ij}^{F}=E_{p_\theta}(\partial_{i}\,log\, p_{\theta}(x)\partial_{j}\,log\, p_{\theta}(x))\,\,\forall i,j=1,...,d,
 \end{equation}
\begin{equation}
    T_{ijk}^{A-C}=E_{p_\theta}(\partial_{i}\,log\, p_{\theta}(x)\partial_{j}\,log\, p_{\theta}(x) \partial_{k}\,log\, p_{\theta}(x))\,\,\forall i,j,k=1,..,d.
\end{equation}
 \vspace{1ex}
$\alpha$-connections are defined by:
\begin{equation}
  \Gamma^{\alpha}_{ij:k}(\theta)=E_{p_{\theta}}(\frac{\partial^2}{\partial\theta_{i}\partial\theta_{j}}l_{\theta}\frac{\partial}{\partial_\theta}_{k}l_{\theta})+\frac{1-\alpha}{2}E_{p_{\theta}}(\frac{\partial}{\partial\theta_{i}}l_{\theta}\frac{\partial}{\partial\theta_{j}}l_{\theta}\frac{\partial}{\partial\theta_{k}}l_{\theta}) 
\end{equation}

For $\alpha=0$, we obtain the Levi-Civita connection of the Fisher metric defined as follows:

\begin{equation}
    \Gamma^{lc}_{ij:k}(\theta)=E_{p_{\theta}}(\frac{\partial^2}{\partial\theta_{i}\partial\theta_{j}}l_{\theta}\frac{\partial}{\partial_\theta}_{k}l_{\theta})+\frac{1}{2}E_{p_{\theta}}(\frac{\partial}{\partial\theta_{i}}l_{\theta}\frac{\partial}{\partial\theta_{j}}l_{\theta}\frac{\partial}{\partial\theta_{k}}l_{\theta}).
\end{equation}
\vspace{1ex}
Obviously, in a statistical model $ \mathcal{P}_{d}(\Xi)$, we have the following equation

\begin{equation}
   \partial_{\theta_k}.g^{F}(\partial_{\theta_i},\partial_{\theta_j})=g^{F}(\nabla^{\alpha}_{\partial_{\theta_k}}\partial_{\theta_i},\partial_{\theta_j})+g^{F}(\partial_{\theta_i},\nabla^{-\alpha}_{\partial_{\theta_k}}\partial_{\theta_j}),
\label{eq:dualityinsta} 
\end{equation}
where $g^{F}$ is the Fisher information matrix defined in \cref{eq:Fisher}. It is to be noted that Steffen L.\citet{lauritzen1987statistical} was motivated by the above construction of $\nabla^{\alpha}$, as well as the concept of Amari-Chentsov tensor and he came up with the definition of the notion statistical manifold.
 \vspace{1ex}
 \begin{defn}
     A statistical manifold refers to a triple $(M, g,\nabla,\nabla^*)$   satisfying the following conditions:
     \begin{enumerate}
         \item $\forall X,Y,Z\in \Gamma(TM)$, 
         \begin{equation}
X.g(Y,Z)=g(\nabla_{X}Y,Z)+g(Y,\nabla^{*}_{X}Z).
       \end{equation}
 
         \item  
     Both $\nabla$ and $\nabla^*$ are torsion free. 
     \end{enumerate}
    A statistical manifold  can equivalently be defined in terms of a Riemannian manifold $(M, g)$ with a 3-symmetric tensor $B$.
 \end{defn}
 \vspace{2ex}
\cite{lauritzen1987statistical} For a statistical manifold $(M,g,\nabla,\nabla^*)$, the dual connections $(\nabla,\nabla^*)$ are called statistical connections and the Levi-Civita
connection of the metric $g$ is characterized by:
\begin{equation}
2\nabla^{lc}=\nabla+\nabla^{*}.
\label{eq:moy}
\end{equation}
Alternatively, one may build up statistical connections $(\nabla^{D},\nabla^{*D})$ using smooth functions $D$ (three-times differentiable), which satisfy some conditions (cf. \cite{matumoto1993any}, \cite{eguchi1983second}, \cite{eguchi1992geometry}) as follows:
\begin{align*}
    &\Gamma^{D}_{ijk}(\theta)=-\partial_{\theta_1i}\partial_{\theta_1j} \partial_{\theta_2k}D(\theta_{1},\theta_{2})\restriction_{\theta_1=\theta_2=\theta}\\
    \text{and}\quad&\Gamma^{*D}_{ijk}(\theta)=-\partial_{\theta_2i}\partial_{\theta_2j} \partial_{\theta_1k}D(\theta_{1},\theta_{2})\restriction_{\theta_1=\theta_2=\theta},
\end{align*}
where $\Gamma^{D}_{ijk}(\theta)$, $\Gamma^{*D}_{ijk}(\theta)$
are the symbols of the dual connections $\nabla^{D}$ and $\nabla^{*D}$. In the literature, it is often indicated that the function $D$ is a divergence function (contrast function), or an entropy function. 

\subsubsection{Statistical connections in an shs manifold}

The suitably adapted SMAT connections $(\nabla,\nabla^*)$ to the shs structure $(\Omega,\alpha)$, will be called \textbf{the SMAT shs connections} and the set of all SMAT shs connections on $(M^{2d+1},\Omega,\alpha)$ is expressed as:


$$\mathcal{E}_{(\Omega,\alpha,g)}(M^{2d+1})=\left\{ (\nabla,\nabla^*) \Big{\lvert}
\begin{array}{ccc}
\nabla E=0,\,\nabla \Omega=0\\
\text{or}\\
\nabla^{*}\alpha=0,\,\nabla \Omega=0
\end{array}\right\}$$

We close this section with the investigation of  obstructions to the existence of statistical SMAT shs connections. We prove that in some shs manifolds, SMAT shs connections are not statistical connections.
\vspace{2ex}

\begin{proposition}  Let $(\Omega=d\alpha,\alpha)$ be a contact manifold. The SMAT shs connections are not statistical connections.
\label{prop:contact}
\end{proposition}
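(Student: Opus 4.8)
The plan is to locate the obstruction entirely in the torsion of the dual connection. By construction a SMAT shs connection $(\nabla,\nabla^*)$ has $\nabla$ torsion-free, so the pair can fail to be a statistical connection (in the sense of Lauritzen, where \emph{both} dual connections must be symmetric) only through $\nabla^*$; hence it suffices to prove that $T^{\nabla^*}\neq 0$. The essential ingredient is the identity \cref{eq:deriv}, which follows from the equivalence $\nabla E=0\Leftrightarrow\nabla^*\alpha=0$ and reads
$$(d\alpha)(X,Y)=\alpha\big(T^{\nabla^*}(X,Y)\big),\qquad\forall X,Y\in\Gamma(TM^{2d+1}).$$
This turns the question of detecting torsion into the purely structural question of whether $d\alpha$ can vanish.

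First I would specialise to the contact hypothesis $\Omega=d\alpha$. Then $d\alpha$ coincides with the Hamiltonian form $\Omega$, which is of maximal rank $2d$; equivalently the contact condition $\alpha\wedge(d\alpha)^{d}\neq 0$ says that $d\alpha=\Omega$ restricts to a nondegenerate symplectic pairing on the hyperplane distribution $\ell_\alpha=\ker\alpha$. At any point $p$ I would therefore choose $X,Y\in(\ell_\alpha)_p$ with $\Omega(X,Y)\neq 0$, and substitute them into the identity above to get
$$\alpha\big(T^{\nabla^*}(X,Y)\big)=(d\alpha)(X,Y)=\Omega(X,Y)\neq 0.$$
Since a covector can only be nonzero on a nonzero argument, this forces $T^{\nabla^*}(X,Y)\neq 0$, so $\nabla^*$ is not symmetric.

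Finally I would conclude that no SMAT shs connection on a contact manifold is a statistical connection, because statisticity requires $T^{\nabla^*}=0$ whereas we have produced a pair of vectors on which it is nonzero. I do not expect a genuine obstacle here: the whole argument rests only on \cref{eq:deriv} together with the maximal rank of $\Omega$, and applies uniformly to every SMAT shs connection since \cref{eq:deriv} holds for the dual of any shs connection. The one conceptual point to state clearly is that the contact equation $\Omega=d\alpha$ is precisely what forces the dual torsion, paired against $\alpha$, to reproduce the full symplectic form on $\ell_\alpha$, and such a form is nondegenerate and hence cannot vanish identically.
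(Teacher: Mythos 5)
Your proposal is correct and takes essentially the same route as the paper: both rest on the identity $(d\alpha)(X,Y)=\alpha\bigl(T^{\nabla^*}(X,Y)\bigr)$, which holds for the dual of any shs connection, combined with the non-vanishing of $d\alpha=\Omega$ on the contact distribution. The paper argues by contradiction (statisticity would force $d\alpha=0$), while you argue the contrapositive directly by exhibiting $X,Y\in\ker\alpha$ with $\Omega(X,Y)\neq 0$; this is only a presentational difference.
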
  
\begin{proof}
    Assume that $(\nabla,\nabla^{*})$ are statistical smat shs connections on the contact manifold. For any $X$ and $Y$, we have: 
$$0=(\nabla_{X}^{*}\alpha)Y=X.\alpha(Y)-\alpha(\nabla^{*}_{X}Y),$$
and 
$$0=(\nabla_{Y}^{*}\alpha)X=Y.\alpha(X)-\alpha(\nabla^{*}_{Y}X).$$

By a simple manipulation, we get:

$$X.\alpha(Y)-Y.\alpha(X)-\alpha([X,Y])=\alpha(\nabla^{*}_{X}Y)-\alpha(\nabla^{*}_{Y}X)-\alpha([X,Y]),$$

which is equivalent to:

\begin{equation}
    \label{eq:closed}
    (d\alpha)(X,Y)=\alpha(T^{\nabla^*}(X,Y)).
\end{equation}

By assumption, $\nabla^{*}$ is a symmetric connection. Accordingly, we have:

$$d\alpha=0.$$

This is a contradiction and the proof of the proposition is hence complete.
\end{proof}
\begin{proposition}
    Let $(M^{2d+1},\Omega,\alpha)$ be a compact shs manifold whose fundamental group is finite. The SMAT shs connections are not statistical connections.
    \label{eq:128}
\end{proposition}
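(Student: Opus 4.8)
The plan is to argue by contradiction, reducing the statistical hypothesis to the closedness of $\alpha$ and then exploiting the topology of $M^{2d+1}$ to contradict the fact that $\alpha$ is nowhere vanishing.

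First I would suppose that some SMAT shs pair $(\nabla,\nabla^{*})$ on $(M^{2d+1},\Omega,\alpha)$ is statistical; in particular $\nabla^{*}$ is then torsion-free, so $T^{\nabla^{*}}=0$. Since $(\nabla,\nabla^{*})$ is an shs pair we have $\nabla E=0$, equivalently $\nabla^{*}\alpha=0$. Proceeding exactly as in the computation leading to \cref{eq:closed} in the proof of \cref{prop:contact} — writing out $0=(\nabla^{*}_{X}\alpha)Y$ and $0=(\nabla^{*}_{Y}\alpha)X$ and subtracting — one recovers the identity
\[
(d\alpha)(X,Y)=\alpha\bigl(T^{\nabla^{*}}(X,Y)\bigr),\qquad \forall X,Y\in\Gamma(TM^{2d+1}).
\]
Substituting $T^{\nabla^{*}}=0$ into this identity forces $d\alpha=0$; that is, $\alpha$ is a closed $1$-form, so the shs structure is in fact cosymplectic.

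Next I would bring in the topological hypothesis. Because $\pi_{1}(M^{2d+1})$ is finite, its abelianization $H_{1}(M^{2d+1};\mathbb{Z})$ is finite, hence the first Betti number vanishes and $H^{1}(M^{2d+1};\mathbb{R})=0$ (a finite group admits no nontrivial homomorphism to $\mathbb{R}$, so the universal-coefficient pairing $H^{1}(M;\mathbb{R})\cong\operatorname{Hom}(H_{1}(M;\mathbb{Z}),\mathbb{R})$ is trivial). Consequently the closed form $\alpha$ is exact: there exists a smooth function $f$ with $\alpha=df$. Since $M^{2d+1}$ is compact, $f$ attains a maximum at some point $p$, at which necessarily $df_{p}=0$, i.e. $\alpha_{p}=0$.

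This last conclusion contradicts the defining non-degeneracy of the stable Hamiltonian structure: the condition $\alpha\wedge\Omega^{d}\neq0$ forces $\alpha$ to be nowhere vanishing, so $\alpha_{p}\neq0$. The contradiction shows that no SMAT shs pair on $(M^{2d+1},\Omega,\alpha)$ can be a pair of statistical connections. The step I expect to be the crux is precisely the passage from the purely algebraic vanishing $T^{\nabla^{*}}=0$ to a genuine global obstruction: the argument only closes because finiteness of $\pi_{1}$ annihilates $H^{1}(M^{2d+1};\mathbb{R})$, which is exactly what upgrades ``$\alpha$ closed'' to ``$\alpha$ exact'' and hence, via compactness, to ``$\alpha$ vanishes somewhere'' — in direct conflict with the stability condition $\alpha\wedge\Omega^{d}\neq0$.
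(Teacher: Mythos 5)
Your proposal is correct and follows the paper's own argument essentially verbatim: derive $d\alpha=\alpha(T^{\nabla^*}(\cdot,\cdot))$ from $\nabla^*\alpha=0$, conclude $\alpha$ is closed and hence exact since $\pi_1$ is finite, and then use compactness to force $\alpha$ to vanish somewhere, contradicting $\alpha\wedge\Omega^d\neq 0$. You merely fill in the details (the $H^1(M;\mathbb{R})=0$ step and the critical point of $f$) that the paper leaves implicit.
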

\begin{proof}
Consider a statistical shs connection $(\nabla,\nabla^{*})$ on $M^{2d+1}$. It follows from \cref{eq:closed} that $\alpha$ is closed. By assumption, the fundamental group of $M^{2d+1}$ is finite. We deduce that $\alpha$ is exact. From this perspective, it should vanish
at some point $p\in M^{2d+1}$, owing to the compactness of $M^{2d+1}$. However, this is a contradiction because $\alpha$ is nowhere vanishing. This contradiction ends the proof of the proposition.
\end{proof}

\begin{proposition}
     Let $(M^{2d+1},\Omega,\alpha)$ be a compact shs manifold with a compatible almost contact metric structure $(\Theta, E,\alpha, g)$ such that the $Ric^{g}$ of $g$ is negative definite. SMAT shs connections are not therefore statistical connections.
     \label{prop:fundamenta}
\end{proposition}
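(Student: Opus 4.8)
The plan is to argue by contradiction and to exploit the explicit description of the torsion of $\nabla^{*}$ obtained in \cref{pr:torsiondual}, rather than only the closedness of $\alpha$ used in \cref{prop:contact,eq:128}. Suppose the SMAT shs connections $(\nabla,\nabla^{*})$ were statistical. By definition of a statistical manifold both $\nabla$ and $\nabla^{*}$ are torsion-free; since $\nabla$ is already torsion-free as an shs connection, the genuinely new information is $T^{\nabla^{*}}=0$. I would then read off the first family of torsion components from \cref{pr:torsiondual}, namely $T^{\nabla^{*}}(\partial_{0},\partial_{j})=g^{\bar{i}p}\partial_{0}g_{\bar{i}j}\partial_{p}$, and use the invertibility of the matrix $(g^{\bar{i}p})$ to conclude that $\partial_{0}g_{\bar{i}j}=0$ in every Darboux chart.

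The next step is to upgrade this coordinate statement into the intrinsic assertion that $E$ is a Killing field for $g$. Working in the Darboux coordinates $(x^{0},z^{i})$ of the previous subsection, the co-Hermitian metric reads $g=\alpha\otimes\alpha+2g_{i\bar{j}}\,dz^{i}\otimes d\bar{z}^{j}$ by \cref{eq:coherloca}. Since $E=\partial_{0}$ and $L_{E}\alpha=0$ by the stabilization condition, the term $\alpha\otimes\alpha$ is Reeb-invariant; moreover $L_{E}dz^{i}=0=L_{E}d\bar{z}^{j}$ because $z^{i},\bar{z}^{j}$ are transverse coordinates. Hence $L_{E}g=2(\partial_{0}g_{i\bar{j}})\,dz^{i}\otimes d\bar{z}^{j}$, which vanishes precisely because $\partial_{0}g_{i\bar{j}}=0$. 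Therefore $L_{E}g=0$, and $E$ is a Killing vector field which never vanishes; indeed it is a unit field, since $\alpha=g(\cdot,E)$ together with $i_{E}\alpha=1$ gives $g(E,E)=1$.

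Finally I would invoke the Bochner vanishing theorem. On a compact Riemannian manifold, every Killing field $X$ satisfies the integral identity $\int_{M}\operatorname{Ric}^{g}(X,X)\,dV=\int_{M}|\nabla^{g}X|^{2}\,dV\geq 0$. Because $\operatorname{Ric}^{g}$ is negative definite and $E$ never vanishes, $\operatorname{Ric}^{g}(E,E)<0$ at every point, so $\int_{M}\operatorname{Ric}^{g}(E,E)\,dV<0$, contradicting the inequality. This contradiction shows that $\nabla^{*}$ cannot be torsion-free, i.e., the SMAT shs connections are not statistical.

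The main obstacle is the middle step: translating the vanishing of the mixed $\partial_{0}$-components of $T^{\nabla^{*}}$ into $L_{E}g=0$. One must notice that it is exactly this component of the torsion, and \emph{not} the condition $d\alpha=0$ exploited in \cref{prop:contact,eq:128}, that controls the Reeb derivative of the transverse metric $g_{i\bar{j}}$; and one must verify that no further component of $L_{E}g$ survives, which is why the explicit local form of $g$ from \cref{eq:coherloca} and the invariance of the transverse coordinate differentials under the Reeb flow are needed. Once $E$ is shown to be Killing, the conclusion is immediate from classical Bochner theory.
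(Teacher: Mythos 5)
Your proposal is correct and follows essentially the same route as the paper: both arguments use Proposition \ref{pr:torsiondual} to deduce $\partial_{0}g_{i\bar{j}}=0$ from the vanishing of $T^{\nabla^{*}}(\partial_{0},\partial_{j})$, conclude that the nowhere-vanishing Reeb field $E$ is Killing, and then derive a contradiction from Bochner's theorem on compact manifolds with negative definite Ricci curvature. Your write-up merely makes explicit two steps the paper leaves implicit (the verification that $\partial_{0}g_{i\bar{j}}=0$ indeed gives $L_{E}g=0$ for the full co-Hermitian metric, and the integral form of the Bochner identity), which is a welcome clarification but not a different proof.
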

 \begin{proof}
     Resting on a direct computation, we have

\begin{equation}
    (L_{E}g)(\partial_{i},\partial_{\Bar{j}})=\partial_{0}g_{i\Bar{j}}.
\end{equation}

Consider a statistical shs connection $(\nabla,\nabla^{*})$ on $M^{2d+1}$. Based on the proposition 
    \ref{pr:torsiondual}, we get
\begin{equation}
    (L_{E}g)(\partial_{i},\partial_{\Bar{j}})=0.
\end{equation}
Therefore, the Reeb vector field $E$ is a Killing vector field. By assumption, $Ric^{g}<0$. Then according to \cite{bochner1946vector}, we deduce that $E$ vanishes identically. This is a contradiction and the result follows at once.
 \end{proof}

\begin{proposition}
   Let $(M^{2d+1},\Omega,\alpha)$ be an shs manifold with its compatible almost contact metric structure $(\Theta, E,\alpha, g)$. If the Ricci tensor $Ric^{g}$ of $g$ is positive definite, the SMAT shs connections are not statistical connections.
   \label{prop:fundabis}
\end{proposition}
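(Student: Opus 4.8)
The plan is to assume, for contradiction, that $(\nabla,\nabla^*)$ is a statistical SMAT shs connection and to show that this is incompatible with $Ric^g>0$, notably \emph{without} appealing to compactness (in contrast with \cref{prop:fundamenta}, whose Bochner-type argument needed it).

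First I would extract, exactly as in \cref{prop:contact} and \cref{prop:fundamenta}, the two consequences of the statistical hypothesis. Since $\nabla^*$ is then torsion-free, \cref{eq:closed} gives $d\alpha=0$. Since part (1) of \cref{pr:torsiondual} forces $\partial_0 g_{i\bar j}=0$, while the stabilization condition \cref{eq:51} already provides $\partial_0\alpha^i=\partial_0\alpha^{i+d}=0$, one obtains $L_E g=0$; that is, the Reeb field $E$ is a Killing vector field.

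The decisive step is to combine these two facts. Writing $\alpha=g(\cdot,E)$ for the metric dual of $E$ and $\nabla^{lc}$ for the Levi-Civita connection of $g$, I would split $\nabla^{lc}\alpha$ into its symmetric and antisymmetric parts. The Killing equation $L_E g=0$ says precisely that the symmetric part $(\nabla^{lc}_X\alpha)(Y)+(\nabla^{lc}_Y\alpha)(X)$ vanishes, while $d\alpha=0$ says the antisymmetric part $(\nabla^{lc}_X\alpha)(Y)-(\nabla^{lc}_Y\alpha)(X)$ vanishes. Adding the two identities gives $\nabla^{lc}\alpha=0$, so $E$ is a \emph{parallel} vector field.

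From parallelism the contradiction is immediate: $\nabla^{lc}E=0$ implies $R^g(X,Y)E=0$ for all $X,Y$, and tracing over $X$ in $R^g(X,E)E=0$ yields $Ric^g(E,E)=0$. Since $\alpha\wedge\Omega^d\ne 0$ forces $E$ to be nowhere vanishing, positivity of $Ric^g$ gives $Ric^g(E,E)>0$, the desired contradiction. I expect the only delicate point to be the symmetric/antisymmetric splitting that upgrades ``Killing with closed dual'' to ``parallel''; this is the structural reason the positive-curvature case requires no compactness, unlike the case $Ric^g<0$.
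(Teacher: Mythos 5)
Your proof is correct, but it follows a genuinely different route from the paper's. The paper extracts only one consequence of the statistical hypothesis, namely $d\alpha=0$ via \cref{eq:closed}, concludes that $(M^{2d+1},\Omega,\alpha,g)$ is almost cosymplectic, and then invokes Olszak's curvature identity $Ric^{g}(E,E)=-\lVert\nabla^{lc}E\rVert^{2}\leq 0$, which already contradicts positive definiteness of $Ric^g$ since $E$ is nowhere vanishing. You instead extract a second consequence --- the Killing property $L_E g=0$ coming from part (1) of \cref{pr:torsiondual}, exactly as in \cref{prop:fundamenta} --- and combine it with $d\alpha=0$ through the symmetric/antisymmetric splitting of $\nabla^{lc}\alpha$ to conclude that $E$ is $\nabla^{lc}$-parallel, whence $Ric^g(E,E)=0$. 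Each step checks out: the splitting argument is standard and the trace computation is right. What your version buys is self-containment (no appeal to Olszak's theorem) and a stronger intermediate conclusion ($E$ parallel rather than merely $Ric^g(E,E)\leq 0$); what the paper's version buys is brevity and the fact that it needs only the closedness of $\alpha$, not the Killing property. Both arguments correctly avoid any compactness assumption, and your closing remark explaining why the positive-curvature case needs no Bochner-type compactness input is accurate.
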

\begin{proof}
    Assume that the set of SMAT shs connections contains statistical connections. Departing from \cref{eq:closed}, we infer that $\alpha$ is closed. Then, $(M^{2d+1},\Omega,\alpha,g)$ is a cosymplectic(almost co-Kahler) manifold. It follows from \cite{olszak1981almost}(Theorem 4.2) that $$Ric^{g}(E,E)=-\lVert \nabla^{lc}E^{2}  \rVert$$
    
    This is a contradiction and the conclusion then follows from it.
\end{proof}

 To sum up, we proved in this section that any shs manifold admits infinite SMAT connections adapted to an shs
structure $(\Omega,\alpha)$. We equally noted that for some shs manifolds, there is no statistical shs SMAT connection. The above
result indicates that the shs manifolds which contains statistical SMAT shs connections are very special types of manifolds. Hence, a natural question arises\vspace{0.2cm}

\textbf{Question 2: Which shs manifold contains a statitical SMAT shs connection ?}\vspace{0.2cm}

For reasons of simplicity, the statistical shs SMAT connections will be called \textbf{ the statistical shs connections}. In what follows, we attempt to provide an answer to this question. Our answer offers a new approach to the integrability for almost co-Khaler structures on almost contact manifolds.

\section{Answer to question 2 and statistical approach to the integrability of almost co-Kähler structure.
}
\label{sec:cokahlerstructure}
 \citet{morimoto1963normal} built up a natural almost complex structure $\Theta$ on the product of two almost contact manifolds $(M_{j},\Theta_{j},E_{j},\alpha_{j})_{j\in\{1,2\}}$ and revealed that $\Theta$ is integral if and only if both of the product factors are normal almost contact manifolds. A direct consequence of this fact lies in that the product of two odd-dimensional spheres $S^{2l+1}\times S^{2k+1}$ is complex. A natural question to ask is to know under what conditions on $M_{i}$, the product is a K\"ahler manifold. It is enticing to speculate that every factor manifold $M_{i}$ should be Sasakian as a solution. Nevertheless, it is not possible because it would imply that the result of two Sasakian spheres would be a K\"ahler manifold. This contradicts the result of Calabi and Eckmann holding that the product of odd dimensional spheres is complex and non Kähler \citet{calabi1953class}. The correct geometric structure on factor manifolds $M_i$ should be a co-Kähler structure, as corroborated by \citet{goldberg1968totally} and later by  \citet{capursi1984some}. 

\begin{defn}
    A co-Kähler manifold $(M^{2d+1},\Omega,\alpha)$ is a cosymplectic manifold admitting a compatible almost contact metric structure $(\Theta, \alpha, E, g)$
such that
    $$N^{\Theta}=0.$$ Recall that $N^{\Theta}$ is defined by 
    $$N^{\Theta}=[\Theta,\Theta]-\Theta[\Theta,.]-\Theta[.,\Theta.]+\Theta^{2}[.,.].$$
\end{defn}
In the co-Kähler manifold, the compatible (co-Hermitain) metric $g$ will be called a co-Kähler metric in this paper. It is very significant to note that Blair \cite{blair2010riemannian} used the term "cosymplectic," whereas Ougie \cite{ogiue1967cocomplex}, \cite{ogiue1968g} used the term "cocomplex''. This first name has been widely adopted in the literature and has been used in numerous papers \cite{watson1983new}, \cite{goldberg1969integrability}, \cite{olszak1981almost}, \cite{de1994compact}, \cite{chinea1993topology}, \cite{chinea1997spectral}, \cite{fujimoto1974cosymplectic},\cite{marrerocompact}. However, it may entail confusion because Libermann \cite{libermann1959automorphismes} used the same term to refer to the almost cosymplectic structure $(\Omega,\alpha)$ such that $\Omega$ and $\alpha$ are closed. We will use the term "co-Kähler" throughout this paper. This name was invested by  Bazzoni \cite{bazzoni2014structure}, 
Janssens, L.Vanhecke \cite{janssens1981almost}, etc. This terminology has more recently been adopted in the article of Hongjuin Li \cite{li2008topology} who confirmed that any closed co-Kähler manifold is a Kähler mapping torus. According to Li’s work, we infer that the co-Kähler manifolds are
really odd dimensional analogues of Kähler manifolds. A natural example of co-Kähler manifold is provided by a Kähler manifold product with the circle (or with $\mathbb{R}$). Some pertinent examples of co-Kähler manifolds are set foward by J. C. Marrero; E. Padrón-Fernández(\cite{marrero1998new}) and Blair\cite{blair2010riemannian}.

\subsection{Statistical characterization of a co-Kähler structure}
In 1969, S. I. Goldberg asserted that if the curvature operator of an almost Kähler manifold commutes with the almost complex structure( ie $ R^{\nabla^{lc}}\circ\Theta=\Theta\circ R^{\nabla^{lc}}$), then the manifold is a Kähler manifold. He conjectured in \cite{goldberg1969integrability1} that "Every compact Einstein almost Kählerian manifold is a Kähler manifold". Under certain curvature conditions, some progress has been implemented (see \cite{sekigawa1985some}, \cite{goldberg1998curvature}, \cite{sawaki1972almost}, \cite{sawaki1972sufficient}, \cite{sawaki1964almost}, \cite{gray1976curvature}, \cite{olszak1981almost}). The odd dimensional counterparts of Kählerian manifolds are co-Kähler manifolds. From this perspective, one can ask for
a Goldberg-like conjecture for these manifolds. In \cite{goldberg1969integrability}, the author identified a curvature condition for the integrability  of the co-symplectic manifolds (almost co-Kähler manifolds). In \cite{montano2010einstein}, the author introduced the analogues of the Goldberg conjecture in the case of cosymplectic manifolds (almost co-Kähler manifolds). The author's theorem states that,
"Every compact Einstein almost co-Kähler manifold, such that the
Reeb vector field is Killing, is a co-Kähler manifold".\vspace{0.2cm}

Our central objective in this section is to provide a new characterization of co-Kähler manifolds using information geometric ingredients.
\vspace{1ex}
\begin{defn}
    \textbf{A statistical shs manifold} is an shs manifold $(M^{2d+1},\Omega,\alpha)$ such that the set of SMAT shs connections $\mathcal{E}_{(\Omega,\alpha,g)}(M^{2d})$ contains a statistical one. 
\end{defn}

\vspace{2ex}

\begin{thm}
    An shs manifold $(M^{2d+1},\Omega,\alpha)$ is a co-Kähler manifold, if and only if, it is a statistical shs manifold. \label{thm:principal}
\end{thm}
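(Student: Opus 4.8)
The plan is to prove the two implications separately; the implication co-Kähler $\Rightarrow$ statistical shs is a one-line construction, while its converse carries all the content. For the first implication, suppose $(M^{2d+1},\Omega,\alpha,\Theta,g)$ is co-Kähler. Then the Levi-Civita connection $\nabla^{lc}$ of $g$ parallelizes the whole structure, $\nabla^{lc}\Theta=0$, whence $\nabla^{lc}E=0$, $\nabla^{lc}\Omega=0$ and $\nabla^{lc}\alpha=0$. Thus $\nabla^{lc}$ is an shs connection in the sense of \cref{th:shs}; being torsion-free and metric, it is its own $g$-dual. The self-dual pair $(\nabla^{lc},\nabla^{lc})$ therefore lies in $\mathcal{E}_{(\Omega,\alpha,g)}(M^{2d+1})$ and is statistical, so $M^{2d+1}$ is a statistical shs manifold.

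For the converse, fix a statistical shs connection $(\nabla,\nabla^*)$: thus $\nabla$ is torsion-free with $\nabla E=0$ and $\nabla\Omega=0$, and in addition $\nabla^*$ is torsion-free, i.e. $T^{\nabla^*}=0$. First I would note that \cref{eq:closed}, namely $(d\alpha)(X,Y)=\alpha(T^{\nabla^*}(X,Y))$, immediately forces $d\alpha=0$, so the underlying almost cosymplectic structure is genuinely cosymplectic. It then remains only to establish $N^\Theta=0$, which by the definition of co-Kähler manifold finishes the argument.

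The crux is a tensorial identity linking $N^\Theta$ to the dual torsion. Since $\nabla\Omega=0$, \cref{eq:38} yields the parallel relation $\nabla^*_X\Theta Y=\Theta\nabla_X Y$, equivalently $(\nabla_X\Theta)Y=-U(X,\Theta Y)$ with $U=\nabla^*-\nabla$. Substituting this into the standard expansion of the Nijenhuis tensor through a torsion-free connection,
\[
N^\Theta(X,Y)=(\nabla_{\Theta X}\Theta)Y-(\nabla_{\Theta Y}\Theta)X-\Theta(\nabla_X\Theta)Y+\Theta(\nabla_Y\Theta)X,
\]
and using both $U(A,B)-U(B,A)=T^{\nabla^*}(A,B)$ (valid because $T^\nabla=0$) and the formula $T^{\nabla^*}(X,Y)=\Theta\bigl(U(X,\Theta Y)-U(Y,\Theta X)\bigr)$ already recorded in \cref{pr:torsiondual}, I expect to obtain, for $X,Y\in\ell_\alpha$,
\[
N^\Theta(X,Y)=T^{\nabla^*}(X,Y)-T^{\nabla^*}(\Theta X,\Theta Y).
\]
With $T^{\nabla^*}=0$ this kills $N^\Theta$ on all of $\ell_\alpha$.

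It remains to treat the Reeb direction. From \cref{pr:torsiondual}(1) the vanishing component $T^{\nabla^*}(\partial_0,\partial_j)=g^{\bar i p}\partial_0 g_{\bar i j}\partial_p=0$ gives $\partial_0 g_{\bar i j}=0$; combined with $L_E\alpha=0$ (stabilization) this upgrades to $L_E g=0$. Since $i_E\Omega=0$ and $d\Omega=0$ give $L_E\Omega=0$, applying the Leibniz rule to $\Omega=g(\Theta\cdot,\cdot)$ forces $L_E\Theta=0$. A short computation of $N^\Theta(E,Y)$ using $\Theta E=0$, $\nabla E=0$ and $[E,\Theta Y]=\Theta[E,Y]$ then returns $0$, so $N^\Theta$ vanishes on all of $TM^{2d+1}$; together with $d\Omega=0$ and $d\alpha=0$ this exhibits $M^{2d+1}$ as co-Kähler. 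The main obstacle is the bracketed identity $N^\Theta=T^{\nabla^*}(\cdot,\cdot)-T^{\nabla^*}(\Theta\cdot,\Theta\cdot)$: one must carefully account for the degenerate direction of $\Theta$ and verify that the anti-symmetrization of $U$ reproduces precisely the dual torsion, with no residual term; the upgrade of $\partial_0 g_{\bar i j}=0$ to $L_E\Theta=0$ is the secondary technical point.
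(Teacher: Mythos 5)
Your proof is correct, and the easy direction (co-K\"ahler $\Rightarrow$ statistical shs via the self-dual Levi-Civita connection) coincides with the paper's. For the substantive direction, however, you take a genuinely different route. The paper, after deducing $d\alpha=0$ from \cref{eq:closed} exactly as you do, proves $\nabla^{*}\Omega=0$ by a case-by-case computation in a $\Theta$-adapted orthonormal frame, averages via $2\nabla^{lc}=\nabla+\nabla^{*}$ (\cref{eq:moy}) to obtain $\nabla^{lc}\Omega=0$, i.e. $\nabla^{lc}\Theta=0$, and then invokes Blair's characterization \cite{blair2010riemannian} of co-K\"ahler manifolds among cosymplectic ones. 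You instead verify the integrability condition $N^{\Theta}=0$ of the paper's definition directly: the parallel-section relation $(\nabla_{X}\Theta)Y=-U(X,\Theta Y)$ coming from $\nabla\Omega=0$ and \cref{eq:38} turns the torsion-free expansion of the Nijenhuis tensor into $N^{\Theta}(X,Y)=T^{\nabla^{*}}(X,Y)-T^{\nabla^{*}}(\Theta X,\Theta Y)$ on $\ell_{\alpha}$, and the Reeb direction is handled by upgrading $\partial_{0}g_{\bar i j}=0$ to $L_{E}g=0$ and hence $L_{E}\Theta=0$. I checked the identity you flag as the main obstacle: the exact formula is $U(X,Y)=(\nabla_{X}\alpha)(Y)E-\Theta(\nabla_{X}\Theta)Y+\alpha(Y)\,g^{-1}(\nabla_{X}\alpha,\cdot)$, and the correction terms disappear precisely because $T^{\nabla^{*}}$ and $N^{\Theta}$ are antisymmetric (the $E$-component antisymmetrizes to $d\alpha(X,Y)$, already known to vanish) and because $\alpha(X)=\alpha(Y)=0$ on $\ell_{\alpha}$; so your bracketed identity holds without residue. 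The trade-off: the paper's route lands on the stronger parallelism statement $\nabla^{lc}\Theta=0$ but leans on an external citation and on the averaging formula; your route never touches the Levi-Civita connection and stays entirely inside the $(\nabla,\nabla^{*},U)$ formalism already set up in \cref{pr:torsiondual}, at the price of the more delicate tensorial bookkeeping in the degenerate direction that you correctly identify.
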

\begin{proof}
Consider an shs manifold $(M^{2d+1},\Omega,\alpha)$ and select a compatible almost co-Kähler structure $(\Theta,g)$ defined in \cref{eq:72,eq:73,eq:74,eq:75,eq:76}. Suppose that the set of SMAT shs connections contains a statistical connection $(\nabla,\nabla^{*})$. It follows from \cref{eq:deriv}, that the stabilization 1-form $\alpha$ is closed. Therefore, the shs manifold $(M^{2d+1},\Omega,\alpha)$ is a co-symplectic manifold(almost co-Khaler manifold). Now, choose a local orthonormal $\Theta$-basis $\{E, X_{i}, X_{\Bar{i}}=\Theta X_{i}\}$ at each $p\in M^{2d+1}$. The components of the Hamiltonian form $\Omega$ in the $\Theta$-basis are provided by:
\begin{align}
          \Omega(E,X_{j})&=0 \\
          \Omega(E,X_{\Bar{j}})&=0 \\
        \Omega(X_{i},X_{j}) &=0\\
          \Omega(X_{i},X_{\Bar{j}})  &=g(X_i,X_j)=\delta_{ij}\\
          \Omega(X_{\Bar{i}},X_{\Bar{j}}) &=0.
 \end{align}
We shall now prove that $\nabla^{*}\Omega=0$. It is sufficient to check the identity in the following three particular cases: 
\begin{enumerate}
    \item By a direct computation, we obtain:   
\begin{equation}
(\nabla^{*}_{E}\Omega)(X_i,\Theta X_j)=E.\Omega(X_{i},\Theta X_{j})-\Omega(\nabla^{*}_{E}X_{i},\Theta X_{j})-\Omega(X_{i},\nabla^{*}_{E}\Theta X_{j}).
\end{equation}

 As $\nabla\Omega=0$, we get $\nabla^{*}_{E}\Theta X_{j}=\Theta\nabla_{E} X_{j}$. Shs connection $\nabla$ is a symmetric connection. Then,
 
 $$\nabla^{*}_{E}\Theta X_{j}=0\,\,, j=1,2,..,2d$$
 Accordingly, we easily obtain:

\begin{equation}
(\nabla^{*}_{E}\Omega)(X_i,\Theta X_j)=-\Omega(\nabla^{*}_{E}X_{i},\Theta X_{j}).
\end{equation}

Now, based on duality condition, we get:
 
$$E.\Omega(X_{i},\Theta X_{j})=E.g(X_{i},X_{j})=g(\nabla_{E}X_i,X_j)+g(X_i,\nabla^{*}_{E}X_j).$$

By a simple computation, we obtain:

\begin{equation}
    g(X_i,\nabla^{*}_{E}X_j)=0, \quad \text{for} \, i,j:1,...,2d.
    \label{eq:92}
\end{equation}

Departing from \cref{eq:92}, we get: 

\begin{equation}
    \Omega(\Theta X_i,\nabla^{*}_{E}X_j)=0, \quad \text{for} \, i,j:1,...,2d.
    \label{eq:93}
\end{equation}

It follows from \cref{eq:93} that:
\begin{equation}
(\nabla^{*}_{E}\Omega)(X_i,\Theta X_j)=0\quad \text{for} \, i,j:1,...,2d.
\end{equation}

\item Using a direct computation, we get:
$$(\nabla^{*}_{X_i}\Omega)(E,\Theta X_j)=X_i\Omega(E,\Theta X_j)-\Omega(\nabla^{*}_{X_i}E, \Theta X_j)-\Omega(E,\nabla^{*}_{X_i}\Theta X_j).$$

Since $i_{E}\Omega=0$, we obtain without difficulty that:
 
\begin{equation}
(\nabla^{*}_{X_i}\Omega)(E,\Theta X_j)=\Omega(\Theta X_j,\nabla^{*}_{X_i}E).
\end{equation}

With respect to \cref{eq:93} as well as the symmetry of $\nabla^{*}$, we get

$$\Omega(\Theta X_j,\nabla^{*}_{X_i}E)= \Omega(\Theta X_j,\nabla^{*}_{E}X_i)=0.$$

We accordingly deduce that:
$$(\nabla^{*}_{X_i}\Omega)(E,\Theta X_j)=0,\, \text{for} \, i,j:1,...,2d.$$

\item Grounded on a direct computation, we have for $i,j,k:1,...,2d,$

\begin{equation}
    \begin{split}
    (\nabla^{*}_{X_i}\Omega)(X_j,\Theta X_k)&=X_i.\Omega(X_j,\Theta X_k)-g(\nabla^{*}_{X_i}X_j,X_k)\\&-g(X_j,\nabla^{*}_{X_i}X_k)-\Omega(X_j,(\nabla^{*}_{X_i}\Theta)X_k).
    \end{split}
\end{equation}

Using the duality condition and \cref{eq:dualexp}, it follows that:

$$
(\nabla^{*}_{X_i}\Omega)(X_j,\Theta X_k) =- g((\nabla_{X_{i}}\Theta) X_j,\Theta X_k)-\Omega(X_j,(\nabla^{*}_{X_i}\Theta)X_k)
$$

Again, through a direct computation, we have
$$
 (\nabla^{*}_{X_i}\Omega)(X_j,\Theta X_k) =g(\Theta\nabla_{X_i}X_{j},\Theta X_k)-g(\nabla_{X_{i}}(\Theta X_{j}),\Theta X_k)-\Omega(X_j,(\nabla^{*}_{X_i}\Theta)X_k)$$
Using the duality once more, we obtain:

\begin{equation}
\begin{split}
    (\nabla^{*}_{X_i}\Omega)(X_j,\Theta X_k) = &g(\nabla_{X_i}X_j,X_k)+g(X_j,\nabla^{*}_{X_i}X_k)-X_{i}.g(X_j,X_k) \\ & +g(\Theta X_j,(\nabla^{*}_{X_i}\Theta)X_k)-\Omega(X_j,(\nabla^{*}_{X_i}\Theta)X_k).
\end{split}
\end{equation}
A simple observation yields:

\begin{equation}
\begin{split}
    (\nabla^{*}_{X_i}\Omega)(X_j,\Theta X_k) = &g(\nabla_{X_i}X_j,X_k)+g(X_j,\nabla^{*}_{X_i}X_k)-X_{i}.g(X_j,X_k) \\ & +\Omega(X_j,(\nabla^{*}_{X_i}\Theta)X_k)-\Omega(X_j,(\nabla^{*}_{X_i}\Theta)X_k).
\end{split}
\end{equation}

Referring to the duality condition once again, we deduce that:

\begin{equation}
(\nabla^{*}_{X_i}\Omega)(X_j,\Theta X_k)=0,\,\text{for}\,i,j,k=1,...,2d.
\end{equation}

\end{enumerate}
 
Finally, resting upon three previous relations, we then have:

\begin{equation}
\nabla^{*}\Omega=0.
\label{eq:nablaetoil}
\end{equation}
Bearing in mind \cref{eq:moy}, we therefore get:

\begin{equation}
    \nabla^{lc}\Omega=0,
    \label{eq:102}
\end{equation}
or equivalently,
\begin{equation}
    \nabla^{lc}\Theta=0.
\end{equation}

We deduce with reference to \cite{blair1966theory,blair2010riemannian} that the shs manifold $(M^{2d+1},\Omega,\alpha)$ is a co-Kähler manifold.\\
Conversely, it is well known departing from \cite{blair2010riemannian} that in any co-Kähler manifold, the Levi-Civita connection of the co-Hermitian metric satisfies the following identity:
\begin{equation}
    \nabla^{lc} E=0,\ \nabla^{lc}\Omega=0;
    \label{eq:paraco}
\end{equation}
equivalently,
\begin{equation}
    \nabla^{lc} \alpha=0,\ \nabla^{lc}\Omega=0.
    \label{eq:paracoo}
\end{equation}
Remember that the Levi-Civita connection of any metric $g$ is an autodual connection, i.e , $\nabla^{lc}=\nabla^{lc*}$. As a matter of fact, from \cref{eq:paraco,eq:paracoo}, the Levi-Civita of the co-K\"ahler metric $g$ is a statistical shs connection (ie, $\nabla^{lc}\in \mathcal{S}_{(\Omega,\alpha,g)}(M^{2d+1})$). This therefore completes the proof. 
\end{proof}
The following corollary is an easy consequence of the above theorem.
\vspace{2ex}
\begin{corollary}
    A compact shs manifold $(M^{2d+1},\Omega,\alpha)$ is a  statistical shs manifold if and only it is a K\"ahler mapping torus.
\end{corollary}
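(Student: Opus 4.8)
The plan is to obtain this corollary directly from \cref{thm:principal} by using Li's structure theorem for compact co-Kähler manifolds \cite{li2008topology} as the bridge between the co-Kähler condition and the mapping-torus description. Since \cref{thm:principal} already identifies ``statistical shs manifold'' with ``co-Kähler manifold'' for an arbitrary shs manifold, the only extra ingredient required is the classification of the diffeomorphism type of a compact co-Kähler manifold, which Li established. In that sense the corollary is genuinely ``an easy consequence'', and my task is mainly to chain the two equivalences correctly in both directions.

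For the forward implication I would start from a compact statistical shs manifold $(M^{2d+1},\Omega,\alpha)$. By \cref{thm:principal}, the existence of a statistical SMAT shs connection forces $(M^{2d+1},\Omega,\alpha)$ to be co-Kähler; in particular $\alpha$ is closed and $\nabla^{lc}\Omega=0$. Invoking Li's theorem, a closed co-Kähler manifold is a Kähler mapping torus, i.e. $M^{2d+1}=\frac{Z^{2d}\times[0,1]}{(p,0)\sim(\varphi(p),1)}$, where $(Z^{2d},\widetilde\Omega)$ is the compact Kähler leaf of the codimension-one foliation $\ell_\alpha$ and $\varphi$ is a Hermitian isometry. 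This yields the desired conclusion.

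For the converse I would begin with a Kähler mapping torus $Z^{2d}_\varphi$ and recover its canonical co-Kähler structure, exactly as recalled in the cosymplectic example of \cref{se:stable H and smat}: the suspension carries the pullback data $\Omega=\widehat\pi^{*}\widetilde\Omega$ and $\alpha=p^{*}(d\theta)$, together with a compatible $\Theta$ descending from the Kähler complex structure on the leaves, so that $d\alpha=0$, $\nabla^{lc}\Omega=0$ and $N^{\Theta}=0$. This is precisely a compact co-Kähler manifold, and applying \cref{thm:principal} in the reverse direction shows that the Levi-Civita connection of the co-Hermitian metric is a statistical shs connection; hence $M$ is a statistical shs manifold.

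The main obstacle I anticipate is not computational but one of bookkeeping: one must check that the almost contact metric structure produced by the mapping-torus construction is \emph{the same} structure (up to the bijective correspondence between compatible structures and $\Omega$-compatible almost complex structures on $\ell_\alpha$ recorded earlier) for which $E$ and $\Omega$ are parallel, so that \cref{thm:principal} genuinely applies. Once the mapping-torus structure and the co-Kähler structure are matched in this way, the equivalence is immediate.
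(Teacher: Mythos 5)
Your proposal is correct and follows the paper's own argument exactly: the paper proves this corollary in one line by combining Theorem \ref{thm:principal} with Li's theorem that a closed co-K\"ahler manifold is a K\"ahler mapping torus. Your additional care in spelling out both directions and matching the compatible almost contact metric structures is a reasonable elaboration of the same route, not a different one.
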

\begin{proof} 
The previous Theorem \ref{thm:principal} as well as the Hongjun Li theorem \cite{li2008topology} prove our assertion.
\end{proof}

\begin{remark}
     As a consequence of the work of 
\citet{chinea1993topology}, we deduce that the Betti numbers of a compact statistical shs manifold satisfy the following monotony property:

\[
   1 \leq b_{i}(M^{2d+1})\leq ...\leq b_{d}(M^{2d+1})\quad i\leq d, \]
   
 \[
        b_{d+1}(M^{2d+1})\geq ...\geq b_{2d}(M^{2d+1})\quad i\geq d+1, \]

\[
        b_{d+1}(M^{2d+1})=b_{d}(M^{2d+1}).\]

\end{remark}

According to Theorem \ref{thm:principal}, we deduce that the only shs manifolds that contain statistical shs connections are co-Kähler manifolds. In other words, a co-Kähler manifold is the only shs manifold which contains a statistical shs connection, which stands for an answer to the question raised in the above section.\vspace{0.2cm}

\begin{com}
   In \cite{acakpo2022stable}, the author attempted to combine cosymplectic and contact structures. To achieve his goals, he invested the basic cohomology of the foliation $\ell_{\Omega}$. The author stated that, if a closed shs manifold $(M^{2d+1},\Omega,\alpha)$ has the dimension of the second basic cohomology group of the $\ell_{\Omega}$ equal 1, then $M^{2d+1}$ is a co-symplectic manifold or contact manifold. The author's assumption is very strong. In general,  the basic cohomology of a foliation is often infinite dimensional. Now, with our statistical approach, we obtain $\ell_{\Omega}$ which is a Killing foliation, and therefore a Riemaniann foliation. According to the work of \cite{kacimi1985cohomologie}, we deduce that for a compact statistical shs manifold $(M^{2d+1},\Omega,\alpha)$, the basic cohomology of $\ell_{\Omega}$ is always finite-dimensional. Furthermore $M^{2d+1}$ is a co-K\"ahler manifold.
\end{com}

 \subsection{Statistical properties on a co-Kähler manifold}
 
\begin{lem}
    Let $(\nabla,\nabla^{*})$ be a statistical shs connection on a co-Kähler manifold $(M^{2d+1},\Omega,\alpha)$.
    The dual connection $\nabla^{*}$ is also an shs connection.
    \label{lem:dual ex}
\end{lem}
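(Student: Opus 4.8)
The plan is to exploit the averaging identity \cref{eq:moy}, which holds precisely because $(\nabla,\nabla^*)$ is a statistical pair, namely $2\nabla^{lc} = \nabla + \nabla^*$, together with the parallelism of the Levi-Civita connection of the co-Kähler metric recorded in \cref{eq:paraco}. Recall that being an shs connection means exactly satisfying \cref{eq:50}, that is $\nabla^* E = 0$ and $\nabla^*\Omega = 0$, so it suffices to verify these two identities for $\nabla^*$.

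First I would observe that, since $(\nabla,\nabla^*)$ is by hypothesis a statistical shs connection, the connection $\nabla$ is itself an shs connection, whence $\nabla E = 0$ and $\nabla\Omega = 0$. Moreover, because $(M^{2d+1},\Omega,\alpha)$ is co-Kähler, \cref{eq:paraco} supplies $\nabla^{lc}E = 0$ and $\nabla^{lc}\Omega = 0$. These are the only two ingredients the argument needs.

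Next, rewriting \cref{eq:moy} as $\nabla^* = 2\nabla^{lc} - \nabla$ and letting both sides act on $E$ and on $\Omega$ yields
\begin{align}
\nabla^* E &= 2\nabla^{lc}E - \nabla E = 0,\\
\nabla^*\Omega &= 2\nabla^{lc}\Omega - \nabla\Omega = 0.
\end{align}
The second identity alternatively follows at once from \cref{eq:nablaetoil}, which was already established during the proof of \cref{thm:principal}. Hence $\nabla^*$ satisfies \cref{eq:50} and is therefore an shs connection, which is the assertion.

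I do not expect any genuine obstacle here: the entire argument collapses to a one-line consequence of the averaging formula once the co-Kähler parallelism of $\nabla^{lc}$ is invoked. The only point deserving minor care is to apply the tensorial averaging identity consistently on $E\in\Gamma(TM^{2d+1})$ and on the $2$-form $\Omega$, i.e. to note that the linear relation $2\nabla^{lc}=\nabla+\nabla^*$ on $TM^{2d+1}$ induces the same relation on the associated tensor bundles; this is immediate from the standard way dual connections extend to tensors and requires no computation.
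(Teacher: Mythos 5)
Your argument is correct and is essentially identical to the paper's own proof: both rewrite \cref{eq:moy} as $\nabla^{*}=2\nabla^{lc}-\nabla$ and combine \cref{eq:paraco} with \cref{eq:50} to conclude $\nabla^{*}E=0$ and $\nabla^{*}\Omega=0$. The remark about the averaging relation extending to the tensor bundles is a reasonable extra precaution but adds nothing beyond what the paper already assumes implicitly.
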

\begin{proof}

 Resting on \cref{eq:moy}, any statistical shs connection $(\nabla,\nabla^{*})$ satisfies the condition:
   \begin{equation}
       \nabla^{*}=2\nabla^{lc}-\nabla.
       \label{eq:recrimoy}
   \end{equation}

Hence, according to \cref{eq:recrimoy,eq:paraco,eq:50}, we get:
\begin{equation}
    \nabla^{*}E=0,\, \nabla^{*}\Omega=0.
    \label{eq:dualshs}
\end{equation}
It follows from \cref{eq:dualshs}, that the dual connection $\nabla^{*}$ is also an shs connection. 
\end{proof}

\begin{corollary}
    A statistical shs connection  $(\nabla,\nabla^{*})$ on a co-Kähler manifold $(M^{2d+1},\Omega,\alpha)$ satisfies the following identities:
 \begin{equation}
     \nabla\alpha=0,\,\, \nabla^{*}\alpha=0.
     \label{eq:parapaa}
 \end{equation}
\end{corollary}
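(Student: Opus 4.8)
The plan is to reduce both identities to the parallelism of the Reeb field $E$ under the two dual connections, exploiting the compatibility $\alpha = g(\cdot,E)$ from \cref{eq:74}. First I would record a general duality identity: for any dual pair $(\nabla,\nabla^*)$ with respect to $g$ and any vector fields $X,Y$, expanding $(\nabla_X\alpha)(Y) = X\cdot g(Y,E) - g(\nabla_X Y, E)$ and then substituting the duality condition $X\cdot g(Y,E) = g(\nabla_X Y,E) + g(Y,\nabla^*_X E)$ yields $(\nabla_X\alpha)(Y) = g(Y,\nabla^*_X E)$. Interchanging the roles of $\nabla$ and $\nabla^*$, which is legitimate since $\nabla^{**}=\nabla$, gives the companion identity $(\nabla^*_X\alpha)(Y) = g(Y,\nabla_X E)$.

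With these two identities in hand, the corollary becomes a matter of parallelism of $E$. By definition of an shs connection one has $\nabla E = 0$ (see \cref{eq:50}), so the companion identity immediately forces $\nabla^*\alpha = 0$; this is precisely the equivalence already recorded in \cref{eq:77}. For the remaining identity $\nabla\alpha = 0$ I would invoke \cref{lem:dual ex}: on a co-Kähler manifold the dual $\nabla^*$ of a statistical shs connection is again an shs connection, hence $\nabla^* E = 0$. Substituting this into the first identity gives $(\nabla_X\alpha)(Y) = g(Y,\nabla^*_X E) = 0$ for all $X,Y$, that is $\nabla\alpha = 0$.

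There is no genuine obstacle here, since the statement is a formal consequence of the preceding lemma; the only point requiring care is bookkeeping of the duality convention, so that the duality condition is applied with the correct connection acting on $E$. The input specific to the co-Kähler setting is \cref{lem:dual ex}, which itself rests on the averaging formula $2\nabla^{lc} = \nabla + \nabla^*$ together with the fact that the Levi-Civita connection of a co-Kähler metric parallelizes both $E$ and $\Omega$ (\cref{eq:paraco}). Outside the co-Kähler case one could only guarantee $\nabla^*\alpha = 0$, because $\nabla^* E = 0$ would no longer be available; thus the symmetry between the two identities is exactly what the co-Kähler hypothesis provides.
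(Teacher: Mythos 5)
Your argument is correct and follows essentially the same route as the paper: the duality identity $(\nabla_X\alpha)(Y)=g(Y,\nabla^*_XE)$ combined with \cref{lem:dual ex} (which gives $\nabla^*E=0$ on a co-K\"ahler manifold) yields $\nabla\alpha=0$, while $\nabla^*\alpha=0$ is the already-established property of the dual of an shs connection from \cref{eq:77}. Your explicit bookkeeping of the companion identity and of which hypothesis delivers which half of the statement is a faithful, slightly more detailed rendering of the paper's proof.
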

\begin{proof}
    Let's use the duality condition. We therefore deduce that:
    
    \begin{equation}
(\nabla_{X}\alpha)Y=g(Y,\nabla^{*}_{X}E),
\end{equation}
for any vector fields $X,Y$ on $M^{2d+1}$. Now grounded on the above lemma, we deduce that:
    \begin{equation}
        \nabla\alpha=0.
    \end{equation}
    
Remember that the dual connection $\nabla^{*}$ of an shs connection satisfies:

$$\nabla^{*}\alpha=0.$$

This proves our assertion.
\end{proof}

\vspace{2ex}

\subsubsection{Doubly autoparallel submanifolds on shs manifold}

Notably, the concept of doubly autoparallelism is often present but crucial to multiple information geometry applications (see\cite{ohara1999information}, \cite{ohara2009information},\cite{nagaoka2017information}). In what follows, we prove in a statistical shs manifold $(\nabla,\nabla^{*})$ that the leaves of both foliations $\ell_{\Omega}$ and $\ell_{\alpha}$ are $\alpha$-totally geodesic, and therefore $\alpha$-autoparalell(doubly autoparall). For a symmetric connection, the autoparallity and totally-geodesibillility are equivalent notions(see\cite{kobayashi1969foundations}).
Let's introduce a family of $\alpha$-connections by:

\begin{equation}
    \nabla^{\alpha}=\frac{1+\alpha}{2}\nabla^{*}+\frac{1-\alpha}{2}\nabla,\,\, \alpha\in \mathbb{R},
    \label{eq:124}
\end{equation}

where $\nabla,\nabla^{*}$ are statistical shs connections. Referring to \cref{lem:dual ex} and \cref{eq:parapaa}, one can easily infer that: 

\begin{equation} \nabla^{\alpha}E=0,\,\nabla^{\alpha}\Omega=0,\,\,\nabla^{\alpha}\alpha=0\,\,,\text{for any}\, \alpha\in \mathbb{R}.
\label{eq:126}
\end{equation}

Hence, $(\nabla^{\alpha},\nabla^{-\alpha})_{a\in\mathbb{R}}$ are also statistical shs connections.\vspace{0.2cm}

By a simple calculation, the readers can prove the following useful statements:

\begin{enumerate}
    \item 
 $\nabla^{-\alpha}\Theta=\Theta\nabla^{\alpha},\,\text{for any}\, \alpha\in \mathbb{R},$
\item 
$ R^{\nabla^{\alpha}}(.,.)E=0,\,R^{\nabla^{\alpha}}(.,.)\Omega=0,\,R^{\nabla^{\alpha}}(.,.)\alpha=0\,,\text{for any}\, \alpha\in \mathbb{R},$
\item The Ricci $Ric^{\nabla^{\alpha}}$ of the $\alpha$-connections are symmetric.

\item  The manifold $(M^{2d+1},g,\mu,\nabla^{\alpha})$, where $\mu=\alpha\wedge\Omega^{d}$, is an equi-affine statistical manifold. For general references on equi-affine statistical manifolds, we refer to \cite{nomizu1994affine}.
\end{enumerate}
\vspace{0.2cm}

\begin{proposition}
    Let $(M^{2d+1},\Omega,\alpha,g)$ be a co-Kähler manifold. The following statements are satisfied:
    
    \begin{enumerate}
    \item The leaves of $\ell_{\Omega}$ are totally geodesic w.r.t. to $\nabla^{\alpha}$, for all $\alpha$’s,
    \item The leaves of $\ell_{\alpha}$ are totally geodesic 
 w.r.t. to $\nabla^{\alpha}$, for all $\alpha$’s.
    \end{enumerate}
    \label{pro:geode}
\end{proposition}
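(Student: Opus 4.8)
The plan is to reduce both assertions to the parallelism relations $\nabla^{\alpha}E=0$, $\nabla^{\alpha}\Omega=0$ and $\nabla^{\alpha}\alpha=0$ already recorded in \cref{eq:126}, which hold uniformly for every $\alpha\in\mathbb{R}$. The two distributions at stake are $\ell_{\Omega}=\langle E\rangle$ and $\ell_{\alpha}=\ker\alpha$, and I recall that a foliation is totally geodesic with respect to a connection $\nabla^{\alpha}$ precisely when $\nabla^{\alpha}_{X}Y$ stays tangent to the leaves whenever $X,Y$ are both tangent to them.

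For the first statement, I would observe that the leaves of $\ell_{\Omega}$ are exactly the integral curves of the Reeb field $E$. Since \cref{eq:126} gives $\nabla^{\alpha}_{X}E=0$ for every vector field $X$, in particular $\nabla^{\alpha}_{E}E=0$; hence the integral curves of $E$ are $\nabla^{\alpha}$-geodesics and the one-dimensional leaves of $\ell_{\Omega}$ are totally geodesic. In fact a stronger conclusion is available: because $\nabla^{\alpha}_{X}E=0$ for all $X$, the line field $\ell_{\Omega}$ is $\nabla^{\alpha}$-parallel (adapted) in the sense recalled before \cref{eq:190}.

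For the second statement, I would take $X,Y\in\Gamma(\ell_{\alpha})$, so that $\alpha(Y)\equiv 0$, and expand the parallelism of $\alpha$:
\begin{equation*}
0=(\nabla^{\alpha}_{X}\alpha)(Y)=X\cdot\alpha(Y)-\alpha(\nabla^{\alpha}_{X}Y)=-\alpha(\nabla^{\alpha}_{X}Y).
\end{equation*}
Thus $\alpha(\nabla^{\alpha}_{X}Y)=0$, i.e.\ $\nabla^{\alpha}_{X}Y\in\Gamma(\ell_{\alpha})$, which is exactly the total geodesibility of the leaves of $\ell_{\alpha}$. As before, the identity persists for every $X\in\Gamma(TM^{2d+1})$, so $\ell_{\alpha}$ is moreover $\nabla^{\alpha}$-parallel.

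Since no curvature or torsion estimate is involved, there is no genuine analytic obstacle here; the argument is a direct corollary of \cref{eq:126}, valid uniformly in $\alpha$. The only points requiring care are bookkeeping the distinction between \emph{totally geodesic} (the condition imposed when $X,Y$ are both tangent to a leaf) and the stronger \emph{adapted/parallel} condition, and ensuring that the cancellation $X\cdot\alpha(Y)=0$ is invoked solely for genuine sections of $\ell_{\alpha}$, for which $\alpha(Y)$ vanishes identically rather than merely pointwise.
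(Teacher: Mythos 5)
Your proof is correct and follows essentially the same route as the paper: both arguments reduce to the parallelism relations $\nabla^{\alpha}E=0$, $\nabla^{\alpha}\Omega=0$, $\nabla^{\alpha}\alpha=0$ of \cref{eq:126}, the paper phrasing this as $\nabla^{\alpha}i_{X}\Omega=i_{\nabla^{\alpha}X}\Omega$ for $X\in\ell_{\Omega}$ and $\nabla^{\alpha}i_{Y}\alpha=i_{\nabla^{\alpha}Y}\alpha$ for $Y\in\ell_{\alpha}$, which is exactly your expansion of $(\nabla^{\alpha}_{X}\alpha)(Y)$ and your use of $\nabla^{\alpha}E=0$. Your write-up is in fact somewhat more explicit than the paper's, and your remark that the distributions are not merely totally geodesic but $\nabla^{\alpha}$-parallel is consistent with what the paper's displayed identities actually establish.
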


\begin{proof}
 The verification of both statements is established through
 
    \begin{equation}
   \nabla^{\alpha} i_{X}\Omega=i_{\nabla^{\alpha} X}\Omega,\,\forall X \in \ell_{\Omega},
   \end{equation}
   \vspace{-4pt}
   \begin{equation}
       \nabla^{\alpha} i_{Y}\alpha=i_{\nabla^{\alpha} Y}\alpha,\,\forall Y \in \ell_{\alpha}.
   \end{equation}
\end{proof}

\begin{corollary}
    Let $(M^{2d+1},\Omega,\alpha,g)$ be a co-Kähler manifold. The leaves of the foliation $\ell_{\Omega}$ and $\ell_{\alpha}$ are statistical submanifolds.
    \label{cor:stat}
\end{corollary}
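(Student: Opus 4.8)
The plan is to realize each leaf as a doubly autoparallel submanifold of the ambient statistical manifold and then show that the ambient statistical structure restricts. By \cref{thm:principal}, since $(M^{2d+1},\Omega,\alpha,g)$ is co-Kähler it carries a statistical shs connection pair $(\nabla,\nabla^{*})$; together with the co-Kähler metric $g$ this makes $(M^{2d+1},g,\nabla,\nabla^{*})$ a statistical manifold in the sense of Lauritzen, so in particular both $\nabla$ and $\nabla^{*}$ are torsion-free. Recall from \cref{eq:124} the $\alpha$-family $\nabla^{\alpha}=\tfrac{1+\alpha}{2}\nabla^{*}+\tfrac{1-\alpha}{2}\nabla$, so that $\nabla^{-1}=\nabla$ and $\nabla^{1}=\nabla^{*}$; by the remarks following \cref{eq:126} each pair $(\nabla^{\alpha},\nabla^{-\alpha})$ is again a statistical shs pair, and each $\nabla^{\alpha}$ is symmetric.

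First I would specialize \cref{pro:geode} to the two endpoints $\alpha=\pm 1$. The proposition asserts that the leaves of $\ell_{\Omega}$ and of $\ell_{\alpha}$ are totally geodesic with respect to $\nabla^{\alpha}$ for every $\alpha$; in particular they are totally geodesic with respect to both $\nabla=\nabla^{-1}$ and $\nabla^{*}=\nabla^{1}$. Since both connections are symmetric, totally geodesic is equivalent to autoparallel (as recalled before \cref{eq:124}), so every leaf $L$ — whether a leaf of $\ell_{\Omega}$ or of $\ell_{\alpha}$ — satisfies $\nabla_{X}Y\in\Gamma(TL)$ and $\nabla^{*}_{X}Y\in\Gamma(TL)$ for all $X,Y\in\Gamma(TL)$. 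Thus every leaf is doubly autoparallel.

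Next I would verify that a doubly autoparallel submanifold inherits the statistical structure. Autoparallelism lets $\nabla$ and $\nabla^{*}$ restrict to well-defined affine connections $\bar{\nabla}$ and $\bar{\nabla}^{*}$ on $L$. These remain torsion-free: for tangent $X,Y$ the bracket $[X,Y]$ is again tangent (the leafwise distribution is involutive), whence $T^{\bar{\nabla}}(X,Y)=\nabla_{X}Y-\nabla_{Y}X-[X,Y]=T^{\nabla}(X,Y)=0$, and likewise for $\bar{\nabla}^{*}$. The duality identity also descends: for $X,Y,Z\in\Gamma(TL)$ one has
\begin{equation*}
X\cdot g(Y,Z)=g(\nabla_{X}Y,Z)+g(Y,\nabla^{*}_{X}Z)=\bar{g}(\bar{\nabla}_{X}Y,Z)+\bar{g}(Y,\bar{\nabla}^{*}_{X}Z),
\end{equation*}
with $\bar{g}=g|_{L}$, precisely because both $\nabla_{X}Y$ and $\nabla^{*}_{X}Z$ are already tangent to $L$. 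Hence $(L,\bar{g},\bar{\nabla},\bar{\nabla}^{*})$ is a statistical manifold, i.e. $L$ is a statistical submanifold.

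The only point demanding care — and the step I expect to be the main obstacle — is that the induced triple be a \emph{genuine} statistical structure, which requires $\bar{g}$ to be non-degenerate on each leaf so that the duality computation above is unambiguous. For a leaf of $\ell_{\alpha}$ this is immediate, since $g|_{\ell_{\alpha}}=\Omega(\cdot,\Theta\cdot)$ is the positive-definite Hermitian metric of the symplectic bundle $\ell_{\alpha}$; for a leaf of $\ell_{\Omega}=\langle E\rangle$ one uses $g(E,E)=\alpha(E)=1$, which follows from \cref{eq:74} together with $i_{E}\alpha=1$. With non-degeneracy established the induced structure is well defined, and the corollary follows at once.
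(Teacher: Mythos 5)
Your proof is correct and follows essentially the same route as the paper, which simply deduces the corollary from \cref{pro:geode}; you have additionally spelled out the (standard but omitted) details of why doubly autoparallel leaves inherit a torsion-free dual pair and a non-degenerate restricted metric.
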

\begin{proof}
    The claim follows from the above proposition.
\end{proof}

\begin{figure}[H]
    \centering
\begin{center}
    \begin{tikzpicture}[xscale = 2]
    \fill[gray!20] (0,0) -- (2,0) -- (3,2) -- (1,2) -- cycle;
    \draw (0,0) -- (2,0) -- (3,2) node[pos = 0.4, below right]{$\ell_\alpha = \text{ker }\alpha$} node[pos = 0.7, below right]{$\nabla^\alpha$-totally geodesic} -- (1,2) -- cycle;
    \draw[thick] (1.5,-1) -- ++(0,1);
    \draw[dashed, thick] (1.5,0) -- ++(0,1);
    \draw[->, thick] (1.5,1) -- ++(0,2) node[pos = 0.9, left=5pt]{$\ell_\Omega = <E>$}node[pos = 0.9, right=5pt]{$\nabla^\alpha$-totally geodesic};
\end{tikzpicture}
 \end{center}
\caption{Statistical leaves on a co-Kähler manifold.}
    \label{fig:staconnections}
\end{figure}
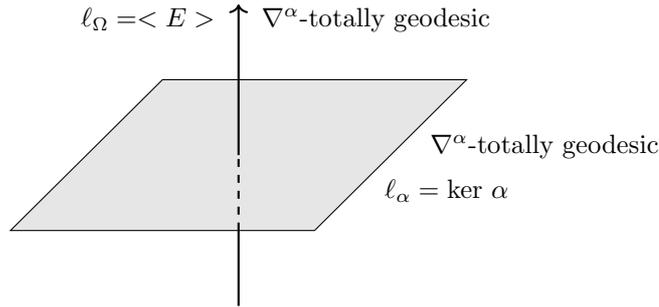

\subsection{Description of the set of statistical shs connections on a co-Kähler manifold}

Referring to \cref{eq:126}, the set of statistical shs connection on a co-Kähler manifold is expressed as:

$$\mathcal{S}_{(g,\Omega,\alpha)}(M^{2d+1})=\left\{ (\nabla,\nabla^*) \Big{\lvert}
\begin{array}{ccc}
\nabla E=0,\,\nabla \Omega=0,\,\nabla \alpha=0,\\
\text{or}\\
\nabla^{*}E=0,\,\nabla^{*}\Omega=0,\,\nabla^{*}\alpha=0
\end{array}\right\}$$

We close this section with a description of the set $\mathcal{S}_{(g,\Omega,\alpha)}(M^{2d+1})$ .

\vspace{2ex}

\begin{proposition}
    The space of statistical shs connections $\mathcal{S}_{(g,\Omega,\alpha)}(M^{2d+1})$ is an affine space modelled  on  the vector  space $(S^{2}_{E}(M^{2d+1})\oplus S^{3}_{E}(M^{2d+1}))\times (S^{2}_{E}(M^{2d+1})\oplus S^{3}_{E}(M^{2d+1})).$
\end{proposition}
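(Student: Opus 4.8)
The plan is to realize $\mathcal{S}_{(g,\Omega,\alpha)}(M^{2d+1})$ as a nonempty set acted on freely and transitively by the asserted model space, via the familiar ``difference tensor'' description of affine spaces of connections. Since $(M^{2d+1},\Omega,\alpha)$ is co-Kähler, Theorem \ref{thm:principal} together with \cref{eq:paraco} shows that the Levi-Civita connection $\nabla^{lc}$ of the co-Kähler metric $g$ parallelizes $E$, $\Omega$ and $\alpha$ and is autodual, so the pair $(\nabla^{lc},\nabla^{lc})$ is a statistical shs connection and serves as a basepoint. I would then measure an arbitrary element $(\nabla,\nabla^{*})\in\mathcal{S}_{(g,\Omega,\alpha)}(M^{2d+1})$ by the two difference tensors $B=\nabla-\nabla^{lc}$ and $B^{*}=\nabla^{*}-\nabla^{lc}$ in $T^{2}_{1}(M^{2d+1})$, and translate the defining conditions into linear constraints on $B$ and $B^{*}$.

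The heart of the argument is a pointwise decomposition of $B$ along the splitting \cref{eq:decomp}, $TM^{2d+1}=\langle E\rangle\oplus\ell_{\alpha}$, together with the observation that $\nabla^{lc}$ already kills $E,\Omega,\alpha$, so that the conditions $\nabla E=0$, $\nabla\Omega=0$, $\nabla\alpha=0$ become conditions on $B$ alone. Writing $A(X,Y,Z):=\Omega(B(X,Y),Z)$ for the $\ell_{\alpha}$-component and $\beta(X,Y):=\alpha(B(X,Y))=g(B(X,Y),E)$ for the Reeb-line component, I expect, exactly as in the proof of Proposition \ref{eq:112}, that $\nabla\Omega=0$ makes $A$ symmetric in $(Y,Z)$, torsion-freeness of $\nabla$ makes $A$ symmetric in $(X,Y)$, and $i_{E}\Omega=0$ with $B(\cdot,E)=0$ makes $A$ fully $E$-transverse, so that $A\in S^{3}_{E}(M^{2d+1})$; while the Reeb component $\beta$ is symmetric by torsion-freeness and $E$-transverse because $B(\cdot,E)=0$, so that $\beta\in S^{2}_{E}(M^{2d+1})$. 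Thus $B\leftrightarrow(\beta,A)\in S^{2}_{E}\oplus S^{3}_{E}$, this being precisely the component $\Omega$ detects plus the component $\Omega$ cannot see along $\ell_{\Omega}$.

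I would then run the identical analysis for $\nabla^{*}$. By Lemma \ref{lem:dual ex} and \cref{eq:parapaa}, the dual of a statistical shs connection again parallelizes $E$, $\Omega$ and $\alpha$ and is torsion-free, so the same decomposition applies verbatim to $B^{*}$ and produces $B^{*}\leftrightarrow(\beta^{*},A^{*})\in S^{2}_{E}\oplus S^{3}_{E}$. The resulting map $(\nabla,\nabla^{*})\mapsto\big((\beta,A),(\beta^{*},A^{*})\big)$ into $(S^{2}_{E}\oplus S^{3}_{E})\times(S^{2}_{E}\oplus S^{3}_{E})$ is injective because $B$ and $B^{*}$ are reconstructed from their two components through the nondegeneracy of $\Omega$ on $\ell_{\alpha}$ and the duality $\alpha=g(\cdot,E)$. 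Surjectivity would follow by starting from a prescribed quadruple, setting $\nabla=\nabla^{lc}+B$ and $\nabla^{*}=\nabla^{lc}+B^{*}$ with $B,B^{*}$ rebuilt from their components, and checking that each member again lies in $\mathcal{S}_{(g,\Omega,\alpha)}(M^{2d+1})$; the affine axioms (differences land in the model space, translates of a statistical shs connection stay in the set) are then immediate from the linearity of every constraint, and the extra freedom indexed by $\alpha$ is consistent with \cref{eq:126}.

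The step I expect to be the main obstacle is the bookkeeping that makes the two factors genuinely independent rather than collapsing to a single copy. Both the duality relation and the averaging identity $2\nabla^{lc}=\nabla+\nabla^{*}$ of \cref{eq:moy} impose relations between $B$ and $B^{*}$, and one must verify that, once the Reeb-direction data carried by $\beta,\beta^{*}$ (the $S^{2}_{E}$ summands, governed by the $\alpha$-parallelism and by the $\partial_{0}$-type torsion controlled in Proposition \ref{pr:torsiondual}) are separated from the $\ell_{\alpha}$-data $A,A^{*}$ (the $S^{3}_{E}$ summands), these relations do not reduce the dimension of the parameter space. Reconciling the constraints coming from $g$-duality with the requirement that all four tensorial components $(\beta,A,\beta^{*},A^{*})$ may be prescribed freely, so that the model space is exactly $(S^{2}_{E}\oplus S^{3}_{E})\times(S^{2}_{E}\oplus S^{3}_{E})$ and neither larger nor smaller, is where the real content of the proposition lies.
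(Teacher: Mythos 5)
Your strategy---fix a basepoint statistical shs connection, measure an arbitrary one by difference tensors, and split those tensors along $TM^{2d+1}=\langle E\rangle\oplus\ell_{\alpha}$ into an $\alpha$-component and an $\Omega$-component---is essentially the paper's own. The paper perturbs one member of a chosen statistical shs pair by a tensor $A$, computes the induced perturbation of the dual via $g(\widetilde{\nabla}^{*}_{X}Y,Z)=g(\nabla^{*}_{X}Y,Z)-g(Y,A(X,Z))$, and extracts exactly your two components $\Omega(A(\cdot,\cdot),\cdot)$ and $\alpha(A(\cdot,\cdot))$; the only cosmetic difference is that you carry two difference tensors $B,B^{*}$ where the paper carries one.

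However, the step you yourself flag as ``the main obstacle'' is a genuine gap, and it cannot be closed in the form the proposition requires. Writing $\nabla=\nabla^{lc}+B$ and $\nabla^{*}=\nabla^{lc}+B^{*}$ and using that $\nabla^{lc}$ is metric, the duality condition forces $g(B^{*}(X,Z),Y)=-g(B(X,Y),Z)$ for all $X,Y,Z$, so $B^{*}$---and hence the pair $(\beta^{*},A^{*})$---is completely determined by $B$. Your map into $(S^{2}_{E}\oplus S^{3}_{E})\times(S^{2}_{E}\oplus S^{3}_{E})$ therefore lands in the graph of a linear map from the first factor to the second and is never surjective: at most one copy of $S^{2}_{E}\oplus S^{3}_{E}$ worth of freedom is available. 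This defect is inherited from the paper, whose proof derives constraints on the single tensor $A$ only and then asserts the product of two copies without justification. There is a further collapse you do not account for: since $\nabla^{lc}\alpha=0$, the condition $\nabla\alpha=0$ in the definition of $\mathcal{S}_{(g,\Omega,\alpha)}(M^{2d+1})$ reads $(\nabla_{X}\alpha)Y=-\alpha(B(X,Y))=-\beta(X,Y)=0$, so the $S^{2}_{E}$ summand you keep as free data is forced to vanish rather than merely being symmetric and $E$-transverse. Your refusal to claim independence of the four components is to your credit, but the proof is incomplete, and carrying the computation to the end yields a model space strictly smaller than the one stated.
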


\begin{proof}
    Let $(\nabla,\nabla^{*})$ be statistical shs connections on $(M^{2d+1},\Omega,\alpha)$. As a matter of fact, statistical shs connections $(\nabla,\nabla^{*})$ satisfy the following identity:

$$\nabla E=0,\, \nabla\Omega=0, \,\nabla\alpha=0;$$
equivalently,
$$\nabla^{*}E=0,\, \nabla^{*}\Omega=0,\,\nabla^{*}\alpha=0.$$

The statistical shs connections $(\nabla,\nabla^{*})$ satisfy the same conditions. For this reason, it is sufficient to work on one of them. As in Proposition \ref{eq:112}, any connection $\widetilde{\nabla}$ can be indicated in terms of:
$$\widetilde{\nabla}=\nabla+A,\quad\text{where}\quad A\in T^{2}_{1}M^{2d+1}.$$

By a direct computation, we obtain the following equation:

\begin{equation}
    g(\widetilde{\nabla}^{*}_{X}Y,Z)=g(\nabla^{*}_{X}Y,Z)-g(Y,A(X,Z)).
\end{equation}

 Now, the pair of connections $(\widetilde{\nabla},\widetilde{\nabla}^{*})$ is a statistical shs connection if and only if the following identities are satisfied:
\[
\begin{cases}
    &  S(X,Y,Z)=\Omega(A(X,Y),Z)\in \Gamma^{\infty}(S^{3}TM^{2d+1}),\\
    
    & Q(X,Y)=\alpha(A(X,Y))\in \Gamma^{\infty}(S^{2}TM^{2d+1}),\\
    
    & A(E.,.)=0.\\
\end{cases}
\label{eq:34}
\]
From the above, we deduce that the set $\mathcal{S}_{(g,\Omega,\alpha)}(M^{2d+1})$ is an affine space modelled on $(S^{2}_{E}(M^{2d+1})\oplus S^{3}_{E}(M^{2d+1}))\times (S^{2}_{E}(M^{2d+1})\oplus S^{3}_{E}(M^{2d+1})).$
\end{proof}

\section{ K\"ahler and co-K\"ahler metrics are always Fisher information metrics}
\label{se:Kal and cok}
As \cite{li2008topology} Li pointed out, co-Kähler manifolds are analogues of Kähler manifolds. From this perspective, in the literature there are analogous results in both cases. We recall some of these results. 

\begin{table}[h!]
    \centering
    \begin{tabular}{|m{15.5em}|m{18.5em}|}
         \hline
        K\"ahler structure & co-K\"ahler structure\\
        \hline
        \cite{goldberg1998curvature} ,\cite{deligne1975real} Betti numbers
    $b_{2k}\ne 0$; $b_{2k-1}=2s$; $b_{i-2}(M^{2d})\leq b_{i}(M^{2d})$ & \cite{chinea1993topology} $b_{i}(M^{2d+1})\ne 0$; $b_{1}(M^{2d+1})=2s+1$; $b_{i-2}\leq b_{i} \,,i\leq d+1$ ; 
   $1 \leq b_{i}(M^{2d+1})\leq ...\leq b_{d}(M^{2d+1})\quad i\leq d$,
   $$ b_{d+1}(M^{2d+1})\geq\ldots\geq b_{2d}(M^{2d+1})\;i\geq d+1,$$ 
        $$b_{d+1}(M^{2d+1})=b_{d}(M^{2d+1}).$$\\ 
        \hline
       \cite{deligne1975real}The Kähler manifold satisfies the strong Lefchetz property.  & \cite{chinea1993topology} The co-Kähler manifold satisfies the strong Lefschetz theorem. \\   
        \hline
 \cite{goldberg1969integrability1}\, Goldberg integrability condition for almost Kähler manifold & \cite{goldberg1969integrability}\, Goldberg integrability condition for almost co-Kähler manifold \\\hline       \cite{hasegawa2006note}\,Hasegawa theorem on solvmanifold Kähler manifold & \cite{fino2011some}\,Anna Fino and Luigi Vezzoni results on solvmanifold co-Kähler structure. \\
              \hline
       \cite{sullivan1973differential} Formality in sense of Sullivan of K\"ahler manifold & \cite{chinea1993topology}\,Formality in sense of Sullivan of co-Kähler manifold.\\
\hline
    \end{tabular}
    \caption{co-Kähler and Kähler correspondance results.}
    \label{tab:my_label}
\end{table}

   


We are now going to reveal that our statistical characterization of co-K\"ahler manifolds remains true with the K\"ahler manifold as well.

A symplectic connection \cite{tondeur1962affine}, \cite{lemlein1957spaces}, \cite{vaisman1985symplectic} on a symplectic manifold $(M^{2d},\Omega)$ is a torsion-free
linear connection $\nabla$ on $M^{2d}$, for which the symplectic 2-form $\Omega$ is parallel. Any symplectic manifold admits a symplectic connection (see \citet{bieliavsky2006symplectic},  \citet{vaisman1985symplectic}, \citet{lichnerowicz1982deformations}. According to Darboux theorem, on any Darboux charts, one can construct a symplectic connection $\nabla^{i}$  to have zero Christoffel symbols. The desired symplectic connection on $(M^{2d},\omega)$ is now obtained through gluing the local connections $\nabla^{i}$
with the help of the partition of unity.  \citet{bieliavsky2006symplectic} reported a general construction of a symplectic connection. They proved that the space of symplectic connections on $(M^{2d},\omega)$ is an affine space modeled on the space of symmetric covariant 3-tensor fields on $M^{2d}.$ \citet{vaisman1985symplectic}) set forward a curvature classification of symplectic connections.

 Let  $(g,\Theta)$ be an almost Hermitian structure on a symplectic manifold $(M^{2d},\omega)$. Choose a symplectic connection $\nabla$. It follows from a straightforward computation that:
\begin{equation}
    (\nabla_{Z}\omega)(X,Y) = g(\nabla^{*}_{X}\Theta Z-\Theta\nabla_{X}Z,Y),
    \label{eq:paaa}
\end{equation}
where $\nabla^{*}$ is the dual connection of $\nabla$ with respect to the Hermitian metric $g$. As a direct 
consequence of \cref{eq:paaa}, we obtain:
\begin{equation}
\nabla=\Theta^{-1}\nabla^{*}\Theta.
\label{eq:gaug}
\end{equation}
With regard to \cref{eq:gaug}, \cref{eq:dualexp} one can easily prove that the following identities hold for any symplectic connection $\nabla$ on $(M^{2b},\omega)$
\vspace{-6pt}
\begin{align}
        \nabla^{*}_{X}Y & =\nabla_{X}Y-\Theta(\nabla_{X} \Theta)Y,\\ 
  \nabla_{X}Y & =\nabla^{*}_{X}Y+\Theta^{-1}(\nabla^{*}_{X}\Theta)Y, \\
        R^{\nabla}(Y,X)Z & = \Theta^{-1}R^{\nabla^{*}}(X,Y)\Theta Z,\\
          T^{\nabla^{*}}(X,Y) &=\Theta^{-1}((\nabla^{*}_{Y}\Theta)X-(\nabla^{*}_{X}\Theta)Y),\\
          T^{\nabla^{*}}(X,Y)&=\Theta((\nabla_{Y}\Theta)X-(\nabla_{X}\Theta)Y),
\end{align}

 for any vector fields $X, Y, Z$ on $M^{2b}.$

\citet{hicks1965linear} introduced the notion of linear perturbations of connection $\nabla$ by:
$$\nabla\rightarrow\nabla^{\Theta}=\Theta^{-1}\nabla\Theta.$$
\vspace{-2pt}
Note that the Hicks definition of linear perturbation coincides exactly with the action of the gauge group of the affine space connections. In the literature, the linear perturbation $\nabla$ is also called the $\Theta$-gauge transformation of a connection $\nabla$ (see \cite{fei2017interaction}) or the gauge equation (see \cite{nguiffo2016foliations}). Note that in the case of two isometric metrics, Levi-Civita connections of both metrics are related by a linear perturbation. In 2009, \citet{schwenk2009codazzi} extended the
works of Hicks in the context of conjugate connections by:
\begin{equation}
\nabla^{*}=\nabla^{\Theta}=\Theta^{-1}\nabla\Theta.
\end{equation}
 \cref{eq:gaug}, brought us to conclude that, any symplectic manifold supports a linear perturbation of Hicks of SMAT connections as follows :
 \vspace{-20pt}
\begin{figure}[H]
    \centering
\begin{equation}
    \begin{tikzcd}
\begin{array}{cc}
\nabla^* = \Theta^{-1}\nabla \Theta  \\ 
\end{array}
    \ar[r, <-, bend left, "\Theta"]\ar[r, bend right, "\Theta^{-1}"] & \nabla \omega = 0
\end{tikzcd}
\end{equation}
\caption{Linear perturbation of a symplectic connection by an almost complex structure.}
    \label{fig:Linearper}
\end{figure}

\subsection{The dual connection of any symplectic connection is an almost symplectic connection}

\begin{defn}
   \cite{vaisman1985symplectic},\cite{tondeur1962affine} Let $(M^{2d},\omega)$ be 
an almost symplectic manifold. A linear connection $\nabla$ on $M^{2d}$ is 
called an almost symplectic connection if:
\begin{equation}
    \nabla\omega=0.
\end{equation}
\end{defn}

 \begin{proposition}
 The dual connection of any symplectic connection on an almsot Khaler manifold $(M^{2d},\omega,\Theta, g)$ is an almsot-symplectic connection.
 \label{pr:paralell}
 \end{proposition}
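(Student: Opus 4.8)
The plan is to show directly that $\nabla^{*}\omega=0$, where $\nabla^{*}$ is the $g$-dual of the symplectic connection $\nabla$ and $\omega=g(\Theta\,\cdot\,,\cdot)$ with $\Theta$ the ($g$-skew-adjoint) almost complex structure satisfying $\Theta^{2}=-I$ and $g(\Theta\,\cdot\,,\Theta\,\cdot\,)=g$. The cleanest route is to introduce the averaged connection
\begin{equation*}
\widehat{\nabla}=\tfrac{1}{2}\bigl(\nabla+\nabla^{*}\bigr),
\end{equation*}
and to exploit the two duality relations defining $\nabla^{*}$, namely $Z\,g(X,Y)=g(\nabla_{Z}X,Y)+g(X,\nabla^{*}_{Z}Y)$ together with its involutive companion $Z\,g(X,Y)=g(\nabla^{*}_{Z}X,Y)+g(X,\nabla_{Z}Y)$. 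Averaging these two identities gives at once $\widehat{\nabla}g=0$, so $\widehat{\nabla}$ is a metric connection. First I would record this purely formal fact.

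Next I would reduce the statement to a property of $\Theta$. Since the covariant derivative of a fixed tensor field is affine in the connection, $\widehat{\nabla}\omega=\tfrac12(\nabla\omega+\nabla^{*}\omega)$; on the other hand, because $\widehat{\nabla}g=0$ and $\omega=g(\Theta\,\cdot\,,\cdot)$, the Leibniz rule collapses to
\begin{equation*}
(\widehat{\nabla}_{Z}\omega)(X,Y)=g\bigl((\widehat{\nabla}_{Z}\Theta)X,Y\bigr).
\end{equation*}
Combining the two expressions yields $\tfrac12(\nabla_{Z}\omega+\nabla^{*}_{Z}\omega)(X,Y)=g\bigl((\widehat{\nabla}_{Z}\Theta)X,Y\bigr)$, so everything comes down to showing $\widehat{\nabla}\Theta=0$.

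The crux is therefore to prove $\nabla^{*}\Theta=-\nabla\Theta$, which forces $\widehat{\nabla}\Theta=0$. Here I would invoke the identity $\nabla^{*}_{X}Y=\nabla_{X}Y-\Theta(\nabla_{X}\Theta)Y$, valid for any symplectic connection (the first of the identities obtained from \cref{eq:gaug} and \cref{eq:dualexp}), so that the difference tensor is $U(X,\cdot)=\nabla^{*}_{X}-\nabla_{X}=-\Theta(\nabla_{X}\Theta)$. Differentiating $\Theta^{2}=-I$ shows that $\nabla_{X}\Theta$ anticommutes with $\Theta$, and a short computation of $(\nabla^{*}_{X}\Theta)Y=\nabla^{*}_{X}(\Theta Y)-\Theta\nabla^{*}_{X}Y$ using the formula for $U$ together with $\Theta^{2}=-I$ reduces it to $(\nabla^{*}_{X}\Theta)Y=-(\nabla_{X}\Theta)Y$. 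Hence $\widehat{\nabla}\Theta=\tfrac12(\nabla\Theta+\nabla^{*}\Theta)=0$, giving $\nabla^{*}\omega=-\nabla\omega$; since $\nabla$ is symplectic we have $\nabla\omega=0$, and therefore $\nabla^{*}\omega=0$, i.e.\ $\nabla^{*}$ is an almost-symplectic connection. I expect the main obstacle to be exactly this middle step: carefully tracking the interplay between the difference tensor $U$, the skew-adjointness of $\Theta$, and the relation $\Theta^{2}=-I$ to extract the sign reversal $\nabla^{*}\Theta=-\nabla\Theta$. Once that is in hand, the metric-average argument closes the proof immediately.
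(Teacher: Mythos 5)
Your proof is correct, and it reaches the conclusion by a route that is organized differently from the paper's. The paper works directly: from the identity $\nabla^{*}_{X}Y=\nabla_{X}Y-\Theta(\nabla_{X}\Theta)Y$ of \cref{eq:dualexp} it extracts the gauge relation $\nabla_{X}(\Theta Y)=\Theta\nabla^{*}_{X}Y$ (using $\Theta^{2}=-I$), then expands $(\nabla^{*}_{X}\omega)(Y,Z)$ with the duality condition to obtain $(\nabla^{*}_{X}\omega)(Y,Z)=g(\nabla_{X}\Theta Y-\Theta\nabla^{*}_{X}Y,Z)=0$ in one stroke. You instead pass through the averaged connection $\widehat{\nabla}=\tfrac12(\nabla+\nabla^{*})$, observe that it is metric, reduce $\widehat{\nabla}\omega$ to $\widehat{\nabla}\Theta$, and establish the sign reversal $\nabla^{*}\Theta=-\nabla\Theta$ from the same identity \cref{eq:dualexp} together with the anticommutation of $\nabla_{X}\Theta$ with $\Theta$; I checked the computation $(\nabla^{*}_{X}\Theta)Y=-\Theta(\nabla_{X}\Theta)\Theta Y=-(\nabla_{X}\Theta)Y$ and it is sound. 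Both arguments ultimately rest on the same ingredient, but yours isolates two reusable facts — that $\widehat{\nabla}$ is the Levi-Civita-type metric average and that $\nabla\Theta$ and $\nabla^{*}\Theta$ are opposite — which connect naturally to the paper's later use of $2\nabla^{lc}=\nabla+\nabla^{*}$ in \cref{th:AlmsKha}, whereas the paper's version is shorter and avoids introducing $\widehat{\nabla}$ at this stage.
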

 \begin{proof}
     Let us choose a symplectic connection $\nabla$ on $(M^{2d},\omega,\Theta, g)$. According to \cref{eq:dualexp}, we have:
     \begin{equation}
\nabla^{*}_{X}Y=\nabla_{X}Y-\Theta(\nabla_{X} \Theta)Y,\,\,\forall X,Y
\end{equation}
Now by a direct calculation, we get:

\begin{equation}
   \Theta\nabla^{*}_{X}Y=\Theta\nabla_{X}Y+\nabla_{X}\Theta Y-\Theta\nabla_{X}Y
\end{equation}
From the above, we deduce that

\begin{equation}
    \nabla_{X}\Theta Y=\Theta\nabla^{*}_{X}Y,\,\,\forall X,Y.
    \label{eq:gauver}
\end{equation}

From a straightforward computation, we get

\begin{equation}
    (\nabla_{X}^{*}\omega)(Y,Z)=X.\omega(Y,Z)-\omega(\nabla_{X}^{*}Y,z)-\omega(Y,\nabla_{X}^{*}Z)
\end{equation}

Now using the duality condition, we obtain

\begin{equation}
   (\nabla_{X}^{*}\omega)(Y,Z)=g(\nabla_{X}\Theta Y-\Theta\nabla_{X}^{*}Y,Z)
\end{equation}
It follows from \cref{eq:gauver} that

\begin{equation}
    \nabla^{*}\omega=0.
\end{equation}
Then, $\nabla^{*}$ is an almost-symplectic connection.
 \end{proof}

\vspace{-20pt}
\begin{figure}[H]
    \centering
\begin{equation}
  \begin{tikzcd}
    \nabla^*\omega=0 \ar[r, <-, bend left, "g"]\ar[r, bend right, "g"] & 
    \begin{array}{c}
\nabla\omega=0 \\T\nabla=0
    \end{array}
\end{tikzcd} \hspace{1.5cm}  
\end{equation}
\caption{Linear perturbation of a symplectic connection is an almost symplectic connection.}
    \label{fig:Linearpsymper}
\end{figure}

 \subsubsection{Hermitian connections and the dual of a symplectic connection}

Two natural connections exist for a given hermitian metric $g$ on $M^{2d}$. The first is Levi-Civita connection $\nabla^{lc}$ of the metric $g$ and the second is the hermitian connection $\nabla^{\text{h}}$, satisfying $$\nabla^{\text{h}}g=0,\,\nabla^{\text{h}}\Theta=0,$$

and that the $(1,1)$-part of the torsion vanishes, i.e., 

\begin{equation}
    T^{\nabla^{\text{h}}}(\Theta X,\Theta Y)=T^{\nabla^{\text{h}}}(X,Y),\,\,\forall X,Y.
\end{equation}

There are two well-known Hermitian connections. The Bismut connection (see \cite{bismut1989local}) and the Chern connection (see \cite{chern1946characteristic},\cite{vanhecke1980chern}).  
\vspace{2ex}

\begin{proposition}
  The dual connection $\nabla^{*}$ of any symplectic connection $\nabla$ on an almost K\"ahler manifold $(M^{2d},\omega,\Theta,g)$ can be expressed as follows:
  \begin{enumerate}
      \item $\nabla^{*}=\nabla^{\text{h}}+L$, where $\omega(L(X,Y),Z)=\omega(L(X,Z),Y),\quad \forall X,Y,Z.$
   \item $\nabla^{*}=\nabla^{lc}-\frac{1}{3}\Theta(\nabla^{lc} \Theta)-\frac{1}{3}\Theta(\nabla^{lc}\Theta)+K$, where\; $\omega(K(X,Y),Z)=\omega(K(X,Z),Y)$, $\forall X,Y,Z$.
  \end{enumerate}
\end{proposition}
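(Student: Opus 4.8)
The plan is to prove both identities from a single principle: \emph{the difference of two $\omega$-preserving connections produces a tensor whose lowered form is symmetric in its last two slots.} Concretely, if $\nabla^{(1)}\omega=\nabla^{(2)}\omega=0$ and $S=\nabla^{(1)}-\nabla^{(2)}$, then expanding $0=(\nabla^{(1)}_X\omega-\nabla^{(2)}_X\omega)(Y,Z)$ gives $\omega(S(X,Y),Z)+\omega(Y,S(X,Z))=0$, hence $\omega(S(X,Y),Z)=\omega(S(X,Z),Y)$. The whole proposition then reduces to choosing, in each case, a reference connection against which $\nabla^*$ is compared and checking that this reference preserves $\omega$.

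For assertion (1), recall first that by \cref{pr:paralell} the dual $\nabla^*$ of any symplectic connection satisfies $\nabla^*\omega=0$. The Hermitian connection obeys $\nabla^{\text{h}}g=0$ and $\nabla^{\text{h}}\Theta=0$; since $\omega=g(\Theta\,\cdot,\cdot)$, a one-line computation shows these two conditions force $\nabla^{\text{h}}\omega=0$. Setting $L=\nabla^*-\nabla^{\text{h}}$ and applying the principle above yields $\omega(L(X,Y),Z)=\omega(L(X,Z),Y)$, which is exactly (1).

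For assertion (2), the reference connection is the canonical symplectic connection built from $\nabla^{lc}$ by the construction of \citet{bieliavsky2006symplectic} already invoked in the proof of \cref{th:mainTheorem}. Defining $B$ by $(\nabla^{lc}_X\omega)(Y,Z)=\omega(B(X,Y),Z)$, a short computation using $\nabla^{lc}g=0$ and $\omega=g(\Theta\,\cdot,\cdot)$ gives $(\nabla^{lc}_X\omega)(Y,Z)=g((\nabla^{lc}_X\Theta)Y,Z)$, whence $\Theta B(X,Y)=(\nabla^{lc}_X\Theta)Y$ and therefore $B(X,Y)=-\Theta(\nabla^{lc}_X\Theta)Y$, using $\Theta^{-1}=-\Theta$. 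The resulting connection
\begin{equation*}
\widetilde{\nabla}_XY=\nabla^{lc}_XY+\tfrac{1}{3}B(X,Y)+\tfrac{1}{3}B(Y,X)=\nabla^{lc}_XY-\tfrac{1}{3}\Theta(\nabla^{lc}_X\Theta)Y-\tfrac{1}{3}\Theta(\nabla^{lc}_Y\Theta)X
\end{equation*}
is torsion-free, because its correction to $\nabla^{lc}$ is symmetric in $X,Y$, and symplectic, so $\widetilde{\nabla}\omega=0$. Writing $K=\nabla^*-\widetilde{\nabla}$ and invoking the principle once more gives $\omega(K(X,Y),Z)=\omega(K(X,Z),Y)$, which is (2); the two displayed correction terms are precisely the $X$- and $Y$-slot contributions $\tfrac13\Theta(\nabla^{lc}_X\Theta)Y$ and $\tfrac13\Theta(\nabla^{lc}_Y\Theta)X$.

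The only delicate point is confirming that $\widetilde{\nabla}$ is genuinely $\omega$-parallel, i.e. the Bieliavsky computation $(\widetilde{\nabla}_X\omega)(Y,Z)=0$: this rests on the closedness $d\omega=0$, which forces the requisite symmetry of $\omega(B(X,Y),Z)$, together with the torsion-freeness of $\nabla^{lc}$, and is exactly the construction quoted from \cite{bieliavsky2006symplectic} in the proof of \cref{th:mainTheorem}. Everything else is the routine tensorial manipulation packaged in the difference principle, so I anticipate no further obstacle.
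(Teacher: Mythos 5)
Your proof is correct and follows essentially the same route as the paper: for each assertion you compare $\nabla^{*}$ with a reference $\omega$-preserving connection ($\nabla^{\text{h}}$ in the first case, the Bieliavsky--Cahen--Gutt--Rawnsley--Schwachh\"ofer symmetrization of $\nabla^{lc}$ in the second) and read off the symmetry $\omega(S(X,Y),Z)=\omega(S(X,Z),Y)$ of the lowered difference tensor. Your explicit statement of this ``difference principle'' and your identification of the second correction term as $-\tfrac{1}{3}\Theta(\nabla^{lc}_{Y}\Theta)X$ (the symmetrized slot) actually clean up what the paper leaves implicit.
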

\begin{proof}
\begin{enumerate}
    \item The dual connection $\nabla^{*}$ can be stated as:

$$\nabla^{*}=\nabla+L,\,\, \text{where}\,\, L\in T^{2}_{1}M^{2d+1}.$$
Departing from proposition \ref{pr:paralell}, we deduce that:
\begin{equation}
\nabla^{h}\omega=0,\nabla^{*}\omega=0.
\label{eq:deuxpa}
\end{equation}
Relying on \cref{eq:deuxpa}, we obtain by means of a direct computation that:
\begin{equation}
(\nabla^{*}_{X}\omega)(Y,z)=(\nabla^{h}_{X}\omega)(Y,z)+\omega(L(X,Y),Z)-\omega(L(X,Z),Y).
\end{equation}
From the above, the conclusion follows.

\item 
Resting upon \cite{bieliavsky2006symplectic}, any symmetric connection $\nabla^{0}$ defines a symplectic connection as follows:

$$\widetilde{\nabla}=\nabla^{0}+\frac{1}{3}U(X,Y)+\frac{1}{3}U(Y,X),$$
where $ \nabla^0_{X}\omega (Y,Z)=\omega(U(X,Y),Z).$ Now let's consider $\nabla^{lc}=\nabla^{0}$. By a straightforward calculation, we obtain:

$$\widetilde{\nabla}_{X}Y=\nabla^{lc}_{X}Y-\frac{1}{3}\Theta(\nabla_{X}^{lc}\Theta)Y-\frac{1}{3}\Theta(\nabla_{X}^{lc}\Theta)Y,\quad\forall X,Y.$$

As in the first case, we conclude that

$$\nabla_{X}^{*}Y=\nabla^{lc}_{X}Y-\frac{1}{3}\Theta(\nabla_{X}^{lc}\Theta)Y-\frac{1}{3}\Theta(\nabla_{X}^{lc}\Theta)Y+ K(X,Y),$$
where 
$\omega(K(X,Y),Z)=\omega(K(X,Z),Y).$
\end{enumerate}
\end{proof}

\subsubsection{\texorpdfstring{$\alpha$-}{Lg}connections are always almost symplectic connections}
 
Let us consider a pair $(\nabla,\nabla^*)$ of SMAT connections in an almost K\"ahler manifold $(M^{2d},\omega,g,\Theta)$. Exactly as in the case of co-K\"ahler manifold, we introduce the family $\alpha$-connections for a pair $(\nabla,\nabla^{*})$ as:

\begin{equation}
    \nabla^{\alpha}=\frac{1+\alpha}{2}\nabla^{*}+\frac{1-\alpha}{2}\nabla,\,\, \alpha\in \mathbb{R},
\end{equation}

The 1-parameter family $(\nabla^{\alpha})_{\alpha\in\mathbb{R}}$ connections
 are distinguished  properties of being all almost symplectic connections.
\vspace{1ex}
\begin{proposition}    \label{pr:alphaprara}
    On any almost K\"ahler manifold, we have:
\begin{align}
        \nabla^{\alpha}\omega & =0;\\ 
  R^{\nabla^{\alpha}}(.,.)\omega & =0; \\
        R^{\nabla^{\alpha}}(.,.). & = \Theta^{-1} R^{\nabla^{-\alpha}}(.,.)\Theta;\\
          U(\Theta.,.)&=\Theta U(.,.), \, where\, \, U=\nabla^{*}-\nabla.
\end{align}
\end{proposition}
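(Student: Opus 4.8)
The plan is to reduce all four identities to a single intertwining relation between $\nabla$ and $\nabla^{*}$ through the almost complex structure $\Theta$, and then propagate it through the $\alpha$-family. First I would dispose of $\nabla^{\alpha}\omega=0$. Since $\frac{1+\alpha}{2}+\frac{1-\alpha}{2}=1$, the operator $\nabla^{\alpha}$ is a genuine affine combination of connections, so for any tensor $T$ one has $\nabla^{\alpha}T=\frac{1+\alpha}{2}\nabla^{*}T+\frac{1-\alpha}{2}\nabla T$. Applying this to $\omega$ and using that $\nabla\omega=0$ (because $\nabla$ is symplectic) together with $\nabla^{*}\omega=0$ (Proposition \ref{pr:paralell}) gives $\nabla^{\alpha}\omega=0$ immediately.

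Next I would establish the key lemma $\nabla^{-\alpha}_{X}(\Theta Y)=\Theta\nabla^{\alpha}_{X}Y$, equivalently $\nabla^{\alpha}=\Theta^{-1}\nabla^{-\alpha}\Theta$. The starting point is \cref{eq:gauver}, namely $\nabla_{X}(\Theta Y)=\Theta\nabla^{*}_{X}Y$; rewriting it as $\nabla^{*}_{X}Z=\Theta^{-1}\nabla_{X}(\Theta Z)$ and substituting $Z=\Theta Y$, the relation $\Theta^{2}=-I$ forces the companion identity $\nabla^{*}_{X}(\Theta Y)=\Theta\nabla_{X}Y$. Taking the weighted average of these two relations with weights $\frac{1+\alpha}{2}$ and $\frac{1-\alpha}{2}$ yields the claimed intertwining for every $\alpha\in\mathbb{R}$.

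From the intertwining, identity (3) is essentially formal: applying $\nabla^{\alpha}_{X}V=\Theta^{-1}\nabla^{-\alpha}_{X}(\Theta V)$ twice shows that $\nabla^{\alpha}_{X}\nabla^{\alpha}_{Y}Z=\Theta^{-1}\nabla^{-\alpha}_{X}\nabla^{-\alpha}_{Y}(\Theta Z)$, and likewise for the $[X,Y]$-term, so the $\Theta^{-1}$ pulls out of the whole bracket and $R^{\nabla^{\alpha}}(\cdot,\cdot)=\Theta^{-1}R^{\nabla^{-\alpha}}(\cdot,\cdot)\Theta$. Identity (2) then follows from identity (1) by the Ricci identity: since $\nabla^{\alpha}\omega=0$, a further covariant differentiation gives $\nabla^{\alpha}\nabla^{\alpha}\omega=0$, whence $R^{\nabla^{\alpha}}(X,Y)\cdot\omega=(\nabla^{\alpha}_{X}\nabla^{\alpha}_{Y}-\nabla^{\alpha}_{Y}\nabla^{\alpha}_{X}-\nabla^{\alpha}_{[X,Y]})\omega=0$; that is, the curvature annihilates the parallel form $\omega$.

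Finally, for identity (4) I would compute the difference tensor $U=\nabla^{*}-\nabla$ directly from the gauge relation, obtaining $U(X,Y)=\Theta^{-1}(\nabla_{X}\Theta)Y$, and then track how $U$ transforms when $\Theta$ is inserted, using $\Theta^{2}=-I$ together with the anticommutation $(\nabla_{X}\Theta)\Theta=-\Theta(\nabla_{X}\Theta)$ obtained by differentiating $\Theta^{2}=-I$. I expect this last identity to be the main obstacle: it is the only one of the four that does not reduce to pure linearity (as in (1)) or to conjugation of a curvature operator (as in (2)--(3)), and the bookkeeping of the factors of $\Theta$ and the signs generated by $\Theta^{2}=-I$ is exactly where the argument is delicate and where care is needed to land on the stated form. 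The remaining three identities are, by contrast, formal consequences of $\nabla\omega=\nabla^{*}\omega=0$ and of the single intertwining relation derived above.
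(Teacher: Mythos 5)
Your treatment of the first three identities is correct and, since the paper's own justification is only the phrase ``straightforward computation,'' your argument is actually more complete than the original: the affine-combination observation giving $\nabla^{\alpha}\omega=0$ from $\nabla\omega=\nabla^{*}\omega=0$, the companion relation $\nabla^{*}_{X}(\Theta Y)=\Theta\nabla_{X}Y$ obtained from \cref{eq:gauver} and $\Theta^{2}=-I$, the averaged intertwining $\nabla^{-\alpha}_{X}(\Theta Y)=\Theta\nabla^{\alpha}_{X}Y$, and the two formal consequences for the curvature all check out.

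The genuine gap is exactly where you predicted it: identity (4) is not actually proved, and the route you sketch cannot land on the identity as printed. From $U(X,Y)=\Theta^{-1}(\nabla_{X}\Theta)Y$ and the anticommutation $(\nabla_{X}\Theta)\Theta=-\Theta(\nabla_{X}\Theta)$ one gets
\begin{equation*}
U(X,\Theta Y)=\Theta^{-1}(\nabla_{X}\Theta)\Theta Y=-(\nabla_{X}\Theta)Y=-\Theta U(X,Y),
\end{equation*}
the \emph{opposite} sign to the stated $U(\cdot,\Theta\cdot)=\Theta U(\cdot,\cdot)$; the same sign appears instantly from \cref{eq:gauver} and its companion, since $U(X,\Theta Y)=\Theta\nabla_{X}Y-\Theta\nabla^{*}_{X}Y$. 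If instead one reads the statement literally, with $\Theta$ in the first slot, then $U(\Theta X,Y)=\Theta U(X,Y)$ is equivalent to $(\nabla_{\Theta X}\Theta)=\Theta(\nabla_{X}\Theta)$, which is not a consequence of $\nabla\omega=0$: replacing $\nabla$ by $\nabla+B$ with $\omega(B(\cdot,\cdot),\cdot)$ totally symmetric changes $\nabla_{X}\Theta$ into $\nabla_{X}\Theta+[B_{X},\Theta]$, and the required covariance $[B_{\Theta X},\Theta]=\Theta[B_{X},\Theta]$ already fails for $\omega(B(X,Y),Z)=X^{1}Y^{1}Z^{1}$ on $(\mathbb{R}^{2},dx\wedge dy)$ with the standard $\Theta$, where $B_{\Theta X}=0$ but $[B_{X},\Theta]\neq 0$. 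So the provable statement is $U(X,\Theta Y)=-\Theta U(X,Y)$. The discrepancy is harmless for the only downstream use, namely the commutator relation $[U_{X},U_{Y}]\Theta=\Theta[U_{X},U_{Y}]$ in \cref{subsec:Gol}, where the two minus signs cancel; but as a proof of the proposition as written, your item (4) is incomplete and the target identity itself needs a sign (or argument-slot) correction before it can be established.
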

 \begin{proof}
    The result is therefore obtained by a straightforward computation.
 \end{proof}

\subsection{A new characterisation of Kähler  metrics }

Let's consider an almsot K\"ahler manifold $(M,\omega,\Theta, g)$. The following S. I. Goldberg conjecture \cite{goldberg1969integrability} is well-known concerning the integrability of the almost complex structure of an almost K\"ahler manifold.

\vspace{2ex}

\textbf{Goldberg conjecture}: "The almost complex structure of a compact Einstein almost K\"ahler manifold
 is integrable (and therefore the manifold is K\"ahler)".\vspace{0.2cm}
 
 There is some significant progress in certain curvature conditions s concerning the Golberg conjecture. \citet{sekigawa1987some} proved the validity of this conjecture for almost Kähler manifolds whose scalar curvature is non negative. A four-dimensional compact almost K\"ahler manifold which is Einsteinian and $*$-Einsteinian is a K\"ahler manifold \citet{sekigawa1990four}. A complete almost K\"ahler manifold of constant sectional curvature is a flat K\"ahler manifold( see \citet{sekigawa1990four}, \citet{oguro1994non}, \citet{olszak1981almost}). It is still open otherwise.\vspace{0.2cm}

 Before stating the main theorem on this subsection, let us put forward certain pertinent remarks.
\vspace{1ex}
 \begin{remark}     Consider a K\"ahler manifold $(M^{2d},\omega,\Theta,g)$. On open holomorphic Darboux charts $(U^{l},z^{1},....,z^{d})$ centered at some point $p$, choose the local symplectic connection $\nabla^{l}$ which has zero Christoffel symbols (ie $(\Gamma^{l}_{{ki}})^{s}=0)$:
     \[\nabla^{l}_{\partial z_{i}}\partial z_{j}=0 , \]   
 \[
   \nabla^{l}_{\partial z_{i}}\partial \Bar{z}_{j}=0. \]
With respect to the duality condition, one proves that:
$$\partial_{k}g_{i\Bar{j}}=(\Gamma^{l}_{{ki}})^{s}g_{\Bar{j}s}+(\Gamma^{l*}_{k\Bar{j}})^{s}g_{is},$$

$$\partial_{\Bar{k}}g_{i\Bar{j}}=(\Gamma^{l}_{{\Bar{k}i}})^{s}g_{\Bar{j}s}+(\Gamma^{l*}_{\Bar{k}\Bar{j}})^{s}g_{is},$$

where $\Gamma^{l*}_{kj,i}$ are the Christoffel symbols of the dual connection $\nabla^{l*}$ of $\nabla^{l}$.  With reference to the definition of $\nabla^{l}$, we deduce that:
$$g^{i\Bar{m}}\partial_{k}g_{i\Bar{j}}=(\Gamma^{l*}_{k\Bar{j}})^{\Bar{m}},$$
$$g^{i\Bar{m}}\partial_{\Bar{k}}g_{i\Bar{j}}=(\Gamma^{l*}_{\Bar{k}\Bar{j}})^{\Bar{m}}.$$

Finally, we infer that

$$g^{i\Bar{m}}(\partial_{k}g_{\Bar{j}i}-\partial_{\Bar{j}}g_{ki})=(\Gamma^{l*}_{k\Bar{j}})^{\Bar{m}}-(\Gamma^{l*}_{\Bar{j}k})^{\Bar{m}},$$

$$g^{i\Bar{m}}(\partial_{\Bar{k}}g_{i\Bar{j}}-\partial_{\Bar{j}}g_{i\Bar{k}})=(\Gamma^{l*}_{\Bar{k}\Bar{j}})^{\Bar{m}}-(\Gamma^{l*}_{\Bar{j}\Bar{k}})^{\Bar{m}}.$$

Clearly, on the local holomorphic Darboux chart $U^{l}$, the connection $\nabla^{l*}$ is a symmetric connection.
 \end{remark}
\vspace{1ex}
Proceeding in the same way as the above remark, let's define the space of all SMAT connections $(\nabla,\nabla^{*})$ on an almost K\"ahler manifold $(M^{2d},\omega,\Theta,g)$ as:


$$\mathcal{E}_{(g,\omega)}(M^{2d+1})=\left\{ (\nabla,\nabla^*) {\lvert}
\nabla\omega=0,\,T^{\nabla}=0\right\}$$
\vspace{1ex}

\begin{defn}
    A statistical almost-K\"ahler $(M^{2d},\omega,\Theta,g)$ is an almost K\"ahler manifold such that $\mathcal{S}_{(g,\omega)}(M^{2d})$ contains statistical connections. These connections will be called the statistical symplectic connections.
\end{defn}

\vspace{2ex}

\begin{proposition}
 An almost K\"ahler manifold $(M^{2d},\omega,\Theta,g)$ is a Kähler manifold if and only it is a statistical almost-K\"ahler manifold.
\label{th:AlmsKha}   
\end{proposition}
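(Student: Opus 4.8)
The plan is to mirror the proof of \cref{thm:principal}, the co-K\"ahler characterization, transposing it from the odd-dimensional shs setting to the even-dimensional symplectic one. Both implications reduce to the interplay between the averaging formula \cref{eq:moy} and the fact (\cref{pr:paralell}) that the dual of a symplectic connection automatically preserves $\omega$. I would first dispose of the easy direction and then concentrate on the converse, which carries the geometric content.

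For the forward direction, assume $(M^{2d},\omega,\Theta,g)$ is K\"ahler. Then its Levi-Civita connection $\nabla^{lc}$ is torsion-free and satisfies $\nabla^{lc}\Theta=0$ and $\nabla^{lc}g=0$; since $\omega=g(\Theta\cdot,\cdot)$ this gives $\nabla^{lc}\omega=0$, so $\nabla^{lc}$ is a symplectic connection. Because the Levi-Civita connection is autodual, $(\nabla^{lc},\nabla^{lc})$ is a pair of dual connections both of which are torsion-free, i.e. a statistical connection lying in $\mathcal{S}_{(g,\omega)}(M^{2d})$. Hence $M$ is a statistical almost-K\"ahler manifold.

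For the converse, suppose $\mathcal{S}_{(g,\omega)}(M^{2d})$ contains a statistical symplectic connection $(\nabla,\nabla^{*})$. By definition $\nabla$ is symplectic, so $T^{\nabla}=0$ and $\nabla\omega=0$, while the statistical hypothesis gives $T^{\nabla^{*}}=0$. Applying \cref{pr:paralell} to the symplectic connection $\nabla$ yields $\nabla^{*}\omega=0$ with no extra hypothesis. Since both $\nabla$ and $\nabla^{*}$ are now torsion-free, the averaging identity \cref{eq:moy} applies and gives $2\nabla^{lc}=\nabla+\nabla^{*}$; averaging the two relations $\nabla\omega=0$ and $\nabla^{*}\omega=0$ then produces $\nabla^{lc}\omega=0$. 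Finally, because $g$ is Hermitian with $\omega=g(\Theta\cdot,\cdot)$ and $\nabla^{lc}g=0$, the condition $\nabla^{lc}\omega=0$ forces $\nabla^{lc}\Theta=0$, which is precisely the K\"ahler condition; thus $\Theta$ is integrable and $(M^{2d},\omega,\Theta,g)$ is K\"ahler.

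The decisive simplification compared with \cref{thm:principal} is that here $\nabla^{*}\omega=0$ is free from \cref{pr:paralell}, so the torsion-freeness of $\nabla^{*}$ enters only through \cref{eq:moy}, which is exactly what upgrades the metric average $\tfrac12(\nabla+\nabla^{*})$ to the torsion-free Levi-Civita connection. I expect the only point needing care to be the last step, namely the implication $\nabla^{lc}\omega=0\Rightarrow\nabla^{lc}\Theta=0$ in the almost Hermitian setting: one must use that $\nabla^{lc}$ is metric to transfer parallelism of $\omega$ to parallelism of $\Theta$ through $\omega=g(\Theta\cdot,\cdot)$. Alternatively, one could bypass \cref{pr:paralell} and reproduce the three-case computation of \cref{thm:principal} in a local orthonormal $\Theta$-basis to establish $\nabla^{*}\omega=0$ directly; this is more laborious but parallels the co-K\"ahler argument verbatim.
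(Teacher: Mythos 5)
Your proposal is correct and follows essentially the same route as the paper: the paper likewise obtains $\nabla^{*}\omega=0$ from \cref{pr:paralell}, deduces $\nabla^{lc}\omega=0$ via the averaging identity $2\nabla^{lc}=\nabla+\nabla^{*}$, and for the converse observes that the Levi-Civita connection of a K\"ahler metric is an autodual, torsion-free symplectic connection. Your added remark that $\nabla^{lc}\omega=0$ together with $\nabla^{lc}g=0$ yields $\nabla^{lc}\Theta=0$ simply makes explicit a step the paper leaves as the standard characterization of K\"ahler metrics.
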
   

\begin{proof}
The proof of the theorem relies on the same strategy adopted with the theorem \ref{thm:principal}. Let us choose $(\nabla,\nabla^*)$, a statistical symplectic connection. Based on the proposition \ref{pr:paralell}, we deduce that:
\begin{equation}
\nabla^{*}\omega=0.
\label{eq:107}
\end{equation}
It follows from \cref{eq:102} that:
\begin{equation}
    \label{eq:New}
    \nabla^{lc}\omega=0.
\end{equation}
From the above \cref{eq:New}, we conclude that $(M^{2d},\omega,g)$ is a Kähler manifold.\vspace{0.2cm}

Conversely, it is well known that in any Kähler manifold $(M^{2d},\omega,g)$, the Levi-Civita connection of the K\"ahler metric satisfies: 
\begin{equation}
   \nabla^{lc}\omega=0.
   \label{eq:paralevicivita}
\end{equation}
Departing from the above equation, we obtain  $\nabla^{lc}\in \mathcal{S}_{(g,\omega)}(M^{2d})$. We conclude that the Kähler manifold $(M^{2d},\omega,g)$ is a statistical almost-K\"ahler manifold.
\end{proof}

\begin{corollary}\label{lem:principal}
   We obtain the following properties on any K\"ahler manifold $(M^{2d}, \omega, g)$:
\begin{enumerate}
   \item $\nabla^{\alpha}=\nabla^{lc}-\frac{1}{2}\alpha\Theta(\nabla\Theta),\quad\forall a\in\mathbb{R}.$
   \item The Ricci curvature $Ric^{\nabla^{\alpha}}$ is symmetric.
\end{enumerate}   
\end{corollary}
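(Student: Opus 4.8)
The plan is to derive both assertions from the structure of the dual of a symplectic connection recorded in \cref{eq:dualexp}, together with the averaging identity \cref{eq:moy}. Throughout, I would fix a statistical symplectic connection $(\nabla,\nabla^{*})$ on the K\"ahler manifold $(M^{2d},\omega,\Theta,g)$; such a connection exists by \cref{th:AlmsKha}, which identifies K\"ahler manifolds with statistical almost-K\"ahler manifolds.

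For the first item, I would start from the definition $\nabla^{\alpha}=\tfrac{1-\alpha}{2}\nabla+\tfrac{1+\alpha}{2}\nabla^{*}$ and rewrite it as $\nabla^{\alpha}=\nabla^{lc}+\tfrac{\alpha}{2}(\nabla^{*}-\nabla)$, using $2\nabla^{lc}=\nabla+\nabla^{*}$ from \cref{eq:moy}. It then suffices to identify the difference tensor $\nabla^{*}-\nabla$. Since $\nabla$ is a symplectic connection on the K\"ahler manifold, \cref{eq:dualexp} gives $\nabla^{*}_{X}Y=\nabla_{X}Y-\Theta(\nabla_{X}\Theta)Y$, that is $\nabla^{*}-\nabla=-\Theta(\nabla\Theta)$. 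Substituting yields $\nabla^{\alpha}=\nabla^{lc}-\tfrac{1}{2}\alpha\,\Theta(\nabla\Theta)$, which is the claimed formula. This step is a direct substitution and I expect no difficulty.

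For the second item, the decisive observation is that $(\nabla,\nabla^{*})$ is a genuine statistical pair, so both $\nabla$ and $\nabla^{*}$ are torsion-free. As torsion depends linearly on the connection, $T^{\nabla^{\alpha}}=\tfrac{1-\alpha}{2}T^{\nabla}+\tfrac{1+\alpha}{2}T^{\nabla^{*}}=0$ for every $\alpha$. Combined with $\nabla^{\alpha}\omega=0$ from \cref{pr:alphaprara}, this shows that each $\nabla^{\alpha}$ is not merely almost-symplectic but a bona fide symplectic connection. I would then invoke the symplectic-curvature symmetries already used in \cref{se:stable H and smat} (after \cref{eq:symplcur}, following \cite{vaisman1985symplectic}): for a torsion-free connection preserving $\omega$, the tensor $S^{\nabla^{\alpha}}(X,Y,Z,W)=\omega(X,R^{\nabla^{\alpha}}(Z,W)Y)$ is symmetric in $(X,Y)$ and antisymmetric in $(Z,W)$, while the first Bianchi identity holds; contracting these relations forces $Ric^{\nabla^{\alpha}}$ to be symmetric, exactly as for shs connections.

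The step I would treat with most care is the passage from the $\alpha$-connections, which in a general SMAT setting carry torsion, to genuinely torsion-free symplectic connections. This hinges on the fact that on a K\"ahler manifold the chosen symplectic connection is statistical, so its $g$-dual is torsion-free as well by \cref{th:AlmsKha}; once torsion-freeness is secured, the symmetry of the Ricci tensor is precisely Vaisman's statement for symplectic connections, and no further computation is required.
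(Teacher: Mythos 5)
Your proposal is correct, and it is essentially the "straightforward computation" the paper's proof alludes to without writing out: item (1) is the substitution of $U=\nabla^{*}-\nabla=-\Theta(\nabla\Theta)$ from \cref{eq:dualexp} into $\nabla^{\alpha}=\nabla^{lc}+\tfrac{\alpha}{2}U$ via \cref{eq:moy}, and item (2) follows because each $\nabla^{\alpha}$ is a torsion-free $\omega$-preserving connection, whose Ricci tensor is symmetric by the Vaisman-type argument already used for shs connections. Your explicit care in choosing a \emph{statistical} symplectic pair (so that both connections are torsion-free and \cref{eq:moy} applies) is exactly the hypothesis the corollary needs and the paper leaves implicit.
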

\begin{proof}
    The result is therefore obtained by a straightforward computation.
 \end{proof}
\vspace{0.2cm}

\begin{remark}
    To sum up, with reference to proposition \ref{th:AlmsKha}, the Kähler metrics on an almost Kähler manifold can be characterized as the Hermitian metrics $g$ such that the dual of a symplectic connection with respect to $g$ is a symmetric connection. In other words, Kähler metrics are Hermitian metrics such that the dual of a symplectic connection is  also a symplectic connection.\vspace{-20pt}
\begin{figure}[H]
    \centering
\begin{equation}
    \begin{tikzcd}
\begin{array}{cc}
\nabla^*\omega=0\\
T\nabla^*=0
\end{array}
    \ar[r, <-, bend left, "g"]\ar[r, bend right, "g"] & \begin{array}{cc}
\nabla\omega=0\\
T\nabla=0
\end{array}
\end{tikzcd}
\end{equation}
\caption{Statistical characterization of Kahler metrics.}
    \label{fig:Statisticalcarac}
\end{figure}
From the above remark, we deduce that our statistical approach is a generalization of the well known result in K\"ahler geometry on the characterization of a K\"ahler metric, which holds that on an almost K\"ahler manifold $(M^{2d},\omega,g,\Theta)$, K\"ahler metrics are Hermitian metrics satisfying (see \cite{tian2012canonical}):
\begin{equation}
    \nabla^{lc}\omega=0.
\end{equation}
\end{remark}

\subsection{Statistical characterization of K\"ahler metrics in terms of parallel transport}
\label{subsect:stacha}
The Levi–Civita connection corresponds to the only symmetric metric connection that preserves the metric by parallel transport, i.e. 

\begin{equation}
g(X_{c(0)},Y_{c(0)})=g(\tau^{\nabla^{lc}}_{_{c(t)}}X,\tau^{\nabla^{lc}}_{c(t)}Y),
\end{equation}
 where 

\begin{equation}
    \nabla^{lc}_{\dot{c}(t)}X=0 \,\,\text{and}\,\,
    \nabla^{lc}_{\dot{c}(t)}Y=0.
\end{equation}
\vspace{1ex}
The dual connections refer to another way of preserving
the metric (see\cite{amari1987differential}). Denote by $\tau^{\nabla}\,\text{and}\, \tau^{\nabla^{*}}$ the parallel transport along curves relative to $\nabla$ and $\nabla^{*}$ , respectively,

\begin{equation}
g(X_{c(0)},Y_{c(0)})=g(\tau^{\nabla}_{_{c(t)}}X,\tau^{\nabla^{*}}_{c(t)}Y),
\end{equation}

where 
\begin{equation}
    \nabla_{\dot{c}(t)}X=0 \,\,\text{and}\,\,
    \nabla^*_{\dot{c}(t)}Y=0.
\end{equation}

 The K\"ahler metrics on a complex manifold $(M^{2d},\Theta)$ can be accounted for in terms of Riemannian metrics satisfying the following equation:

\begin{equation}
    \nabla^{lc}\Theta=\Theta\nabla^{lc}
    \label{eq:ka}
\end{equation}

In terms of parallel transport, condition \cref{eq:ka} is equivalent to: 

\begin{equation}
\tau^{\nabla^{lc}}\Theta X =\Theta\tau^{\nabla^{lc}}X.
\end{equation}

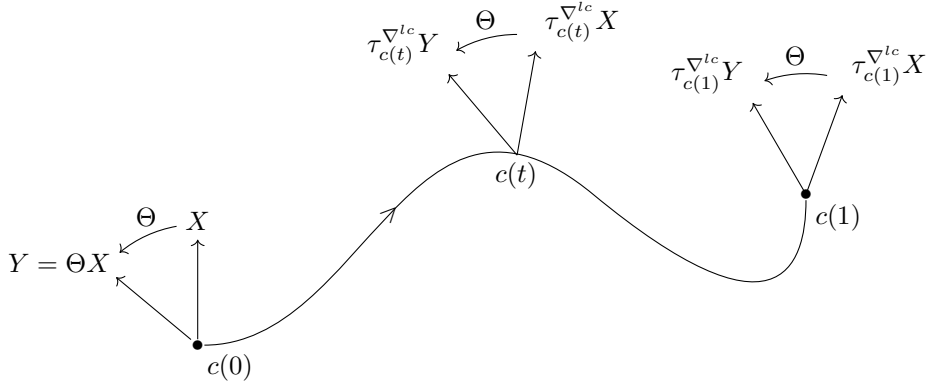
\begin{figure}[H]
    \centering
\begin{tikzpicture}[scale=2]
\node[inner sep=0pt] (c0) at (0,0) {\small $\bullet$};
\node[inner sep=0pt] (c1) at (4,1) {\small $\bullet$};
\draw (c0)node[ below right]{$c(0)$}..controls (1.1,0) and (1.4,2)..(2.6,1) node[midway, sloped]{\small $>$} node[pos = 0.77, below right]{$c(t)$}.. controls(3.8,0) and (4,0.5)..(c1)node[ below right]{$c(1)$};
\draw[->] (c0)--++(0,0.7) node[above]{$X$};
\draw[->] (c0)--(140:0.7) node[above left=-2pt]{$Y=\Theta X$};
\draw[->] (100:0.8) arc (100:130:0.8) node[midway, above]{$\Theta$};
\begin{scope}[xshift = 4cm, yshift = 1cm, rotate = -20]
\draw[->] (0,0)--++(0,0.7) node[above right]{$\tau_{c(1)}^{\nabla^{lc}}X$};
\draw[->] (0,0)--(140:0.7) node[above left]{$\tau_{c(1)}^{\nabla^{lc}}Y$};
\draw[->] (100:0.8) arc (100:130:0.8) node[midway, above]{$\Theta$};
\end{scope}
\begin{scope}[xshift = 2.1cm, yshift = 1.26cm, rotate = -10]
\draw[->] (0,0)--++(0,0.7) node[above right]{$\tau_{c(t)}^{\nabla^{lc}}X$};
\draw[->] (0,0)--(140:0.7) node[above left]{$\tau_{c(t)}^{\nabla^{lc}}Y$};
\draw[->] (100:0.8) arc (100:130:0.8) node[midway, above]{$\Theta$};
\end{scope}
\end{tikzpicture}
  \caption{Parallel transport and K\"ahler metrics.}
    \label{eq:tr} 
\end{figure}

Grounded on proposition \ref{pr:alphaprara}, in any almost K\"ahler manifold, we get the following identities:
\vspace{-17pt}
\begin{eqnarray}
    \Theta^{-1}\nabla^{-\alpha}\Theta=\nabla^{\alpha}\Leftrightarrow\nabla^{\alpha}\omega=0\Leftrightarrow\omega(X,Y)=\omega(\tau^{\nabla^{\alpha}}X,\tau^{\nabla^{\alpha}}Y);
    \label{eq:paraaaa1}\\
     \Theta^{-1}\nabla^{\alpha}\Theta=\nabla^{-\alpha}\Leftrightarrow\nabla^{-\alpha}\omega=0\Leftrightarrow \omega(X,Y)=\omega(\tau^{\nabla^{-\alpha}}X,\tau^{\nabla^{-\alpha}}Y).
     \label{eq:paraaaa2}
\end{eqnarray}
In terms of parallel transport, both of the following diagrams are commutative.
\[
\xymatrix {
     X  \ar[r]^{\tau^{\nabla^{\alpha}}} \ar[d]^{\Theta} &  \tau^{\nabla^{\alpha}}X  \ar[d]^{\Theta}  \\
     \Theta X \ar[r]^{\tau^{\nabla^{-\alpha}}} &  \Theta(\tau^{\nabla^{\alpha}}X)
     }\hspace{1cm}
\xymatrix {
     X  \ar[r]^{\tau^{\nabla^{-\alpha}}} \ar[d]^{\Theta} & \tau^{\nabla^{-\alpha}} X \ar[d]^{\Theta}  \\
     \Theta X \ar[r]^{\tau^{\nabla^{\alpha}}} &  \Theta(\tau^{\nabla^{-\alpha}} X)}
\]\vspace{-10pt}
Referring to \cref{eq:paraaaa1} and \cref{eq:paraaaa2}, we get:
\begin{eqnarray}\label{eq:trstt}
\omega(\tau^{\nabla^{\frac{\nabla^{\alpha}+\nabla^{-\alpha}}{2}}}X,\tau^{\nabla^{\frac{\nabla^{\alpha}+\nabla^{-\alpha}}{2}}}Y)=\omega(X,Y),\,\,\,\forall X,Y.
\end{eqnarray}
According to the duality condition, finally we have:
\begin{align}
g(\tau^{\nabla^{\frac{\nabla^{\alpha}+\nabla^{-\alpha}}{2}}}X,\tau^{\nabla^{\frac{\nabla^{\alpha}+\nabla^{-\alpha}}{2}}}Y)&=g(X,Y),\,\,\,\forall X,Y.\\
\omega(\tau^{\nabla^{\frac{\nabla^{\alpha}+\nabla^{-\alpha}}{2}}}X,\tau^{\nabla^{\frac{\nabla^{\alpha}+\nabla^{-\alpha}}{2}}}Y)&=\omega(X,Y),\,\,\,\forall X,Y.
\end{align}
From our statistical point of view, K\"ahler metrics are those Hermitian metrics corroborating  that $\nabla^{-\alpha}$ is a symplectic connection. Hence,
\vspace{2ex}
 \begin{equation}
\tau^{\frac{\nabla^{\alpha}+\nabla^{-\alpha}}{2}}=\tau^{\nabla^{lc}}.
\end{equation}
Consequently, with respect to \cref{eq:tr}, we have:
\begin{equation}
\omega(X,Y)=\omega(\tau^{\nabla^{lc}}X,\tau^{\nabla^{lc}}Y).
\end{equation}

\begin{figure}[H]
    \centering
\begin{tikzpicture}[scale = 4]
    \node at (0,0) {$\bullet$};
\begin{scope}[rotate = 40, scale = 0.7]
\draw[->] (0,0)--++(0,0.7) node[above]{$X$};
\draw[->] (0,0)--(140:0.7) node[left]{$\Theta X$};
\draw[->] (100:0.8) arc (100:130:0.8) node[midway, above]{$\Theta$};
\end{scope}
\begin{scope}[xshift = 2cm, yshift = 1cm, rotate = -160, scale = 0.5]
\draw[->] (0,0)--++(0,0.7) node[below]{$\tau_{c(t)}^{\nabla^{lc}}X$};
\draw[->] (0,0)--(140:0.7) node[right]{$\tau_{c(t)}^{\nabla^{lc}}\Theta X$};
\draw[->] (100:0.8) arc (100:130:0.8) node[midway, above]{$\Theta$};
\end{scope}
\begin{scope}[xshift = 2cm, yshift = 1cm, rotate = 10, scale = 0.5]
\draw[->,red] (0,0)--++(0,0.7) node[above]{$\tau_{c(t)}^{\nabla^{\alpha}}X$};
\draw[->,blue] (0,0)--(140:0.7) node[above left]{$\tau_{c(t)}^{\nabla^{-\alpha}}\Theta X$};
\draw[->] (100:0.8) arc (100:130:0.8) node[midway, above]{$\Theta$};
\end{scope}
    \node at (2,1) {$\bullet$};
    \draw[red] (0,0) .. controls (0,1).. (2,1)node[pos = 0.3, left]{$\tau_{\nabla^{\alpha}}$};
    \draw (0,0) .. controls (1.25,0.3) and (0.75,0.7).. (2,1) node[pos = 0.3, left]{$\tau_{\nabla^{lc}=\frac{\nabla^{\alpha}+\nabla^{-\alpha}}{2}}$};
    \draw[blue] (0,0) .. controls (2,0).. (2,1)node[pos = 0.3, above]{$\tau_{\nabla^{-\alpha}}$};   
\end{tikzpicture}
 \caption{Parallel transport and $\alpha$-statistical symplectic connections.}
    \label{eq:transport} 
\end{figure}
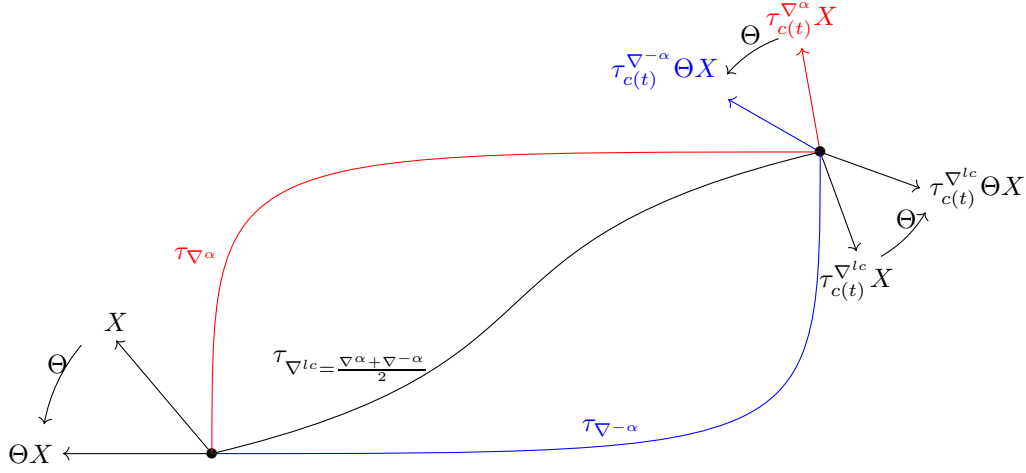

\subsection{Statistical generalization of S. Kobayashi theorem}

\label{subsection:Kobay}
Our new approach allows us to provide a generalisation of the S. Kobayashi theorem, which states that:
\vspace{1ex}
\begin{thm}\cite{kobayashi1961compact}
    A compact K\"ahler manifold $(M^{2d},\Omega,g)$ with a positive definite Ricci curvature is simply connected. Furthermore, its first Betti number is zero.
\end{thm}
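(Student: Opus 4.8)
The plan is to establish the two assertions—vanishing of the first Betti number and simple connectedness—separately, combining the Bochner technique with the Kodaira vanishing theorem. The decisive structural fact I would exploit is that on a compact Kähler manifold positive definiteness of $Ric^{g}$ forces the anticanonical bundle $K_{M}^{-1}$ to be a positive line bundle, so that $(M^{2d},\Omega,g)$ is a Fano manifold. Everything downstream rests on this observation.

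I would first dispose of the statement $b_{1}=0$ by the Bochner method. On a compact Riemannian manifold whose Ricci tensor is positive definite, the Weitzenböck formula for $1$-forms shows that every harmonic $1$-form must vanish; by Hodge theory $H^{1}(M,\mathbb{R})$ is isomorphic to the space of harmonic $1$-forms, and therefore $b_{1}(M)=0$. Equivalently, in the Kähler setting one has $b_{1}=2h^{0,1}$ with $h^{0,1}=\dim H^{1}(M,\mathcal{O}_{M})$, and this dimension vanishes by the Kodaira argument used in the next step; I would note that once simple connectedness is proved, $b_{1}=0$ also follows immediately since $H_{1}(M,\mathbb{Z})$ is the abelianization of the trivial group.

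For simple connectedness I would argue in three moves. The first is Myers' theorem: since $M$ is compact with $Ric^{g}$ bounded below by a positive constant, $\pi_{1}(M)$ is finite; write $k=|\pi_{1}(M)|$ and let $\pi\colon\widetilde{M}\to M$ be the universal cover, a finite étale cover of degree $k$. Pulling back $\Omega$ and $g$, the cover $\widetilde{M}$ is again a compact Kähler manifold with positive definite Ricci curvature, hence again Fano. The second move is a rigidity computation: because $K_{M}^{-1}$ is positive, Kodaira vanishing gives $H^{q}(M,\mathcal{O}_{M})=0$ for all $q>0$, whence the holomorphic Euler characteristic is $\chi(\mathcal{O}_{M})=h^{0}(\mathcal{O}_{M})=1$, and the same computation on $\widetilde{M}$ yields $\chi(\mathcal{O}_{\widetilde{M}})=1$. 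The third move is multiplicativity: the Hirzebruch--Riemann--Roch expression $\chi(\mathcal{O})=\int \mathrm{td}$ is a characteristic number, and since $T\widetilde{M}=\pi^{*}TM$ one has $\mathrm{td}(\widetilde{M})=\pi^{*}\mathrm{td}(M)$, so $\chi(\mathcal{O}_{\widetilde{M}})=k\,\chi(\mathcal{O}_{M})$. Comparing the two evaluations gives $1=k$, so $\pi_{1}(M)$ is trivial.

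The hard part will be the last move, namely rigorously justifying that $\chi(\mathcal{O})$ is genuinely multiplicative under the finite unramified cover: this requires that the Todd class pulls back correctly under $\pi$ and that $\int_{\widetilde{M}}\pi^{*}\eta=k\int_{M}\eta$ for top forms $\eta$. Once this multiplicativity is secured, the rigidity $\chi(\mathcal{O})=1$ furnished by Kodaira vanishing does all the remaining work. I would also be careful to check that the pulled-back Kähler structure on $\widetilde{M}$ indeed retains positive definite Ricci curvature, since Kodaira vanishing must be invoked there as well; this is routine because $Ric^{g}$ is a local tensor and $\pi$ is a local isometry, but it is the point where one must verify that no hypothesis is lost in passing to the cover.
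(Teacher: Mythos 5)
The paper does not prove this statement: it is quoted verbatim from the cited reference \cite{kobayashi1961compact} and used as a black box for the statistical generalization that follows. Your argument is correct and is in fact precisely Kobayashi's original proof — Bochner/Weitzenböck for $b_{1}=0$, then Myers' theorem to get a finite universal cover, Kodaira vanishing applied to the positive anticanonical bundle to force $\chi(\mathcal{O})=1$ on both the base and the cover, and multiplicativity of the Todd genus under a degree-$k$ étale cover to conclude $k=1$. The points you flag as needing care (pullback of the Todd class, the degree factor in the integral, and preservation of positive Ricci under the local isometry $\pi$) are all routine and close without difficulty, so the proof is complete.
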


\vspace{2ex}

Let's consider a statistical symplectic connection. We obtain the same result as in Kobayashi's theorem when the Ricci of a statistical symplectic connection is positive definite.

\vspace{2ex}

\begin{proposition}
     Consider the family of $\alpha$-statistical symplectic connections $\nabla^{\alpha}$ on a compact K\"ahler manifold $(M^{2d},\Omega,g)$. Assume that there exists an $\alpha_{0}\in\mathbb{R}$( but is not necessary $\alpha_{0}=0$) such that $Ric^{\nabla^{\alpha_{0}}}$ is positive definite. Therefore, its first Betti number is zero.
\end{proposition}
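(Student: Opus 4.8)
The plan is to reduce the statement to the classical Bochner vanishing theorem by showing that positivity of $Ric^{\nabla^{\alpha_0}}$ forces the Levi--Civita Ricci tensor $Ric^{\nabla^{lc}}$ itself to be positive definite; once this is established, the Kobayashi theorem recalled above (equivalently, the Bochner argument applied to harmonic $1$-forms) yields $b_1(M^{2d})=0$. Writing $\nabla^{\pm\alpha}=\nabla^{lc}\pm\alpha S$, where $S=\tfrac12(\nabla^{*}-\nabla)$ is the common difference tensor and $\nabla^{lc}=\tfrac12(\nabla^{\alpha}+\nabla^{-\alpha})$, I would first record the three structural facts that drive the argument: (i) the cubic form $C(X,Y,Z)=g(S_XY,Z)$ is totally symmetric, since $\nabla$ and $\nabla^{*}$ are torsion-free and $g$-dual (the Amari--Chentsov symmetry); (ii) the Tchebychev form $\tau(X)=\operatorname{tr}(S_X)$ vanishes identically, because $\nabla^{\alpha}\Omega=0$ (\cref{pr:alphaprara}) makes $\nabla^{\alpha}$ preserve the Kähler volume $\Omega^{d}/d!=dV_g$, so $\operatorname{tr}(\nabla^{\alpha}_X-\nabla^{lc}_X)=0$; and (iii) in the genuinely Kähler case $\nabla^{lc}\Theta=0$, which together with the gauge relation $\nabla^{-\alpha}=\Theta^{-1}\nabla^{\alpha}\Theta$ of \cref{eq:paraaaa2} yields $S_{\Theta Y}=-\Theta S_Y$ and $S_X\Theta=-\Theta S_X$, equivalently $C(\Theta\cdot,\Theta\cdot,\cdot)=-C(\cdot,\cdot,\cdot)$.

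Next I would expand the curvature of $\nabla^{\pm\alpha_0}=\nabla^{lc}\pm\alpha_0 S$ and trace it to obtain
\[
Ric^{\nabla^{\pm\alpha_0}}=Ric^{\nabla^{lc}}\pm\alpha_0\,L+\alpha_0^{2}\,Q,
\]
where $L(Y,Z)=\sum_i(\nabla^{lc}_{e_i}C)(Y,Z,e_i)$ is the divergence of $C$ and $Q(Y,Z)=\sum_i g([S_{e_i},S_Y]Z,e_i)$ is the quadratic term. Using $\tau=0$ one finds $Q(Y,Y)=-\sum_i|S(e_i,Y)|^{2}\le 0$, the would-be extra trace $\sum_i C(S_YZ,e_i,e_i)$ dropping out precisely because $\tau=0$. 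The decisive computation is that the Kähler identities in (iii) make the orthogonal map $\Theta$ intertwine the two Ricci tensors: since $Ric^{\nabla^{lc}}$ is $\Theta$-invariant while $L(\Theta\cdot,\Theta\cdot)=-L(\cdot,\cdot)$ and $Q(\Theta\cdot,\Theta\cdot)=Q(\cdot,\cdot)$, I obtain the identity $Ric^{\nabla^{-\alpha_0}}(Y,Z)=Ric^{\nabla^{\alpha_0}}(\Theta Y,\Theta Z)$.

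Because $\Theta$ is a $g$-isometry, this identity transfers the hypothesis: $Ric^{\nabla^{\alpha_0}}>0$ implies $Ric^{\nabla^{-\alpha_0}}>0$ as well. Averaging the two expansions kills the linear term and leaves
\[
Ric^{\nabla^{lc}}(Y,Y)=\tfrac12\bigl(Ric^{\nabla^{\alpha_0}}(Y,Y)+Ric^{\nabla^{-\alpha_0}}(Y,Y)\bigr)+\alpha_0^{2}\sum_i|S(e_i,Y)|^{2},
\]
which is strictly positive for $Y\neq 0$. Thus $(M^{2d},\Omega,g)$ has positive definite Levi--Civita Ricci curvature, and Kobayashi's theorem gives $b_1(M^{2d})=0$.

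I expect the main obstacle to lie in step (iii): verifying $Ric^{\nabla^{-\alpha_0}}(Y,Z)=Ric^{\nabla^{\alpha_0}}(\Theta Y,\Theta Z)$ requires carefully propagating the anti-commutation $S_X\Theta=-\Theta S_X$ through the covariant derivative in $L$ (legitimate only because $\Theta$ is $\nabla^{lc}$-parallel, i.e. only in the truly Kähler, not merely almost Kähler, setting) and invoking $\tau=0$ to discard the non-matching traces in $Q$. This is exactly the point where symmetrizing over $\pm\alpha_0$ becomes indispensable, since for a single sign the linear contribution $\alpha_0 L$ cannot be controlled.
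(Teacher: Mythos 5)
Your proof is correct, but it takes a genuinely different route from the paper's. The paper's argument is essentially two lines: since $\nabla^{\alpha_0}\Omega=0$ forces $\nabla^{\alpha_0}\Omega^{d}=0$, the triple $(g,\nabla^{\alpha_0},\Omega^{d})$ is an equi-affine statistical structure, and $b_{1}=0$ is then read off directly from Opozda's Bochner-type vanishing theorem for statistical structures (\cite{opozda2015bochner}, Th.~9.6) applied to $\nabla^{\alpha_0}$ itself; no reduction to the Levi--Civita connection is attempted and the integrability $\nabla^{lc}\Theta=0$ plays no role in the vanishing step. You instead prove the stronger intermediate statement $Ric^{\nabla^{lc}}>0$, by averaging the curvature expansions of $\nabla^{\pm\alpha_0}=\nabla^{lc}\pm\alpha_0 S$ (the quadratic trace being $\le 0$ thanks to $\tau=0$) and transferring positivity from $\alpha_0$ to $-\alpha_0$ via the conjugation identity $Ric^{\nabla^{-\alpha_0}}(Y,Z)=Ric^{\nabla^{\alpha_0}}(\Theta Y,\Theta Z)$. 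I checked the three ingredients of that identity --- $\Theta$-invariance of $Ric^{\nabla^{lc}}$, $L(\Theta\cdot,\Theta\cdot)=-L$, and $Q(\Theta\cdot,\Theta\cdot)=Q$ (both sides reducing to $-\sum_i g(S_{e_i}Y,S_{e_i}Z)$ once $\tau=0$ is used) --- and they all hold, the last two precisely because $\Theta$ is $\nabla^{lc}$-parallel and $C(\Theta\cdot,\Theta\cdot,\cdot)=-C$. As for what each approach buys: the paper's route is shorter, requires no curvature computation, and applies verbatim to any compact equi-affine statistical structure with positive Ricci; your route is self-contained (only the classical Bochner/Kobayashi theorem is invoked) and actually yields more, since $Ric^{\nabla^{lc}}>0$ gives simple connectivity by Kobayashi's theorem rather than merely $b_{1}=0$, but it uses the Kähler identity $\nabla^{lc}\Theta=0$ in an essential way and therefore would not survive in a merely almost-Kähler setting where the paper's argument still goes through.
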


\begin{proof}
With  reference to lemma \ref{pr:alphaprara}, it is known that

   \begin{equation}
\nabla^{\alpha_{0}}\Omega^{d}=0.
   \end{equation} 
   
    Referring to the above formula, we deduce that  $(g,\nabla^{\alpha_{0}},\Omega^{d})$ has an equi-affine statistical structure. By assumption $Ric^{\nabla^{\alpha_{0}}}>0$, then, departing from \cite{opozda2015bochner}[Th 9.6], we conclude that its first Betti number is zero.
\end{proof}

\subsubsection{S.Goldberg integrability condition and statistical integrability condition}

It is interesting to face our statistical characterization of K\"ahler metrics  with the integrability condition of S.Golberg. Using straightforward calculations, we can prove that our statistical integrability condition implies the integrability condition of Goldberg \cite{goldberg1969integrability1}. Let us then recall the integrability condition of S. Goldberg.

\vspace{2ex}

\begin{thm}\cite{goldberg1969integrabilityn}
Consider an almost Kähler manifold $(M^{2d},\Omega,g,\Theta)$. If $R^{\nabla^{lc}}\circ \Theta=\Theta\circ R^{\nabla^{lc}} $, then $(M^{2d},\omega,\Theta,g)$ is a Kähler manifold.
\label{th:golb}
\end{thm}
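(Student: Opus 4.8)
The plan is to reduce the statement to the single equation $\nabla^{lc}\Theta=0$. Indeed, by \cref{eq:ka} the condition $\nabla^{lc}\Theta=\Theta\nabla^{lc}$ (equivalently $\nabla^{lc}\Theta=0$ as a tensor) is precisely the statement that $g$ is a K\"ahler metric, and by Proposition \ref{th:AlmsKha} this is the same as saying that the dual $\nabla^{*}=\Theta^{-1}\nabla\Theta$ of a symplectic connection is torsion-free, i.e. that $(M^{2d},\omega,g,\Theta)$ is a statistical almost-K\"ahler manifold (see \cref{eq:gaug}). So it suffices to extract $\nabla^{lc}\Theta=0$ from the curvature hypothesis $R^{\nabla^{lc}}\circ\Theta=\Theta\circ R^{\nabla^{lc}}$, after which the conclusion follows from our characterization and closes the loop with the statistical integrability condition.

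First I would invoke the Ricci identity for the $(1,1)$-tensor $\Theta$,
\[
(\nabla^{2}_{X,Y}\Theta)Z-(\nabla^{2}_{Y,X}\Theta)Z=R^{\nabla^{lc}}(X,Y)\Theta Z-\Theta R^{\nabla^{lc}}(X,Y)Z=[R^{\nabla^{lc}}(X,Y),\Theta]Z,
\]
so that the hypothesis kills the right-hand side and forces the second covariant derivative $\nabla^{2}\Theta$ to be symmetric in its first two arguments; this is the only place the commutation assumption is used. Next I would record the almost K\"ahler structure equations for $\psi_{X}:=\nabla^{lc}_{X}\Theta$. Since $\nabla^{lc}g=0$ and $\Theta$ is a $g$-orthogonal almost complex structure, each $\psi_{X}$ is $g$-skew-adjoint and anticommutes with $\Theta$, i.e. $\psi_{X}\Theta+\Theta\psi_{X}=0$; and because $d\omega=0$ with $\omega=g(\Theta\cdot,\cdot)$ one gets the cyclic identity
\[
g(\psi_{X}Y,Z)+g(\psi_{Y}Z,X)+g(\psi_{Z}X,Y)=0 .
\]

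The heart of the argument is then to contract the symmetry of $\nabla^{2}\Theta$ against a local orthonormal frame and feed in the three identities above (together with the first Bianchi identity for $R^{\nabla^{lc}}$), arranging the bookkeeping so that a single sign-definite quantity survives and yields the pointwise relation $\|\nabla^{lc}\Theta\|^{2}=0$, hence $\nabla^{lc}\Theta=0$. I expect this contraction to be the main obstacle: it is purely computational, and the difficulty is to reconcile the symmetry produced by the curvature condition with the skew-adjointness, the anticommutation, and the cyclic (closedness) symmetries of $\psi$ in such a way that no indefinite cross-terms remain. Once $\nabla^{lc}\Theta=0$ is established, \cref{eq:ka} shows that the metric is K\"ahler, and Proposition \ref{th:AlmsKha} exhibits $\nabla^{lc}$ as a statistical symplectic connection, so that Goldberg's integrability condition is subsumed by our statistical characterization, exactly as announced.
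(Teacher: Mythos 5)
The first thing to note is that the paper does not prove Theorem \ref{th:golb} at all: it is imported verbatim from Goldberg's 1969 paper and used as a black box inside Proposition \ref{subsec:Gol}. So your attempt is not an alternative to an argument in the text; it is an attempt to reprove the cited classical result from scratch. Your skeleton does match the classical route: reducing to $\nabla^{lc}\Theta=0$, invoking the Ricci identity $(\nabla^{2}_{X,Y}\Theta)-(\nabla^{2}_{Y,X}\Theta)=[R^{\nabla^{lc}}(X,Y),\Theta]$ so that the commutation hypothesis forces $\nabla^{2}\Theta$ to be symmetric, and recording the three structure identities of $\psi_{X}=\nabla^{lc}_{X}\Theta$ ($g$-skew-adjointness, anticommutation with $\Theta$, and the cyclic identity coming from $d\omega=0$) are all correct and are exactly the ingredients Goldberg uses. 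The closing step, passing from $\nabla^{lc}\Theta=0$ to the K\"ahler condition via \cref{eq:ka} and then to the statistical characterization via Proposition \ref{th:AlmsKha}, is also sound.

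The genuine gap is the step you yourself flag: the contraction that is supposed to produce $\lVert\nabla^{lc}\Theta\rVert^{2}=0$. That computation is the entire mathematical content of the theorem, and "I expect this to work out" is not a proof of it. It is also not a routine bookkeeping exercise: to make it go through one must covariantly differentiate the cyclic identity and the anticommutation relation, which generates terms quadratic in $\psi$, then use the symmetry of $\nabla^{2}\Theta$ to reshuffle the derivative terms until they cancel, and only then does a sign-definite quadratic expression in $\nabla^{lc}\Theta$ survive. The almost K\"ahler hypothesis (closedness of $\omega$, as opposed to a general almost Hermitian structure) enters precisely in arranging that no indefinite cross-terms remain, so the difficulty you defer is exactly where the hypothesis does its work. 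As written, the proposal is a correct outline of the known proof with its decisive step omitted; either carry out the contraction explicitly or do what the paper does and cite \cite{goldberg1969integrabilityn} for the statement rather than claiming a proof.
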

\vspace{2ex}

\begin{proposition}
    Let $(M^{2d},\omega,\Theta,g)$ be an almost Kähler manifold. The following statements are equivalent:
        \begin{enumerate}
        \item[(1)] $M^{2d}$ is a statistical almost-K\"ahler manifold, 
        
        \item[(2)] $R^{\nabla^{lc}}\circ \Theta=\Theta\circ R^{\nabla^{lc}}$,
        
        \item[(3)] $M^{2d}$ is a Kähler manifold.
    \end{enumerate}
    \label{subsec:Gol}
\end{proposition}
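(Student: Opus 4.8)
The plan is to close a cycle of implications among the three statements by leaning on the two results already available: Proposition~\ref{th:AlmsKha}, which asserts that an almost Kähler manifold is statistical almost-Kähler if and only if it is Kähler, and Goldberg's Theorem~\ref{th:golb}, which guarantees that the curvature-commutation condition forces integrability. With these in hand, the equivalence $(1)\Leftrightarrow(3)$ is simply a restatement of Proposition~\ref{th:AlmsKha}, and the implication $(2)\Rightarrow(3)$ is exactly Theorem~\ref{th:golb}. It therefore remains only to establish one further implication, $(3)\Rightarrow(2)$, in order to make the three statements pairwise equivalent.

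To prove $(3)\Rightarrow(2)$, I would invoke the defining property of a Kähler metric, namely that its Levi-Civita connection parallelizes the complex structure: $\nabla^{lc}\Theta=0$, equivalently $\nabla^{lc}_{W}(\Theta Z)=\Theta(\nabla^{lc}_{W} Z)$ for all vector fields $W,Z$. Feeding this identity into the definition of the curvature operator and iterating it gives
\begin{align*}
R^{\nabla^{lc}}(X,Y)(\Theta Z) &= \nabla^{lc}_{X}\nabla^{lc}_{Y}(\Theta Z)-\nabla^{lc}_{Y}\nabla^{lc}_{X}(\Theta Z)-\nabla^{lc}_{[X,Y]}(\Theta Z)\\
&= \Theta\!\left(\nabla^{lc}_{X}\nabla^{lc}_{Y} Z-\nabla^{lc}_{Y}\nabla^{lc}_{X} Z-\nabla^{lc}_{[X,Y]} Z\right)\\
&= \Theta\!\left(R^{\nabla^{lc}}(X,Y) Z\right),
\end{align*}
which is precisely $R^{\nabla^{lc}}\circ\Theta=\Theta\circ R^{\nabla^{lc}}$, i.e.\ statement $(2)$.

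With $(3)\Rightarrow(2)$ secured, I would assemble the full equivalence as the cycle $(1)\Rightarrow(3)$ by Proposition~\ref{th:AlmsKha}, then $(3)\Rightarrow(2)$ by the computation above, then $(2)\Rightarrow(3)$ by Goldberg's Theorem~\ref{th:golb}, and finally $(3)\Rightarrow(1)$ again by Proposition~\ref{th:AlmsKha}. In particular this records the advertised fact that the statistical integrability condition $(1)$ implies Goldberg's condition $(2)$, since $(1)\Rightarrow(3)\Rightarrow(2)$.

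The genuinely substantive input is Goldberg's Theorem~\ref{th:golb}, whose proof is deep and which I would use as a black box for the single hard direction $(2)\Rightarrow(3)$; every other link is either a citation of Proposition~\ref{th:AlmsKha} or the elementary parallel-transport computation displayed above. I therefore expect no real obstacle beyond one bookkeeping point: one must be careful that the direction being proved elementarily here is the converse $(3)\Rightarrow(2)$ and \emph{not} the deep direction of Goldberg, so that combining the two introduces no circularity in the argument.
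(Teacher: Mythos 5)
Your proof is correct, but it reaches the equivalence by a different route than the paper. You obtain $(1)\Rightarrow(2)$ only indirectly, by composing $(1)\Rightarrow(3)$ (Proposition~\ref{th:AlmsKha}) with the elementary fact that $\nabla^{lc}\Theta=0$ forces $R^{\nabla^{lc}}$ to commute with $\Theta$. The paper instead proves $(1)\Rightarrow(2)$ \emph{directly}, without passing through the Kähler conclusion: starting from a statistical symplectic connection $(\nabla,\nabla^{*})$ it uses the gauge relation $\nabla=\Theta^{-1}\nabla^{*}\Theta$ (so $\Theta R^{\nabla}(Y,X)Z=R^{\nabla^{*}}(X,Y)\Theta Z$), the identity $R^{\nabla}+R^{\nabla^{*}}=2R^{\nabla^{lc}}+2[U_{X},U_{Y}]$ valid for dual torsion-free connections, and the commutation $U(\Theta\,\cdot,\cdot)=\Theta U(\cdot,\cdot)$ from Proposition~\ref{pr:alphaprara}, to conclude $R^{\nabla^{lc}}(X,Y)\Theta Z=\Theta R^{\nabla^{lc}}(X,Y)Z$; it then closes the cycle $(1)\Rightarrow(2)\Rightarrow(3)\Rightarrow(1)$ with Goldberg's theorem and Proposition~\ref{th:AlmsKha}. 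Your version is shorter and leans on more standard facts, and your remark about avoiding circularity is well taken; what the paper's computation buys is an autonomous demonstration that the statistical integrability condition implies Goldberg's curvature condition, which is precisely the point the surrounding discussion wants to advertise, and which your argument only recovers as a consequence of the manifold already being Kähler.
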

\begin{proof}
Let's prove that $(1)$ implies $(2)$. Consider an almost Kähler manifold $(M^{2d+1},\Omega,g,\Theta)$. Remember that, from our characterization, a Kähler manifold is the one which contains statistical symplectic connections. Now, let's choose a statistical shs connection $(\nabla,\nabla^{*})$. Using Proposition \ref{pr:alphaprara}, we have: 
\begin{equation} \label{eq:122}
    \Theta R^{\nabla}(Y,X)Z = R^{\nabla^{*}}(X,Y)\Theta Z\,\, \text{and}\,\, R^{\nabla}(Y,X)\Theta Z = \Theta R^{\nabla^{*}}(X,Y)Z,\,\, \forall X,Y,Z.
\end{equation} 

On the other side, it is known that for a statistical connection $(\nabla,\nabla^{*})$, we have:
\begin{equation}
    R^{\nabla}(X,Y)Z+R^{\nabla^{*}}(X,Y)Z = 2R^{\nabla^{lc}}(X,Y)Z+2[U_{X},U_{Y}]Z,
    \label{eq:cur}
\end{equation} 
where $U=\nabla^{*}-\nabla=-\Theta(\nabla\Theta)$. Now, using \cref{eq:cur}, we obtain:
\begin{equation}
    2R^{\nabla^{lc}}(X,Y)\Theta Z= \Theta(R^{\nabla}(X,Y) Z+R^{\nabla^{*}}(X,Y) Z)-2[U_{X},U_{Y}]\Theta Z.
    \label{eq:125}
\end{equation}
Using proposition \ref{pr:alphaprara} once again, it follows that:
\begin{align}
        [U_{X},U_{Y}]\Theta Z  &=U_{X}U_{Y}\Theta Z-U_{Y}U_{X}\Theta Z  \\
         &=\Theta[U_{X},U_{Y}]Z, 
\end{align}
for any vector field $X, Y, Z$ in, $M^{2d+1}.$ Again, it follows from the above calculations that:
\begin{equation}
    2R^{\nabla^{lc}}(X,Y)\Theta Z= \Theta(R^{\nabla}(X,Y) Z+R^{\nabla^{*}}(X,Y) Z)-2\Theta[U_{X},U_{Y}]Z.
    \label{eq:129}
\end{equation}
According to \cref{eq:129}, the integrability condition of S.Goldberg and K.Yano is identified as:
\begin{equation}
   R^{\nabla^{lc}}(X,Y)\Theta Z=\Theta R^{\nabla^{lc}}(X,Y)Z.
\end{equation}
From Theorem \ref{th:golb}, we infer that $(2)$ implies $(3)$. Based on proposition \cref{th:AlmsKha}, we deduce that $(3)$ implies $(2)$.

\end{proof}

\subsection{Positive answer to question 1}
\label{subsec:Posit}
\subsubsection{The first approach to  the resolution of question 1}

Before providing an answer to question1, we need to recall the definition of the isostatistical embedding. 
\vspace{2ex}
\begin{defn}
    A smooth embedding $i$ from a statistical manifold $(M,g,T)$ to a statistical manifold $(N,h,K)$ is isostatistical if it preserves the statistical
structure, i.e. $g=i^{*}h$ and $T=i^{*}K$.
\end{defn}
\vspace{2ex}
\begin{thm}
    Any Kähler and co-Kähler metric is a Fisher information metric.
     \label{th:KacOka}
\end{thm}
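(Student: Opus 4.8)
The plan is to realize the statistical structure canonically attached to a Kähler (resp.\ co-Kähler) metric as the pullback of the Fisher information structure of an honest parametric family of densities, via an isostatistical embedding. The key observation is that only the metric must be recovered, so it suffices to exhibit $(M,g)$ as a \emph{statistical manifold} in Lauritzen's sense and then quote a realization theorem that presents any statistical manifold as a statistical model.

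First I would equip the manifold with a genuine statistical structure. If $(M^{2d},\omega,\Theta,g)$ is Kähler, then by Proposition \ref{th:AlmsKha} it is a statistical almost-Kähler manifold, so there exists a pair of statistical symplectic connections $(\nabla,\nabla^{*})$: both are torsion-free and mutually dual with respect to $g$. Consequently the difference tensor
\begin{equation}
T(X,Y,Z)=g\big((\nabla^{*}_{X}-\nabla_{X})Y,\,Z\big)
\end{equation}
coincides with the Codazzi cubic form $(\nabla_{X}g)(Z,Y)$ and is therefore a totally symmetric covariant $3$-tensor, so that $(M,g,T)$ is a statistical manifold. In the co-Kähler case the same conclusion follows from Theorem \ref{thm:principal} together with the family of statistical shs connections $\nabla^{\alpha}$ of \cref{eq:126}: one works on the symplectic leaves $\ell_{\alpha}$ and along $\ell_{\Omega}$ using the splitting \cref{eq:decomp}, and the resulting difference tensor is again totally symmetric.

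Next I would invoke the isostatistical embedding theorem for statistical manifolds (Lê): any statistical manifold $(M,g,T)$ admits a smooth isostatistical embedding $i\colon M\to\mathcal{P}_{d}(\Xi)$ into a parametric family of probability densities, so that $g=i^{*}g^{F}$ and $T=i^{*}T^{A-C}$, where $g^{F}$ and $T^{A-C}$ are the Fisher metric and the Amari--Chentsov tensor of the ambient model. Restricting attention to the metric identity $g=i^{*}g^{F}$ and recalling the defining integral \cref{eq:Fisher} for the Fisher matrix, we conclude that $g$ is exactly the Fisher information metric of the pulled-back family $\{p_{i(\theta)}\}$. The identical argument applied to the co-Kähler data settles the statement for co-Kähler metrics.

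The main obstacle is verifying the hypotheses under which the embedding theorem applies. One must check that the statistical structure $(g,T)$ built from the Kähler data is smooth, and, if a finite sample space $\Xi$ is desired, that $M$ is compact; in the non-compact case one falls back on the version of the embedding theorem for infinite sample spaces. A secondary subtlety is that the realization is not canonical---$g$ is recovered only up to the choice of embedding---so the statement must be read as asserting the existence of \emph{some} statistical model whose Fisher metric pulls back to $g$, which is precisely what ``is a Fisher information metric'' means. Finally one should confirm that total symmetry of $T$ survives the extension of the leafwise tensor across the Reeb direction in the co-Kähler setting, which is exactly where the identities $\nabla^{\alpha}E=0$, $\nabla^{\alpha}\Omega=0$, $\nabla^{\alpha}\alpha=0$ from \cref{eq:126} are used.
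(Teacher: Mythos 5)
Your proposal is correct and follows essentially the same route as the paper: obtain statistical (torsion-free, mutually dual) connections from the statistical characterization of K\"ahler metrics (Proposition \ref{th:AlmsKha}) and of co-K\"ahler metrics (Theorem \ref{thm:principal}), form the totally symmetric difference $3$-tensor $g(U(\cdot,\cdot),\cdot)$ with $U=\nabla^{*}-\nabla$ to get an Amari--Chentsov structure, and conclude via L\^e's isostatistical embedding theorem that $g=i^{*}g^{F}$. The only additions beyond the paper's argument are your remarks on the Codazzi cubic form and on compactness versus infinite sample spaces, which are consistent with, and slightly more careful than, the paper's treatment.
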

\begin{proof}
Consider a Kähler manifold $(M^{2d},\Omega_{g},g)$. Relying on \cref{th:AlmsKha}, we know that the Kähler metric $g$ always admits statistical connections $(\nabla,\nabla^{*})$ verifying
\begin{equation}
    \nabla^{*}\Omega_{g}=0,\nabla\Omega_{g}=0.
    \label{eq:deuxpara}
\end{equation}
Let's define a totally symmetric tensor by: 
\begin{equation}
    T^{Ka}(X,Y,Z)=g(U(X,Y),Z),
    \label{eq:Tensortotallysy}
\end{equation}
where $U=\nabla^{*}-\nabla.$ Resting on \cref{eq:deuxpara} and \cref{eq:Tensortotallysy}, we deduce that $(M^{2d},\Omega_g,g,T^{Ka})$ is a statistical manifold (Amari-Chentsov structure). According to the works of \citet{le2006statistical}, we deduce that the Kähler manifold $(M^{2d}, \Omega_g, g, T^{Ka})$ can be embedded isostatically into $\mathcal{P}(\Xi^{\text{N}})$ the set of probability density functions on the sample space $\Xi^{N}$, for $N$ sufficiently large, such that the Kähler metric $g$ is induced by the
Fisher metric $g^{F}$ on $\mathcal{P}(\Xi^{\text{N}})$ and $T^{Ka}$ is induced by the Amari-Chentsov tensor $T^{A-C}$ on $\mathcal{P}(\Xi^{\text{N}})$. For the co-K\"ahler metrics, the proof is based on the same strategy adopted for K\"ahler metrics. It follows, from Theorem \ref{thm:principal}, that on a co-K\"ahler manifold $(M^{2d+1},\Omega,\alpha,g)$, we have statistical connections $(\nabla,\nabla^{*})$ satisfying:
\begin{equation}
    \nabla E=0,\, \nabla\Omega=0 ,\nabla\alpha=0;
    \label{eq:Nic}
\end{equation}
\begin{equation}
    \nabla^{*} E=0,\, \nabla^{*}\Omega=0,\nabla^{*} \alpha=0.
    \label{eq:Nice}
\end{equation}
Let's define a totally symmetric tensor by: 
\begin{equation}
    T^{CoKa}(X,Y,Z)=g(U(X,Y),Z),
    \label{eq:Tensortotally}
\end{equation}
where $U=\nabla^{*}-\nabla.$ We deduce from \cref{eq:Nic,eq:Nice,eq:Tensortotally} that $(M^{2d+1},\Omega,\alpha,g,T^{CoKa})$ is a statistical manifold (Amari-Chentsov structure). Investing once again \citet{le2006statistical} theorem, the result follows at once.
\end{proof}

\subsubsection{The second approach to the resolution of the question 1}

At this stage, we propose another approach based on an explicit construction. Consider a co-K\"ahler manifold $(M^{2d+1},\Omega,\alpha,g)$. Let's build up a family of symmetric connections on $(M^{2d+1},\Omega,\alpha,g)$ as follows:

$$\nabla^{\epsilon,a}=\frac{1+a}{2}\nabla^{\epsilon}+\frac{1-a}{2}\nabla^{-\epsilon},\,\,\forall a\in\mathbb{R},$$

where
\begin{equation}
\nabla^{\epsilon}=\nabla^{lc}+\epsilon\alpha(.)\alpha(.)E,\,\,\forall \epsilon\in\mathbb{R}.
\end{equation}
By a direct computation, we get 
\begin{equation}
    (\nabla^{\epsilon})^{*}=\nabla^{-\epsilon},
\end{equation}
Hence, 
\begin{equation}
    (\nabla^{\epsilon,a})^{*}=\nabla^{\epsilon,-a}.
\end{equation}
Consequently, the pair $(\nabla^{\epsilon,a},\nabla^{\epsilon,-a})$ is a statistical connection.\\
Now, with a straightforward calculation, one can notice that
\begin{equation}
\nabla^{\epsilon,a}E=a\epsilon\alpha(.)E,\, \nabla^{\epsilon,a}\Omega=0,\,\,\forall \epsilon\in\mathbb{R}\,\,\forall a\in\mathbb{R}.
\label{eq:mycosn}
\end{equation}

Thus, departing from \cref{eq:mycosn}, it follows that the foliation $\ell_{\Omega}$ is $\nabla^{\epsilon,a}$-geodesic.

\vspace{1ex}

Now, let's use the duality condition determined as follows:
\begin{equation}
X.g(Y,Z)=g(\nabla^{\epsilon,a}_{X}Y,Z)+g(Y,\nabla_{X}^{\epsilon,-a}Z)\,\,\forall X\in\Gamma(TM^{2d+1}),\,\, \forall Y\in\ell_{\Omega},\,\,
    \forall Z\in\ell_{\alpha}.
    \label{eq:dualmtipl}
\end{equation}

With respect to \cref{eq:dualmtipl}, we deduce that $\ell_{\alpha}$ is $\nabla^{\epsilon,-a}$-geodesic.

\vspace{2ex}

\begin{figure}[H]
    \centering
\begin{center}
    \begin{tikzpicture}[xscale = 2]
    \fill[gray!20] (0,0) -- (2,0) -- (3,2) -- (1,2) -- cycle;
    \draw (0,0) -- (2,0) -- (3,2) node[pos = 0.4, below right]{$\ell_\alpha = \text{ker }\alpha$} node[pos = 0.7, below right]{$\nabla^{\epsilon,-a}$-totally geodesic} -- (1,2) -- cycle;
    \draw[thick] (1.5,-1) -- ++(0,1);
    \draw[dashed, thick] (1.5,0) -- ++(0,1);
    \draw[->, thick] (1.5,1) -- ++(0,2) node[pos = 0.9, left=5pt]{$\ell_\Omega = <E>$}node[pos = 0.9, right=5pt]{$\nabla^{\epsilon,a}$-totally geodesic};
\end{tikzpicture}
 \end{center}
 \caption{Canonical statistical connections on a co-K\"ahler manifold.}
    \label{eq:canonicalconn} 
\end{figure}
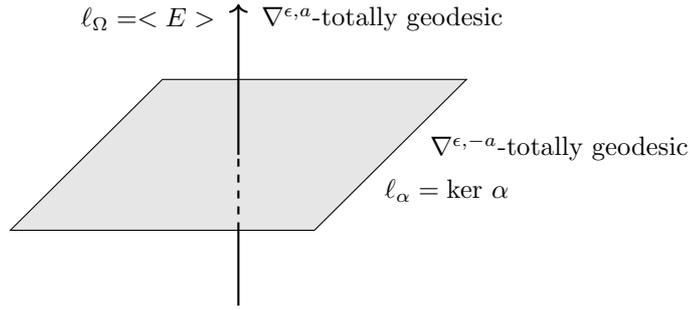
 Finally, for a co-K\"ahler manifold $(M^{2d+1},\Omega,g)$, we have

\begin{figure}[H]
    \centering
\begin{center}
    \begin{tikzcd}
\begin{array}{cc}
\nabla^{\epsilon,-a}\omega=0\\
\nabla^{\epsilon,-a}E=-a\epsilon\alpha(.)E\\
T^{\nabla^{\epsilon,-a}}=0
\end{array}
    \ar[r, <-, bend left, "g"]\ar[r, bend right, "g"] & \begin{array}{cc}
\nabla^{\epsilon,a}\omega=0\\
\nabla^{\epsilon,a}E=a\epsilon\alpha(.)E\\
T^{\nabla^{{\epsilon,a}}}=0
\end{array}
\end{tikzcd}
\end{center}
 \caption{Co-K\"ahler metric and canonical statistical connections.}
    \label{eq:157trans} 
\end{figure}
\vspace{2ex}
Hence, $(M^{2d+1},\Omega,g,\nabla^{\epsilon,a},\nabla^{\epsilon,-a})$ is a statistical manifold. Now, referring to \cite{le2006statistical}, we deduce that there exists an isostatistical embedding
\begin{equation}
    i:(M^{2d+1},\Omega,\alpha,g,T_{\epsilon,a}^{CoKa})\hookrightarrow ( \mathcal{P}(\Xi^\text{N}), g^{F},T^{A-C}),
    \label{eq:cokalerembending}
\end{equation} 
for $N$ sufficiently large, such that
$$g=i^{*}g^{F} \,\text{and}\,\, T_{\epsilon,a}^{coKa}=i^{*} T^{A-C};$$
where $T_{\epsilon,a}^{coKa}=g(.,U_{\epsilon,a})$, and $U_{\epsilon,a}=\nabla^{\epsilon,-a}-\nabla^{\epsilon,a}$. Accordingly, any co-K\"ahler metric coincides with the Fisher–information metric of $ \mathcal{P}(\Xi^\text{N})$.
\vspace{1ex}

Now consider a K\"ahler manifold $(M^{2d},\Omega,g,\Theta)$, we can construct a co-Kahler manifold as follows $(W^{2d+2}=M^{2d}\times \mathbb{R},\Omega,\alpha,\Tilde{g},\Tilde{\Theta})$, where $\alpha=dt$, $E=(0,\frac{\partial}{\partial_{t}})$, $\Tilde{\Theta}(.,fE)=(\Theta,0) $, $\Tilde{g}=g+\alpha^{2}$ and $\Tilde{\Omega}=\Tilde{g}(\Tilde{\Theta},.)$.
Now, based on \cref{eq:cokalerembending}, there exists a isostatistical embedding $i:(W^{2d+2}=M^{2d}\times \mathbb{R},\Tilde{g},T_{\epsilon,a}^{coKa})\rightarrow (\mathcal{P}(\Xi^\text{N}),g^{F},T^{A-C}) $, such that $\widetilde{g}=i^{*}g^{F}$. Now, identify $M^{2d}$ with the leaf $t=0$ of $W^{2d+2}$; by restriction:

\begin{equation}
    g=\widetilde{g}|_{M^{2d}}=g^{F}(\theta)_{ij}=4\int_{\Omega}\frac{\partial\sqrt{p_{\theta}}}{\partial\theta_{i}}\frac{\partial\sqrt{p_{\theta}}}{\partial\theta_{j}}d\mu.
\end{equation}
\begin{com}  
To summarize this subsection, K\"ahler and co-K\"ahler manifolds, can be viewed as being a parametric family of probability density functions, whereas K\"ahler and coK\"ahler metrics can be regarded as Fisher information metrics. We then obtain the answer to question 1.
\end{com}

\subsection{K\"ahler manifolds and co-K\"ahler manifolds as submanifold of statistical models}
\label{subsec:kahsub}
Scrutinizing through literature, there are three main families of statistical models which are identified in terms of:

\begin{enumerate}
    \item \textbf{The family of discrete distribution} The family of discrete distribution( also called the family of categorical distributions in \cite{nielsen2020elementary}) is a statistical model which has a constant positive curvature. This family belongs to spherical geometry.
    \vspace{2ex}
    \item \textbf{The family of location-scale families} belongs to Euclidean geometry(Euclidean manifold)
\begin{enumerate}
        \item Multivariate normal distributions.(see\cite{costa2015fisher})
\vspace{2ex}
        \item  Logistic Model\cite{rylov2016constant}.
        \vspace{2ex}
       \item  Weibull Statistical Model.\cite{rylov2016constant}
    \end{enumerate}
    \vspace{2ex}
    \item \textbf{The family of location families} belongs to Euclidean geometry(Euclidean manifold)
\end{enumerate}
Fore additional details about the above stated families refer to (\cite{nielsen2020elementary}).
\vspace{2ex}

In information geometry, the study of the nature of submanifolds in a statistical model is of paramount importance
(see\cite{amari2000methods}). From the estimation perspective in information geometry, the existence of efficient estimators depends on the autoparallelity of the corresponding submanifolds in a set of probability
distributions(see\cite{amari2000methods}). In \cite{lauritzen1987statistical}, Lauritzen largely focused on categorizing completely $\alpha$-geodesic submanifolds within specific statistical models (family of normal distribution, family of inverse gaussian family, family of Gamma densities, etc) owing to the significance of these submanifolds in statistical models. In the previous subsection, we demonstrated that K\"ahler and co-K\"ahler manifolds are sub-manifolds of statistical models $(\mathcal{P}(\Xi^{N}),g^{F}, T^{A-C})$(or $(\mathcal{P}(\Xi^{N}),g^{F},\nabla^{a})$). It is quite natural to understand their nature. We first deal with compact K\"ahler and co-K\"ahler manifolds cases, then we handle non-compact cases.
\vspace{2ex}

\begin{proposition}
    Compact K\"ahler and co-K\"ahler manifolds are not $\nabla^{0}$-totally geodesic submanfiolds of $\mathcal{P}(\Xi^{N})$. 
    \label{prop:kaandco}
\end{proposition}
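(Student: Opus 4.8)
The plan is to exploit the fact that the ambient statistical model $\mathcal{P}(\Xi^N)$ appearing in the isostatistical embedding of \cref{th:KacOka} is nothing but the open probability simplex, whose Fisher--Rao geometry is \emph{spherical}. Concretely, the map $p=(p_1,\dots,p_N)\mapsto 2(\sqrt{p_1},\dots,\sqrt{p_N})$ is an isometry from $(\mathcal{P}(\Xi^N),g^{F})$ onto the open positive orthant $\mathcal{O}=\{q\in S^{N-1}(2):\,q_i>0\}$ of the round sphere of radius $2$. Under this identification $\nabla^{0}$ is exactly the Levi--Civita connection of the round metric, so $(\mathcal{P}(\Xi^N),g^{F},\nabla^{0})$ has constant positive sectional curvature. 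First I would record this identification and thereby reduce the proposition to a purely Riemannian statement about totally geodesic submanifolds of the sphere.

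Next I would argue by contradiction. Let $M$ be a compact K\"ahler (resp.\ co-K\"ahler) manifold and let $i\colon M\hookrightarrow \mathcal{P}(\Xi^N)$ be the isostatistical embedding provided by \cref{th:KacOka}, so that $g=i^{*}g^{F}$; suppose $i(M)$ were $\nabla^{0}$-totally geodesic. Since $M$ is compact, $i(M)$ is a compact, hence complete, totally geodesic submanifold of $S^{N-1}(2)$, and it inherits the constant positive curvature of the ambient space. The key classical fact I would invoke is that a connected, complete, totally geodesic submanifold of a round sphere is a great subsphere, i.e.\ $i(M)=S^{N-1}(2)\cap V$ for some linear subspace $V$ through the origin. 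As $M$ has positive dimension, $\dim V\geq 2$ and the great subsphere $i(M)$ is centrally symmetric: together with each $q\in i(M)$ it contains $-q$.

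This yields the contradiction. If $q\in i(M)\subset\mathcal{O}$, then all coordinates satisfy $q_i>0$, whereas its antipode $-q$ has all coordinates negative and so cannot lie in $\mathcal{O}$; yet $-q\in i(M)\subset\mathcal{O}$. Hence no positive-dimensional great subsphere can be confined to the open orthant, and $i(M)$ cannot be $\nabla^{0}$-totally geodesic. The same argument applies verbatim in the co-K\"ahler case, since that embedding (constructed around \cref{eq:cokalerembending}) targets the same simplex model. Observe that beyond compactness and positive dimension the K\"ahler/co-K\"ahler hypothesis is not actually used; it merely singles out the geometrically meaningful submanifolds under study.

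I expect the main obstacle to lie in the justification of two structural facts rather than in any computation: first, that the embedding target genuinely is the \emph{open} simplex, whose Fisher--Rao geometry is the \emph{open} spherical orthant, so that the completeness of $i(M)$ cannot be supplied by the ambient space rounding out a full great subsphere; and second, the classification of compact totally geodesic submanifolds of the sphere as great subspheres. Once these are secured, the orthant/antipode obstruction closes the argument uniformly in all dimensions, including the borderline complex one-dimensional case $S^{2}=\mathbb{CP}^1$, where the alternative reasoning ``a K\"ahler manifold of constant positive curvature must be flat'' would break down.
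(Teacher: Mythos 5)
Your proposal is correct, but it follows a genuinely different route from the paper. The paper also passes to the simplex and the radius-$2$ sphere via $q^i=2\sqrt{p_i}$, but from there it argues curvature-theoretically: a $\nabla^{0}$-totally geodesic submanifold of a constant-curvature space inherits constant sectional curvature $\tfrac14$, and this contradicts the non-existence of K\"ahler (resp.\ co-K\"ahler) manifolds of non-zero constant curvature, citing Goldberg (resp.\ Blair, Olszak, Goldberg). You instead use compactness to get geodesic completeness of $i(M)$, observe that complete geodesics of a totally geodesic submanifold are full great circles of $S^{N-1}(2)$, and note that a great circle through $q$ contains the antipode $-q$, which cannot lie in the open positive orthant. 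Your argument buys three things: it is more elementary (no input from K\"ahler or co-K\"ahler geometry beyond compactness and positive dimension); it proves the strictly stronger statement that \emph{no} compact positive-dimensional submanifold of $\mathcal{P}(\Xi^{N})$ is $\nabla^{0}$-totally geodesic; and it covers the complex one-dimensional case, where the paper's cited non-existence result is problematic since $S^{2}=\mathbb{C}P^{1}$ with the round metric \emph{is} a compact K\"ahler manifold of constant positive curvature. (You do not actually need the full classification of complete totally geodesic submanifolds as great subspheres --- the single-geodesic antipodal argument already closes the contradiction --- but invoking it is harmless.) What the paper's approach buys in exchange is a statement tied intrinsically to the curvature of $M$ rather than to the global topology of the orthant, which is more in the spirit of the Goldberg-type integrability results the paper is pursuing.
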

\begin{proof}

First of all, we address compact K\"ahler manifolds. Consider a compact K\"ahler manifold $(M^{2d},\Omega_{g},g)$. According to Theorem\ref{th:KacOka}, we know that 
 $(M^{2d},\Omega_{g},g)$ can be embedded isostatiscally in $(\mathcal{P}(\Xi^{N}),g^{F},T^{A-C})$, for some finite $N$. By assumption, $M^{2d}$ is compact. Then, according to Le's\cite{le2006statistical} work, we deduce that for $N$ sufficiently large $(M^{2d},\Omega_{g},g)$ can be embedded isostatiscally in a probability simplex $(\Delta^{N},g^{F},T^{A-C})$  on a discrete sample space $\Xi$ with $|\Xi|=N+1$. The probability simplex $\Delta^{N}$ is indicated as follows:
 \begin{equation}
     \Delta^{N}=\{p=(p_{i})\in\mathbb{R}^{N+1}, p_{i}>0,\, \sum_{i}^{N+1}p_{i}=1\}
 \end{equation}
and the Fisher metric on $\Delta^{N}$ is specified in terms of
\begin{equation}
    g_{p}^{F}(v,w)=(\sum_{i=1}^{N}\frac{v^{i}w^{j}}{p^{i}})+\frac{v^{N+1}w^{N+1}}{1-\sum_{j=1}^{N}p_{j}},\forall v,w\in T_{p}\Delta^{N}\simeq \mathbb{R}^{N}.
\end{equation}
For further details concerning the probability simplex, consult(\cite{amari2007methods},\cite{amari1987differential}). Departing from \cite{amari2000methods}, \cite{amari1987differential},\cite{kass2011geometrical},\cite{burbea1984informative}, a new parameterization is typically used: $q^{i}=2\sqrt{p_{i}}\,\, i=1,...,N+1$. With this new parametrization, we can identify $\Delta^{N}$ with the positive portion of sphere of radius $2$, denoted $2S_{+}^{N}$. Therefore, the Fisher metric $g^{F}$ is the standard constant positive curvature$(c=\frac{1}{4})$ on $2S_{+}^{N}$, obtained by the restriction of the ambient Euclidean metric of $\mathbb{R}^{N+1}$. Now assume that $(M^{2d},\Omega_{g},g)$ is $\nabla^{0}$-totally geodesic. Hence,
\begin{equation}
    R^{\nabla^{lc}}(X,Y,Z)=\frac{1}{4}\{ g(Y,Z)X-g(X,Z)Y \},\,\,\forall X,Y,Z\in \Gamma(TM^{2d}).
    \label{eq:cont}
\end{equation}
The above \cref{eq:cont} leads to a contradiction  with regard to Goldberg's result\cite{goldberg1969integrabilityn}[cor 1.2], which states that K\"ahler manifolds of non-zero constant curvature do not exist. The result follows at once. For the compact co-K\"ahler manifolds, the proof relies on exactly the same philosophy as in K\"ahler manifolds case. We use the same reasoning displayed above and then we conclude using the results obtained by \citet{blair1966theory}, \,\citet{olszak1981almost},\,\citet{olszak1987almost},\,\citet{goldberg1969integrability1}, which state that co-K\"ahler manifolds of non-zero constant curvature
do not exist in any dimension.
\end{proof}

\begin{corollary}
For compact K\"ahler and co-K\"ahler manifolds, the Hellinger distance is
a lower bound on both geodesic distance of K\"ahler metrics and on the geodesic distance of co-K\"ahler metrics, for $N$ sufficiently large, i,e:

\begin{equation}
    d_{\text{Helliger}}(p,q)=\sqrt{\sum_{i=1}^{N}(\sqrt{p_{i}}-\sqrt{q_{i}})^{2}}\leq d_{K}(p,q), \,\forall p,q\in M^{2d}
\end{equation}
\vspace{2ex}
\begin{equation}
    d_{\text{Helliger}}(p,q)=\sqrt{\sum_{i=1}^{N}(\sqrt{p_{i}}-\sqrt{q_{i}})^{2}}\leq d_{coK}(p,q),\,\,\forall p,q\in M^{2d+1},
\end{equation}
where $d_{K}$ and $d_{coK}$ corresponds to geodesic distance of the K\"ahler metric and geodesic distance of the co-K\"ahler metric, respectively.
\label{cor:Hell}
\end{corollary}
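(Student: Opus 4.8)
The plan is to run a chain of distance inequalities linking the intrinsic Kähler (resp.\ co-Kähler) geodesic distance to the chordal distance on a round sphere, the latter being exactly a rescaling of the Hellinger distance. First I would recall, from \cref{th:KacOka} together with the discussion in the proof of \cref{prop:kaandco}, that for a compact Kähler manifold $(M^{2d},\Omega_g,g)$ there is an isostatistical embedding $i\colon M^{2d}\hookrightarrow(\Delta^N,g^F,T^{A-C})$ for $N$ sufficiently large, and that under the reparametrization $q^i=2\sqrt{p_i}$ the simplex $\Delta^N$ is identified with the positive spherical cap $2S^N_+$ of radius $2$, with $g^F$ becoming the restriction to $2S^N_+$ of the ambient Euclidean metric of $\mathbb{R}^{N+1}$. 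The factor $4$ in the definition \cref{eq:Fisher} of $g^F$ is precisely what makes this identification an isometry.

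Next I would exploit that $i$ is isometric, so the induced metric on the image equals $g$ and the intrinsic geodesic distance of the image submanifold of $2S^N_+$ coincides with $d_K$. Since every path lying in the submanifold is in particular a path in $2S^N_+$, passing to the infimum over the larger family of competing curves yields
\begin{equation}
d_K(p,q)\;\geq\; d_{2S^N_+}\bigl(i(p),i(q)\bigr),
\end{equation}
where $d_{2S^N_+}$ denotes the great-circle (arc-length) distance on the round sphere of radius $2$. Then I would invoke the elementary spherical inequality that arc length dominates chord length: two points subtending an angle $\theta\in[0,\pi]$ on the sphere of radius $2$ satisfy $\text{arc}=2\theta\geq 4\sin(\theta/2)=\text{chord}$, since $\theta\geq 2\sin(\theta/2)$. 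Hence the spherical distance is at least the ambient Euclidean distance in $\mathbb{R}^{N+1}$.

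Finally, a direct computation of that Euclidean distance in the variables $q^i=2\sqrt{p_i}$ gives
\begin{equation}
\bigl\lVert i(p)-i(q)\bigr\rVert_{\mathbb{R}^{N+1}}=\sqrt{\textstyle\sum_i\bigl(2\sqrt{p_i}-2\sqrt{q_i}\bigr)^2}=2\,d_{\text{Helliger}}(p,q),
\end{equation}
so that $d_K(p,q)\geq 2\,d_{\text{Helliger}}(p,q)\geq d_{\text{Helliger}}(p,q)$, which is the claimed bound (in fact slightly stronger). For the co-Kähler case the argument is verbatim, invoking instead the isostatistical embedding \cref{eq:cokalerembending} of $(M^{2d+1},\Omega,\alpha,g)$ and the same reduction to a finite simplex $\Delta^N\cong 2S^N_+$, which is available because $M$ is compact. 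I expect the only genuine subtlety to be the justification that compactness is what forces the embedding into a \emph{finite}-dimensional simplex, so that the round-sphere model—and hence the chordal comparison—can be used; the two geometric inequalities (submanifold distance dominating ambient distance, and spherical arc dominating chord) are themselves standard, and the remaining work is bookkeeping of the constant coming from the radius-$2$ normalization.
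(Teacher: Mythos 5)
Your argument is correct and follows exactly the route the paper leaves implicit: the paper states this corollary without a proof, relying on the identification (set up in the proof of \cref{prop:kaandco}) of $(\Delta^N,g^F)$ with the positive portion of the radius-$2$ sphere, and your chain (isostatistical embedding $\Rightarrow$ submanifold geodesic distance dominates ambient spherical distance $\Rightarrow$ arc dominates chord $\Rightarrow$ chord equals $2\,d_{\text{Hellinger}}$) is precisely the missing justification. Note that your computation actually yields the sharper inequality $2\,d_{\text{Hellinger}}(p,q)\leq d_{K}(p,q)$ (and likewise for $d_{coK}$), which is stronger than the stated bound.
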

\vspace{2ex}
From the above synthesis, it is noteworthy that compact K\"ahler and co-K\"ahler manifolds cannot be expected to be $\nabla^{0}$ totally geodesic submanifolds of $\mathcal{P}(\Xi^{N})$. We can now ask whether they can at least be $\nabla^{1}$-geodesic. The connection $\nabla^{1}=\nabla^{e}$ is called in information geometry, the exponential connection and $\nabla^{-1}=\nabla^{m}$ is called the mixture connection. Even for e-connections, there are topological obstructions for $M^{2d}$ to be $\nabla^{e}$-geodesic. To gain a better insight into this notion, consider a compact K\"ahler manifold $(M^{2d},\Omega,g)$ whose fundamental group is finite. If $M^{2d}$ is $\nabla^{e}$-geodesic, then $M^{2d}$ would be an exponential model, or more rigorously, the restriction of the family of categorical distributions on the discrete finite sample space $\Xi$ on $M^{2d}$ should be an exponential family. This refers to the fact that $\nabla^{e}$-totally geodesic submanifolds of the space of all strictly positive probability distributions on a finite sample space are exponential families (see\cite{amari1987differential},\cite{amari2007methods}). This is impossible as locally flat compact manifolds have an infinite fundamental group (see\cite{ay2002dually}). In the case of co-K\"ahler manifolds, this obstruction doesn't exist, as , according to \cite{li2008topology}, a compact co-K\"ahler manifold is a K\"ahler mapping torus; therefore, $M^{2d+1}$ fibers over a circle $\mathbb{S}^{1}$. Consequently, its fundamental group is infinite.\vspace{0.2cm}

In the case it would work on K\"ahler manifolds, according to \cite{amari1987differential} and \cite{amari2007methods}, there exist local charts $(U_{j},x^{j})$ on $M^{2d}$ such that $(\Gamma^{e}_{ij})^{k}=0$. Now, assume that the charts $U^{j}$ intersect  Darboux charts $U^{i}$ on $M^{2d}$. Then, on the commune charts $(U_{ij}^{l},y^{l},y^{d+l})$, we get $\Omega_{g}|_{U^{l}_{ij}}=\sum dy^{l}\wedge dy^{d+l}$ and $\nabla^{e}_{\partial_{y^{j}}}\partial_{y^{j}}=0$ for $j=1,..,2d.$ Let $l:U_{ij}^{l}\rightarrow M^{2d}$ be an inclusion map. The quadruplet $(\Omega_{g}|_{U^{l}_{ij}},l^{*}g,l^{*}\nabla^{e},l^{*}\nabla^{m})$ is a special K\"ahler structure and consequently, $(\nabla^{e},\nabla^{m})$ are statistical symplectic connections on $U_{ij}^{l}$. Fore further details on special K\"ahler manifold, refer to \cite{freed1999special},\cite{hitchin1999moduli}.

\vspace{2ex}

Now, let's examine the non-compact manifolds case. Non-compact K\"ahler and co-K\"ahler manifolds are neither $\nabla^{0}$-totally geodesic in the location-scale families nor in the family of discrete distribution. The proof of this assertion invests exactly the same arguments used for the compact case. The only possibility lies with the family of location families. In this case, according to \citet{wolf1972spaces}, K\"ahler and co-K\"ahler manifolds are isometric to a flat cylinder over an
Euclidean torus.

\subsection{Statistical symplectic connections and a K\"ahler Potential}
\label{subsec:Stasymp}
\begin{proposition}
 $\alpha$-statistical symplectic connections have the following components in local holomorphic coordinates $(z_1,z_2,...,z_{d})$:

\begin{equation}
 2(\Gamma^{\alpha }_{ij})^{k}=-\alpha\Gamma^{k}_{ij}+2(\Gamma^{lc}_{ij})^{k}-\alpha\partial_i(\Theta^{l}_j)\Theta^{r}_{l}\delta_{rk}-\alpha\Theta^{l}_j\Gamma^{p}_{il}\Theta_{s}^{p}\delta_{pk} 
\end{equation}

\begin{equation}
    2(\Gamma^{\alpha }_{i\Bar{j}})^{k}=-\alpha\Gamma^{k}_{i\Bar{j}}+2(\Gamma^{lc}_{i\Bar{j}})^{k}-\alpha\partial_i(\Theta^{l}_{\Bar{j}})\Theta^{r}_{l}\delta_{rk}-\alpha\Theta^{l}_{\Bar{j}}\Gamma^{p}_{il}\Theta_{s}^{p}\delta_{pk}
\end{equation}

\begin{equation}
    2(\Gamma^{\alpha }_{ij})^{\Bar{k}}=-\alpha\Gamma^{\Bar{k}}_{ij}+2(\Gamma^{lc}_{ij})^{\Bar{k}}-\alpha\partial_i(\Theta^{l}_j)\Theta^{\Bar{r}}_{l}\delta_{\Bar{r}\Bar{k}}-\alpha\Theta^{l}_j\Gamma^{s}_{il}\Theta_{s}^{\Bar{p}}\delta_{\Bar{p}\Bar{k}}
\end{equation}

\begin{equation}
    2(\Gamma^{\alpha }_{i\Bar{j}})^{\Bar{k}}=-\alpha\Gamma^{\Bar{k}}_{i\Bar{j}}+2(\Gamma^{lc}_{i\Bar{j}})^{\Bar{k}}-\alpha\partial_i(\Theta^{l}_{\Bar{j}})\Theta^{\Bar{r}}_{l}\delta_{\Bar{r}\Bar{k}}-\alpha\Theta^{l}_{\Bar{j}}\Gamma^{s}_{il}\Theta_{s}^{\Bar{p}}\delta_{\Bar{p}\Bar{k}}
\end{equation}
\label{prop:loca}
\end{proposition}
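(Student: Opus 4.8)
The plan is to treat this proposition as an explicit coordinate computation resting on Corollary~\ref{lem:principal}, which already provides the clean intrinsic formula $\nabla^{\alpha}=\nabla^{lc}-\tfrac{1}{2}\alpha\,\Theta(\nabla\Theta)$ on any K\"ahler manifold, where $\nabla$ denotes the chosen statistical symplectic connection. Since the four displayed identities differ only by the holomorphic/antiholomorphic type of the free lower index $j$ and the target index $k$, I would first establish a single index-free local expansion of the difference term $\Theta(\nabla_{\partial_i}\Theta)\partial_j$, and only afterwards specialize the index types to read off each of the four equations.

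First I would expand, using the Leibniz rule for $\nabla$ together with $\Theta\partial_j=\Theta^{l}_{j}\partial_l$,
\begin{equation}
(\nabla_{\partial_i}\Theta)\partial_j=\partial_i(\Theta^{l}_{j})\partial_l+\Theta^{l}_{j}\Gamma^{m}_{il}\partial_m-\Gamma^{l}_{ij}\Theta^{m}_{l}\partial_m ,
\end{equation}
where $\Gamma^{m}_{il}$ are the Christoffel symbols of the symplectic connection $\nabla$. Applying $\Theta$ once more and invoking $\Theta^{2}=-\mathrm{Id}$ (equivalently $\Theta^{m}_{l}\Theta^{k}_{m}=-\delta^{k}_{l}$), the last term collapses to $+\Gamma^{k}_{ij}$, which is exactly the source of the term $-\alpha\Gamma^{k}_{ij}$ after multiplication by $-\tfrac{\alpha}{2}\cdot 2$. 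This yields
\begin{equation}
\Theta(\nabla_{\partial_i}\Theta)\partial_j=\bigl(\partial_i(\Theta^{l}_{j})\Theta^{k}_{l}+\Theta^{l}_{j}\Gamma^{m}_{il}\Theta^{k}_{m}+\Gamma^{k}_{ij}\bigr)\partial_k .
\end{equation}

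Combining this with the formula of Corollary~\ref{lem:principal} immediately gives $2(\Gamma^{\alpha}_{ij})^{k}=2(\Gamma^{lc}_{ij})^{k}-\alpha\Gamma^{k}_{ij}-\alpha\,\partial_i(\Theta^{l}_{j})\Theta^{k}_{l}-\alpha\,\Theta^{l}_{j}\Gamma^{m}_{il}\Theta^{k}_{m}$, which is the first displayed identity. The remaining three formulas follow by repeating the \emph{same} computation with $j$ replaced by $\bar{j}$ and/or the target component $k$ replaced by $\bar{k}$: the Kronecker symbols $\delta_{rk}$ and $\delta_{\bar r\bar k}$ appearing in the statement are simply the device projecting the output vector onto its holomorphic, respectively antiholomorphic, component.

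The computation itself is routine, so I expect the only real difficulty to be notational rather than conceptual. The two points that demand care are keeping the symplectic Christoffel symbols $\Gamma^{k}_{ij}$ of $\nabla$ strictly distinct from the Levi--Civita symbols $(\Gamma^{lc}_{ij})^{k}$ throughout, and applying the contraction $\Theta^{m}_{l}\Theta^{k}_{m}=-\delta^{k}_{l}$ to the correct pair of indices at each occurrence while tracking how $\Theta$ mixes barred and unbarred indices in the complex frame $(z^{1},\dots,z^{d})$. With that bookkeeping in place, each of the four equations receives precisely the terms displayed.
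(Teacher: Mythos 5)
Your proposal is correct and follows essentially the same route as the paper: both start from the identity $\nabla^{\alpha}=\nabla^{lc}-\tfrac{1}{2}\alpha\,\Theta(\nabla\Theta)$ of Corollary~\ref{lem:principal}, expand $\Theta(\nabla_{\partial_i}\Theta)\partial_j$ via the Leibniz rule, and use $\Theta^{2}=-\mathrm{Id}$ to produce the $-\alpha\Gamma^{k}_{ij}$ term (the paper merely keeps that term on the left-hand side as $2\nabla^{\alpha}_{\partial_i}\partial_j+\alpha\nabla_{\partial_i}\partial_j$ rather than absorbing it as you do, and extracts components by contracting with $g_{k\bar l}$ instead of reading them off directly). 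The remaining three identities are obtained in both treatments by the same barred-index substitutions, so the two arguments coincide up to bookkeeping.
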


\begin{proof}

Grounded on corollary \ref{lem:principal}, we have:

\begin{equation}
    \nabla^{\alpha}=\nabla^{lc}-\frac{1}{2}\alpha \Theta(\nabla\Theta) \,,  \forall a\in\mathbb{R}.
\end{equation}

Let's choose a local coordinate $(z_1,..,z_{d})$ and hence a local basis
$(\partial z_1,..,\partial z_d)$ for $T_{\mathbb{C}}M^{2d}$. We define the Christoffel symbols $(\Gamma^{\alpha}_{ij})^{k}$ by
\[
 \nabla^{\alpha}_{\partial z_{i}}\partial z_{j}=(\Gamma^{\alpha }_{ij})^{k}\partial_{z_{k}}+(\Gamma^{\alpha}_{ij})^{\Bar{k}}\partial_{\Bar{z}_{k}} , \]   
 \[
   \nabla^{\alpha}_{\partial z_{i}}\partial \Bar{z}_{j}=(\Gamma^{\alpha }_{i\Bar{j}})^{k}\partial_{z_{k}}+(\Gamma^{\alpha}_{i\Bar{j}})^{\Bar{k}}\partial_{\Bar{z}_{k}}, \]
Using a direct computation, we obtain

$$2\nabla^{\alpha}_{\partial_i}\partial_j+\alpha\nabla_{\partial_i}\partial_j=2\nabla^{lc}_{\partial_i}\partial_j-\alpha\Theta(\nabla_{\partial_i}\Theta^{l}_j\partial_l)$$

$$=2\nabla^{lc}_{\partial_i}\partial_j-\alpha\Theta(\partial_i\Theta^{l}_j\partial_l+\Theta^{l}_j\nabla_{\partial_i}\partial_l)$$

$$=2\nabla^{lc}_{\partial_i}\partial_j-\alpha\partial_i(\Theta^{l}_j)\Theta^{r}_{l}\partial_r-\alpha\Theta^{l}_j\Theta(\nabla_{\partial_i}\partial_l)$$

$$=2\nabla^{lc}_{\partial_i}\partial_j-\alpha\partial_i(\Theta^{l}_j)\Theta^{r}_{l}\partial_r-\alpha\Theta^{l}_j\Theta(\Gamma^{s}_{il}\partial_{s})$$

Finally, we get

$$2\nabla^{\alpha}_{\partial_i}\partial_j+\alpha\nabla_{\partial_i}\partial_j=2\nabla^{lc}_{\partial_i}\partial_j-\alpha\partial_i(\Theta^{l}_j)\Theta^{r}_{l}\partial_r-\alpha\Theta^{l}_j\Gamma^{s}_{il}\Theta_{s}^{p}\partial_{p}$$

It follows form the above that:

\begin{equation}
    (2(\Gamma^{\alpha }_{ij})^{k}+\alpha\Gamma^{k}_{ij})\partial_{k}=2(\Gamma^{lc}_{ij})^{k}\partial_{k}-\alpha\partial_i(\Theta^{l}_j)\Theta^{r}_{l}\partial_r-\alpha\Theta^{l}_j\Gamma^{s}_{il}\Theta_{s}^{p}\partial_{p}
    \label{eq:150}
\end{equation}

We deduce that:
\begin{equation}
    (2(\Gamma^{\alpha }_{ij})^{k}+\alpha\Gamma^{k}_{ij})g_{k\Bar{l}}=2(\Gamma^{lc}_{ij})^{k}g_{k\Bar{l}}-\alpha\partial_i(\Theta^{l}_j)\Theta^{r}_{l}g_{r\Bar{l}}-\alpha\Theta^{l}_j\Gamma^{p}_{il}\Theta_{s}^{p}g_{p\Bar{l}}
\end{equation}

Thus,

\begin{equation}
 2(\Gamma^{\alpha }_{ij})^{k}+\alpha\Gamma^{k}_{ij}=2(\Gamma^{lc}_{ij})^{k}-\alpha\partial_i(\Theta^{l}_j)\Theta^{r}_{l}\delta_{rk}-\alpha\Theta^{l}_j\Gamma^{p}_{il}\Theta_{s}^{p}\delta_{pk} 
 \label{eq:152}
\end{equation}
Now replacing $j$ by $\Bar{j}$ in the above formula, we get

\begin{equation}
    2(\Gamma^{\alpha }_{i\Bar{j}})^{k}+\alpha\Gamma^{k}_{i\Bar{j}}=2(\Gamma^{lc}_{i\Bar{j}})^{k}-\alpha\partial_i(\Theta^{l}_{\Bar{j}})\Theta^{r}_{l}\delta_{rk}-\alpha\Theta^{l}_{\Bar{j}}\Gamma^{p}_{il}\Theta_{s}^{p}\delta_{pk}
    \label{eq:153}
\end{equation}

Referring to \cref{eq:150}, we obtain
\begin{equation}
    (2(\Gamma^{\alpha }_{ij})^{\Bar{k}}+\alpha\Gamma^{\Bar{k}}_{ij})\partial_{\Bar{k}}=2(\Gamma^{lc}_{ij})^{k}\partial_{\Bar{k}}-\alpha\partial_i(\Theta^{l}_j)\Theta^{r}_{l}\partial_r-\alpha\Theta^{l}_j\Gamma^{s}_{il}\Theta_{s}^{p}\partial_{p}
\end{equation}

Hence,

\begin{equation}
    (2(\Gamma^{\alpha }_{ij})^{\Bar{k}}+\alpha\Gamma^{\Bar{k}}_{ij})g_{l\Bar{k}}=2(\Gamma^{lc}_{ij})^{\Bar{k}}g_{l\Bar{k}}-\alpha\partial_i(\Theta^{l}_j)\Theta^{\Bar{r}}_{l}g_{\Bar{r}l}-\alpha\Theta^{l}_j\Gamma^{s}_{il}\Theta_{s}^{\Bar{p}}g_{\Bar{p}l}
\end{equation}
we deduce that
\begin{equation}
    2(\Gamma^{\alpha }_{ij})^{\Bar{k}}+\alpha\Gamma^{\Bar{k}}_{ij}=2(\Gamma^{lc}_{ij})^{\Bar{k}}-\alpha\partial_i(\Theta^{l}_j)\Theta^{\Bar{r}}_{l}\delta_{\Bar{r}\Bar{k}}-\alpha\Theta^{l}_j\Gamma^{s}_{il}\Theta_{s}^{\Bar{p}}\delta_{\Bar{p}\Bar{k}}
    \label{eq:156}
\end{equation}
Replacing $j$ by $\Bar{j}$ in the above formula, we obtain
\begin{equation}
    2(\Gamma^{\alpha }_{i\Bar{j}})^{\Bar{k}}+\alpha\Gamma^{\Bar{k}}_{i\Bar{j}}=2(\Gamma^{lc}_{i\Bar{j}})^{\Bar{k}}-\alpha\partial_i(\Theta^{l}_{\Bar{j}})\Theta^{\Bar{r}}_{l}\delta_{\Bar{r}\Bar{k}}-\alpha\Theta^{l}_{\Bar{j}}\Gamma^{s}_{il}\Theta_{s}^{\Bar{p}}\delta_{\Bar{p}\Bar{k}}
    \label{eq:last}
\end{equation}
Finally, relying upon \cref{eq:152}, \cref{eq:153}, \cref{eq:156},\cref{eq:last} we obtain:

\begin{equation}
 2(\Gamma^{\alpha }_{ij})^{k}=-\alpha\Gamma^{k}_{ij}+2(\Gamma^{lc}_{ij})^{k}-\alpha\partial_i(\Theta^{l}_j)\Theta^{r}_{l}\delta_{rk}-\alpha\Theta^{l}_j\Gamma^{p}_{il}\Theta_{s}^{p}\delta_{pk} 
\end{equation}

\begin{equation}
    2(\Gamma^{\alpha }_{i\Bar{j}})^{k}=-\alpha\Gamma^{k}_{i\Bar{j}}+2(\Gamma^{lc}_{i\Bar{j}})^{k}-\alpha\partial_i(\Theta^{l}_{\Bar{j}})\Theta^{r}_{l}\delta_{rk}-\alpha\Theta^{l}_{\Bar{j}}\Gamma^{p}_{il}\Theta_{s}^{p}\delta_{pk}
\end{equation}

\begin{equation}
    2(\Gamma^{\alpha }_{ij})^{\Bar{k}}=-\alpha\Gamma^{\Bar{k}}_{ij}+2(\Gamma^{lc}_{ij})^{\Bar{k}}-\alpha\partial_i(\Theta^{l}_j)\Theta^{\Bar{r}}_{l}\delta_{\Bar{r}\Bar{k}}-\alpha\Theta^{l}_j\Gamma^{s}_{il}\Theta_{s}^{\Bar{p}}\delta_{\Bar{p}\Bar{k}}
\end{equation}

\begin{equation}
    2(\Gamma^{\alpha }_{i\Bar{j}})^{\Bar{k}}=-\alpha\Gamma^{\Bar{k}}_{i\Bar{j}}+2(\Gamma^{lc}_{i\Bar{j}})^{\Bar{k}}-\alpha\partial_i(\Theta^{l}_{\Bar{j}})\Theta^{\Bar{r}}_{l}\delta_{\Bar{r}\Bar{k}}-\alpha\Theta^{l}_{\Bar{j}}\Gamma^{s}_{il}\Theta_{s}^{\Bar{p}}\delta_{\Bar{p}\Bar{k}}
\end{equation}
\end{proof}

\begin{corollary}
    There exists a real-valued smooth function $\Phi: U \rightarrow\mathbb{R}$ known as the Kähler potential 
such that
\begin{equation}
 2(\Gamma^{\alpha }_{ij})^{k}=-\alpha\Gamma^{k}_{ij}+2g^{k\Bar{l}}\partial_{i}\partial_{j}\partial_{\Bar{l}}\Phi-\alpha\partial_i(\Theta^{l}_j)\Theta^{r}_{l}\delta_{rk}-\alpha\Theta^{l}_j\Gamma^{p}_{il}\Theta_{s}^{p}\delta_{pk} 
\end{equation}

\begin{equation}
    2(\Gamma^{\alpha }_{i\Bar{j}})^{k}=-\alpha\Gamma^{k}_{i\Bar{j}}-\alpha\partial_i(\Theta^{l}_{\Bar{j}})\Theta^{r}_{l}\delta_{rk}-\alpha\Theta^{l}_{\Bar{j}}\Gamma^{p}_{il}\Theta_{s}^{p}\delta_{pk}
\end{equation}

\begin{equation}
    2(\Gamma^{\alpha }_{ij})^{\Bar{k}}=-\alpha\Gamma^{\Bar{k}}_{ij}-\alpha\partial_i(\Theta^{l}_j)\Theta^{\Bar{r}}_{l}\delta_{\Bar{r}\Bar{k}}-\alpha\Theta^{l}_j\Gamma^{s}_{il}\Theta_{s}^{\Bar{p}}\delta_{\Bar{p}\Bar{k}}
\end{equation}

\begin{equation}
    2(\Gamma^{\alpha }_{i\Bar{j}})^{\Bar{k}}=-\alpha\Gamma^{\Bar{k}}_{i\Bar{j}}-\alpha\partial_i(\Theta^{l}_{\Bar{j}})\Theta^{\Bar{r}}_{l}\delta_{\Bar{r}\Bar{k}}-\alpha\Theta^{l}_{\Bar{j}}\Gamma^{s}_{il}\Theta_{s}^{\Bar{p}}\delta_{\Bar{p}\Bar{k}}
\end{equation}

$$\partial_{k}\partial_{\Bar{i}}\partial_{j}\Phi=\Gamma^{a}_{{ki},\Bar{j}}+\Gamma^{-a}_{k\Bar{j},i}$$

$$\partial_{\Bar{k}}\partial_{\Bar{i}}\partial_{j}\Phi=\Gamma^{a}_{{\Bar{k}i},\Bar{j}}+\Gamma^{-a}_{\Bar{k}\Bar{j},i}$$

\end{corollary}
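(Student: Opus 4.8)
The plan is to reduce the statement to two classical facts about K\"ahler geometry and then substitute them into the component formulas of Proposition~\ref{prop:loca}. The only genuine input is the local existence of the potential: since $(M^{2d},\omega,\Theta,g)$ is K\"ahler, the K\"ahler form $\omega$ is closed, so on a sufficiently small coordinate neighbourhood $U$ the Dolbeault--Grothendieck form of the Poincar\'e lemma (the local $\partial\bar\partial$-lemma) furnishes a real-valued smooth function $\Phi:U\to\mathbb{R}$ with $\omega=i\,\partial\bar\partial\Phi$. In holomorphic coordinates $(z_1,\dots,z_d)$ this reads
\begin{equation}
g_{i\bar{j}}=\partial_i\partial_{\bar{j}}\Phi .
\end{equation}
This existence is classical and is the heart of the corollary; everything else is substitution.

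Next I would recall the explicit form of the Levi-Civita connection of a K\"ahler metric in holomorphic coordinates, where the only non-vanishing Christoffel symbols are the purely holomorphic ones (and their conjugates):
\begin{equation}
(\Gamma^{lc}_{ij})^{k}=g^{k\bar{l}}\partial_i g_{j\bar{l}},\qquad (\Gamma^{lc}_{ij})^{\bar{k}}=0,\qquad (\Gamma^{lc}_{i\bar{j}})^{k}=(\Gamma^{lc}_{i\bar{j}})^{\bar{k}}=0 .
\end{equation}
Substituting $g_{j\bar{l}}=\partial_j\partial_{\bar{l}}\Phi$ into the first relation gives $(\Gamma^{lc}_{ij})^{k}=g^{k\bar{l}}\partial_i\partial_j\partial_{\bar{l}}\Phi$. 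Inserting these three facts into the four formulas of Proposition~\ref{prop:loca} yields the first four displayed identities of the corollary at once: in the formula for $(\Gamma^{\alpha}_{ij})^{k}$ the term $2(\Gamma^{lc}_{ij})^{k}$ becomes $2g^{k\bar{l}}\partial_i\partial_j\partial_{\bar{l}}\Phi$, while in the other three formulas the Levi-Civita term disappears because the relevant mixed symbol vanishes.

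For the two third-order identities I would invoke the duality condition defining the dual pair $(\nabla^{a},\nabla^{-a})$ of statistical symplectic connections with respect to $g$. Writing $\Gamma^{a}_{ki,\bar{j}}:=g(\nabla^{a}_{\partial_k}\partial_i,\partial_{\bar{j}})$ and likewise for $\nabla^{-a}$, the relation $\partial_k g(\partial_i,\partial_{\bar{j}})=g(\nabla^{a}_{\partial_k}\partial_i,\partial_{\bar{j}})+g(\partial_i,\nabla^{-a}_{\partial_k}\partial_{\bar{j}})$ becomes
\begin{equation}
\partial_k g_{i\bar{j}}=\Gamma^{a}_{ki,\bar{j}}+\Gamma^{-a}_{k\bar{j},i},
\end{equation}
together with its analogue obtained by replacing $\partial_k$ with $\partial_{\bar{k}}$. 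Substituting $g_{i\bar{j}}=\partial_i\partial_{\bar{j}}\Phi$ and using that the partial derivatives commute produces precisely the two stated relations $\partial_k\partial_{\bar{i}}\partial_j\Phi=\Gamma^{a}_{ki,\bar{j}}+\Gamma^{-a}_{k\bar{j},i}$ and $\partial_{\bar{k}}\partial_{\bar{i}}\partial_j\Phi=\Gamma^{a}_{\bar{k}i,\bar{j}}+\Gamma^{-a}_{\bar{k}\bar{j},i}$ after relabeling indices.

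I do not expect any serious obstacle: once the potential is available the proof is a chain of substitutions. The only points requiring care are purely bookkeeping, namely verifying that each mixed Levi-Civita symbol indeed vanishes (so that exactly the right terms drop out of Proposition~\ref{prop:loca}), and matching the holomorphic and anti-holomorphic index placements in the duality relations to the stated equations. It should also be emphasised that $\Phi$ is only local, so the corollary is intrinsically a statement on a chart $U$, in agreement with its phrasing.
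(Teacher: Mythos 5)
Your proposal is correct and follows essentially the same route as the paper: local existence of the potential with $g_{i\bar j}=\partial_i\partial_{\bar j}\Phi$, the vanishing of the mixed Levi--Civita Christoffel symbols in holomorphic coordinates, substitution into Proposition~\ref{prop:loca}, and the duality condition for the two third-order identities. No substantive differences to report.
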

\begin{proof}

It's well known that the Christoffels symbols of the Levi-Civita connection $g$ are identified in a local coordinate
$(U,z_1,..,z_d)$ by:
\begin{equation}
(\Gamma^{lc})^{\Bar{k}}_{ij}=0,\,(\Gamma^{lc})^{\Bar{k}}_{i\Bar{j}}=(\Gamma^{lc})^{k}_{i\Bar{j}}=0,\, (\Gamma^{lc})^{k}_{ij}=g^{k\Bar{l}}\frac{\partial g_{i{\Bar{l}}}}{\partial z_{j}} 
\end{equation}
There exists a
real-valued function $\Phi$ on a neighbourhood of $p\in M^{2d}$ such that
\begin{equation}
g_{i\Bar{k}}=\partial_{i}\partial_{\Bar{k}}\Phi.
\label{eq:171}
\end{equation}
We accordingly have:
$$(\Gamma^{lc})^{k}_{ij}=g^{k\Bar{l}}\frac{\partial g_{j{\Bar{l}}}}{\partial z_{i}}=g^{k\Bar{l}}\partial_{i}\partial_{j}\partial_{\Bar{l}}\Phi.$$
With respect to \ref{prop:loca}, we conclude that
\begin{equation}
 2(\Gamma^{\alpha }_{ij})^{k}=-\alpha\Gamma^{k}_{ij}+2g^{k\Bar{l}}\partial_{i}\partial_{j}\partial_{\Bar{l}}\Phi-\alpha\partial_i(\Theta^{l}_j)\Theta^{r}_{l}\delta_{rk}-\alpha\Theta^{l}_j\Gamma^{p}_{il}\Theta_{s}^{p}\delta_{pk} 
\end{equation}

\begin{equation}
    2(\Gamma^{\alpha }_{i\Bar{j}})^{k}=-\alpha\Gamma^{k}_{i\Bar{j}}-\alpha\partial_i(\Theta^{l}_{\Bar{j}})\Theta^{r}_{l}\delta_{rk}-\alpha\Theta^{l}_{\Bar{j}}\Gamma^{p}_{il}\Theta_{s}^{p}\delta_{pk}
\end{equation}

\begin{equation}
    2(\Gamma^{\alpha }_{ij})^{\Bar{k}}=-\alpha\Gamma^{\Bar{k}}_{ij}-\alpha\partial_i(\Theta^{l}_j)\Theta^{\Bar{r}}_{l}\delta_{\Bar{r}\Bar{k}}-\alpha\Theta^{l}_j\Gamma^{s}_{il}\Theta_{s}^{\Bar{p}}\delta_{\Bar{p}\Bar{k}}
\end{equation}

\begin{equation}
    2(\Gamma^{\alpha }_{i\Bar{j}})^{\Bar{k}}=-\alpha\Gamma^{\Bar{k}}_{i\Bar{j}}-\alpha\partial_i(\Theta^{l}_{\Bar{j}})\Theta^{\Bar{r}}_{l}\delta_{\Bar{r}\Bar{k}}-\alpha\Theta^{l}_{\Bar{j}}\Gamma^{s}_{il}\Theta_{s}^{\Bar{p}}\delta_{\Bar{p}\Bar{k}}
\end{equation}

Additionally, departing from \cref{eq:171} and based on the duality condition, one can get the following expressions:

$$\partial_{k}\partial_{\Bar{i}}\partial_{j}\Phi=\Gamma^{a}_{{ki},\Bar{j}}+\Gamma^{-a}_{k\Bar{j},i};$$

$$\partial_{\Bar{k}}\partial_{\Bar{i}}\partial_{j}\Phi=\Gamma^{a}_{{\Bar{k}i},\Bar{j}}+\Gamma^{-a}_{\Bar{k}\Bar{j},i},$$

where $\Gamma^{a}_{kj,i}=g_{il}(\Gamma^{a}_{kj})^{l}\,,\forall i, j, k.$
\end{proof}

\subsubsection{K\"ahler potential and Kullback-Leibler divergence}
\label{subs:KahlandKL}
    We start this subsection by recalling an outstanding result known to all specialists. In \cite{matumoto1993any}, Matumoto  revealed that a divergence exists for any statistical manifold. $(M,g,T)$. However, it is
not unique and there are infinitely many divergences that provide the same geometrical structure. Referring to the work of \cite{matumoto1993any} Matumoto, we deduce with our statistical characterization of K\"ahler metrics that, for any K\"ahler metric $g$, there exists a divergence or contrast function $D$ such that on a neighborhood $\Delta\subset U\times U\simeq U\times\Bar{U}$ of the diagonal ($\Bar{U}$ denotes the neighborhood defined by conjugated
local coordinates), it is expressed  by:

\begin{equation}
    g_{i,\Bar{j}}(p)=-\frac{\partial^{2}D(z,\Bar{z})}{\partial z^{i}\partial \Bar{z}^{j}}|_{\Delta}
    \label{eq:187}
\end{equation}
Remember that the K\"ahler metric can equally be indicated in a local coordinate by the K\"ahler potential by:
\begin{equation}
g_{i,\Bar{j}}(p)=\frac{\partial^{2}\Phi(z,\Bar{z})}{\partial z^{i}\partial \Bar{z}^{j}}
\label{eq:pote}
\end{equation}

The affinity between \cref{eq:187} and \cref{eq:pote} is not accidental. These facts are suggestive that there exists a connection
between the Kähler potential and the divergence function. Resting on \cref{eq:187} and \cref{eq:pote}, we obtain on the common holomorphic coordinate, the following formula:

\begin{equation}
    \frac{\partial^{2}(D(z,\Bar{z})-\Phi(z,\Bar{z}))}{\partial z^{i}\partial \Bar{z}^{j}}=0.
\end{equation}

Now, $\partial\Bar{\partial}$-Lemma is estimated in terms of

\begin{equation}
    D(z,\Bar{z})=\Phi(z,\Bar{z})+h(z)+\Bar{h}(z),
\end{equation}
where $h$ is an holomorphic function. This yields the
relation between the divergence function and the Kähler potential.
\vspace{2ex}

We infer from Theorem \ref{th:KacOka} that a K\"ahler metric is always obtained by pulling back the Fisher metric of some statistical model $(\mathcal{P}(\Xi^{N}),g^{F},T^{A-C})$. Let's consider an isostatistical embedding $i:(M^{2d},\Omega,g,T^{Ka})\hookrightarrow ( \mathcal{P}(\Xi^{N}), g^{F},T^{A-C});$ for $N$ sufficiently large, where:

\begin{equation}
    g=i^{*}g^{F} \,\text{and}\,\, T^{Ka}=i^{*} T^{A-C}.  
\end{equation}

Consider $D_{KL}$ the Kullback-Leibler divergence on $\mathcal{P}(\Xi^{N})$, defined as:
\begin{equation}
    D_{KL}(p,q)=\text{E}_{p}(log(\frac{p}{q}))\,\,, p,q\in \mathcal{P}(\Xi^{N}).
\end{equation}

It is notably related to the Fisher information metric $g^{F}$ as follows:

\begin{equation}
    g^{F}_{ij}(\theta_{0})=\frac{\partial^{2}D_{KL}(p_{\theta_{0}},p_{\theta})}{\partial\theta_{i}\theta_{j}}|_{\theta=\theta_{0}}
    \label{eq:pull}
\end{equation}

With respect to \cref{eq:pull}, we obviously locally have 

\begin{equation}
    D_{KL}(i(z),\Bar{i(z)})=D(z,\Bar{z})+r(z)+\Bar{r}(z),
\end{equation}
 , where $r$ is some holomorphic function. Now, using \cref{eq:pote}, we easily locally obtain
 
 \begin{equation}
    D_{KL}(i(z),\Bar{i(z)})=\Phi(z,\Bar{z})+f(z)+\Bar{f}(z),
\end{equation}
\vspace{2ex}
where $f$ is some holomorphic function.
 \vspace{2ex}

We conclude this section by investigating the case of real analytic K\"ahler metrics.
\vspace{2ex}

\subsubsection{A Real analytic K\"ahler metric and a Fisher-information metric of exponential family }
\label{subs:analy}
Let us now consider a real analytic Kähler
metric $g$. Recall that a K\"ahler metric is a real analytic if in a fixed local coordinate $(z_{1},...,z_{d})$ on a neighbourhood $U$ at any point $z\in M^{2d}$, there exists a real analytical K\"ahler potential $\Phi:U\rightarrow\mathbb{R}$ such that:
\begin{equation}
g_{i\Bar{j}}=\frac{\partial^{2}\Phi(z,\Bar{z})}{\partial z^{i}\partial \Bar{z}^{j}}.
\end{equation}

\begin{thm}
    Any real analytic K\"ahler metric is locally is the Fisher information of an exponential family.
\end{thm}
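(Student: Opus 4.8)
The plan is to reduce the statement to a Laplace--transform (moment) representation of the exponential of a Kähler potential, and then to read off the exponential family from that representation. First I would fix a point $p$ and, using the statistical characterization of \cref{th:AlmsKha} together with the Kähler potential of \cref{eq:pote}, write the metric in holomorphic coordinates $(z^1,\dots,z^d)$ centred at $p$ as $g_{i\bar j}=\partial_{z^i}\partial_{\bar z^j}\Phi$ with $\Phi$ real analytic. The guiding observation is that for an exponential family $p(x;\theta)=\exp\!\big(2\,\mathrm{Re}\langle \theta,F(x)\rangle-\psi(\theta)\big)$ with complex parameter $\theta=z\in\mathbb{C}^d$, complex sufficient statistics $F=(F_1,\dots,F_d)$ and log--partition function $\psi(z,\bar z)=\log\int_\Xi \exp\!\big(2\,\mathrm{Re}\langle z,F(x)\rangle\big)\,d\mu(x)$, the Hermitian part of the Fisher information \cref{eq:Fisher} (the part indexed by $i\bar j$, exactly as in the Fubini--Study formula \cref{eq:FS}) is $\partial_{z^i}\partial_{\bar z^j}\psi=\mathrm{Cov}_{p_z}(F_i,\bar F_j)$. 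Hence it suffices to produce a positive measure $\mu$ and statistics $F$ for which $\psi$ agrees with $\Phi$ up to a pluriharmonic term, since pluriharmonic summands do not affect the mixed Hessian and therefore leave $g_{i\bar j}$ unchanged.

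Concretely, I would seek $\mu$ and $F$ such that, after polarising the real analytic potential (replacing $\bar z$ by an independent antiholomorphic variable $\bar w$, which is legitimate precisely because $\Phi$ is real analytic), the identity $e^{\Phi(z,\bar w)}=\int_\Xi e^{\langle z,F(x)\rangle}\,\overline{e^{\langle w,F(x)\rangle}}\,d\mu(x)$ holds on a neighbourhood of the diagonal. This exhibits $K(z,\bar w):=e^{\Phi(z,\bar w)}$ as the Gram (reproducing) kernel of the family of holomorphic functions $x\mapsto e^{\langle z,F(x)\rangle}$ inside $L^2(\mu)$. If such a representation exists then $\psi=\Phi$ and the preceding paragraph yields $g_{i\bar j}=\partial_{i}\partial_{\bar j}\Phi$, so the Kähler metric is the (Hermitian) Fisher information of the exponential family $\{p(\cdot\,;z)\}$. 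The whole problem is thus transferred to the existence of the positive measure realising $K$, which by the Moore--Aronszajn / Kolmogorov correspondence between positive--definite kernels and $L^2$--families is equivalent to the positive--definiteness of $K$ on a small neighbourhood.

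The principal obstacle is exactly this positive--definiteness of $K=e^{\Phi(z,\bar w)}$, which I would establish locally as follows. Expanding the real analytic $\Phi$ in its Taylor series at $p$ splits it as $\Phi(z,\bar w)=c+\langle a,z\rangle+\overline{\langle a,w\rangle}+\sum_{i,j}g_{i\bar j}(p)\,z^i\bar w^j+R(z,\bar w)$, where $R$ collects the terms of order at least three. The constant and linear parts are pluriharmonic and contribute a rank--one positive factor $e^{c}e^{\langle a,z\rangle}\overline{e^{\langle a,w\rangle}}$; the Hermitian quadratic part exponentiates to the Bargmann--Fock kernel $\exp\!\big(\sum_{i,j}g_{i\bar j}(p)z^i\bar w^j\big)$, which is strictly positive--definite because $(g_{i\bar j}(p))$ is a positive--definite Hermitian matrix. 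Since products and convergent sums of positive--definite kernels are again positive--definite (Schur product theorem), the task reduces to controlling the higher--order remainder $e^{R}$ on a sufficiently small polydisc; here real analyticity is indispensable, as it guarantees convergence of all the series involved and lets one absorb $R$ into the dominant Gaussian factor after shrinking the chart. Carrying out this perturbation estimate rigorously --- equivalently, solving the associated local moment problem --- is the heart of the argument; once it is in place, the resulting $(\mu,F)$ define the desired exponential family and, by construction, the real analytic Kähler metric $g$ coincides locally with its Fisher information, which also dovetails with the Kähler--potential / Kullback--Leibler identity obtained in \cref{subs:KahlandKL} and with the Fubini--Study model \cref{eq:FS}.
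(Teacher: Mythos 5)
Your reduction is cleanly stated and your two framing observations are correct: for a complex-parameter exponential family the mixed Hessian $\partial_{z^i}\partial_{\bar z^j}\psi$ of the log-partition function is the Hermitian Fisher information, and pluriharmonic summands of the potential do not affect $g_{i\bar j}$. But the step you yourself flag as ``the heart of the argument'' --- local positive-definiteness of the kernel $K(z,\bar w)=e^{\Phi(z,\bar w)}$ --- is not a perturbation estimate waiting to be carried out; it is an additional hypothesis on the metric, and it is false in general. Requiring $e^{\Phi}$ to be a positive-definite kernel is the same as requiring $\Phi=\log\sum_j|f_j|^2$ for holomorphic $f_j$, i.e.\ that $g$ be locally induced by a holomorphic map into a projective/Fock-type Hilbert space; by Calabi's theory of the diastasis this is a countable family of algebraic conditions on the Taylor coefficients of the polarized potential, not an open condition. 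Concretely, take $d=1$ and $\Phi=|z|^{2}-\tfrac{2}{5}|z|^{4}$, a real analytic K\"ahler potential near $0$ (the metric is $1-\tfrac{8}{5}|z|^{2}>0$ there); then
\begin{equation*}
e^{\Phi(z,\bar w)}=e^{\,z\bar w-\frac{2}{5}(z\bar w)^{2}}=\sum_{N\ge 0}c_{N}\,(z\bar w)^{N},\qquad c_{3}=\tfrac{1}{6}-\tfrac{2}{5}=-\tfrac{7}{30}<0,
\end{equation*}
and a rotation-invariant kernel with a negative Taylor coefficient is not positive semi-definite on any neighbourhood of $0$ (test it on $M$ equidistributed points on a small circle: the Gram matrix is circulant with an eigenvalue asymptotic to $Mc_{3}r^{6}$). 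Shrinking the chart does not change the coefficients at the centre, and replacing $\Phi$ by $\Phi+h+\bar h$ multiplies $K$ by $e^{h(z)}\overline{e^{h(w)}}$, which preserves non-positive-definiteness; so no admissible choice of potential rescues the Gram representation for this metric. This is also why the ``absorb $R$ into the Gaussian'' idea cannot work: strict positive definiteness of the Bargmann--Fock kernel gives no uniform lower bound on the spectra of its Gram matrices (arbitrarily many, arbitrarily clustered points are allowed), so the Schur-product argument has nothing with which to dominate the remainder.

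For comparison, the paper's proof does not attempt a Gram representation of $e^{\Phi}$. It passes to Bochner normal coordinates, forms Calabi's diastasis $D^{g}(z,w)$ from the analytically continued potential, verifies that $D^{g}$ is a contrast function whose Eguchi connections vanish in these coordinates (so $D^{g}$ is of Bregman type), and then invokes the Bregman-divergence/exponential-family correspondence of Banerjee et al.\ to obtain a measure $\mu$ with $\Phi(z)=\log\int e^{\langle z,x\rangle}d\mu(x)$, from which the Fisher-information identity is immediate. Your route could only be salvaged by restricting to metrics satisfying Calabi's resolvability condition, which is strictly smaller than the class of all real analytic K\"ahler metrics; note also that the existence of the representing measure is the delicate point in either approach, so if you rework this, that is the step to scrutinize hardest.
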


\begin{proof}
   Let's consider a real analytic K\"ahler metric $g$ on $M^{2d}$. In the neighbourhood $U$ of each point $z\in M^{2d}$, there exist local
holomorphic coordinates $(z^{1},...,z^{d})$ centered at $z$ (unique modulo unitary linear transformations) such that 
\vspace{-4pt}
\begin{equation}
g_{i\Bar{j}}(z)=\frac{\partial^{2}\Phi}{\partial z^{i}\partial \Bar{z}^{j}}(z)=\delta_{ij}
\label{eq:kalerdia}
\end{equation}
These holomorphic
coordinates $(z^{1},...,z^{d})$ are
called normal or geodesic coordinates or Bochner’s coordinates around the point $p$ (see\cite{bochner1947curvature} and \cite{calabi1953isometric}). With respect the analycity of the K\"ahler metric $g$, it follows from\cite{calabi1953isometric}, that the K\"ahler potential can be analytically extended
in a neighbourhood $V\subset U\times U\simeq  U\times \Bar{U} $
of the diagonal. This extension is denoted by $\Phi(z,\Bar{w})$. Now consider the diastasis function $D^{g}(z,w)$ determined by Calabi \cite{calabi1953isometric} as:

\begin{equation}
    D^{g}(z,w)=\Phi(z,\Bar{w})+\Phi(w,\Bar{w})-\Phi(z,\Bar{w})-\Phi(w,\Bar{z}),\quad (z,w)\in V.
    \label{eq:cala}
\end{equation}
According to\cite{calabi1953isometric}, the  diastasis function satisfies the following properties
\begin{align}
\label{pr:distat}
        D(z,z) & =0,\\ \label{pr:distat0}
    g_{i\Bar{j}}(z) & =\frac{\partial^{2}D^{g}(z,w)}{\partial z^{i}\partial \Bar{z}^{j}}=\frac{\partial^{2}\Phi(z,\Bar{z})}{\partial z^{i}\partial \Bar{z}^{j}},\, \text{for}\, q\, \text{fixed};\\\label{pr:distat1}
    D^{g}(z,w) & =(d_{R}(z,w))^{2}+o(d_{R}(z,w))^{4}.   
\end{align}
where $d_{R}$ is the geodesic distance with respect to the analytic K\"ahler metric $g$ between $z$ and $w$. Referring the above equations \ref{pr:distat}, \ref{pr:distat0}, \ref{pr:distat1}, we deduce that the diastasis function $D^{g}$ is a divergence function or a
contrast function on $U$. Now, we can use Eguchi construction \cite{eguchi1992geometry} in order to define statistical structure as follows:
\begin{align}
       \Gamma^{D^{g}}_{ij,k}(z) & =-\partial_{z^{i}}\partial_{z^{j}}\partial_{\Bar{z}_k}D^{g}(z,w)\restriction_{z=w},\\ 
    \Gamma^{*D^{g}}_{ij,k}(z)& =-\partial_{z^{k}}\partial_{\Bar{z}^{i}}\partial_{\Bar{z}^{j}} D^{g}(z,w)\restriction_{z=w};\\ 
    T_{ijk}^{D^{g}}(z)& =\Gamma^{*D^{g}}_{ij,k}(z)-\Gamma^{D^{g}}_{ij,k}(z).
\end{align}

Obviously, using \cref{eq:kalerdia} on $U$, we have 
\begin{equation}
    \Gamma^{D^{g}}_{ij,k}(z)=\Gamma^{*D^{g}}_{ij,k}(z)=0,\,\text{and}\,\, T_{ijk}^{D^{g}}(z)=0.
    \label{eq:divv}
\end{equation}
Based on \cref{eq:divv}, the diastasic function $D^{g}$ coincides with Bregman divergence (canonical divergence). For simplicity, we denote $\Phi(z,\Bar{z})=\Phi(z)$. Resting on \cite{banerjee2005clustering}, it's possible to find  a measure $\mu(x)$ such that
\begin{equation}
   \Phi(z) =log\int exp(<z,x>)d\mu(x)
\end{equation}
Therefore, the exponential family is provided by
\begin{equation}
    p(x,z)=exp((<z,x>-\Phi(z)).
\end{equation}
As a matter of fact,

$$g_{i\Bar{j}}(z) =g^{F}_{i\Bar{j}}(z)=\frac{\partial^{2}(log\int exp(<z,x>)d\mu(x))}{\partial z^{i}\partial \Bar{z}^{j}}.$$

The result follows at once.
\end{proof}

 The following corollary is an easy consequence of the above theorem.
\vspace{1ex}
\begin{corollary}
    Any  real analytic K\"ahler manifold $(M, g)$ is locally an exponential family.
\end{corollary}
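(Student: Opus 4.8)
The plan is to run the argument through Calabi's diastasis function and the dictionary between Bregman divergences and exponential families. First I would fix a point $p\in M^{2d}$ and, using the real analyticity of $g$, invoke Bochner's theorem to produce local holomorphic coordinates $(z^{1},\dots,z^{d})$ centered at $p$ in which the K\"ahler potential $\Phi$ is real analytic and normalized so that $g_{i\bar{j}}(p)=\partial_{i}\partial_{\bar{j}}\Phi(p)=\delta_{ij}$, as in \cref{eq:kalerdia}. Since $\Phi$ is real analytic, it extends holomorphically in the second argument to a function $\Phi(z,\bar{w})$ on a neighborhood $V$ of the diagonal, and one defines Calabi's diastasis $D^{g}(z,w)$ by \cref{eq:cala}.

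The second step is to recognize $D^{g}$ as a contrast function and extract its statistical structure. The properties \cref{pr:distat,pr:distat0,pr:distat1} show that $D^{g}$ vanishes on the diagonal, has Hessian equal to $g$, and agrees to second order with the squared geodesic distance; hence $D^{g}$ is a divergence in the sense of Eguchi. Applying Eguchi's construction yields dual connections whose Christoffel symbols are computed from the third derivatives of $D^{g}$. In the Bochner coordinates these third derivatives vanish at $p$, as recorded in \cref{eq:divv}, so the induced statistical structure is dually flat and $D^{g}$ coincides locally with the canonical (Bregman) divergence generated by the convex potential $\Phi$.

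The decisive step is then to realize $\Phi$ itself as a log-partition function. Writing $\Phi(z)$ for $\Phi(z,\bar{z})$, I would appeal to the bijection of Banerjee et al. between regular Bregman divergences and regular exponential families: because the Bregman generator $\Phi$ is a strictly convex, real analytic function of Legendre type, there exists a base measure $\mu$ on the sample space with
\[
\Phi(z)=\log\int \exp(\langle z,x\rangle)\,d\mu(x).
\]
Setting $p(x,z)=\exp(\langle z,x\rangle-\Phi(z))$ defines an exponential family whose natural parameters are the coordinates $z$, and a direct computation identifies its Fisher information with the complex Hessian of the cumulant generating function,
\[
g^{F}_{i\bar{j}}(z)=\partial_{i}\partial_{\bar{j}}\Phi(z)=g_{i\bar{j}}(z),
\]
which is exactly the given K\"ahler metric. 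Hence $g$ is locally the Fisher information metric of an exponential family.

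I expect the main obstacle to be this third step: guaranteeing that an arbitrary real analytic K\"ahler potential $\Phi$ is genuinely of the convex Legendre type required for the Bregman-to-exponential-family correspondence, and that a legitimate (finite, positive) base measure $\mu$ with the prescribed cumulant generating function actually exists. Convexity of $\Phi$ cannot be expected globally, so this is precisely where the normalization to Bochner coordinates and the restriction to a small neighborhood of $p$ become essential: there $\Phi$ is strictly convex because its complex Hessian equals the positive definite metric, and there one must verify that the hypotheses of Banerjee's theorem are truly met rather than merely formal.
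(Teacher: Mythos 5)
Your argument follows essentially the same route as the paper's proof of the underlying theorem: Bochner normal coordinates from real analyticity, Calabi's diastasis as a contrast function, Eguchi's construction showing the structure is dually flat with the diastasis reducing to a Bregman divergence, and then the Banerjee et al.\ correspondence to realize the K\"ahler potential as a log-partition function, from which the corollary is immediate. The caveat you flag in your final paragraph --- that one must verify the potential $\Phi$ is genuinely of Legendre type and that a legitimate base measure $\mu$ with the prescribed cumulant generating function exists --- is a real concern, but it is equally unaddressed in the paper, which likewise invokes the Bregman-to-exponential-family dictionary without checking its hypotheses.
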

\begin{proof}
    The claim follows from the above theorem.
\end{proof}
\vspace{2ex}

\begin{figure}[H]
    \centering
\begin{center}
\begin{tikzpicture}

    \draw[smooth cycle, tension=0.4, fill=white, pattern color=brown, pattern=north west lines, opacity=0.7] plot coordinates{(2,2) (-0.5,0) (3,-2) (5,1)} node at (3,2.3) {$(M^{2d},\omega,g)$};

     \draw[smooth cycle, pattern color=blue, pattern=crosshatch dots] 
         plot coordinates {(4, 0) (3.5, 0.6) (3.0, 1.2) (2.5, 1.2) (2.2, 0.8) (2.3, 0.5) (2.6, 0.3) (3.5, 0.0)}; node [label={[label distance=-0.2cm, xshift=.55cm, yshift=2cm, fill=white]:$(U_j,p(x,z)=exp((<z,x>-\Phi(z)))$}] {};
\end{tikzpicture}
 
\end{center}

\caption{An real analytic K\"ahler manifold is locally an exponential family.}
    \label{fig:anal} 
\end{figure}
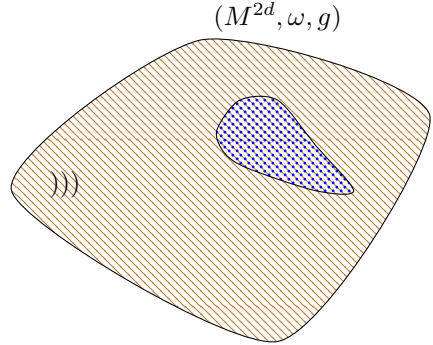

\section{Examples}

\begin{enumerate}
    \item \textbf{Statistical almost Kahler manifolds (K\"ahler manifolds)}
    \vspace{2ex}
    \begin{enumerate}
        \item Special K\"ahler manifolds,
\vspace{2ex}
        \item Holomorphic statistical manifolds,
        \vspace{2ex}
       \item  Conic K\"ahler manifolds,
        \vspace{2ex}
       \item  Projective special K\"ahler manifolds.
    \end{enumerate}
    \vspace{2ex}
    \item \textbf{Statistical shs manifolds (co-K\"ahler manifolds)}\\
Consider a statistical almost K\"ahler manifold $(M^{2d},g,\Omega,\Theta,\nabla,\nabla^{*})$. We can construct a statistical shs manifold as follows:

$$(W^{2d+1}=M^{2d}\times\mathbb{R},\widetilde{g},\widetilde{\omega},\widetilde{\alpha},\widetilde{\nabla},\widetilde{\nabla^{*}}),$$

\vspace{1ex}
where $\widetilde{\alpha}=dt$, $E=(0,\frac{\partial}{\partial t})$, $\Tilde{\Theta}(.,fE)=(\Theta,0) $, $\Tilde{g}=g+dt^{2}$ and $\Tilde{\omega}=\Tilde{g}(\Tilde{\Theta},.)$
and the shs statistical connections are indicated by:

$$\widetilde{\nabla}_{X}Y=\nabla_{X}Y,\,\widetilde{\nabla}_{E}X=\widetilde{\nabla}_{X}E=0,\,\,\widetilde{\nabla}_{E}E=0,\,\,\, \forall X,Y\in \Gamma(TM^{2d}),$$
\vspace{-10pt}
$$\widetilde{\nabla^*}_{X}Y=\nabla^{*}_{X}Y,\,\widetilde{\nabla^{*}}_{E}X=\widetilde{\nabla^{*}}_{X}E=0,\,\widetilde{\nabla^{*}}_{E}E=0,\,\,\forall X,Y\in\Gamma(TM^{2d}).$$

\end{enumerate}

\section*{Summary and Perspectives}
Information geometry grants the opportunity to handle other connections(statistical connections) which generalise Levi-Civita connection. Therefore, multiple theorems of Riemanian geometry can be generalized with statistical connections. In the current research paper, we have proved a strong link between Information geometry and K\"ahler geometry. Through we did not solve Goldberg's conjecture, we provided given new characterisations of K\"ahler metrics and co-K\"ahler metrics with statistical ingredients. In particular, we corroborate that any K\"ahler manifold is a family of probability distributions and any K\"ahler metrics is a Fisher information metric. We also confirmed that a real analytic K\"ahler metric is always locally the Fisher information metric of some exponential family. As a final note, we would assert that this work can be taken further and built upon. In future research, we intend to solve Golberg's conjecture using a statistical approach.

\section*{Acknowledgement}
I am thankful to Michel Nguiffo-Boyom and Stephane Puechmorel for their introduction to the field of information geometry. I am sincerely grateful to Frédéric Barbaresco and all members of the working group "Nord-Bassin parisien" for stimulating conversations and clarifications on information geometry concepts. Thanks to Charles-Michel Marle for constructive discussions and insightful suggestions on cosymplectic structures which played an intrinsic role in this article. I would like to thank Andreas Guitart and Samir Marouani for their valuable help in presenting this article.

\bibliography{main}

\end{document}